\address{Simons Center for Geometry and Physics,
State University of New York, Stony Brook, NY 11794-3636 U.S.A.} \email{kfukaya@scgp.stonybrook.edu}
\address{Center for Geometry and Physics, Institute for Basic Sciences (IBS), Pohang, Korea \& Department of Mathematics,
POSTECH, Pohang, Korea} \email{yongoh1@postech.ac.kr}
\address{Graduate School of Mathematics,
Nagoya University, Nagoya, Japan} \email{ohta@math.nagoya-u.ac.jp}
\address{Research Institute for Mathematical Sciences, Kyoto University, Kyoto, Japan}
\email{ono@kurims.kyoto-u.ac.jp}
\def\l@section{\@tocline{1}{0pt}{3mm}{8mm}{}}
\def\l@subsection{\@tocline{2}{0pt}{6mm}{10mm}{}}
\def\l@subsubsection{\@tocline{3}{0pt}{9mm}{11mm}{}}
\def\E{\ifmmode{\mathbb E}\else{$\mathbb E$}\fi} 
\def\N{\ifmmode{\mathbb N}\else{$\mathbb N$}\fi} 
\def\R{\ifmmode{\mathbb R}\else{$\mathbb R$}\fi} 
\def\Q{\ifmmode{\mathbb Q}\else{$\mathbb Q$}\fi} 
\def\C{\ifmmode{\mathbb C}\else{$\mathbb C$}\fi} 
\def\H{\ifmmode{\mathbb H}\else{$\mathbb H$}\fi} 
\def\Z{\ifmmode{\mathbb Z}\else{$\mathbb Z$}\fi} 
\def\P{\ifmmode{\mathbb P}\else{$\mathbb P$}\fi} 
\def\T{\ifmmode{\mathbb T}\else{$\mathbb T$}\fi} 
\def\SS{\ifmmode{\mathbb S}\else{$\mathbb S$}\fi} 
\def\DD{\ifmmode{\mathbb D}\else{$\mathbb D$}\fi} 
\def\K{\ifmmode{\mathbb K}\else{$\mathbb K$}\fi}
\theoremstyle{theorem}
\newtheorem{thm}{Theorem}[section]
\newtheorem{cor}[thm]{Corollary}
\newtheorem{lem}[thm]{Lemma}
\newtheorem{sublem}[thm]{Sublemma}
\newtheorem{prop}[thm]{Proposition}
\theoremstyle{definition}
\newtheorem{defn}[thm]{Definition}
\newtheorem{rem}[thm]{Remark}
\newtheorem{conds}[thm]{Condition}
\newtheorem{shitu}[thm]{Situation}
\numberwithin{equation}{section}
\begin{document}

\title[Moduli spaces of pseudoholomorphic disks]{
Construction of Kuranishi structures 
on the moduli spaces of pseudo holomorphic disks: I}
\author{Kenji Fukaya, Yong-Geun Oh, Hiroshi Ohta, Kaoru Ono}

\thanks{Kenji Fukaya is supported partially by NSF Grant No. 1406423
and Simons Collaboration on homological Mirror symmetry
,
Yong-Geun Oh by the IBS project IBS-R003-D1, Hiroshi Ohta by JSPS Grant-in-Aid
for Scientific Research No. 15H02054 and Kaoru Ono by JSPS Grant-in-Aid for
Scientific Research, Nos. 21244002, 26247006.
}

\begin{abstract}
This is the first of two articles in which we provide detailed and 
self-contained account of the construction of a system 
of Kuranishi structures on the moduli spaces of pseudo holomorphic 
disks, using 
the exponential decay estimate given in \cite{foooexp}.
This  article completes the construction 
of a Kuranishi structure of a single moduli space.
This article is an improved version of 
\cite[Part 4]{foootech} and its mathematical content is taken from our
earlier writing \cite{FO,fooobook2,foootech,foooexp}.
\end{abstract}
\maketitle

\date{Oct. 03st, 2017}


\maketitle


\section{Statement of the results}
\label{subsec;discsstatement}

This is the first of two articles which provide detail of the construction of 
a system of Kuranishi structures
on the moduli spaces of pseudo holomorphic disks.
\par
The construction of Kuranishi structure on the 
moduli spaces of pseudo holomorphic curves is a part of the virtual fundamental chain/cycle 
technique which was discovered in 
the year 1996 (\cite{FO,LiTi98,LiuTi98,Rua99,Siebert}).
The case of pseudo holomorphic disks was established and used in \cite{fooobook,fooobook2}.

Let 
$(X,\omega)$ be a symplectic manifold that is tame at infinity and
$L$ a compact Lagrangian submanifold without boundary.
Take an almost complex structure $J$ on $X$ which is tamed by $\omega$ and
let $\beta \in H_2(X,L;\Z)$.
\par
We denote by $\mathcal M_{k+1,\ell}(X,L,J;\beta)$ the compactified
moduli space of stable maps with boundary condition given by $L$ 
and of homology class $\beta$, from marked disks
with $k+1$ boundary and $\ell$ interior marked points.
We require that the enumeration of the boundary marked points 
respects the cyclic order of the 
boundary. (See Definition  \ref{defn24222} for the detail of this definition.)
We can define a topology on 
$\mathcal M_{k+1,\ell}(X,L,J;\beta)$
which is Hausdorff and compact.
(See \cite[Definition 10.3]{FO}, 
\cite[Definition 7.1.42]{fooobook2}, and Definition \ref{defn411}.)
The main result we prove in this article is as follows.
\begin{thm}\label{thm11}
$\mathcal M_{k+1,\ell}(X,L,J;\beta)$ carries a Kuranishi structure 
with corners.
\end{thm}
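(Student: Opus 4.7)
The plan is to build the Kuranishi structure in two stages: first, construct a local Kuranishi chart $(V_\sigma, E_\sigma, \Gamma_\sigma, \psi_\sigma, s_\sigma)$ at each point $\sigma \in \mathcal M_{k+1,\ell}(X,L,J;\beta)$; then verify the coordinate-change / covering axioms between charts on overlapping neighborhoods. The overall scheme is the standard one from \cite{FO,fooobook2,foootech}, but here the goal is to spell out the analytic details rigorously, leaning on the exponential decay estimates of \cite{foooexp}.

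For the chart at a point $\sigma = [((\Sigma,\vec z),u)]$, I would proceed as follows. First, stabilize the domain by adding auxiliary interior marked points on each unstable component, so that the automorphism group of the stabilized pointed curve is finite; this finite group plays the role of the isotropy group $\Gamma_\sigma$. Next, choose a finite-dimensional $\Gamma_\sigma$-invariant obstruction space $E_\sigma$ of smooth $(0,1)$-forms valued in $u^*TX$, supported away from the nodes and from the added marked points, large enough that its image surjects onto the cokernel of the linearized Cauchy--Riemann operator $D_u\bar\partial$. Then solve the $E_\sigma$-perturbed equation
\[
\bar\partial_J u' \;\equiv\; 0 \pmod{E_\sigma(u')}
\]
for maps $u'$ with domain a nearby (possibly glued/smoothed) pointed disk, using Newton iteration on a weighted Sobolev space. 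The space of such solutions modulo the $\Gamma_\sigma$-action is $V_\sigma$, and $s_\sigma\colon V_\sigma \to E_\sigma$ records the residual error. Because boundary nodes of the bordered Riemann surface come with a single gluing length in $[0,\infty)$, while interior nodes come with a complex gluing parameter, this naturally produces the corner directions of $V_\sigma$; transverse directions to the stratum are provided by the deformations of the complex structure of the stabilized domain together with the variation of $u'$ and of the added marked points.

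The main technical obstacle lies in upgrading the above construction from a continuous parametrization to a smooth parametrization of the chart and of the section $s_\sigma$, uniformly near the limiting stable map. This is precisely where the exponential decay estimates of \cite{foooexp} enter: they provide $C^k$-bounds on the pre-gluing plus Newton correction, uniform in the gluing parameters as they tend to the boundary stratum, together with derivative bounds with respect to those parameters. From these estimates one extracts the smoothness of $\psi_\sigma$ and $s_\sigma$, as well as the smoothness of the evaluation maps at the marked points that cut out the slices used to eliminate the automorphism freedom of the domain. I expect that assembling these estimates correctly in the presence of several simultaneously degenerating nodes (both interior and boundary) is the core difficulty.

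The second stage is building the coordinate changes. Given $\sigma_1$ with $\sigma_2$ in the image of $\psi_{\sigma_1}$, I would choose $E_{\sigma_2}$ so as to contain the parallel transport of $E_{\sigma_1}$ along the nearby map, giving a natural embedding of obstruction bundles, and a corresponding embedding of Kuranishi neighborhoods compatible with $\Gamma_{\sigma_2}\hookrightarrow \Gamma_{\sigma_1}$-equivariance. Verifying the tangential condition and the cocycle condition reduces again to comparing two gluing constructions centered at different stable maps, which the same exponential decay yields. Finally, using compactness of $\mathcal M_{k+1,\ell}(X,L,J;\beta)$, a finite subcover of such charts together with induction on the combinatorial type (depth of the corner stratification) produces the Kuranishi structure with corners claimed in Theorem \ref{thm11}.
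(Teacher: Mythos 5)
Your overall scheme -- stabilize the domain, choose a finite-dimensional $\Gamma$-invariant obstruction space supported away from the nodes, solve the perturbed $\bar\partial$-equation by Newton iteration with exponential decay estimates supplying smoothness uniformly in the gluing parameters, then assemble charts over a finite cover -- is indeed the route taken here, and your description of where the boundary and corner directions come from (real gluing lengths for boundary nodes, complex parameters for interior nodes) is correct. But two points of substance separate your proposal from what the paper actually does, and the second one is a genuine gap.

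First, an organizational but substantive difference: the paper does not run the whole construction ``in one go'' point by point. It first axiomatizes the family $\{E_{\bf p}({\bf x})\}$ as \emph{obstruction bundle data} (Definition \ref{defn51}, with smoothness, transversality, semi-continuity, invariance) and proves separately that any such data yields a germ of a Kuranishi structure (Theorem \ref{constthm}) and that such data exist (Theorem \ref{them111}). Part of what makes this work is that $E_{\bf p}({\bf x})$ is defined on a neighborhood of ${\bf p}$ in the \emph{ambient set} $\mathcal X_{k+1,\ell}(X,L,J;\beta)$ of not-necessarily-holomorphic stable maps. The Kuranishi neighborhood is then $U_{\bf p} = \{{\bf x} : \bar\partial u_{\bf x}\in E_{\bf p}({\bf x})\}\subset\mathcal X_{k+1,\ell}$, the coordinate change $\varphi_{{\bf p}{\bf q}}$ is literally an inclusion of subsets of $\mathcal X_{k+1,\ell}$, and the cocycle identity (\ref{form3d333}) becomes a set-theoretic tautology (Lemma \ref{lem79}, Remark \ref{lem710}). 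Your proposal instead promises to ``verify the cocycle condition by comparing two gluing constructions,'' which is the older, heavier route; the ambient-set reformulation is precisely what removes that analytic overhead. Relatedly, the paper cuts by local transversals \emph{before} solving the modified Cauchy--Riemann equation (see the footnote in the introduction), in the opposite order from what you describe; both work, but the order chosen here is what allows the clean axiomatization.

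Second, and this is the gap: your treatment of the obstruction spaces at different base points is too local to close. You write that for $\sigma_2\in\operatorname{Im}\psi_{\sigma_1}$ you would ``choose $E_{\sigma_2}$ so as to contain the parallel transport of $E_{\sigma_1}$.'' Note first that the containment is the wrong way around for the conventions used here (the coordinate change goes from the $q$-chart to the $p$-chart when $q\in\operatorname{Im}\psi_p$, so one needs $E_{\bf q}({\bf x})\subseteq E_{\bf p}({\bf x})$, i.e.\ Definition \ref{defn52}). More importantly, ``choose $E_{\sigma_2}$ to contain such-and-such'' is not a construction: if you try to enlarge $E_q$ for every pair $(p,q)$ of overlapping charts you lose control of the dimension and have no way to guarantee the transitivity needed for the cocycle condition. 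The paper's actual mechanism (Section \ref{sub:globalclnst}) is to fix a finite cover $\{K({\bf p}_i)\}_{i=1}^{\mathscr P}$ by closed sets, define
\[
E_{\bf q}({\bf x})\;=\;\bigoplus_{i\in I({\bf q})} E_{{\bf q},{\bf p}_i}({\bf x}),\qquad I({\bf q})=\{i:{\bf q}\in K({\bf p}_i)\},
\]
and then use that $I({\bf q}')\subseteq I({\bf q})$ for ${\bf q}'$ near ${\bf q}$ (because the $K({\bf p}_i)$ are closed) to get semi-continuity automatically; the direct-sum condition is arranged by a small generic perturbation of the seed spaces $E^0_{{\bf p}_i}({\bf p}_i)$ (Lemma \ref{lem1116}), which requires the equivariant Grassmannian transversality argument of Subsection \ref{sub:transtechlem}. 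Without something playing the role of this finite indexing set with the upper-semicontinuity of $I(\cdot)$, your coordinate-change step does not go through.
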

See Section \ref{sec;Kurareview} for the definition of Kuranishi structure.

This article is not an original research paper but
is a revised version of \cite[Part 4]{foootech}.
Most of the material of this article is taken 
from our previous writing such as 
\cite{FO,fooobook2,foootech,foootech2,foootech21,foooexp,foootoric32,FuFu5}.
The novel points of this article are on its presentation and 
simplifications of the proofs  especially in the following two points.
\par
Firstly we clarify a sufficient condition of the way to take a family of `obstruction spaces' 
so that it produces Kuranishi structure. In other word, we define the notion of 
obstruction bundle data (Definition \ref{defn51}) and show that we can associate a Kuranishi structure 
to given obstruction bundle data 
in a canonical way (Theorem \ref{constthm}).
We also prove the existence of such obstruction bundle data (Theorem \ref{them111}).\footnote{
Certain minor adjustment of the proof becomes necessary for this purpose.
For example, compared to \cite{foootech}, we changed the order 
of the following two process: Solving modified Cauchy-Riemann equation to obtain 
a finite dimensional reduction: Cutting the space of maps by using local transversal.
In \cite{foootech} these two process are performed in this order.
In this article we do it in the opposite order.
Both proofs are correct.}
\par
Secondly we use an `ambient {\it set}' to simplify the construction 
of coordinate change and the proof of its compatibility.
(See Remark \ref{lem710} (2).)
\par
This article studies a single moduli space and constructs its 
Kuranishi structure.
We provide the detail of the proof using the exponential decay estimate in 
\cite{foooexp}.
\par
In the second of this series of articles, we will provide detail 
of the construction of a system of Kuranishi structures 
of the moduli spaces of holomorphic disks so that they are compatible.
More precisely we will construct a tree like K-system as defined in \cite[Definition 21.9]{foootech21}.
\par
We conclude the introduction by reviewing the definition of the 
moduli space $\mathcal M_{k+1,\ell}(X,L,J;\beta)$.

\begin{defn}\label{defn24222}
Let $k,\ell \in \Z_{\ge 0}$.
We denote by $\mathcal M_{k+1,\ell}(X,L,J;\beta)$
\index{00M3{k+1,l}(X,L,J@$\mathcal M_{k+1,\ell}(X,L,J;\beta)$} 
the set of all
$\sim$ equivalence classes of
$((\Sigma,\vec z,\vec {\frak z}),u)$ with the following properties.
\begin{enumerate}
\item
$\Sigma$ is a genus $0$ bordered curve with one boundary component
which has only (boundary or interior) nodal singularities.
\item
$\vec z = (z_0,z_1,\dots,z_k)$ is a $(k+1)$-tuple of boundary marked points.
We assume that they are distinct and are not nodal points.
Moreover we assume that the enumeration respects the counter clockwise cyclic
ordering of the boundary.
\item
$\vec {\frak z} = ({\frak z}_1,\dots,{\frak z}_{\ell})$ is an $\ell$-tuple of interior
marked points. We assume that they are distinct and are not nodal.
\item
$u : (\Sigma,\partial\Sigma) \to (X,L)$ is a continuous map
which is pseudo holomorphic on each irreducible component.
The homology class  $u_*([\Sigma,\partial\Sigma])$  is
$\beta$.
\item
$((\Sigma,\vec z,\vec {\frak z}),u)$ is stable in the sense of
Definition \ref{stabilitydefn27} below.
\end{enumerate}
\end{defn}
We define an equivalence relation $\sim$ in Definition \ref{stabilitydefn27} below.
\begin{defn}\label{stabilitydefn27}
Suppose $((\Sigma,\vec z,\vec {\frak z}),u)$
and $((\Sigma',\vec z^{\,\prime},\vec {\frak z}^{\,\prime}),u')$ satisfy
Definition \ref{defn24222} (1)(2)(3)(4).
We call a homeomorphism $v : \Sigma \to \Sigma'$ an {\it extended isomorphism}
if the following holds.
\begin{enumerate}
\item[(i)]
$v$ is biholomorphic on each irreducible component.
\item[(ii)]
$u' \circ v = u$.
\item[(iii)]
$v(z_j) = z'_j$ and there exists a 
permutation $\sigma : \{1,\dots,\ell\}\to \{1,\dots,\ell\}$
such that $(v({\frak z}_1),\dots,v({\frak z}_{\ell}))$
coincides with $({\frak z}_{\sigma(1)},\dots,{\frak z}_{\sigma(\ell)})$.
\end{enumerate}
We call $v$ an {\it isomorphism} if $\sigma = {\rm id}$ in addition
and $((\Sigma,\vec z,\vec {\frak z}),u) \sim ((\Sigma',\vec z^{\,\prime},\vec {\frak z}^{\,\prime}),u')$
if there exists an isomorphism between them.
\par
The group ${\rm Aut}^+((\Sigma,\vec z),\vec {\frak z}),u)$ of 
{\it extended automorphisms} (resp. ${\rm Aut}((\Sigma,\vec z,\vec {\frak z}),u)$ of
{\it automorphisms}) \index{extended automorphisms} 
consists
of extended isomorphisms (resp. isomorphisms) from  $((\Sigma,\vec z),\vec {\frak z}),u)$ to itself.
\par
The object $((\Sigma,\vec z,\vec {\frak z}),u)$ is said
to be {\it stable} if ${\rm Aut}^+((\Sigma,\vec z,\vec {\frak z}),u)$ is a finite group.
\end{defn}
The whole construction of this article is invariant under the group of 
extended automorphisms. Therefore the Kuranishi structure in Theorem \ref{thm11} is invariant 
under the permutation of the interior marked points.

\section{Universal family of marked disks and spheres}
\label{subsec;universal}

In this section we review well-known facts about the 
moduli spaces of marked spheres and disks.
See \cite{delignemumford,alcorGri,keel} for the detail of the sphere case 
and \cite[Subsection 7.1.5]{fooobook2} for the detail of the disk case.

\begin{prop}\label{prop21}
Let $\ell \ge 3$. There exist complex manifolds $\mathcal M^{\rm s,reg}_{\ell}$, 
$\mathcal C^{\rm s,reg}_{\ell}$ 
\index{00M^{s,reg}_{l}@$\mathcal M^{\rm s,reg}_{\ell}$}
\index{00C^{s,reg}_{l}@$\mathcal C^{\rm s,reg}_{\ell}$}
\footnote{Here $\rm s$ stands for `spheres'.} and holomorphic maps
$$
\pi : \mathcal C^{\rm s,reg}_{\ell} \to \mathcal M^{\rm s,reg}_{\ell}
, \qquad
\frak s_i : \mathcal M^{\rm s,reg}_{\ell} \to \mathcal C^{\rm s,reg}_{\ell}
$$
$i=1,\dots,\ell$, 
\index{00S4_i@$\frak s_i$}
with the following properties.
\begin{enumerate}
\item
$\pi$ is a proper submersion and its fiber $\pi^{-1}({\bf p})$ is biholomorphic to 
Riemann sphere $S^2$.
\item
$\pi \circ \frak s_i$ is the identity.
\item
$\frak s_i({\bf p}) \ne \frak s_j({\bf p})$ for $i \ne j$. 
\item
Let ${\frak z}_1,\dots,{\frak z}_{\ell} \in S^2$
be mutually distinct points. Then there exists 
uniquely a point ${\bf p} \in \mathcal M^{\rm s,reg}_{\ell}$ 
and a biholomorphic map $S^2\to \pi^{-1}({\bf p})$ which 
sends ${\frak z}_i$ to $\frak s_i({\bf p})$.
\item
There exist holomorphic actions of symmetric group ${\rm Perm}(\ell)$ 
of order $\ell!$ on $\mathcal M^{\rm s,reg}_{\ell}$, 
$\mathcal C^{\rm s,reg}_{\ell}$,
which commute with $\pi$ and 
$$
\frak s_{\sigma(i)}(\sigma({\bf p})) =\sigma(\frak s_i({\bf p})).
$$
\item
There exist anti-holomorphic involutions $\tau$ on 
$\mathcal M^{\rm s,reg}_{\ell}$, 
$\mathcal C^{\rm s,reg}_{\ell}$
such that $\pi$ and $\frak s_i$ commute with $\tau$.
The involution $\tau$ commutes with the action of ${\rm Perm}(\ell)$.
\end{enumerate}
\end{prop}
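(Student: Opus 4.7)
The plan is to rigidify the moduli problem using three-transitivity of the automorphism group $PSL(2,\mathbb{C})$ of $\mathbb{CP}^1$. Let
$$
\mathrm{Conf}_\ell(\mathbb{CP}^1) := \{({\frak z}_1,\dots,{\frak z}_\ell) \in (\mathbb{CP}^1)^\ell : {\frak z}_i \ne {\frak z}_j \text{ for } i \ne j\},
$$
equipped with the diagonal $PSL(2,\mathbb{C})$-action. Since $\ell \ge 3$ and $PSL(2,\mathbb{C})$ acts simply transitively on ordered triples of distinct points, this action is free and proper, and each orbit has a unique representative with $({\frak z}_1,{\frak z}_2,{\frak z}_3) = (0,1,\infty)$. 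I would therefore set
$$
\mathcal{M}^{\rm s,reg}_\ell := \mathrm{Conf}_\ell(\mathbb{CP}^1)/PSL(2,\mathbb{C}),
$$
and exhibit it as a complex manifold of dimension $\ell-3$ via the global slice identifying it with the open subset of $(\mathbb{CP}^1)^{\ell-3}$ whose coordinates are mutually distinct and disjoint from $\{0,1,\infty\}$. The universal family is
$$
\mathcal{C}^{\rm s,reg}_\ell := (\mathrm{Conf}_\ell(\mathbb{CP}^1) \times \mathbb{CP}^1)/PSL(2,\mathbb{C})
$$
with diagonal action; the projection $\pi$ is induced from the first factor, and the section ${\frak s}_i$ from $({\frak z}_1,\dots,{\frak z}_\ell) \mapsto (({\frak z}_1,\dots,{\frak z}_\ell),{\frak z}_i)$, which is manifestly $PSL(2,\mathbb{C})$-equivariant.

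Properties (1)--(4) then follow by inspection on the slice. The fiber of $\pi$ over any point is a copy of $\mathbb{CP}^1$; properness and the submersion property are local on the base and immediate from the product description. Distinctness ${\frak s}_i \ne {\frak s}_j$ is inherited from $\mathrm{Conf}_\ell$. The universal property (4) is just a reformulation of three-transitivity: given any distinct ${\frak z}_1,\dots,{\frak z}_\ell \in S^2$, the unique $g \in PSL(2,\mathbb{C})$ sending the first three to $(0,1,\infty)$ produces the required point ${\bf p}$ and biholomorphism $S^2 \to \pi^{-1}({\bf p})$. Property (5) is tautological in the quotient picture: the symmetric group ${\rm Perm}(\ell)$ acts by permuting the entries of $\mathrm{Conf}_\ell(\mathbb{CP}^1)$, commutes with the $PSL(2,\mathbb{C})$-action, and therefore descends to a holomorphic action on $\mathcal{M}^{\rm s,reg}_\ell$ and $\mathcal{C}^{\rm s,reg}_\ell$ commuting with $\pi$; the identity ${\frak s}_{\sigma(i)}(\sigma({\bf p})) = \sigma({\frak s}_i({\bf p}))$ reduces to the definition.

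Property (6) is given by complex conjugation. Define $\tau({\frak z}_1,\dots,{\frak z}_\ell) := (\bar{{\frak z}}_1,\dots,\bar{{\frak z}}_\ell)$ on $\mathrm{Conf}_\ell(\mathbb{CP}^1)$ and extend diagonally to the product with $\mathbb{CP}^1$. Since $\tau \circ g = \bar g \circ \tau$ for $g \in PSL(2,\mathbb{C})$ (where $\bar g$ denotes the M\"obius transformation obtained by conjugating matrix entries), $\tau$ permutes $PSL(2,\mathbb{C})$-orbits and descends to anti-holomorphic involutions on the two quotients, commuting with $\pi$, ${\frak s}_i$, and the ${\rm Perm}(\ell)$-action by direct inspection of the formulas. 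The only genuine technical obstacle is verifying that the $PSL(2,\mathbb{C})$-quotient actually carries a well-defined complex manifold structure. I would avoid any heavy geometric invariant theory: the explicit global slice $({\frak z}_1,{\frak z}_2,{\frak z}_3) = (0,1,\infty)$, combined with freeness of the action, exhibits $\mathcal{M}^{\rm s,reg}_\ell$ directly as an open submanifold of $(\mathbb{CP}^1)^{\ell-3}$, after which every remaining claim reduces to an elementary complex-analytic check.
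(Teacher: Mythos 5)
Your proof is correct, and it is the standard construction that the authors have in mind (they omit the argument as ``well-known and easy to show''): realize $\mathcal M^{\rm s,reg}_\ell$ as $\mathrm{Conf}_\ell(\mathbb{CP}^1)/PSL(2,\mathbb C)$ with the explicit slice fixing the first three points at $(0,1,\infty)$, take $\mathcal C^{\rm s,reg}_\ell$ to be the associated $\mathbb{CP}^1$-bundle, and verify (1)--(6) by inspection using sharp $3$-transitivity for (4), the visible permutation action on configurations for (5), and coordinate-wise conjugation for (6).
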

This is well-known and is easy to show.
\par
We can compactify the universal family given in Proposition \ref{prop21} as follows.
\begin{thm}
There exist compact complex manifolds $\mathcal M^{\rm s}_{\ell}$, 
$\mathcal C^{\rm s}_{\ell}$ 
\index{00M^{s}_{l}@$\mathcal M^{\rm s}_{\ell}$}
\index{00C1^{\rm s}_{\ell}@
$\mathcal C^{\rm s}_{\ell}$} containing $\mathcal M^{\rm s,reg}_{\ell}$, 
$\mathcal C^{\rm s,reg}_{\ell}$ as dense subspaces, respectively.
The maps $\pi$ and $\frak s_i$ extend to 
$$
\pi : \mathcal C^{\rm s}_{\ell} \to \mathcal M^{\rm s}_{\ell}
, \qquad
\frak s_i : \mathcal M^{\rm s}_{\ell} \to \mathcal C^{\rm s}_{\ell}
$$
and the following holds.
\begin{enumerate}
\item[(1)']
$\pi$ is proper and holomorphic. For each point $x \in \mathcal C^{\rm s}_{\ell}$ 
at which $\pi$ is not a submersion, we may choose local coordinates so that $\pi$ is given 
locally by $(u_1,\dots,u_m,w_1,w_2) \to (u_1,\dots,u_m,w_1w_2)$ where 
$m = \dim_{\C} \mathcal M^{\rm s}_{\ell} = \ell - 3$.
\item[(2)]
$\pi \circ \frak s_i$ is the identity.
$\pi$ is a submersion on the image of $\frak s_i$.
\item[(4)]
There exist holomorphic actions of symmetric group ${\rm Perm}(\ell)$ 
of order $\ell!$ on $\mathcal M^{\rm s}_{\ell}$, 
$\mathcal C^{\rm s}_{\ell}$,
which commute with $\pi$ and 
$$
\frak s_{\sigma(i)}(\sigma({\bf p})) =\sigma(\frak s_i({\bf p})).
$$
\item[(5)]
There exist anti-holomorphic involutions $\tau$ \index{00T5au@$\tau$} on 
$\mathcal M^{\rm s}_{\ell}$, 
$\mathcal C^{\rm s}_{\ell}$
such that $\pi$ and $\frak s_i$ commute with $\tau$.
$\tau$ also commutes with the action of ${\rm Perm}(\ell)$.
\end{enumerate}
\end{thm}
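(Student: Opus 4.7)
The plan is to identify $\mathcal M^{\rm s}_{\ell}$ with the Deligne-Mumford moduli space of stable $\ell$-pointed genus zero curves: isomorphism classes of connected nodal curves of arithmetic genus $0$ equipped with $\ell$ distinct smooth marked points, subject to the stability requirement that every irreducible component (a copy of $\mathbb{P}^1$) carries at least three special points (marked points or nodes). The compactification $\mathcal C^{\rm s}_{\ell}$ is then the total family of such pointed curves with an additional distinguished point. The inclusions of $\mathcal M^{\rm s,reg}_{\ell}$ and $\mathcal C^{\rm s,reg}_{\ell}$ as dense open subsets are immediate, since the locus of smooth irreducible curves is open and nonempty.

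I would equip these underlying sets with complex manifold structures by induction on $\ell$, following Knudsen. The base $\ell=3$ is clear, with $\mathcal M^{\rm s}_3=\{\mathrm{pt}\}$ and $\mathcal C^{\rm s}_3\cong\mathbb{P}^1$. For the inductive step, one identifies $\mathcal M^{\rm s}_{\ell+1}$ with $\mathcal C^{\rm s}_{\ell}$ (the new marked point is recorded as a point on the universal curve, with a bubble inserted whenever it would collide with an existing section or land on a node), and builds $\mathcal C^{\rm s}_{\ell+1}$ from the fiber product $\mathcal C^{\rm s}_{\ell}\times_{\mathcal M^{\rm s}_{\ell}}\mathcal C^{\rm s}_{\ell}$ by blowing up the diagonal and the products $\frak s_i\times\frak s_i$. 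The two projections then supply $\pi$ and an extra section simultaneously within the holomorphic category, and the identity $\pi\circ\frak s_i = \mathrm{id}$ extends by continuity from the regular locus.

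The main technical point, and what I expect to be the principal obstacle, is verifying the local normal form of $\pi$ stated in (1)'. Near a stable curve with nodes $p_1,\dots,p_r$, a versal deformation is captured by the classical smoothing picture: an \'etale neighborhood of each node is isomorphic to $\{w_1 w_2=0\}\subset\mathbb{C}^2$, whose universal smoothing is the one-parameter family $\{w_1 w_2=t_j\}$. Combined with the remaining moduli parameters $u_1,\dots,u_m$ recording the positions of the marked points and node preimages on the normalization (and the cross-ratios of these on each component), one reads off precisely the local model $(u_1,\dots,u_m,w_1,w_2)\mapsto(u_1,\dots,u_m,w_1 w_2)$. Properness of $\pi$ follows from the compactness of each fiber (a connected nodal curve of arithmetic genus $0$), and $\pi$ is submersive along the image of $\frak s_i$ because the $i$-th marked point stays disjoint from the degeneration locus under deformation.

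For (4) and (5), the symmetric group action is defined by relabeling the marked points; this operation preserves stability, commutes with $\pi$ on the regular locus, and by the uniqueness of the Knudsen construction extends holomorphically to the compactification with the required equivariance for the sections. The anti-holomorphic involution $\tau$ is induced by the complex-conjugation functor $(\Sigma,\vec{\frak z})\mapsto(\overline{\Sigma},\vec{\frak z})$, which preserves the stability condition, induces anti-holomorphic maps on both $\mathcal M^{\rm s}_{\ell}$ and $\mathcal C^{\rm s}_{\ell}$, and by construction commutes with $\pi$, with each $\frak s_i$, and with the action of ${\rm Perm}(\ell)$.
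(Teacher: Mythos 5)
Your proposal is correct and follows exactly the route the paper itself takes: the paper gives no proof here but simply cites the marked (Knudsen) version of the Deligne--Mumford compactification, which is precisely the inductive construction, nodal local model $(u,w_1,w_2)\mapsto(u,w_1w_2)$, relabeling action, and conjugation involution that you sketch. The only cosmetic remark is that properness of $\pi$ is automatic once both spaces are shown compact by the inductive (projective) construction, rather than needing a separate fiberwise argument.
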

This is a special case of the marked version of Deligne-Mumford's compactification 
of the moduli space of stable curves (\cite{delignemumford}). 
We can make a similar statement as Proposition \ref{prop21} (4), where we replace 
$(S^2,(\frak z_1,\dots,\frak z_{\ell}))$ by a stable marked curve 
$(\Sigma,\vec{\frak z})$ of genus $0$ with $\ell$ marked points.
\par
We next define the moduli space of 
marked disks.
Let $k,\ell \in \Z_{\ge 0}$.
We define 
$$
\rho_0 : \{0,1,\dots,k+2\ell\} \to \{0,1,\dots,k+2\ell\}
$$
as follows:
$$
\aligned
\rho_0(i) = i, \qquad &\text{$i=0,\dots,k$}, \\
\rho_0(k+2j-1) = k+2j, \qquad &\text{$j=1,\dots,\ell$}, \\
\rho_0(k+2j) = k+2j-1, \qquad &\text{$j=1,\dots,\ell$}.
\endaligned
$$
$\rho_0$ defines a holomorphic involution on ${\mathcal M}^{\rm s}_{k+2\ell+1}$.
We compose it with $\tau$ and obtain an anti-holomorphic involution on 
${\mathcal M}^{\rm s}_{k+2\ell+1}$, which we denote by $\tilde\tau$.\index{00T5autilde@$\tilde\tau$}
We denote an element 
${\bf p} \in {\mathcal M}^{\rm s}_{k+2\ell+1}$
by
$(\pi^{-1}({\bf p}),\vec {\frak z},\vec {\frak z}^{\,+}({\bf p}),\vec {\frak z}^{\,-}({\bf p}))$.
Here
$$
\aligned
\vec {\frak z}({\bf p})
&= (\frak s_0({\bf p}),\dots,\frak s_k({\bf p})),
\\
\vec {\frak z}^{\,+}({\bf p})
&= (\frak s_{k+1}({\bf p}),\dots,\frak s_{k+2j-1}({\bf p}),\dots,\frak s_{k+2\ell-1}({\bf p}))
\\
\vec {\frak z}^{\,-}({\bf p})
&= (\frak s_{k+2}({\bf p}),\dots,\frak s_{k+2j}({\bf p}),\dots,\frak s_{k+2\ell}({\bf p}))
\endaligned
$$
(Here we enumerate $\frak s_i : {\mathcal M}^{\rm s}_{k+2\ell+1}
\to {\mathcal C}^{\rm s}_{k+2\ell+1}$ by $i=0,\dots,k+2\ell$ 
in place of $i=1,\dots,k+2\ell+1$.)
\par
We lift $\tilde{\tau}$ to ${\mathcal C}^{\rm s}_{k+2\ell+1}$ 
as follows. 
Note ${\mathcal C}^{\rm s}_{k+2\ell+1}$ is identified with 
${\mathcal M}^{\rm s}_{k+2\ell+2}$, where 
the projection ${\mathcal C}^{\rm s}_{k+2\ell+1} \to {\mathcal M}^{\rm s}_{k+2\ell+1}$
is identified with the map ${\mathcal M}^{\rm s}_{k+2\ell+2} \to {\mathcal M}^{\rm s}_{k+2\ell+1}$
which forgets the last marked point.
We extend $\rho_0$ to  $\rho_1 : \{0,1,\dots,k+2\ell+1\} \to \{0,1,\dots,k+2\ell+1\}$
by $\rho_1(k+2\ell+1) = k+2\ell+1$. The composition of $\tau : {\mathcal M}^{\rm s}_{k+2\ell+2}
\to {\mathcal M}^{\rm s}_{k+2\ell+2}$ and $\rho_1$ is an anti-holomorphic 
involution $\tilde{\tau}$ on ${\mathcal C}^{\rm s}_{k+2\ell+1}$ which is a lift of the 
involution $\tilde{\tau}$ on 
${\mathcal M}^{\rm s}_{k+2\ell+1}$
\par
Suppose $\tilde{\tau}{\bf p} = {\bf p}$, ${\bf p} \in {\mathcal M}^{\rm s,\rm{reg}}_{k+2\ell+1}$. 
Put $S^2_{\bf p} = \pi^{-1}({\bf p})$.
The restriction of $\tilde{\tau}$, still denoted by $\tilde{\tau}$, 
becomes an anti-holomorphic involution $\tilde{\tau}$ on $S^2_{\bf p}$.
\par
Note 
$\tilde\tau({\frak z}_0) = {\frak z}_0$ by definition. Therefore the fixed point set of 
the anti-holomorphic involution $\tilde\tau : 
S^2_{\bf p} \to S^2_{\bf p}$ is nonempty.
We put $C_{\bf p} = \{ z \in S^2_{\bf p} \mid \tilde\tau({\bf p}) = {\bf p}\}$.
Using the fact $C_{\bf p}$ is nonempty we can show that 
$C_{\bf p}$ is a circle.
\begin{defn}\label{defn23}
We denote by ${\mathcal M}^{\rm d,reg}_{k+1,\ell}$ the set of all 
\index{00M^{s,reg}_{l}M^@$\mathcal M^{\rm s,reg}_{\ell}$}
${\bf p} \in {\mathcal M}^{\rm s,reg}_{k+2\ell+1}$ with the following 
properties.\footnote{Here $\rm d$ stands for `disks'.}
\begin{enumerate}
\item
$\tilde{\tau}{\bf p} = {\bf p}$.
\item
Let $C_{\bf p}$ be as above. We can decompose
$S^2 \setminus C_{\bf p} = {\rm Int}D_+ \cup {\rm Int}D_-$,\footnote{${\rm Int}D_+
= \{z \in {\rm Int} D^2 \mid {\rm Im}(z) \ge 0\}$.
\index{00IntD_+@${\rm Int}D_+$}} where $D_{\pm} 
= {\rm Int}D_{\pm} \cup 
C_{\bf x}$ are disks. 
\item We require that elements of $\vec {\frak z}^+$ are all in 
${\rm Int}D_+$. (It implies that elements of $\vec {\frak z}^-$ are all in 
${\rm Int}D_-$.)
\item
We orient $C_{\bf p}$ by using the (complex) orientation of ${\rm Int}D_+$.
Note ${\frak z}_0,\dots,{\frak z}_k \in C_{\bf p}$. We require the enumeration ${\frak z}_0,\dots,{\frak z}_k$ respects 
the orientation of $C_{\bf p}$.
\end{enumerate}
\par
We denote by ${\mathcal M}^{\rm d}_{k+1,\ell}$ the 
closure of ${\mathcal M}^{\rm d,reg}_{k+1,\ell}$ in ${\mathcal M}^{\rm s}_{k+1+2\ell}$.
\end{defn}
We remark that by definition  ${\mathcal M}^{\rm d,reg}_{k+1,\ell}$ 
is a connected component of the fixed point set of the $\tilde{\tau}$ 
action of ${\mathcal M}^{\rm s,reg}_{k+2\ell+1}$.
We also remark that 
${\mathcal M}^{\rm d,reg}_{k+1,\ell}$ is identified 
with the set of isomorphism classes of $(D^2,\vec z,\vec{\frak z})$
where:
\begin{enumerate}
\item $\vec z = (z_0,\dots,z_{k+1})$, $z_j \in \partial D^2$
are mutually distinct and the 
enumeration respects the orientation.
\item
$\vec {\frak z} = ({\frak z}_1,\dots,{\frak z}_{\ell})$, ${\frak z}_i \in {\rm Int} D^2$
are mutually distinct.
\end{enumerate}
We say $(D^2,\vec z,\vec {\frak z})$ is isomorphic to 
$(D^2,\vec z^{\,\prime},\vec {\frak z}^{\,\prime})$
if there exists a biholomorphic map $v : D^2 \to D^2$  such that
$v(z_i) = z'_i$ and $v({\frak z}_i) = {\frak z}'_i$.
\par
We can use this remark to show the identification:
$$
{\mathcal M}^{\rm d}_{k+1,\ell} \cong \mathcal M_{k+1,\ell}({\rm pt},{\rm pt},J;0).
$$
Here the right hand side is the case of the moduli space $\mathcal M_{k+1,\ell}(X,L,J;\beta)$
when $X$ is a point. ($L$ then is necessarily a point and the homology class $\beta$ is 
$0$.)
Therefore an element of ${\mathcal M}^{\rm d}_{k+1,\ell}$ is an 
equivalence class of an object $(\Sigma,\vec z,\vec {\frak z})$ as in
Definition \ref{defn24222}. (We do not include $u$ here in the notation 
since it is the constant map to the point = $X$.)

\begin{defn} 
We define 
$\partial\mathcal C^{\rm d}_{k+1,\ell}$ as the subspace of ${\mathcal C}^{\rm s}_{k+1+2\ell}$
which consists of the element $x$ such that
\begin{enumerate}
\item $\pi(x) \in {\mathcal M}^{\rm d}_{k+1,\ell}$.
\item
$\tilde\tau(x) = x$.
\end{enumerate}
By construction it is easy to see that there exists an open 
subset $\overset{\circ}{\mathcal C^{\rm d}_{k+1,\ell}}$
of $\pi^{-1}({\mathcal M^{\rm d}_{k+1,\ell}})$ such that, for ${\bf p} \in \mathcal M^{\rm d}_{k+1,\ell}$,
$\pi^{-1}({\mathcal M^{\rm d}_{k+1,\ell}})$ is the disjoint union
$
\overset{\circ}{\mathcal C^{\rm d}_{k+1,\ell}} \cup 
\tilde\tau(\overset{\circ}{\mathcal C^{\rm d}_{k+1,\ell}})
\cup \partial\mathcal C^{\rm d}_{k+1,\ell}
$,
$
\vec{\frak z}^{\,+}({\bf p}) \subset \overset{\circ}{\mathcal C^{\rm d}_{k+1,\ell}}
$,
and that
the enumeration of $\vec z$ respects the boundary orientation of $\partial\overset{\circ}{\mathcal C^{\rm d}_{k+1,\ell}}
\cap \pi^{-1}({\bf p})$.
Such a choice of $\overset{\circ}{\mathcal C^{\rm d}_{k+1,\ell}}$ is unique.
We define 
\index{00C^{d}_{k+1,l}@${\mathcal C^{\rm d}_{k+1,\ell}}$}
\begin{equation}
{\mathcal C^{\rm d}_{k+1,\ell}}
=
\overset{\circ}{\mathcal C^{\rm d}_{k+1,\ell}} \cup 
\partial{\mathcal C^{\rm d}_{k+1,\ell}}.
\end{equation}
\end{defn}
The restrictions of the maps $\pi,\frak s^{\rm d}_j, \frak s^{\rm s}_{i}$ above  define maps
$$
\pi : \mathcal C^{\rm d}_{k+1,\ell} \to \mathcal M^{\rm d}_{k+1,\ell}
, \quad
\frak s^{\rm d}_j : \mathcal M^{\rm d}_{k+1,\ell} \to \partial\mathcal C^{\rm d}_{k+1,\ell},
\quad \frak s^{\rm s}_{i} : \mathcal M^{\rm d}_{k+1,\ell} \to \overset{\circ}{\mathcal C^{\rm d}_{k+1,\ell}}
$$
for
$j=0,\dots,k$, $i=1,\dots,\ell$.
\par
If ${\bf p} \in \mathcal M^{\rm d}_{k+1,\ell}$ is represented by $(\Sigma_{\bf p},\vec z_{\bf p},\vec {\frak z}_{\bf p})$
\index{00Sigma_{p},z_{\bf p},\vec {\frak z}_{\bf p})@$(\Sigma_{\bf p},\vec z_{\bf p},\vec {\frak z}_{\bf p})$}
then the fiber $\pi^{-1}({\bf p})$ is canonically identified with $\Sigma_{\bf p}$.
Moreover $\frak s^{\rm d}_j({\bf p}) = z_{{\bf p},j}$, $\frak s^{\rm s}_i({\bf p}) = \frak z_{{\bf p},i}$, 
via this 
identification. 
\par
We denote by $\frak S^{\rm d}_{k+1,\ell}$ the set of all points $x \in \mathcal C^{\rm d}_{k+1,\ell}$ such that 
it corresponds to a boundary or interior node of $\Sigma_{\bf p}$ by the identification of $\Sigma_{\bf p} \cong \pi^{-1}(\pi(x))$.
\begin{prop}
\begin{enumerate}
\item
$\mathcal C^{\rm d}_{k+1,\ell} \setminus \frak S^{\rm d}_{k+1,\ell}$ is a smooth manifold 
with corner. 
\item
$\pi$ is proper. The restriction of $\pi$ to $\mathcal C^{\rm d}_{k+1,\ell} \setminus \frak S^{\rm d}_{k+1,\ell}$
is a submersion.
\item
$\pi \circ \frak s^{\rm d}_j$, $\pi \circ \frak s^{\rm s}_i$ are the identity maps.
The images of $\frak s^{\rm d}_j$, $\frak s^{\rm s}_i$ do not intersect with $\frak S^{\rm d}_{k+1,\ell}$.
\item
$\frak s^{\rm d}_i({\bf p}) \ne \frak s^{\rm d}_j({\bf p})$, $\frak s^{\rm s}_i({\bf p}) \ne \frak s^{\rm s}_j({\bf p})$ for $i \ne j$. 
\item
There exist smooth actions of the symmetric group ${\rm Perm}(\ell)$ 
of order $\ell!$ on $\mathcal M^{\rm d,reg}_{k+1,\ell}$, 
$\mathcal C^{\rm d,reg}_{k+1,\ell}$,
which commute with $\pi$ and satisfy
$$
\frak s^{\rm s}_{\sigma(i)}(\sigma({\bf p})) =\sigma(\frak s^{\rm s}_i({\bf p})).
$$
\end{enumerate}
\end{prop}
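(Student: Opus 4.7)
The plan is to deduce each statement from the corresponding holomorphic property of the sphere moduli space $\mathcal M^{\rm s}_{k+1+2\ell}$ and its universal curve $\mathcal C^{\rm s}_{k+1+2\ell}$, exploiting that $\mathcal M^{\rm d}_{k+1,\ell}$, respectively $\mathcal C^{\rm d}_{k+1,\ell}$, appears as a connected component of the fixed point set of the antiholomorphic involution $\tilde\tau$, in the total-space case intersected with the chosen ``positive half'' $\overset{\circ}{\mathcal C^{\rm d}_{k+1,\ell}} \cup \partial\mathcal C^{\rm d}_{k+1,\ell}$. The main tool will be to produce, at each point, holomorphic local coordinates on $\mathcal C^{\rm s}_{k+1+2\ell}$ in which $\tilde\tau$ acts by complex conjugation of the coordinates, and then to read off the real-analytic structure of the $\tilde\tau$-fixed locus.

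For part (1), I would fix $x \in \mathcal C^{\rm d}_{k+1,\ell} \setminus \frak S^{\rm d}_{k+1,\ell}$, set ${\bf p}=\pi(x)$, and use the local form of $\pi$ given in the compactification theorem following Proposition \ref{prop21} to pick holomorphic coordinates near ${\bf p}$ of the shape $(u;q_1,\dots,q_a;q^+_1,q^-_1,\dots,q^+_b,q^-_b)$. Here $u$ parametrizes the equisingular deformation stratum of $\Sigma_{\bf p}$, each $q_i$ is a smoothing parameter of a boundary node of $\Sigma_{\bf p}$ (a node fixed individually by $\tilde\tau$), and each pair $(q^+_j,q^-_j)$ smooths an interior node of $\Sigma_{\bf p}$ together with its $\tilde\tau$-conjugate node on the doubled curve. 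In these coordinates $\tilde\tau$ acts by $u\mapsto\bar u$, $q_i\mapsto\bar q_i$, $(q^+_j,q^-_j)\mapsto(\overline{q^-_j},\overline{q^+_j})$, so the fixed locus is cut out by $u,q_i\in\R$ and $q^-_j=\overline{q^+_j}$, and the connected component $\mathcal M^{\rm d}_{k+1,\ell}$ is then selected by the further half-space conditions $q_i\ge 0$, where the sign is determined by the requirement that the smoothing preserve the cyclic enumeration of boundary marked points. A compatible fiber coordinate $w$ near $x$ with $\tilde\tau(w)=\bar w$, available precisely because $x$ is not a node, now yields a smooth manifold-with-corners chart; the corner directions come solely from the $q_i\ge 0$ half-spaces on the base, while a boundary-hypersurface direction appears exactly when $x$ lies on the circle $C_{\bf p}\subset \partial\mathcal C^{\rm d}_{k+1,\ell}$.

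Once (1) is in hand, the remaining statements follow with little extra work. For (2), $\pi$ is proper because it is the restriction of the proper map from $\mathcal C^{\rm s}_{k+1+2\ell}$ to the closed subset $\mathcal C^{\rm d}_{k+1,\ell}$; in the coordinates above it becomes the projection $(u,q,q^\pm,w)\mapsto(u,q,q^\pm)$, hence a submersion off $\frak S^{\rm d}_{k+1,\ell}$. For (3)--(5), the sections $\frak s^{\rm d}_j$ and $\frak s^{\rm s}_i$ are defined as the restrictions of the $\frak s_i$ from the sphere moduli to the $\tilde\tau$-fixed subspace, so $\pi\circ\frak s=\mathrm{id}$, avoidance of $\frak S^{\rm d}_{k+1,\ell}$, and pairwise distinctness are all inherited directly from Proposition \ref{prop21} and its compactification. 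The ${\rm Perm}(\ell)$-action on interior marked points will be identified with the subgroup of ${\rm Perm}(k+1+2\ell)$ that fixes $\{0,\dots,k\}$ pointwise, preserves the partition of the remaining marked points into $\vec{\frak z}^+$ and $\vec{\frak z}^-$, and commutes with $\tilde\tau$; smoothness of its action is inherited from the holomorphic action on the sphere side.

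The main obstacle I foresee lies in the sign bookkeeping inside (1): one has to verify that the ``$q_i\ge 0$'' half-space in each local chart is globally coherent, i.e.\ that the sign distinguishing the smoothing which respects the cyclic enumeration of boundary marked points coincides with the sign selecting the component $\overset{\circ}{\mathcal C^{\rm d}_{k+1,\ell}}$ fixed in the definition of $\mathcal C^{\rm d}_{k+1,\ell}$. This requires a careful inspection of the local model of $\tilde\tau$ near a node it fixes and a comparison of the two real branches that meet at the corner. Every other step reduces to transporting structure from the well-understood Deligne--Mumford theory of the sphere moduli across the real form.
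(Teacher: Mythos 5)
Your proposal is correct and follows essentially the same route as the paper: the smooth (with-corners) structure is obtained from the $\tilde\tau$-equivariant analytic families of coordinates on the double (Definitions \ref{defn31}, \ref{defn32}) and the resulting plumbing charts of Lemma \ref{lem34}, whose factors $\mathcal V_a\times[0,c)^{m_{\rm d}}\times(D^2_{\circ}(c))^{m_{\rm s}}$ are exactly your real/half-space conditions $u,q_i\in\R$, $q_i\ge 0$, $q^-_j=\overline{q^+_j}$ on $\tilde\tau$-adapted holomorphic coordinates, while (2)--(5) are inherited from the sphere side just as you indicate. The sign-coherence issue you single out is absorbed in the paper by taking the charts of Lemma \ref{lem34} as the definition of the smooth structure (see the remark following it) together with the uniqueness of the choice of $\overset{\circ}{\mathcal C^{\rm d}_{k+1,\ell}}$ and Definition \ref{defn23} (4), so no further global bookkeeping is needed.
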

Construction of a smooth structure on 
$\mathcal M^{\rm d}_{k+1,\ell}$ 
is explained in Subsection \ref{subsec:complexstrumod}. The other part of the proof is easy and is omitted.

\section{Analytic family of coordinates at the marked points and 
local trivialization of the universal family}
\label{subsec;markcoordinate}

\subsection{Analytic family of coordinates at the marked points}
\label{subsec:coordinates}

We first recall the notion of an analytic family of 
coordinates introduced in \cite[Section 8]{foooexp}.
Let a stable marked curve 
$(\Sigma_{\bf q},\vec{\frak z}_{\bf q})$ of genus $0$ with $\ell$ marked points represent an element 
$\bf q$ of $\mathcal M^{\rm s}_{\ell}$
and $(\Sigma_{\bf p},\vec z_{\bf p},\vec{\frak z}_{\bf p})$ represent an element 
$\bf p$ of $\mathcal M^{\rm d}_{k+1,\ell}$.
We put
\index{00D1^2_{\circ}@$D^2_{\circ}$} 
\index{00D1_{\circ}^2(+)@$D_{\circ,+}^2$}
$$
D^2_{\circ} = \{ z \in \C \mid \vert z\vert < 1\}, \quad
D_{\circ,+}^2 = \{ z \in \C \mid \vert z\vert < 1, {\rm Im} z \ge 0\}.
$$
\begin{defn}\label{defn31}(\cite[Definition 8.1]{foooexp})
An {\it analytic family of 
coordinates of $\bf q$ (resp. $\bf p$) at the $i$-th interior marked point} 
\index{analytic family of 
coordinates}
is by definition a holomorphic map
$$
\varphi : \mathcal V \times D_{\circ}^2 \to \mathcal C^{\rm s}_{\ell}
\qquad
\text{(resp.
$
\varphi : \mathcal V \times D_{\circ}^2 \to \mathcal C^{\rm d}_{k_+1,\ell}$).}
$$
Here $\mathcal V$ is  a neighborhood of $\bf q$ in $\mathcal M^{\rm s}_{\ell}$
(resp. a neighborhood $\mathcal V$ of $\bf p$ in $\mathcal M^{\rm d}_{k+1,\ell}$).
We require that it has the following properties.
\begin{enumerate}
\item
$\pi \circ \varphi$ coincides with the projection 
$\mathcal V \times D_{\circ}^2 \to \mathcal V$.
\item
$\varphi({\bf x},0) = \frak s_{i}({\bf x})$
(resp. $\varphi({\bf x},0) = \frak s^{\rm s}_{i}({\bf x})$)
for ${\bf x} \in \mathcal V$.
\item
For ${\bf x} \in \mathcal V$ the restriction of $\varphi$ to 
$\{{\bf x}\} \times D_{\circ}^2$ defines a biholomorphic map 
to a neighborhood of $\frak s_{i}({\bf x})$ in $\pi^{-1}({\bf x})$.
(resp. $\frak s^{\rm s}_{i}({\bf x})$ in $\pi^{-1}({\bf x})$).
\end{enumerate}
\end{defn}
We next define an analytic family of 
coordinates at a boundary marked point.
Let $(\Sigma_{\bf p},\vec z_{\bf p},\vec{\frak z}_{\bf p})$ represent an element 
$\bf p$ of $\mathcal M^{\rm d}_{k+1,\ell}$.
By Definition \ref{defn23}, 
$\mathcal M^{\rm d}_{k+1,\ell}$ is a subset of 
$\mathcal M^{\rm s}_{k+1+2\ell}$.
Let ${\bf p}^{\rm s} = (\Sigma^{\rm s},\vec z\cup \vec{\frak z}_{\bf p} \cup \vec{\frak z}^{\,\prime}_{\bf p})$
be a representative of the corresponding element of $\mathcal M^{\rm s}_{k+1+2\ell}$.
In other words, $\Sigma^{\rm s}_{\bf p}$ admits an anti-holomorphic involution 
$\tilde\tau : \Sigma^{\rm s}_{\bf p} \to \Sigma^{\rm s}_{\bf p}$ and
$\Sigma_{\bf p}$ is identified with a subset of $\Sigma^{\rm s}_{\bf p}$,
such that
$\Sigma^{\rm s}_{\bf p} = \Sigma_{\bf p} \cup \tilde\tau(\Sigma_{\bf p})
$.
Moreover
$\partial\Sigma_{\bf p} = \Sigma_{\bf p} \cap \tilde\tau(\Sigma_{\bf p})
$
and 
$\vec{\frak z}^{\,\prime}_{\bf p} = \tilde\tau(\vec{\frak z}_{\bf p})$.

\begin{defn}\label{defn32}(\cite[Definition 8.5]{foooexp})
An {\it analytic family of 
coordinates of $\bf p$ at the $j$-th (boundary) marked point} 
is by definition a holomorphic map
$$
\varphi^{\rm s} : \mathcal V^{\rm s} \times D_{\circ}^2 \to \mathcal C^{\rm s}_{k+1+2\ell}
$$
with the following properties.
\begin{enumerate}
\item
$\mathcal V^{\rm s}$ is a neighborhood of $\bf p^{\rm s}$ in $\mathcal M^{\rm s}_{k+1+2\ell}$
and is $\tilde\tau$ invariant.
\item
$\varphi^{\rm s}$ is an analytic family of 
coordinates at $\frak p^{\rm s}$ of the $j$-th marked point in the sense of Definition \ref{defn31}.
\item
$
\varphi^{\rm s}(\tilde\tau({\bf x}),\overline z) = \tilde\tau(\varphi^{\rm s}({\bf x},z)).
$
\end{enumerate}
\end{defn}

We put $\mathcal V = \mathcal V^{\rm s} \cap \mathcal M^{\rm d}_{k+1,\ell}$.
In the situation of Definition \ref{defn32}
we may replace $\varphi^{\rm s}(\frak v,z)$ by $\varphi^{\rm s}(\frak v,-z)$ if necessary 
and may assume
$
\varphi^{\rm s}(\mathcal V \times D_{\circ,+}^2) \subset \mathcal C^{\rm d}_{k+1,\ell}.
$
We put
\begin{equation}\label{form31}
\varphi = \varphi^{\rm s}\vert_{\mathcal V \times D_{\circ,+}^2}.
\end{equation}
Then for each ${\bf x} \in \mathcal V$, the restriction of 
$\varphi$ to $\{{\bf x}\}\times D_{\circ,+}^2$ defines a 
coordinate of $\pi^{-1}({\bf x})$ at $j$-th boundary coordinate.
The existence of an analytic family of 
coordinates is proved in \cite[Lemma 8.3]{foooexp}.

\subsection{Analytic families of coordinates and 
complex/smooth structure of the moduli space}
\label{subsec:complexstrumod}

In this subsection we use analytic families of coordinates
to describe the complex and/or smooth structures of the moduli space 
of stable marked curves of genus $0$.

Let a stable marked curve 
$(\Sigma_{\bf q},\vec{\frak z}_{\bf q})$ of genus $0$ with $\ell$ marked points 
represent an element 
${\bf q}$ of $\mathcal M^{\rm s}_{\ell}$
and $(\Sigma_{\bf p},\vec z_{\bf p},\vec{\frak z}_{\bf p})$ represent an element 
$\bf p$ of $\mathcal M^{\rm d}_{k+1,\ell}$.
We decompose $\Sigma_{\bf q}$, $\Sigma_{\bf p}$ into irreducible
components as
\begin{equation}\label{form3232}
\Sigma_{\bf q}
= \bigcup_{a \in \mathcal A_{\bf q}} \Sigma_{\bf q}(a),
\qquad
\Sigma_{\bf p}
= \bigcup_{a \in \mathcal A_{\bf p}^{\rm s}} \Sigma_{\bf p}(a)
\cup \bigcup_{a \in \mathcal A_{\bf p}^{\rm d}} \Sigma_{\bf p}(a).
\end{equation}
Here $\Sigma_{\bf q}(a)$ and $\Sigma_{\bf p}(a)$ for 
$a \in \mathcal A_{\bf p}^{\rm s}$ are $S^2$ and 
$\Sigma_{\bf p}(a)$ for 
\index{00Sigma_{\p}(a)@$\Sigma_{\bf p}(a)$}
$a \in \mathcal A_{\bf p}^{\rm d}$ is $D^2$.
\footnote{$\mathcal A_{\bf q}$ etc. are certain index sets.}
\index{00A3_{q}@$\mathcal A_{\bf q}$}
\index{00A3_{p}^{\rm s}@$\mathcal A_{\bf p}^{\rm s}$}
\index{00A3_{p}^{\rm d}@$\mathcal A_{\bf p}^{\rm d}$}
\par
We regard the nodal points and marked points on each irreducible 
component as the marked points on the component. Together with the marked points 
of $\bf p$, $\bf q$, they determine elements
\begin{eqnarray}
{\bf q}_{a} &=& (\Sigma_{\bf q}(a),\vec z_{\bf q}(a)) \in \mathcal M^{\rm s,reg}_{\ell(a)}
\nonumber
\\
{\bf p}_{a} &=& (\Sigma_{\bf p}(a),\vec z_{\bf p}(a)) \in \mathcal M^{\rm s,reg}_{\ell(a)}
\quad (a \in \mathcal A_{\bf p}^{\rm s})\label{formpa1}
\\
{\bf p}_{a} &=& (\Sigma_{\bf p}(a),\vec z_{\bf p}(a),\vec{\frak z}_{\bf p}(a)) \in \mathcal M^{\rm d,reg}_{k(a)+1,\ell(a)}
\quad (a \in \mathcal A_{\bf p}^{\rm d}).\label{formpa2}
\end{eqnarray}

Let $\mathcal V_a$ 
\index{00V3_a@$\mathcal V_a$}  
be a neighborhood of ${\bf q}_{a}$ 
in $ \mathcal M^{\rm s,reg}_{\ell(a)}$ or  ${\bf p}_{a}$ in $\mathcal M^{\rm d,reg}_{k(a)+1,\ell(a)}$.

\begin{defn}\label{defn33}
{\it Analytic families of coordinates at the nodes of ${\bf q}$} are data which assign 
an analytic family of coordinates at each marked point of ${\bf q}_{a}$ 
corresponding to a nodal point of ${\bf q}$ for each $a$.
We require them to be invariant under the extended automorphisms of ${\bf q}$
in the obvious sense.\footnote{In our genus $0$ situation the automorphism group of 
${\bf q}$ is trivial. However there may be a nontrivial {\it extended} automorphism,
which is a biholomorphic map exchanging the marked points.}
Analytic families of coordinates at the nodes of ${\bf p}$ are defined 
in the same way.
\end{defn}

\begin{lem}{\rm(See \cite[Definition-Lemma 8.7]{foooexp})}\label{lem34}
Analytic families of coordinates at the nodes of ${\bf p}$
determine a smooth open embedding
\index{00FPhi@$\Phi$}
\begin{equation}\label{form33}
\Phi : 
\prod_{a\in \mathcal A_{\bf p}^{\rm s} \cup \mathcal A_{\bf p}^{\rm d}} \mathcal V_a 
\times [0,c)^{m_{\rm d}} \times (D^2_{\circ}(c))^{m_{\rm s}}
\to \mathcal M^{\rm d}_{k+1,\ell}, \qquad c < 1/10
\end{equation}
where $m_{\rm d}$ (resp. $m_{\rm s}$) is the number of boundary (resp. interior) 
nodes of $\Sigma_{\bf p}$.
\par
Analytic families of coordinates at the nodes of ${\bf q}$
determine a smooth open embedding
\begin{equation}\label{form34}
\Phi : \prod_{a\in \mathcal A_{\bf q}} \mathcal V_a 
\times (D^2_{\circ}(c))^{m}
\to \mathcal M^{\rm s}_{\ell},
\qquad c < 1/10
\end{equation}
where $m$ is the number of
nodes of $\Sigma_{\bf q}$.
\par
(\ref{form33}) is a diffeomorphism onto a neighborhood of ${\bf p}$.
(\ref{form34}) is a biholomorphic map onto a neighborhood of ${\bf q}$.
(\ref{form33}), (\ref{form34})  are invariant under the extended automorphisms 
of ${\bf p}$, ${\bf q}$, in the obvious sense.
\end{lem}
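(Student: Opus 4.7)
The plan is to construct $\Phi$ explicitly by the classical plumbing construction at each node, then verify smoothness, injectivity, and a dimension count. I will treat the sphere case (\ref{form34}) first; the disk case (\ref{form33}) follows by restriction to a component of the $\tilde\tau$-fixed locus. For each interior node $n$ of $\Sigma_{\bf q}$, the two given analytic families of coordinates $\varphi_n^\pm : \mathcal V_{a_\pm(n)} \times D^2_\circ \to \mathcal C^{\rm s}_\ell$ provide holomorphic charts around the branches meeting at $n$. Given a gluing parameter $t_n \in D^2_\circ(c)$, I would delete the small closed disks $\{|z_\pm| \le |t_n|^{1/2}\}$ on each side and identify the remaining annuli via $z_+ z_- = t_n$; for $t_n = 0$ this recovers the node. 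Varying each ${\bf q}_a \in \mathcal V_a$ simultaneously yields a smooth family of marked genus-$0$ curves parametrized by $\prod_a \mathcal V_a \times (D^2_\circ(c))^m$, and $\Phi$ assigns to each parameter its isomorphism class. For the disk case, the $\tilde\tau$-equivariant analytic coordinates at a boundary node (Definition \ref{defn32}) force the corresponding gluing parameter to be real; the non-negative half-line $[0,c)$ is then selected as the sign for which the plumbed doubled sphere remains in the connected component $\mathcal M^{\rm d}_{k+1,\ell}$ of the $\tilde\tau$-fixed locus.

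Smoothness of $\Phi$ is immediate from the fact that the plumbing rule $z_+ z_- = t_n$ depends analytically on $t_n$ and on ${\bf q}_a$ through the holomorphic maps $\varphi_n^\pm$. In the sphere case this makes $\Phi$ holomorphic. For the disk case holomorphicity cannot hold because of the real corner directions $[0,c)$, but smoothness up to the corner is automatic from the $\tilde\tau$-equivariant construction. That $\Phi$ is an open embedding follows from a dimension count combined with a local inverse: on the sphere side one checks $\sum_a \dim_\C \mathcal M^{\rm s,reg}_{\ell(a)} + m = \ell - 3 = \dim_\C \mathcal M^{\rm s}_\ell$, with the analogous identity in the disk case after separating real boundary gluings from complex interior gluings, while the differential of $\Phi$ at the nodal basepoint is injective by direct inspection of the plumbing. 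The local inverse is obtained by detecting the neck annuli $\{|t_n|^{1/2} < |z_\pm| < 1/2\}$ intrinsically on each nearby curve (they are the collars around the minimally long separating curves in the hyperbolic metric), which recovers each $t_n$ and, after cutting the necks, the components ${\bf q}_a \in \mathcal V_a$.

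The principal technical step is this last local inverse assertion: showing that after shrinking $\mathcal V_a$ and imposing $c < 1/10$, every curve close to ${\bf q}$ arises uniquely from the plumbing construction. This is the classical local structure theorem for the Deligne-Mumford compactification and is handled by an implicit function theorem argument applied to the identification of thin parts; the bound $c < 1/10$ is used precisely to prevent the neck annuli from overlapping with the marked points or with each other. Finally, invariance of $\Phi$ under extended automorphisms of ${\bf p}$ or ${\bf q}$ is manifest from the construction: an extended automorphism permutes the irreducible components and the nodes compatibly, hence permutes the factors in the domain of $\Phi$; the plumbed output is invariant under the combined action because we assumed in Definition \ref{defn33} that the analytic families of coordinates at the nodes are themselves invariant under the extended automorphism group.
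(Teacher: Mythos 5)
Your construction of $\Phi$ is essentially the same plumbing/gluing rule as the paper's proof: on the annular overlap identify $z_1 z_2 = \sigma_i$ (or $z_1 z_2 = r_j$ on the boundary side), with the given analytic families of coordinates supplying the holomorphic charts near each node, and the invariance argument under ${\rm Aut}^+$ is identical. But the paper's proof \emph{only} defines the map $\Phi$ and then refers to \cite[Section 8]{foooexp} for everything else; the remark directly after the lemma makes clear that the logic is deliberately reversed --- (\ref{form33}) is used to \emph{define} the smooth structure on $\mathcal M^{\rm d}_{k+1,\ell}$, rather than being shown to respect a pre-existing one. Your approach is genuinely different: you detour through the sphere moduli and the $\tilde\tau$-fixed locus, and you try to prove the open-embedding claim directly by detecting neck annuli via hyperbolic thin parts, plus a dimension count and an implicit function theorem. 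That is a legitimate route in the classical Teichm\"uller/Deligne--Mumford setting and would give a self-contained proof, which the paper's version does not aim at.

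The genuine gap is in the sentence ``smoothness of $\Phi$ is immediate.'' What is immediate is that, for a \emph{fixed} basepoint ${\bf p}$ and a \emph{fixed} choice of analytic families of coordinates, the plumbed curve depends holomorphically on $(({\bf q}_a),(t_n))$ as an abstract family. What is not immediate --- and is the substantive content being deferred to \cite[Section 8]{foooexp} --- is that two such charts, centered at different basepoints or built from different analytic families of coordinates, overlap smoothly. Without that, you have a bijection onto a neighborhood but no coherent differentiable structure against which to call $\Phi$ a ``smooth open embedding.'' This chart-compatibility is not formal: it is established via exponential decay estimates of exactly the type displayed in (\ref{form94}) inside the proof of Lemma \ref{lem91} (which is the log-coordinate version of the same fact), where the key input is that, e.g., the ratio $\sigma_i^{\bf p}/\sigma_i^{\bf q}$ of two gluing parameters for the same divisor is a nowhere-vanishing holomorphic function. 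Your neck-detection argument also assumes a preferred metric in which ``minimally long separating curve'' is a well-defined intrinsic notion varying smoothly in the family, which is an additional non-obvious input --- the paper avoids it entirely by defining the structure through $\Phi$. If you want to keep your route, you must separately justify that the hyperbolic metric (or whatever canonical metric you use) depends smoothly on the plumbing parameters across the nodal strata; otherwise you should supply the chart-overlap estimates in place of ``immediate.''
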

\begin{rem}
In other words, we specify the smooth and complex structures of $\mathcal M^{\rm s}_{\ell}$
by requiring (\ref{form34}) to be biholomorphic to the image, and specify the smooth structure of 
$\mathcal M^{\rm d}_{k+1,\ell}$ by requiring (\ref{form33}) to be a diffeomorphism 
onto the image .
\end{rem}
\begin{proof}
Below we define the map (\ref{form33}).
See \cite[Section 8]{foooexp} for the definition of (\ref{form34})
and the proof of its holomorphicity.
(We do not use (\ref{form34}) in this article.)
Let $\frak n^{\rm s}_i$ ($i=1,\dots,m_{\rm s}$) be the interior nodes 
of $\Sigma_{\bf p}$ and 
$\frak n^{\rm d}_j$ ($j=1,\dots,m_{\rm d}$)  the boundary nodes 
of $\Sigma_{\bf p}$.
We take $a^{\rm s}_{i,1}, a^{\rm s}_{i,2} \in \mathcal A_{\bf p}^{\rm s} \cup \mathcal A_{\bf p}^{\rm d}$
and $a^{\rm d}_{j,1}, a^{\rm d}_{j,2} \in \mathcal A_{\bf p}^{\rm d}$
such that
$$
\{\frak n^{\rm s}_i\} = \Sigma_{\bf p}(a^{\rm s}_{i,1}) \cap \Sigma_{\bf p}(a^{\rm s}_{i,2}),
\quad
\{\frak n^{\rm d}_j\} = \Sigma_{\bf p}(a^{\rm d}_{j,1}) \cap \Sigma_{\bf p}(a^{\rm d}_{j,2}).
$$
Let $\varphi^{\rm s}_{i,1}$, $\varphi^{\rm s}_{i,2}$, 
$\varphi^{\rm d}_{j,1}$, $\varphi^{\rm d}_{j,2}$ be analytic families of coordinates 
\index{00FPh{\rm s}_{i,1}@$\varphi^{\rm s}_{i,1}$, $\varphi^{\rm s}_{i,2}$, 
$\varphi^{\rm d}_{j,1}$, $\varphi^{\rm d}_{j,2}$}
at those nodal points which we take by assumption.
Suppose
$$
(({\bf x}_a)_{a\in \mathcal A_{\bf q}},(r_j)_{j=1}^{m_{\rm d}},(\sigma_i)_{i=1}^{m_{\rm s}})
\in
\prod_{a\in \mathcal A_{\bf p}^{\rm s} \cup \mathcal A_{\bf p}^{\rm d}} \mathcal V_a 
\times [0,c)^{m_{\rm d}} \times (D^2_{\circ}(c))^{m_{\rm s}}.
$$
We denote 
${\bf x}_a = (\Sigma_{\bf x}(a),\vec z_{\bf x}(a),\vec{\frak z}_{\bf x}(a))$
or 
${\bf x}_a = (\Sigma_{\bf x}(a),\vec z_{\bf x}(a))$.
\par
We consider the disjoint union
\begin{equation}\label{form3535}
\bigcup_{a \in \mathcal A_{\bf p}^{\rm s} \cup \mathcal A_{\bf p}^{\rm d}}\Sigma_{\bf x}(a).
\end{equation}
We remove the (disjoint) union 
\begin{equation}\label{form3636}
\aligned
&\left(\bigcup_{i=1,\dots,m_{\rm s}} (\varphi_{a^{\rm s}_{i,1}}(D_{\circ}^2(\vert\sigma_i\vert)) \cup \varphi_{a^{\rm s}_{i,2}}(D_{\circ}^2(\vert\sigma_i\vert)))\right)
\\
&\cup
\left(\bigcup_{j=1,\dots,m_{\rm d}} (\varphi_{a^{\rm d}_{j,1}}(D_{\circ,+}^2(r_j)) \cup 
\varphi_{a^{\rm d}_{j,2}}(D_{\circ,+}^2(r_j)))\right)
\endaligned
\end{equation}
from (\ref{form3535}).
Here
$$
D^2_{\circ}(c) = \{ z \in \C \mid \vert z\vert < c\}, \quad
D_{\circ,+}^2(c) = \{ z \in \C \mid \vert z\vert < c, {\rm Im} z \ge 0\}.
$$
In case $r_j = 0$ or $\sigma_i = 0$, certain summand of (\ref{form3636}) may be an empty set.
Let 
$$
\Sigma' = (\ref{form3535}) \setminus {(\ref{form3636})}.
$$
When $z_1,z_2 \in D^2 \setminus D^2(\vert\sigma_i\vert))$, we identify
$$
\varphi_{a^{\rm s}_{i,1}}(z_1) \in \Sigma_{\bf x}(a^{\rm s}_{i,1}) 
\quad \text{and} \quad
\varphi_{a^{\rm s}_{i,2}}(z_2) \in \Sigma_{\bf x}(a^{\rm s}_{i,2})
$$
if and only if
$$
z_1z_2 = \sigma_i.
$$
When $z_1,z_2 \in D^2_{+} \setminus D^2_{+}(\vert\sigma_j\vert))$, we identify
$$
\varphi_{a^{\rm d}_{j,1}}(z_1) \in \Sigma_{\bf x}(a^{\rm d}_{j,1}) 
\quad \text{and} \quad
\varphi_{a^{\rm d}_{j,2}}(z_2) \in \Sigma_{\bf x}(a^{\rm d}_{j,2})
$$
if and only if
$$
z_1z_2 = r_j.
$$
In case $r_j = 0$ or $\sigma_i = 0$, we identify the corresponding marked points and obtain a 
nodal point.
Under these identifications, we obtain $\Sigma$ from $\Sigma'$.
\par
The marked points of ${\bf x}_a = (\Sigma_{\bf x}(a),\vec z_{\bf x}(a),\vec{\frak z}_{\bf x}(a))$
or 
${\bf x}_a = (\Sigma_{\bf x}(a),\vec z_{\bf x}(a))$
determine the corresponding marked points on $\Sigma$ in the obvious way. 
We thus obtain an element 
$(\Sigma,\vec z,\vec{\frak z})$ which is 
by definition a representative of the stable marked curve
$\Phi(({\bf x}_a)_{a\in \mathcal A_{\bf q}},(r_j)_{j=1}^{m_{\rm d}},(\sigma_i)_{i=1}^{m_{\rm s}})$.
\end{proof}
We use the next notation in the later (sub)sections.
Let 
$\frak x 
= (({\bf x}_a)_{a\in \mathcal A_{\bf q}},(r_j)_{j=1}^{m_{\rm d}},(\sigma_i)_{i=1}^{m_{\rm s}})$
and 
$\epsilon^{\rm s}_i \in [\vert\sigma_i\vert,1]$, $\epsilon^{\rm d}_j \in [r_j,1]$.
We put $\vec \epsilon = ((\epsilon^{\rm s}_i),(\epsilon^{\rm d}_j))$.
Consider
\begin{equation}\label{form3737}
\aligned
&\left(\bigcup_{i=1,\dots,m_{\rm s}} (\varphi_{a^{\rm s}_{i,1}}(D^2(\epsilon^{\rm s}_i)) \cup \varphi_{a^{\rm s}_{i,2}}(D^2(\epsilon^{\rm s}_i)))\right)
\\
&\cup
\left(\bigcup_{j=1,\dots,m_{\rm d}} (\varphi_{a^{\rm d}_{j,1}}(D^2_+(\epsilon^{\rm d}_j)) 
\cup \varphi_{a^{\rm d}_{j,2}}(D^2_+(\epsilon^{\rm d}_j)))\right).
\endaligned
\end{equation}
We now define
\begin{equation}
\Sigma(\frak x;\vec \epsilon) = (\ref{form3535}) \setminus (\ref{form3737}).
\end{equation}
We write $\Sigma_{\bf x}(\vec{\epsilon}) = \Sigma(\frak x;\vec \epsilon)$
if ${\bf x} =\Phi(\frak x)$. 
\index{00Sigma_{\bf x}(\vec{\epsilon})@$\Sigma_{\bf x}(\vec{\epsilon})$}
In case 
$\Phi(({\bf x}_a)_{a\in \mathcal A_{\bf q}},(r_j)_{j=1}^{m_{\rm d}},(\sigma_i)_{i=1}^{m_{\rm s}}) 
= {\bf p}$ we denote $\Sigma_{\bf p}(\vec \epsilon)$.
(Note $\sigma_i$, $r_j$ are all $0$ in this case in particular.)
\begin{figure}[h]
\centering
\includegraphics[scale=0.7]{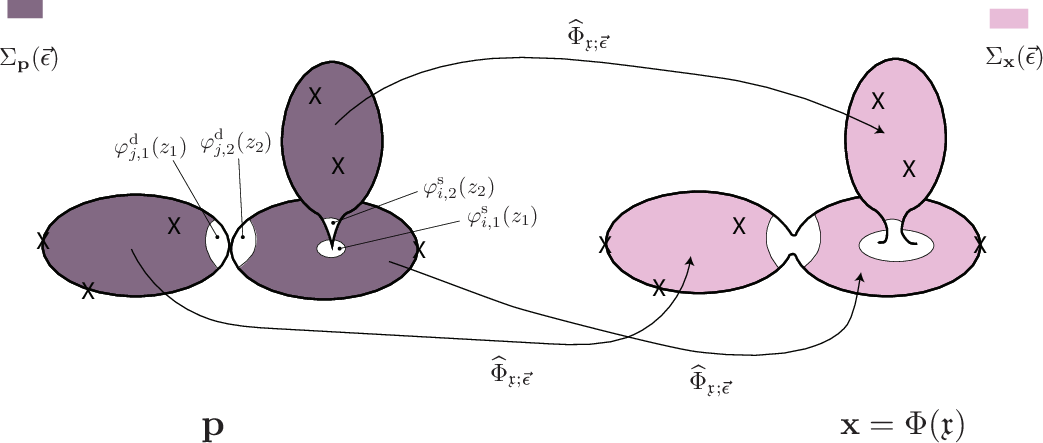}
\caption{The map $\Phi$}
\label{zu1}
\end{figure}

\subsection{Local trivialization of the universal family}
\label{subsec:Localtrivialization}

An important point of the construction of the Kuranishi structure 
is specifying the coordinate of the source curve we use.\footnote{In other words, 
we need to kill the freedom of the action of the group of 
diffeomorphisms of the source curves.}
The construction of the last subsection specifies the coordinate of the 
moduli space (especially its gluing parameter.)
We use one extra datum to specify the coordinate of the source curve.
\par
We use the notation $\bf p$, $\bf q$ etc. as in the last subsection.

\begin{defn}
Let ${\bf p}$ be as in (\ref{formpa1}), (\ref{formpa2})
and $\mathcal V_a$ a neighborhood of its irreducible component ${\bf p}_a$ in the moduli space of marked curves.
A {\it $C^{\infty}$ trivialization} $\phi_a$
\index{00C9^{\infty}trivialization@ $C^{\infty}$ trivialization}
\index{00F4phi_a@$\phi_a$}
of our universal family over  $\mathcal V_a$ is a diffeomorphism
$$
\phi_a : \mathcal V_a \times \Sigma_{\bf p}(a) \to \pi^{-1}(\mathcal V_a)
$$
with the following properties.
Here $\pi^{-1}(\mathcal V_a) \subset \mathcal C^{\rm s,reg}_{\ell(a)}$
or $\pi^{-1}(\mathcal V_a) \subset \mathcal C^{\rm d,reg}_{k(a)+1,\ell(a)}$.
\begin{enumerate}
\item
The next diagram commutes.
$$
\begin{CD}
\mathcal V_a \times \Sigma_{\bf p}(a) @ >{\phi_a}>>
\pi^{-1}(\mathcal V_a) \\
@ VVV @ VV{\pi}V\\
\mathcal V_a @ >{\rm id}>> \mathcal V_a
\end{CD}
$$
where the left vertical arrow is the projection to the first factor.
\item
If $z_j$ (resp $\frak z_i$) is the $j$-th boundary (resp. the $i$-th interior) marked point of  
$\Sigma_{\bf p}(a)$  then
$$
\phi_a({\bf x},z_j) = \frak s^{\rm d}_j({\bf x}),
\quad 
\phi_a({\bf x},\frak z_i) = \frak s^{\rm s}_i({\bf x}).
$$
\item
$\phi_a({\bf o},z) = z$. Here $z \in \Sigma_{\bf p}(a)$ and $\Sigma_{\bf p}(a)$ is regarded as a subset of 
$\mathcal C_{\ell(a)}^{{\rm s},{\rm reg}}$ or of $\mathcal C_{k(a)+1,\ell(a)}^{{\rm d},{\rm reg}}$. 
${\bf o} \in \mathcal V_a$ is the point corresponding to ${\bf p}_a$.
\end{enumerate}
\end{defn}
\begin{defn}\label{defn377}
Suppose we are given analytic families of coordinates at the nodes of ${\bf p}$.
Then we say that the $C^{\infty}$ trivialization {\it $\{\phi_a\}$ is compatible with the families} if the following holds. 
\begin{enumerate}
\item
Suppose that  the $i$-th interior marked point of ${\bf p}_a$ corresponds to a nodal point of 
$\Sigma_{\bf p}(a)$.
Let $\varphi_{a,i} : \mathcal V_a \times D^2_{\circ} \to \pi^{-1}(\mathcal V_a)$ 
be the given analytic family of coordinates at this marked point. Then
$$
\phi_a({\bf x},\varphi_{a,i}({\bf o},z)) = \varphi_{a,i}({\bf x},z).
$$
Here ${\bf o} \in \mathcal V_a$ is the point corresponding to ${\bf p}_a$.
\item
Suppose that  the $j$-th boundary marked point of ${\bf p}_a$ corresponds to a nodal point of 
$\Sigma_{\bf p}(a)$.
Let $\varphi_{a,j} : \mathcal V_a \times D^{2}_{\circ,+} \to \pi^{-1}(\mathcal V_a)$ 
be the given analytic family of coordinates at this marked point. 
(Namely $\varphi_{a,j}$ is the map defined as in (\ref{form31}).) Then
$$
\phi_a({\bf x},\varphi_{a,i}({\bf o},z)) = \varphi_{a,i}({\bf x},z).
$$
Here ${\bf o} \in \mathcal V_a$ is the point corresponding to ${\bf p}_a$.
\end{enumerate}
\end{defn}
Now we define:
\begin{defn}\label{defn37}
{\it Local trivialization data} at ${\bf p}$ consist of the 
\index{local trivialization data}
following:
\begin{enumerate}
\item Analytic families of coordinates at the nodes of ${\bf p}$.
\item A $C^{\infty}$ trivialization $\phi_a$
of our universal family over  $\mathcal V_a$ for each $a$.
We assume it is compatible with the analytic families 
of coordinates.
\item
We require that the data (1)(2) are compatible with the action of extended 
automorphisms of ${\bf p}$ in the obvious sense.
(See \cite[Definition 7.4]{FuFu5}.)
\end{enumerate}
\end{defn}
Let 
$\frak x 
= (({\bf x}_a)_{a\in \mathcal A_{\bf q}},(r_j)_{j=1}^{m_{\rm d}},(\sigma_i)_{i=1}^{m_{\rm s}})$
and 
$\epsilon^{\rm s}_i \in [\vert\sigma_i\vert,1]$, $\epsilon^{\rm d}_j \in [r_j,1]$.
We put $\vec \epsilon = ((\epsilon^{\rm s}_i),(\epsilon^{\rm d}_j))$.
\begin{lem}\label{lem3838}\footnote{See \cite[the paragraph right below (10.1)]{FO}.}
Suppose we are given local trivialization data at ${\bf p}$
and  put $\Phi(\frak x) = (\Sigma_{\frak x},\vec z_{\frak x},\vec{\frak z}_{\frak x})$.
Then the local trivialization data canonically induce a smooth embedding
\index{00FPhi\frak x;\vec \epsilon@$\widehat{\Phi}_{\frak x;\vec \epsilon}$}
$$
\widehat{\Phi}_{\frak x;\vec \epsilon} : \Sigma_{\bf p}(\vec \epsilon) \to \Sigma_{\frak x}
$$
which preserves marked points.
The map $\widehat{\Phi}_{\vec \epsilon} : \mathcal V \times \Sigma_{\bf p}(\vec \epsilon) 
\to \mathcal C^{\rm d}_{k+1,\ell}$ defined by
\begin{equation}\label{hatPhi39}
\widehat{\Phi}_{\vec \epsilon}(\frak x,z)= \widehat{\Phi}_{\frak x;\vec \epsilon}(z)
\end{equation}
is smooth, where $\mathcal V = \mathcal \prod_{a\in \mathcal A_{\bf p}^{\rm s} \cup \mathcal A_{\bf p}^{\rm d}} \mathcal V_a 
\times [0,c)^{m_{\rm d}} \times (D^2_{\circ}(c))^{m_{\rm s}}$ 
is as in (\ref{form33}).
\end{lem}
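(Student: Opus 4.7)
The plan is to build $\widehat{\Phi}_{\frak x;\vec\epsilon}$ piece by piece out of the $C^\infty$ trivializations $\phi_a$, then check that the pieces assemble into a well-defined smooth embedding and that the resulting family map is jointly smooth.

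First, by definition $\Sigma_{\bf p}(\vec\epsilon)$ is the disjoint union (\ref{form3535}) taken for $\bf p$ itself (so all $\sigma_i, r_j$ vanish) with the $\vec\epsilon$-disks around the nodal marked points removed. On each irreducible piece $\Sigma_{\bf p}(a) \cap \Sigma_{\bf p}(\vec\epsilon)$, I would set
$$
\widehat{\Phi}_{\frak x;\vec\epsilon}(z) = \phi_a(\frak x_a, z),
$$
composed with the inclusion of $\Sigma_{\bf x}(a)$ into $\Sigma_{\frak x}$ provided by the gluing construction of Lemma \ref{lem34}. On each such piece this is a smooth embedding, since $\phi_a$ is a diffeomorphism of $\mathcal V_a \times \Sigma_{\bf p}(a)$ onto $\pi^{-1}(\mathcal V_a)$.

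The crucial point is that the images of the different pieces do not collide on the gluing necks of $\Sigma_{\frak x}$. Because $\epsilon^{\rm s}_i \ge |\sigma_i|$ and $\epsilon^{\rm d}_j \ge r_j$, the compatibility condition in Definition \ref{defn377} gives $\phi_a(\frak x_a,\varphi_{a,i}({\bf o},z)) = \varphi_{a,i}(\frak x_a,z)$ on the nodal coordinate disks. Hence the $\epsilon$-disk excised from the source maps precisely onto the $\epsilon$-disk excised when $\Sigma_{\frak x}$ is assembled out of $\{\Sigma_{\bf x}(a)\}$. Consequently the images of the various components land in disjoint regions of $\Sigma_{\frak x}$ lying away from the gluing regions, the piecewise definition patches to a globally defined injection, and the marked-point condition follows at once from item (2) of the definition of a $C^\infty$ trivialization together with the identification of $\frak s^{\rm d}_j, \frak s^{\rm s}_i$ with the marked points of $\pi^{-1}(\frak x)$.

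For the joint smoothness of $\widehat{\Phi}_{\vec\epsilon}$ I would argue locally near $(\frak x, z) \in \mathcal V \times \Sigma_{\bf p}(\vec\epsilon)$: select the unique $a$ with $z \in \Sigma_{\bf p}(a)$, factor the map through the smooth projection $\frak x \mapsto \frak x_a$ onto the $\mathcal V_a$-factor of $\mathcal V$, apply the smooth $\phi_a$, and compose with the inclusion $\Sigma_{\bf x}(a) \hookrightarrow \Sigma_{\frak x}$ from Lemma \ref{lem34}. The only substantive point is to verify that this last inclusion is smooth as a map of families as the gluing parameters $\sigma_i, r_j$ vary; but this is exactly the content of the smooth structure on the total space $\mathcal C^{\rm d}_{k+1,\ell}$ built in Lemma \ref{lem34}: away from the gluing necks the coordinate on $\Sigma_{\bf x}(a)$ is itself a chart on the universal family, so smoothness of the composition is immediate. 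The invariance under extended automorphisms, finally, is inherited from clause (3) of Definition \ref{defn37}.
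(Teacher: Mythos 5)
Your proposal is correct and follows essentially the same route as the paper's own (very terse) proof: the canonical embedding $\Sigma(\frak x;\vec\epsilon)\subset\Sigma_{\frak x}$ coming from the fact that the excised $\vec\epsilon$-disks contain the gluing regions, together with the diffeomorphism $\Sigma_{\bf p}(\vec\epsilon)\cong\Sigma(\frak x;\vec\epsilon)$ induced by the trivializations $\phi_a$ via their compatibility (Definition \ref{defn377}) with the analytic families of coordinates. You merely spell out the patching, marked-point, and joint-smoothness checks that the paper leaves implicit, so no further comment is needed.
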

\begin{proof}
By definition we can define a canonical (holomorphic) embedding 
$\Sigma(\frak x;\vec \epsilon) \subset \Sigma_{\frak x}$.
The $C^{\infty}$ trivialization $\phi_a$ induces a diffeomorphism
$\Sigma(\frak x;\vec \epsilon) \cong \Sigma(\vec \epsilon)$.
\end{proof}

\begin{rem}
The construction of this section is similar to \cite[Section 16]{foootech}.
The only difference is that we use {\it analytic} families of coordinates here 
but {\it smooth} families of coordinates in \cite[Section 16]{foootech}.
The map $\Phi$ in Lemma \ref{lem34} is holomorphic but the corresponding map 
in \cite[Section 16]{foootech} is only smooth.
In that sense the construction here is the same as \cite[Section 8]{foooexp}.
\end{rem}

\section{Stable map topology and $\epsilon$-closeness}
\label{sec;ecloseness}

\subsection{Partial topology}
\label{subsec:weaktopology}

\begin{defn}\label{defn4141411}
Let $\mathcal X$ be a set and $\mathcal M$  its subset.
Suppose we are given a topology on $\mathcal M$, which is metrizable.
A {\it partial topology} \index{partial topology} of $(\mathcal X,\mathcal M)$
assigns $B_{\epsilon}(\mathcal X,{\bf p}) 
\subset \mathcal X$ for each ${\bf p}\in\mathcal M$ and $\epsilon >0$ with the 
following properties. 
\begin{enumerate}
\item ${\bf p}$ is an element of $B_{\epsilon}(\mathcal X,{\bf p})$
and 
$
\{ B_{\epsilon}(\mathcal X,{\bf p}) \cap \mathcal M \mid {\bf p}, \epsilon\}
$
is a basis of the topology of $\mathcal M$.
\item 
For each $\epsilon, {\bf p}$ and ${\bf q} \in B_{\epsilon}(\mathcal X,{\bf p}) \cap \mathcal M$,
there exists $\delta >0$ such that
$
B_{\delta}(\mathcal X,{\bf q}) \subset B_{\epsilon}(\mathcal X,{\bf p}).
$
\item
If $\epsilon_1 < \epsilon_2$ then
$B_{\epsilon_1}(\mathcal X,{\bf p}) \subset B_{\epsilon_2}(\mathcal X,{\bf p})$.
Moreover
$$
\bigcap_{\epsilon} B_{\epsilon}(\mathcal X,{\bf p}) = \{{\bf p}\}.
$$
\end{enumerate}
\par
We say $U \subset \mathcal X$ is a {\it neighborhood} of ${\bf p}$ if 
$U \supset B_{\epsilon}(\mathcal X,{\bf p})$ for some $\epsilon > 0$.
\par
We say two partial topologies are {\it equivalent} if the notion of neighborhood 
coincides.
\end{defn}

\begin{defn}\label{defn3144}
We define 
${\mathcal X}_{k+1,\ell}(X,L,J;\beta)$ to be 
\index{00X3{k+1,\ell}(X,L,J;\beta)@${\mathcal X}_{k+1,\ell}(X,L,J;\beta)$}
the set of all isomorphism classes of $((\Sigma,\vec z,\vec {\frak z}),u)$
which satisfy the same condition as in Definition \ref{defn24222}
except we do not require $u$ to be pseudo holomorphic.
We require $u$ to be continuous and of $C^2$ class on each irreducible component.
\par
We define the notions of isomorphisms and of extended isomorphisms between  
elements of ${\mathcal X}_{k+1,\ell}(X,L,J;\beta)$ in the same way as 
Definition \ref{stabilitydefn27}, requiring (i)(ii)(iii).
The groups of automorphisms ${\rm Aut}({\bf x})$ and of extended automorphisms ${\rm Aut}^+({\bf x})$
of an element ${\bf x} \in {\mathcal X}_{k+1,\ell}(X,L,J;\beta)$ are defined in the same way as 
Definition \ref{stabilitydefn27}.\index{extended automorphisms}
\end{defn}

\begin{prop}\label{prop43}
The pair $({\mathcal X}_{k+1,\ell}(X,L,J;\beta)),{\mathcal M}_{k+1,\ell}(X,L,J;\beta))$
has a partial topology in the sense of Definition \ref{defn4141411}.
Here the topology of ${\mathcal M}_{k+1,\ell}(X,L,J;\beta)$ is the stable 
map topology introduced in \cite[Definition 10.3]{FO}.
\end{prop}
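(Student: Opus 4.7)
The plan is to extend the stable map $\epsilon$-closeness of \cite[Definition 10.3]{FO}, originally formulated between pairs of pseudo holomorphic stable maps, to the larger set $\mathcal X_{k+1,\ell}(X,L,J;\beta)$ by dropping pseudo holomorphicity on one side. Fix ${\bf p}\in\mathcal M_{k+1,\ell}(X,L,J;\beta)$ represented by $((\Sigma_{\bf p},\vec z_{\bf p},\vec{\frak z}_{\bf p}),u_{\bf p})$ and choose local trivialization data at ${\bf p}$ as in Definition \ref{defn37}. This produces the product chart $\Phi$ of (\ref{form33}), the partial smoothings $\Sigma_{\bf p}(\vec\epsilon)$ of Section \ref{subsec:complexstrumod}, and the smooth embeddings $\widehat\Phi_{\frak x;\vec\epsilon}:\Sigma_{\bf p}(\vec\epsilon)\to\Sigma_{\frak x}$ of Lemma \ref{lem3838}.

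For $\epsilon>0$ I declare that ${\bf x}=((\Sigma_{\bf x},\vec z_{\bf x},\vec{\frak z}_{\bf x}),u_{\bf x})$ belongs to $B_\epsilon(\mathcal X,{\bf p})$ if and only if there exist a parameter $\frak x$ in the domain of $\Phi$ and gluing scales $\vec\epsilon$, all of size less than $\epsilon$, such that $\Sigma_{\bf x}$ is isomorphic to $\Sigma_{\Phi(\frak x)}$ as a marked nodal curve and
\[
\|u_{\bf x}\circ\widehat\Phi_{\frak x;\vec\epsilon}-u_{\bf p}\|_{C^0(\Sigma_{\bf p}(\vec\epsilon))}<\epsilon,
\]
while the image of $u_{\bf x}$ on each connected component of $\Sigma_{\bf x}\setminus\widehat\Phi_{\frak x;\vec\epsilon}(\Sigma_{\bf p}(\vec\epsilon))$ has diameter less than $\epsilon$. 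When ${\rm Aut}^+({\bf p})$ is nontrivial the definition is made equivariantly, which is allowed by part (3) of Definition \ref{defn37}.

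Axioms (1) and (3) of Definition \ref{defn4141411} are straightforward. Restricting the above condition to pseudo holomorphic $u_{\bf x}$ recovers, up to the standard reparametrization, the basis that defines the stable map topology in \cite[Definition 10.3]{FO}; and two distinct choices of local trivialization data give comparable balls, so the resulting partial topologies are equivalent. Monotonicity in $\epsilon$ is visible from the definition, and the intersection $\bigcap_\epsilon B_\epsilon(\mathcal X,{\bf p})$ equals $\{{\bf p}\}$ because shrinking $\vec\epsilon$ and $\frak x$ forces $\Sigma_{\bf x}\to\Sigma_{\bf p}$, while the $C^0$ and diameter bounds collapse $u_{\bf x}$ to $u_{\bf p}$.

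The substantive step is axiom (2): from ${\bf q}\in B_\epsilon(\mathcal X,{\bf p})\cap\mathcal M$ one must produce $\delta>0$ with $B_\delta(\mathcal X,{\bf q})\subset B_\epsilon(\mathcal X,{\bf p})$. The $\delta$-ball around ${\bf q}$ is defined using its own local trivialization data, so the argument requires comparing two chart descriptions whose underlying combinatorial types may differ, some of the nodes of ${\bf p}$ being smoothed at ${\bf q}$. In the same-combinatorial-type case the trivialization data at ${\bf p}$ restrict to a set of trivialization data at ${\bf q}$, and the claim follows from the triangle inequality combined with uniform continuity of $\widehat\Phi$ in $(\frak x,\vec\epsilon)$. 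In the genuine smoothing case one has to construct trivialization data at ${\bf q}$ by transporting those at ${\bf p}$ across the smoothed necks via $\widehat\Phi_{\frak x;\vec\epsilon}$, and then verify equivalence with any intrinsic choice at ${\bf q}$. This last equivalence is the only nontrivial technical step; everything else is a direct consequence of the uniform continuity of the gluing construction built in Lemmas \ref{lem34} and \ref{lem3838}.
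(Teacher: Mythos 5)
Your plan has a genuine gap at its very first step: you invoke the chart $\Phi$ of (\ref{form33}), the partial smoothings $\Sigma_{\bf p}(\vec \epsilon)$, and the embeddings $\widehat\Phi_{\frak x;\vec\epsilon}$ of Lemma \ref{lem3838} directly for the source curve $(\Sigma_{\bf p},\vec z_{\bf p},\vec{\frak z}_{\bf p})$ of ${\bf p}$. But an element of ${\mathcal M}_{k+1,\ell}(X,L,J;\beta)$ is only required to be stable as a \emph{map}; its source curve is in general not a stable marked curve (e.g.\ a disk with $k+1+2\ell<3$ special points carrying a nonconstant $u_{\bf p}$, or sphere bubbles with a single node and no marked points). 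For such ${\bf p}$ there is no point of $\mathcal M^{\rm d}_{k+1,\ell}$ representing the source, hence no local trivialization data in the sense of Definition \ref{defn37}, no $\Phi$, and no $\widehat\Phi_{\frak x;\vec\epsilon}$; your sets $B_\epsilon(\mathcal X,{\bf p})$ are simply undefined in the typical case. This is exactly why the paper inserts Subsection \ref{subsec:weakstab}: one first chooses weak stabilization data $\vec{\frak w}_{\bf p}$ (extra interior marked points making ${\bf p}\cup\vec{\frak w}_{\bf p}$ source stable), fixes stabilization and trivialization data $\frak W_{\bf p}$, and then defines $\epsilon$-closeness (Definition \ref{defn411}) by \emph{existentially} quantifying over added points $\vec{\frak w}_{\bf x}$ on $\Sigma_{\bf x}$ so that the comparison takes place in $\mathcal M^{\rm d}_{k+1,\ell+\ell'}$.

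This omission is not cosmetic, because the substantive content of axiom (2) — Lemma \ref{lem413}, whose proof occupies Section \ref{proofoflem413} — is precisely the independence of the construction from these auxiliary choices: one must compare two different choices of added marked points and trivialization data at the same point (Steps 1--2 there, done by passing through a third weak stabilization disjoint from both), and then transport data from ${\bf p}$ to a nearby ${\bf q}$ compatibly (Step 3, via Sublemma \ref{sublem125}). Your sketch of axiom (2) captures only the transport-across-the-neck step and has no mechanism at all for the comparison of different stabilizing marked-point sets, since those do not appear in your framework; yet without that comparison one cannot conclude that a $\delta$-ball defined from \emph{arbitrary} data $\frak W_{\bf q}$ at ${\bf q}$ sits inside the $\epsilon$-ball at ${\bf p}$. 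A secondary deviation: you use $C^0$-closeness of the maps where the paper requires $C^2$ (Definition \ref{defn411} (5)); on the pseudo holomorphic locus this changes nothing by elliptic regularity, but the $C^2$ control is what is actually used later (e.g.\ in Lemma \ref{lem832} and in Definition \ref{defn51}, where obstruction spaces consist of $C^2$ sections), so you should keep the $C^2$ formulation even though it does not by itself invalidate the existence statement of Proposition \ref{prop43}.
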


The proof of this proposition will be given in the rest of this section.

\subsection{Weak stabilization data}
\label{subsec:weakstab}

\begin{defn}
An element $((\Sigma,\vec z,\vec {\frak z}),u)$ of ${\mathcal M}_{k+1,\ell}(X,L,J;\beta)$
is called {\it source stable} \index{source stable} if the set of $v : \Sigma \to \Sigma$ 
satisfying Definition \ref{stabilitydefn27} (i)(iii) (but not necessarily (ii)) is finite.
We can define the source stability of an element of ${\mathcal X}_{k+1,\ell}(X,L,J;\beta)$
in the same way.
\end{defn}

\begin{defn}
Let  $I \subset \{1,\dots,\ell+\ell'\}$ with $\# I = 
\ell$. The {\it forgetful map}\index{forgetful map}
$$
\frak{forget}_{\ell+\ell',I} :  {\mathcal M}_{k+1,\ell+\ell'}(X,L,J;\beta)
\to {\mathcal M}_{k+1,\ell}(X,L,J;\beta),
$$
is defined as follows.
Let $((\Sigma,\vec z,\vec {\frak z}),u)\in {\mathcal M}_{k+1,\ell+\ell'}(X,L,J;\beta)$
and $I = \{i_1,\dots,i_{\ell}\}$. ($i_j < i_{j+1}$.)
We put 
$\vec {\frak z}_I = ({\frak z}_{i_1},\cdots,{\frak z}_{i_{\ell}})$ 
and consider $((\Sigma,\vec z,\vec {\frak z}_I),u)$.
If this object is stable then it is 
$\frak{forget}_{\ell+\ell',I}((\Sigma,\vec z,\vec {\frak z}),u)$
by definition.
\par
If not there exists an irreducible component $\Sigma_a$ of $\Sigma$ 
on which $u$ is constant and $\Sigma_a$ is {\it unstable}
in the following sense.
If $\Sigma_a = S^2$ the number of singular or marked points 
on it is less than $3$. If $\Sigma_a = D^2$ 
then $2 m_{\rm s} + m_{\rm d} < 3$. Here $m_{\rm d}$ is the sum of the number of 
boundary nodes on $\Sigma_a$ and the order of $\vec z \cap \Sigma_a$.
$m_{\rm s}$ is the sum of the number of 
interior nodes on $\Sigma_a$ and the order of $\vec {\frak z}_I \cap \Sigma_a$.
\par
We shrink all the unstable components $\Sigma_a$ to points. 
We thus obtain $((\Sigma',\vec z,\vec {\frak z}_I),u)$
which is an element of ${\mathcal M}_{k+1,\ell}(X,L,J;\beta)$.
This is by definition $\frak{forget}_{\ell+\ell',I}((\Sigma,\vec z,\vec {\frak z}),u)$.
See \cite[Lemma 7.1.45]{fooobook2} for more detail.
\par
In case $I = \{1,\dots,\ell\}$ we write $\frak{forget}_{\ell+\ell',\ell}$ in place of 
\index{00Forgetlll@$\frak{forget}_{\ell+\ell,\ell}$}
$\frak{forget}_{\ell+\ell',I}$.
\end{defn}
We define
$
\frak{forget}_{\ell+\ell',I} :  {\mathcal X}_{k+1,\ell+\ell'}(X,L,J;\beta)
\to {\mathcal X}_{k+1,\ell}(X,L,J;\beta),
$
and also $\frak{forget}_{\ell+\ell',\ell}$ among those sets in the same way.
\begin{defn}\label{defn46}
Let ${\bf p} =((\Sigma_{\bf p},\vec z_{\bf p},\vec {\frak z}_{\bf p}),u_{\bf p})\in {\mathcal M}_{k+1,\ell}(X,L,J;\beta)$.
Its {\it weak stabilization data} \index{weak stabilization data} are $\vec {\frak w}_{\bf p} = ({\frak w}_{{\bf p},1},
\dots,{\frak w}_{{\bf p},\ell'})$ 
\index{00W4p@$\vec {\frak w}_{\bf p}$}
with the following properties.
\begin{enumerate}
\item ${\frak w}_{{\bf p},i} \in \Sigma_{\bf p}$.
\item
We put $\vec {\frak z}_{\bf p} \cup \vec {\frak w}_{\bf p} = ({\frak z}_{{\bf p},1},\dots,{\frak z}_{{\bf p},\ell},
{\frak w}_{{\bf p},1},\dots,{\frak w}_{{\bf p},\ell'})$. 
Then $((\Sigma_{\bf p},\vec z_{\bf p},\vec {\frak z}_{\bf p}\cup \vec {\frak w}_{\bf p}),u_{\bf p})$ represents an element of 
${\mathcal M}_{k+1,\ell+\ell'}(X,L,J;\beta)$.
We write this element ${\bf p} \cup \vec {\frak w}_{\bf p}$.
\index{00P4wp@${\bf p} \cup \vec {\frak w}_{\bf p}$}
\item
${\bf p} \cup \vec {\frak w}_{\bf p}$ is source stable.
\item
An arbitrary extended automorphism $v : \Sigma_{\bf p} \to \Sigma_{\bf p}$ of ${\bf p}$
becomes an extended automorphism of ${\bf p} \cup \vec {\frak w}_{\bf p}$.
\end{enumerate}
\end{defn}
\begin{rem}
\begin{enumerate}
\item By definition 
$
\frak{forget}_{\ell+\ell',\ell}({\bf p} \cup \vec {\frak w}_{\bf p})
=
{\bf p}.
$
\item
Condition (4) means that any extended automorphism $v : \Sigma_{\bf p} \to \Sigma_{\bf p}$ preserve
$\vec {\frak w}_{\bf p}$ up to enumeration.
\item
It is easy to prove the existence of weak stabilization data.
\end{enumerate}
\end{rem}
\begin{rem}
\begin{enumerate}
\item
Until Section \ref{subsec;markcoordinate} the symbols ${\bf p}$, ${\bf q}$
were used for the elements of the moduli space of stable marked curves.
From now on  the symbols ${\bf p}$, ${\bf q}$ 
stand for elements of the moduli space 
$\mathcal M_{k+1,\ell}(X,L,J;\beta)$.
\item
The symbol ${\bf x}$ (and ${\bf r}$) stand for the elements of $\mathcal X_{k+1,\ell}(X,L,J;\beta)$.
\item
For ${\bf p}$, ${\bf x}$ etc. we denote its representative by
$((\Sigma_{\bf p},\vec z_{\bf p},\vec{\frak z}_{\bf p}),u_{\bf p})$, 
$((\Sigma_{\bf x},\vec z_{\bf x},\vec{\frak z}_{\bf x}),u_{\bf x})$ and etc..
\item
For an element ${\bf p} = ((\Sigma_{\bf p},\vec z_{\bf p},\vec{\frak z}_{\bf p}),u_{\bf p})$ etc.
we call $(\Sigma_{\bf p},\vec z_{\bf p},\vec{\frak z}_{\bf p})$ its {\it source curve}. 
\index{00Sigma_{\bf p},\vec z_{\bf p},\vec{\frak z}_{\bf p}),u_{\bf p}@$((\Sigma_{\bf p},\vec z_{\bf p},\vec{\frak z}_{\bf p}),u_{\bf p})$}
\index{source curve}
\item
Sometimes we denote by ${\bf p}$ the source curve of ${\bf p}$, by an abuse of notation.
\end{enumerate}
\end{rem}

\subsection{The $\epsilon$-closeness}
\label{subsec:epclo}

\begin{defn}\label{situation38}
Let ${\bf p} = ((\Sigma_{\bf p},\vec z_{\bf p},\vec {\frak z}_{\bf p}),u_{\bf p})\in {\mathcal M}_{k+1,\ell}
(X,L,J;\beta)$.
\begin{enumerate}
\item
We fix its weak stabilization data $\vec {\frak w}_{\bf p}$ (consisting of $\ell'$ marked points).
\item We fix analytic families of coordinates $\{\varphi_{a,i}^{\rm s}\}$, $\{\varphi_{a,j}^{\rm d}\}$ 
at the nodes of ${\bf p} \cup \vec{\frak w}_{\bf p}$ in the sense of Definition \ref{defn33}.
\item We fix a family of $C^{\infty}$ trivializations $\{\phi_a\}$ which is compatible with 
the analytic family of coordinates given in item (2).
\item
We fix a Riemannian metric given on each irreducible component of $\Sigma_{\bf p}$.
\end{enumerate}
We denote the totality of such data by the symbol $\frak W_{\bf p}$ and call it  
\index{00W4_{\bf p}@$\frak W_{\bf p}$, $\frak W_{\bf r}$}  {\it stabilization and trivialization data}.\index{stabilization and trivialization data}
\par
$\frak W_{\bf p}$ induce the data $\frak W_{{\bf p} \cup \vec {\frak w}_{\bf p}}
= (\emptyset,\{\varphi_{a,i}^{\rm s}\}, \{\varphi_{a,j}^{\rm d}\},
\{\phi_a\})$, which are stabilization and trivialization data
of ${\bf p} \cup \vec {\frak w}_{\bf p}$.
Note ${\bf p} \cup \vec {\frak w}_{\bf p}$ is already source stable.
So we do not need to add additional marked points.
\end{defn}
\begin{rem}\label{rem499}
Throughout this paper we fix a Riemannian metric of $X$ and metrics on 
the moduli spaces $\mathcal M^{\rm d}_{k+1,\ell}$, $\mathcal M^{\rm s}_{\ell}$
and the total spaces $\mathcal C^{\rm d}_{k+1,\ell}$, $\mathcal C^{\rm s}_{\ell}$ 
of the universal families.
Since they are all compact the whole construction is independent of such a choice.
\end{rem}
\begin{defn}\label{defn41444}
Let $F : X \to Y$ be a map from a topological space to a metric space.
We say that $F$ {\it has diameter $<\epsilon$}, \index{diameter $<\epsilon$} if the images of all the 
connected components of $X$ have diameter $<\epsilon$ in $Y$.
\end{defn}
\begin{defn}\label{defn411}
Let ${\bf p} = ((\Sigma_{\bf p},\vec z_{\bf p},\vec {\frak z}_{\bf p}),u_{\bf p})\in {\mathcal M}_{k+1,\ell}(X,L,J;\beta)$ 
and $\frak W_{\bf p}$ its stabilization and trivialization data (Definition \ref{situation38}).
Let $\epsilon$ be a sufficiently small positive constant\footnote{We will specify how small it should be below.}.
\par
Let ${\bf x} = ((\Sigma_{\bf x},\vec z_{\bf x},\vec {\frak z}_{\bf x}),u_{\bf x})\in {\mathcal X}_{k+1,\ell}(X,L,J;\beta)$.
We say {\it ${\bf x}$ is $\epsilon$-close to ${\bf p}$ with respect to $\frak W_{\bf p}$} 
\index{00Epsilonclose@$\epsilon$-close} and write 
${\bf x} \in B_{\epsilon}({\mathcal X}_{k+1,\ell}(X,L,J;\beta);{\bf p},\frak W_{\bf p})$ if 
there exists $\vec{\frak w}_{\bf x} = ({\frak w}_{{\bf x},1},\dots,{\frak w}_{{\bf x},\ell'})$ with the following six 
properties.
\begin{enumerate}
\item ${\frak w}_{{\bf x},i} \in \Sigma_{\bf x}$.
\item
We put $\vec {\frak z}_{\bf x} \cup \vec {\frak w}_{\bf x} = ({\frak z}_{{\bf x},1},\dots,{\frak z}_{{\bf x},\ell},
{\frak w}_{{\bf x},1},\dots,{\frak w}_{{\bf x},\ell'})$.
Then $((\Sigma_{\bf x},\vec z_{\bf x},\vec {\frak z}_{\bf x}\cup \vec {\frak w}_{\bf x}),u_{\bf x})$ represents an element of 
${\mathcal X}_{k+1,\ell+\ell'}(X,L,J;\beta)$.
We write this element as ${\bf x} \cup \vec {\frak w}_{\bf x}$.
\item ${\bf x} \cup \vec {\frak w}_{\bf x}$ is source stable.
\item
$(\Sigma_{\bf x},\vec z_{\bf x},\vec {\frak z}_{\bf x}\cup \vec {\frak w}_{\bf x})$ is in the $\epsilon$-neighborhood 
of $(\Sigma_{\bf p},\vec z_{\bf p},\vec {\frak z}_{\bf p}\cup \vec {\frak w}_{\bf p})$ in 
$\mathcal M^{\rm d}_{k+1,\ell+\ell'}$.
\end{enumerate}
\par
We may take $\epsilon$ so small  that (4) above implies that there exists 
$\frak x$ such that $\Phi(\frak x) = (\Sigma_{\bf x},\vec z_{\bf x},\vec {\frak z}_{\bf x})\cup \vec {\frak w}_{\bf x}$.
Now the main part of the conditions is as follows.
We require that there exists $\vec \epsilon = ((\epsilon^{\rm s}_i),(\epsilon^{\rm d}_j))$ such that 
the map $\widehat{\Phi}_{\frak x;\vec \epsilon}$ in Lemma \ref{lem3838} has the following properties.
\begin{enumerate}
\item[(5)]
The $C^2$ difference between the two maps 
$$
u_{\bf x}\circ \widehat{\Phi}_{\frak x;\vec \epsilon} : \Sigma_{\bf p}(\vec \epsilon) \to X
\qquad
\text{and} 
\qquad
u_{\bf p}\vert_{\Sigma_{\bf p}(\vec \epsilon)} : \Sigma_{\bf p}(\vec \epsilon) \to X
$$
is smaller than $\epsilon$.
\item[(6)]
The restriction of $u_{\bf x}$ to $\Sigma_{\bf x} \setminus \Sigma_{\bf x}(\vec \epsilon)$ has diameter $< \epsilon$.
\end{enumerate}
Hereafter we call $\Sigma_{\bf x} \setminus \Sigma_{\bf x}(\vec \epsilon)$ the 
{\it neck region}.\index{neck region}
\end{defn}
\begin{rem}
In case $u_{\bf x}$ is pseudo holomorphic, Condition (5) corresponds to 
\cite[Definition 10.2 (10.2.1)]{FO}
and Condition (6) corresponds to 
\cite[Definition 10.2 (10.2.2)]{FO}.
So Definition \ref{defn411} is an adaptation of the definition of the stable map 
topology (which was introduced in \cite[Definition 10.3]{FO}) 
to the situation when $u_{\bf x}$ is not necessarily pseudo holomorphic.
\par
We remark that in various other references, in place of Condition (6), 
the condition that the energy of $u_{\bf x}$ is close to that of $u_{\bf p}$ is required
\footnote{Such a topology (using energy condition in place of (6)) is sometimes called `Gromov topology'.
We use the name `stable map topology' in order to distinguish it from 
`Gromov topology'.} to define a topology of the moduli space of pseudo holomorphic curves.
In the case when $u_{\bf x}$ is pseudo holomorphic 
this condition on the energy is equivalent to (6)
(when (5) is satisfied). To include the case when $u_{\bf x}$ is not necessarily pseudo holomorphic,
Condition (6) seems to be more suitable than the condition on the energy.
\end{rem}
\begin{lem}\label{lem413}
Let ${\bf p}$ and $\frak W_{\bf p}$ be as in Definition \ref{defn411}.
Then for any sufficiently small $\epsilon>0$ the following holds.
\par
Let ${\bf q} \in {\mathcal M}_{k+1,\ell}(X,L,J;\beta) \cap B_{\epsilon}({\mathcal X}_{k+1,\ell}(X,L,J;\beta);{\bf p},\frak W_{\bf p})$
and $\frak W_{\bf q}$ its stabilization and trivialization data (Definition \ref{situation38}).
Then there exists $\delta>0$ such that:
\begin{equation}\label{lem413occc}
B_{\delta}({\mathcal X}_{k+1,\ell}(X,L,J;\beta);{\bf q},\frak W_{\bf q})
\subset 
B_{\epsilon}({\mathcal X}_{k+1,\ell}(X,L,J;\beta);{\bf p},\frak W_{\bf p}).
\end{equation}
\end{lem}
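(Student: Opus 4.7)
The plan is to chain together the two witnesses of $\epsilon$-closeness: the one that places $\mathbf{q}$ inside $B_{\epsilon}({\mathcal X}_{k+1,\ell}(X,L,J;\beta);\mathbf{p},\frak W_{\mathbf{p}})$ and the one that places $\mathbf{x}$ inside $B_{\delta}({\mathcal X}_{k+1,\ell}(X,L,J;\beta);\mathbf{q},\frak W_{\mathbf{q}})$, and then produce from them new witness data showing $\mathbf{x} \in B_{\epsilon}({\mathcal X}_{k+1,\ell}(X,L,J;\beta);\mathbf{p},\frak W_{\mathbf{p}})$. By hypothesis there exist additional marked points $\vec{\frak w}_{\mathbf{q}}^{(\mathbf{p})}$ on $\Sigma_{\mathbf{q}}$ of cardinality $\ell'=\#\vec{\frak w}_{\mathbf{p}}$, gluing data $\frak x^{(\mathbf{p})}$ with $\Phi(\frak x^{(\mathbf{p})})=(\Sigma_{\mathbf{q}},\vec z_{\mathbf{q}},\vec{\frak z}_{\mathbf{q}}\cup\vec{\frak w}_{\mathbf{q}}^{(\mathbf{p})})$, and parameters $\vec\epsilon^{(\mathbf{p})}$ verifying the six conditions of Definition~\ref{defn411}. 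Similarly, for any $\mathbf{x} \in B_{\delta}({\mathcal X}_{k+1,\ell}(X,L,J;\beta);\mathbf{q},\frak W_{\mathbf{q}})$ one obtains $\vec{\frak w}_{\mathbf{x}}^{(\mathbf{q})}$ of cardinality $\ell''=\#\vec{\frak w}_{\mathbf{q}}$, together with $\frak x^{(\mathbf{q})}$, $\vec\epsilon^{(\mathbf{q})}$, and the associated smooth embedding $\widehat{\Phi}_{\frak x^{(\mathbf{q})};\vec\epsilon^{(\mathbf{q})}}$ of $\Sigma_{\mathbf{q}}(\vec\epsilon^{(\mathbf{q})})$ into $\Sigma_{\mathbf{x}}$ supplied by Lemma~\ref{lem3838}.

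The next step is to produce the required additional marked points $\vec{\frak w}_{\mathbf{x}}^{(\mathbf{p})}$ on $\Sigma_{\mathbf{x}}$. I would choose $\delta$ small enough that (a) $\vec\epsilon^{(\mathbf{q})}$ is small enough that all points of $\vec{\frak w}_{\mathbf{q}}^{(\mathbf{p})}$ lie inside the thick part $\Sigma_{\mathbf{q}}(\vec\epsilon^{(\mathbf{q})})$, which is possible because the weak stabilization points are specified on components of $\Sigma_{\mathbf{p}}$ away from the nodes and therefore stay uniformly away from the neck regions of $\mathbf{q}$ under a small deformation, and (b) the extended automorphism compatibility of Definition~\ref{defn37}(3) transfers coherently along $\widehat{\Phi}$. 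Then I set $\vec{\frak w}_{\mathbf{x}}^{(\mathbf{p})} := \widehat{\Phi}_{\frak x^{(\mathbf{q})};\vec\epsilon^{(\mathbf{q})}}(\vec{\frak w}_{\mathbf{q}}^{(\mathbf{p})})$. Conditions (1)(2)(3) of Definition~\ref{defn411} follow because source stability is an open condition under sufficiently small deformations of the source curve and $\mathbf{q}\cup\vec{\frak w}_{\mathbf{q}}^{(\mathbf{p})}$ was already source stable. Condition (4), proximity in $\mathcal M^{\rm d}_{k+1,\ell+\ell'}$, follows from the smoothness of $\Phi$ in Lemma~\ref{lem34}: the gluing data describing $\mathbf{x}\cup\vec{\frak w}_{\mathbf{x}}^{(\mathbf{p})}$ in that moduli space is, up to a small additional perturbation, the concatenation of the gluing data of $\mathbf{q}\cup\vec{\frak w}_{\mathbf{q}}^{(\mathbf{p})}$ at $\mathbf{p}$ with that of $\mathbf{x}\cup\vec{\frak w}_{\mathbf{x}}^{(\mathbf{q})}$ at $\mathbf{q}$, and both parametrizations cover overlapping neighborhoods.

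For conditions (5) and (6) the key point is that, after a choice of new gluing parameters $\vec\epsilon$, the map $\widehat{\Phi}_{\frak x;\vec\epsilon}$ agrees, up to a uniformly small $C^2$-error, with the composition $\widehat{\Phi}_{\frak x^{(\mathbf{q})};\vec\epsilon^{(\mathbf{q})}}\circ \widehat{\Phi}_{\frak x^{(\mathbf{p})};\vec\epsilon^{(\mathbf{p})}}$ on the thick part; this is a consequence of the smoothness of the $C^\infty$ trivializations $\{\phi_a\}$ and of the analytic families $\{\varphi_{a,i}\}$ with respect to the moduli parameters. The triangle inequality then bounds
\[
\bigl\| u_{\mathbf{x}}\circ \widehat{\Phi}_{\frak x;\vec\epsilon} - u_{\mathbf{p}}\vert_{\Sigma_{\mathbf{p}}(\vec\epsilon)}\bigr\|_{C^2}
\]
by the sum of the two $C^2$-errors coming from the two given $\epsilon$-closenesses, so taking $\delta$ much smaller than $\epsilon$ yields (5). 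Condition (6) follows by decomposing the neck region of $\Sigma_{\mathbf{x}}$ into the portion inherited from the $\mathbf{q}\to\mathbf{p}$ neck (small image by the hypothesis on $\mathbf{q}$) and the newly generated portion (small image by the $\mathbf{x}\to\mathbf{q}$ hypothesis together with the $C^0$-closeness of $u_{\mathbf{x}}$ and $u_{\mathbf{q}}$ on the overlap). The main obstacle will be comparing the two a priori distinct local trivialization data $\frak W_{\mathbf{p}}$ and $\frak W_{\mathbf{q}}$ on the region where they both make sense; this comparison has the flavor of a cocycle compatibility and relies on the smooth dependence of the analytic families of coordinates and of the $C^\infty$ trivializations of Section~\ref{subsec;markcoordinate} on the moduli parameters.
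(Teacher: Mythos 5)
Your route differs from the paper's: you chain the two witnesses directly, transporting $\vec{\frak w}_{\bf q}^{({\bf p})}$ through the embedding supplied by the $\delta$-closeness and concluding by the triangle inequality, whereas the paper (Section \ref{proofoflem413}) reduces the statement to two special cases --- change of stabilization and trivialization data at a \emph{fixed} point (Steps 1--2, via the forgetful-map diagram and the insertion of a third, disjoint set of stabilizing points) and a \emph{specially constructed} $\frak W_{\bf q}$ compatible with $\frak W_{\bf p}$ (Step 3, via Sublemma \ref{sublem125}, which makes Diagram (\ref{diagram12-3}) commute exactly) --- and then combines them. The direct route is viable, but as written it has a genuine gap precisely at the step you yourself flag as ``the main obstacle'': the claim that $\widehat{\Phi}_{\frak x;\vec\epsilon}$ (built from $\frak W_{\bf p}$) agrees up to a small $C^2$-error with the composite $\widehat{\Phi}_{\frak x^{({\bf q})};\vec\epsilon^{({\bf q})}}\circ\widehat{\Phi}_{\frak x^{({\bf p})};\vec\epsilon^{({\bf p})}}$ does \emph{not} follow from ``smoothness of $\{\phi_a\}$ and $\{\varphi_{a,i}\}$ in the moduli parameters'' alone. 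Since $\frak W_{\bf q}$ is arbitrary, the two families of embeddings are built from unrelated data, and smoothness of each family by itself gives no comparison between them. What makes the comparison work --- and what the paper's Step 1 and Sublemma \ref{sublem125} actually provide --- is an anchor: the two constructions coincide on the central fiber ${\bf x}={\bf q}$ (because $\phi_a({\bf o},z)=z$ and because the gluing datum $\frak x$ is unique, by source stability of ${\bf p}\cup\vec{\frak w}_{\bf p}$), and only then does continuity of both finite-dimensional families force approximate agreement for $\delta$ small. Without stating and using this anchoring fact, the estimate underlying your verification of Definition \ref{defn411} (5)--(6) is unsupported.

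A secondary problem is your step (a): you ``choose $\delta$ small enough that $\vec\epsilon^{({\bf q})}$ is small enough'' that the points $\vec{\frak w}_{\bf q}^{({\bf p})}$ lie in $\Sigma_{\bf q}(\vec\epsilon^{({\bf q})})$. But $\vec\epsilon^{({\bf q})}$ is part of the existential witness in Definition \ref{defn411} and is not controlled by $\delta$; its components may be large, so the region on which you have $C^2$-control of $u_{\bf x}$ relative to $u_{\bf q}$ can a priori miss the transported points and even part of the image of $\Sigma_{\bf p}(\vec\epsilon)$. You must either handle this explicitly (for instance, enlarge $\vec\epsilon$ so that the thick part stays inside the controlled region, and re-verify condition (6) on the enlarged neck using the small-diameter hypothesis on the excluded region together with uniform continuity of $u_{\bf q}$ on a thin collar), or sidestep it as the paper does by first passing to the exactly compatible data produced by Sublemma \ref{sublem125} and treating the change of $\frak W_{\bf q}$ separately at the fixed point ${\bf q}$.
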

This is mostly the same as \cite[Lemma 7.26]{FuFu5}
and can be proved in the same way.
See also (the proof of) \cite[Lemma 12.13]{FuFu6}.
We prove it in Section \ref{proofoflem413} for completeness' sake.
\begin{proof}[Proof of Proposition \ref{prop43}]
We take $\frak W_{\bf p}$ for each ${\bf p} \in {\mathcal M}_{k+1,\ell}(X,L,J;\beta)$
and fix them.
We then put
$$
B_{\epsilon}({\mathcal X}_{k+1,\ell}(X,L,J;\beta),{\bf p})
=
B_{\epsilon}({\mathcal X}_{k+1,\ell}(X,L,J;\beta);{\bf p},\frak W_{\bf p}).
$$
Lemma \ref{lem413} implies that this choice satisfies Definition \ref{defn4141411} (2).
Definition \ref{defn4141411} (3) is obvious from construction.
\par
From the definition of the stable map topology \index{stable map topology} on ${\mathcal M}_{k+1,\ell}(X,L,J;\beta)$ 
(\cite[Definition 10.3]{FO} and \cite[Definition 7.1.42]{fooobook2})
we find that the totality of all the subsets
${\mathcal M}_{k+1,\ell}(X,L,J;\beta) \cap B_{\epsilon}({\mathcal X}_{k+1,\ell}(X,L,J;\beta);{\bf p},\frak W_{\bf p})$
moving $\epsilon$, ${\bf p}$, $\frak W_{\bf p}$ is a basis of the 
stable map topology.
Then Lemma \ref{lem413} implies that 
when we  fix ${\bf p} \mapsto \frak W_{\bf p}$, the set 
$\{{\mathcal M}_{k+1,\ell}(X,L,J;\beta) \cap B_{\epsilon}({\mathcal X}_{k+1,\ell}(X,L,J;\beta);{\bf p},\frak W_{\bf p})
\mid {\bf p}, \epsilon\}$ is still a basis of the stable map topology.
This implies Definition \ref{defn4141411} (1).
\end{proof}
\begin{rem}
Lemma \ref{lem413} also implies that the partial topology 
we defined above is independent of the choice of ${\bf p} \mapsto \frak W_{\bf p}$,
up to equivalence.
\end{rem}

\section{Obstruction bundle data}
\label{sec;obstbundledata}

\begin{defn}\label{defn51}
{\it Obstruction bundle data} of the \index{obstruction bundle data}
moduli space ${\mathcal M}_{k+1,\ell}(X,L,J;\beta)$
assign to each 
${\bf p} \in {\mathcal M}_{k+1,\ell}(X,L,J;\beta)$
a neighborhood $\mathscr U_{\bf p}$ of ${\bf p}$ in ${\mathcal X}_{k+1,\ell}(X,L,J;\beta)$
and an object
$E_{\bf p}(\bf x)$ to each ${\bf x} \in \mathscr U_{\bf p}$ . We require that they have the following properties.
\begin{enumerate}
\item
We put ${\bf x} = ((\Sigma_{\bf x},\vec z_{\bf x},\vec {\frak z}_{\bf x}),u_{\bf x})$.
Then $E_{\bf p}({\bf x})$ 
\index{00E1_{p}({\bf x})@$E_{\bf p}({\bf x})$} is a finite dimensional linear subspace of 
the set of $C^2$ sections 
$$
E_{\bf p}({\bf x}) \subset C^2(\Sigma_{\bf x};u_{\bf x}^*TX \otimes \Lambda^{01}),
$$
whose support is away from nodal points.  (See Remark \ref{rem5757}.)
\item (Smoothness)
$E_{\bf p}({\bf x})$ depends smoothly on ${\bf x}$ as defined in  
Definition \ref{defn8911}.
\item (Transversality)
$\{E_{\bf p}({\bf x})\}$ satisfies the transversality condition as in
Definition \ref{defn54564}.
\item (Semi-continuity)
$E_{\bf p}({\bf x})$ is semi-continuous on  ${\bf p}$ as defined in Definition \ref{defn52}.
\item (Invariance under extended automorphisms)
$E_{\bf p}({\bf x})$ is invariant under the extended automorphism group of 
${\bf x}$ as in Condition \ref{conds55}.
\end{enumerate}
For a fixed ${\bf p}$ we call ${\bf x} \mapsto E_{\bf p}({\bf x})$ 
{\it obstruction bundle data at ${\bf p}$} if (1)(2)(3)(5) above are satisfied.
\end{defn}
We now define Conditions (3)(4)(5).
(2) will be defined in Section \ref{sec;obstbundleexpdecay}.
\par
\begin{defn}\label{defn52}
We say $E_{\bf p}({\bf x})$ is {\it semi-continuous}\index{semi-continuous} on  ${\bf p}$ if the following holds.
\par
If ${\bf q} \in \mathscr U_{\bf p} \cap {\mathcal M}_{k+1,\ell}(X,L,J;\beta)$
and 
${\bf x} \in \mathscr U_{\bf p} \cap \mathscr U_{\bf q}$,
then
$$
E_{\bf q}({\bf x}) \subseteq E_{\bf p}({\bf x}).
$$
\end{defn}
We require the transversality condition for ${\bf x} = {\bf p}$ only.
We put ${\bf p} = ((\Sigma_{\bf p},\vec z_{\bf p},\vec {\frak z}_{\bf p}),u_{\bf p})$.
We decompose $\Sigma_{\bf p}$ into irreducible
components as
$$
\Sigma_{\bf p}
= \bigcup_{a \in \mathcal A_{\bf p}^{\rm s}} \Sigma_{\bf p}(a)
\cup \bigcup_{a \in \mathcal A_{\bf p}^{\rm d}} \Sigma_{\bf p}(a).
$$
See (\ref{form3232}).
Let $u_{{\bf p},a}$ be the restriction of $u_{\bf p}$ to $\Sigma_{\bf p}(a)$.
The linearization of the non-linear Cauchy-Riemann equation 
defines a linear elliptic operator
\begin{equation}\label{form51}
\aligned
D_{u_{{\bf p},a}}\overline\partial : &L^2_{m+1}(\Sigma_{\bf p}(a),\partial \Sigma_{\bf p}(a);
u_{{\bf p},a}^*TX,u_{{\bf p},a}^*TL)\\
&\to L^2_{m}(\Sigma_{\bf p}(a);u_{{\bf p},a}^*TX\otimes \Lambda^{01})
\endaligned
\end{equation}
for $a \in \mathcal A_{\bf p}^{\rm d}$  and
\begin{equation}\label{form52}
D_{u_{{\bf p},a}}\overline\partial : L^2_{m+1}(\Sigma_{\bf p}(a);
u_{{\bf p},a}^*TX) \to L^2_{m}(\Sigma_{\bf p}(a);u_{{\bf p},a}^*TX\otimes \Lambda^{01})
\end{equation}
for $a \in \mathcal A_{\bf p}^{\rm s}$.
Here $L^2_{m+1}(\Sigma_{{\bf p}}(a),\partial \Sigma_{{\bf p},a}(a);
u_{{\bf p},a}^*TX,u_{{\bf p},a}^*TL)$ is the space of all sections of the bundle 
$u_{{\bf p},a}^*TX$ of $L^2_{m+1}$-class  whose boundary values lie in $u_{{\bf p},a}^*TL$.
Other spaces are appropriate Sobolev spaces of the sections. Take $m$ sufficiently 
large.
We take a direct sum
\begin{equation}\label{form5333}
\aligned
&\bigoplus_{a \in \mathcal A_{\bf p}^{\rm s}}L^2_{m+1}(\Sigma_{\bf p}(a);
u_{{\bf p},a}^*TX\otimes \Lambda^{01}) \\
&\oplus
\bigoplus_{a \in \mathcal A_{\bf p}^{\rm d}}
L^2_{m+1}(\Sigma_{\bf p}(a),\partial \Sigma_{\bf p}(a);
u_{{\bf p},a}^*TX,u_{{\bf p},a}^*TL).
\endaligned
\end{equation}
We also consider
\begin{equation}\label{form5444}
\bigoplus_{a \in \mathcal A_{\bf p}^{\rm s} \cup \mathcal A_{\bf p}^{\rm d}}L^2_{m}(\Sigma_{\bf p}(a);
u_{{\bf p},a}^*TX\otimes \Lambda^{01}).
\end{equation}
\begin{defn}
We define 
$L^2_{m}(\Sigma_{\bf p};u_{{\bf p}}^*TX\otimes \Lambda^{01})$
\index{00L1^2_{m}(\Sigma_{\bf p};u_{{\bf p}}^*TX\otimes \Lambda^{01})@$L^2_{m}(\Sigma_{\bf p};u_{{\bf p}}^*TX\otimes \Lambda^{01})$}
to be the Hilbert space (\ref{form5444}).
\par
We define a Hilbert space $W^2_{m+1}(\Sigma_{\bf p},\partial\Sigma_{\bf p};u_{{\bf p}}^*TX,u_{{\bf p}}^*TL)$
\index{00W1^2_{m+1}(\Sigma_{\bf p},\partial\Sigma_{\bf p};u_{{\bf p}}^*TX,u_{{\bf p}}^*TL)@$W^2_{m+1}(\Sigma_{\bf p},\partial\Sigma_{\bf p};u_{{\bf p}}^*TX,u_{{\bf p}}^*TL)$}
as the subspace of the Hilbert space (\ref{form5333}) consisting of  elements 
$
\sum_{a \in \mathcal A_{\bf p}^{\rm s} \cup 
\mathcal A_{\bf p}^{\rm d}}V_a
$
(where $V_a$ is a section on $\Sigma_{\bf p}(a)$)
with the following properties.
Let $p \in \Sigma_{\bf p}$ be a nodal point. We take $a_1(p)$, $a_2(p)$
such that
$\{p\} = \Sigma_{\bf p}(a_1(p)) \cap \Sigma_{\bf p}(a_2(p))$. We require
$$
V_{a_1(p)}(p) = V_{a_2(p)}(p).
$$
We require this condition at all the nodal points $p$.
\end{defn}
The operators (\ref{form51}), (\ref{form52}) induce a Fredholm  operator
\begin{equation}\label{form523}
D_{u_{{\bf p}}}\overline\partial : W^2_{m+1}(\Sigma_{\bf p},\partial\Sigma_{\bf p};u_{{\bf p}}^*TX,u_{{\bf p}}^*TL) \to L^2_{m}(\Sigma_{\bf p};u_{{\bf p}}^*TX\otimes \Lambda^{01}).
\end{equation}
\begin{rem}
We define $L^2_{m}(\Sigma_{\bf x};u_{{\bf x}}^*TX\otimes \Lambda^{01})$,
$W^2_{m+1}(\Sigma_{\bf x},\partial\Sigma_{\bf x};u_{{\bf x}}^*TX,u_{{\bf x}}^*TL)$
and the operator $D_{u_{{\bf x}}}\overline\partial$ between them for 
${\bf x} \in {\mathcal X}_{k+1,\ell}(X,L,J;\beta)$ in the same way. (Here $u_{{\bf x}}$ may not 
be pseudo holomorphic but is of $L^2_{m+1}$ class.)
\end{rem}
Now we describe the transversality condition.
When ${\bf x} = {\bf p}$  
we require $E_{\bf p}({\bf p})$ consists of smooth sections
as a part of Definition \ref{defn51} (2).
(See Definition \ref{defn86} (1).)

\begin{defn}\label{defn54564}
We say that $\{E_{\bf p}({\bf x})\}$ satisfies the {\it transversality condition}\index{transversality condition}
if
$$
{\rm Im}(\ref{form523}) + E_{\bf p}({\bf p}) = L^2_{m}(\Sigma_{\bf p};u_{{\bf p}}^*TX\otimes \Lambda^{01}).
$$
\end{defn}
By ellipticity this condition is independent of $m$.
\par
We next describe Definition \ref{defn51} (5).
Let $v : \Sigma_{\bf x} \to \Sigma_{\bf x}$ be an extended 
automorphism. It induces an isomorphism
$$
v_* : C^2(\Sigma_{{\bf x}};u_{{\bf x}}^*TX\otimes \Lambda^{01}) 
\to C^2(\Sigma_{{\bf x}};u_{{\bf x}}^*TX\otimes \Lambda^{01})
$$
since $v$ is biholomorphic and $u_{{\bf x}}\circ v = u_{\bf x}$.
In case ${\bf x} = {\bf p}$ the group ${\rm Aut}^+({\bf p})$  acts also on the domain and target of 
(\ref{form523}) and the operator $D_{u_{{\bf p}}}\overline\partial$ is invariant under this action.
Let $\frak{aut}(\Sigma_{\bf p},\vec z_{\bf p},\vec{\frak z}_{\bf p})$ be the Lie algebra 
of the group of automorphisms of the source curve $(\Sigma_{\bf p},\vec z_{\bf p},\vec{\frak z}_{\bf p})$ of 
${\bf p}$. We can embed it into the kernel of $D_{u_{\bf p}}\overline{\partial}$ by 
differentiating $u_{\bf p}$, so that it becomes ${\rm Aut}({\bf p})$ invariant.
\begin{conds}\label{conds55}
We require $v_*(E_{\bf p}({\bf x})) = E_{\bf p}({\bf x})$ for any $v \in {\rm Aut}^+({\bf x})$.
\par
We also assume  that
the action of the group of automorphisms 
${\rm Aut}({\bf p})$ of ${\bf p}$ on 
$(D_{u_{{\bf p}}}\overline\partial)^{-1}(E_{\bf p}({\bf p}))/\frak{aut}(\Sigma_{\bf p},\vec z_{\bf p},\vec{\frak z}_{\bf p})$ is effective,
where $D_{u_{{\bf p}}}\overline\partial$ is as in (\ref{form523}).
\end{conds}
\begin{rem}\label{rem5757}
Note an element of $\mathcal X_{k+1,\ell}(X,L,J;\beta)$ is an  {\it equivalence class} 
of objects ${\bf x} = ((\Sigma_{\bf x},\vec z_{\bf x},\vec {\frak z}_{\bf x}),u_{\bf x})$.
Therefore for the data $E_{\bf p}({\bf x})$ to be well-defined we need to assume the following.
\begin{enumerate}
\item[(*)]
If $v : \Sigma_{\bf x} \to \Sigma_{\bf x'}$ is an isomorphism from 
${\bf x}$ to ${\bf x}' = ((\Sigma_{\bf x'},\vec z_{\bf x'},\vec {\frak z}_{\bf x'}),u_{\bf x'})$
then $v_*(E_{\bf p}({\bf x})) = E_{\bf p}({\bf x'})$.
\end{enumerate}
\par
We include this condition as a part of Definition \ref{defn51} (1).
In particular (*) implies that $E_{\bf p}({\bf x})$ is invariant under the action of ${\rm Aut}({\bf x})$.
The first half of Condition \ref{conds55} is slightly stronger than (*).
We add the second half of  Condition \ref{conds55} so that 
orbifolds appearing in our Kuranishi structure become 
effective. 
\end{rem}
(*) and Condition \ref{conds55} imply the next lemma.
Let $v : \Sigma_{\bf p} \to \Sigma_{\bf p}$ be an extended 
automorphism.
Let ${\bf x} \in \mathscr U_{\bf p}$.
We may write ${\bf x} = \Phi(\frak x)$.
Here $\Phi$ is the map in Lemma \ref{lem34}.
The map $v$ induces a map
$$
v_* : \prod_{a\in \mathcal A_{\bf p}^{\rm s} \cup \mathcal A_{\bf p}^{\rm d}} \mathcal V_a 
\times [0,c)^{m_{\rm d}} \times (D^2_{\circ}(c))^{m_{\rm s}}
\to 
\prod_{a\in \mathcal A_{\bf p}^{\rm s} \cup \mathcal A_{\bf p}^{\rm d}} \mathcal V_a 
\times [0,c)^{m_{\rm d}} \times (D^2_{\circ}(c))^{m_{\rm s}}.
$$
We put $v_*({\bf x}) = \Phi(v_*(\frak x))$.
$v \mapsto v_*$ determines an action of the group 
${\rm Aut}^+({\bf p})$ of extended automorphisms on
$\mathscr U_{\bf p}$.
\par
By Definition \ref{defn37} (3), Definition \ref{defn46} (4) etc. 
$v$ induces a biholomorphic map
$
\hat v : \Sigma_{{\bf x}} \to \Sigma_{v_*({\bf x})}
$
such that
$u_{v_*({\bf x})} \circ \hat v = u_{{\bf x}}$, 
$\hat v(z_{{\bf x},j}) = z_{v_*({\bf x}),j}$ and 
$\hat v(\frak z_{{\bf x},i}) = \frak z_{v_*({\bf x}),\sigma(i)}$.
Here $\sigma$ is the permutation such that
$v(\frak z_{{\bf p},i}) = \frak z_{{\bf p},\sigma(i)}$.
\par
Therefore the map $\hat v$ induces an isomorphism
\begin{equation}\label{form5666}
\hat v_* : L^2_{m}(\Sigma_{{\bf x}};u_{{\bf x}}^*TX\otimes \Lambda^{01}) 
\to L^2_{m}(\Sigma_{v_*({\bf x})};u_{v_*({\bf x})}^*TX\otimes \Lambda^{01}).
\end{equation}
\begin{lem}\label{conds55lem}
$\hat v_*(E_{\bf p}({\bf x})) = E_{\bf p}(v_*({\bf x}))$.
\end{lem}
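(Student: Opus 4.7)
The strategy I would use is to factor the extended isomorphism $\hat v$ through an ordinary isomorphism, so that property $(\ast)$ in Remark~\ref{rem5757} can be invoked. Let $\sigma$ denote the permutation on $\{1,\dots,\ell\}$ determined by $v\in{\rm Aut}^+({\bf p})$ via $v(\frak z_{{\bf p},i})=\frak z_{{\bf p},\sigma(i)}$, and introduce the auxiliary representative $\tilde{\bf x}=((\Sigma_{\bf x},\vec z_{\bf x},(\frak z_{{\bf x},\sigma^{-1}(1)},\dots,\frak z_{{\bf x},\sigma^{-1}(\ell)})),u_{\bf x})$, i.e.\ ${\bf x}$ with its interior marked points re-labelled by $\sigma^{-1}$. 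The direct check $\hat v(\frak z_{\tilde{\bf x},i})=\hat v(\frak z_{{\bf x},\sigma^{-1}(i)})=\frak z_{v_*({\bf x}),\sigma(\sigma^{-1}(i))}=\frak z_{v_*({\bf x}),i}$, together with $\hat v(z_{{\bf x},j})=z_{v_*({\bf x}),j}$ and $u_{v_*({\bf x})}\circ\hat v=u_{\bf x}$, shows that $\hat v$ is an ordinary isomorphism (with trivial permutation) from $\tilde{\bf x}$ to $v_*({\bf x})$. Applying $(\ast)$ then gives $\hat v_*(E_{\bf p}(\tilde{\bf x}))=E_{\bf p}(v_*({\bf x}))$.

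It therefore remains to prove the equality $E_{\bf p}({\bf x})=E_{\bf p}(\tilde{\bf x})$ of two subspaces of $L^2_m(\Sigma_{\bf x};u_{\bf x}^*TX\otimes\Lambda^{01})$. For this I would exploit that $\tilde{\bf x}$ and ${\bf x}$ share source curve, map, and boundary marked points, differing only by the $\sigma$-permutation of interior marked points, together with the ${\rm Aut}^+({\bf p})$-equivariance of the stabilization and trivialization data $\frak W_{\bf p}$ required in Definition~\ref{defn37}~(3). Concretely, the identity of $\Sigma_{\bf x}$ is an extended isomorphism from ${\bf x}$ to $\tilde{\bf x}$ whose permutation is exactly $\sigma$, and since $\sigma$ originates from $v\in{\rm Aut}^+({\bf p})$, the equivariance of $\frak W_{\bf p}$ under $v$ propagates through the construction of $E_{\bf p}(\cdot)$ to produce the desired equality, which may be viewed as Condition~\ref{conds55} transported from~${\bf p}$ to the nearby element~${\bf x}$ along the orbit of $v_*$ on $\mathscr U_{\bf p}$.

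The main obstacle I anticipate is precisely this last step: for a generic ${\bf x}\in\mathscr U_{\bf p}$ the group ${\rm Aut}^+({\bf x})$ is trivial even when ${\rm Aut}^+({\bf p})$ is not, so Condition~\ref{conds55} applied literally to ${\bf x}$ is vacuous. Thus $E_{\bf p}({\bf x})=E_{\bf p}(\tilde{\bf x})$ cannot be obtained by pure formal manipulation of $(\ast)$ and Condition~\ref{conds55} alone; one must invoke the ${\rm Aut}^+({\bf p})$-equivariance imposed on every ingredient of $\frak W_{\bf p}$ (the analytic families of coordinates, the $C^\infty$ trivializations of Definition~\ref{defn377}, and the Riemannian data of Definition~\ref{situation38}), which is arranged precisely so that re-enumeration of interior marked points induced by any element of ${\rm Aut}^+({\bf p})$ leaves $E_{\bf p}(\cdot)$ invariant. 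The rigorous verification will therefore naturally follow once the explicit construction of $E_{\bf p}({\bf x})$ from $\frak W_{\bf p}$ in the later sections is in hand.
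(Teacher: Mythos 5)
Your first paragraph is correct and reproduces the only real manipulation involved: writing $\tilde{\bf x}$ for ${\bf x}$ with its interior marked points relabelled by $\sigma^{-1}$, the map $\hat v$ is an isomorphism with trivial permutation from $\tilde{\bf x}$ to $v_*({\bf x})$, so $(\ast)$ of Remark \ref{rem5757} gives $\hat v_*(E_{\bf p}(\tilde{\bf x}))=E_{\bf p}(v_*({\bf x}))$, and the lemma is thereby equivalent to the equality $E_{\bf p}({\bf x})=E_{\bf p}(\tilde{\bf x})$ inside $L^2_m(\Sigma_{\bf x};u_{\bf x}^*TX\otimes\Lambda^{01})$.

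The gap is in how you dispose of that remaining equality, which is not a leftover technicality but the entire content of the lemma. You propose to obtain it from the ${\rm Aut}^+({\bf p})$-equivariance of the stabilization and trivialization data $\frak W_{\bf p}$ ``propagating through the construction of $E_{\bf p}(\cdot)$'', with the rigorous verification deferred to the explicit construction of later sections. But in Section \ref{sec;obstbundledata} the collection $\{E_{\bf p}({\bf x})\}$ is abstract data subject only to Definition \ref{defn51}; it is not constructed from $\frak W_{\bf p}$, and $\frak W_{\bf p}$ enters the statement of the lemma only through the definition of the action $v_*$ and of the map $\hat v$ (via Definition \ref{defn37} (3) and Definition \ref{defn46} (4)). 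Equivariance of $\frak W_{\bf p}$ therefore imposes no constraint whatsoever on an abstractly given assignment ${\bf x}\mapsto E_{\bf p}({\bf x})$, and appealing to the particular construction of Section \ref{sec:exiobst} would prove the lemma only for that choice, whereas the lemma is needed for arbitrary obstruction bundle data entering Theorem \ref{constthm} (it is what makes the ${\rm Aut}^+({\bf p})$-action in Lemma \ref{lem9555} and the passage to $U_{\bf p}=V_{\bf p}/{\rm Aut}({\bf p})$ and to $\mathcal E_{\bf p}$ legitimate). The paper's own proof is the sentence preceding the lemma: $(\ast)$ together with Condition \ref{conds55}, i.e.\ Definition \ref{defn51} (5) (``invariance under extended automorphisms''), is read as exactly encoding the compatibility of $E_{\bf p}(\cdot)$ with the extended isomorphisms $\hat v$ induced by ${\rm Aut}^+({\bf p})$; it is treated as a hypothesis on the data, not as something to be derived from $\frak W_{\bf p}$. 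Your observation that Condition \ref{conds55} applied literally at a generic ${\bf x}$ (with trivial ${\rm Aut}^+({\bf x})$) is vacuous shows you located the crux, but the route you then take is circular in this setting, so the proposal does not close the argument.
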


\section{Kuranishi structure: review}
\label{sec;Kurareview}

The main result, Theorem \ref{constthm}, we prove in this article assigns a Kuranishi structure 
to each obstruction bundle data in the sense of Definition 
\ref{defn51}.
We refer readers to \cite[Section 15]{foootech2} for the version of the terminology of 
orbifold we use.\footnote{See also \cite{ofdruan} for an exposition on orbifold.}
(We always assume orbifolds to be effective, in particular.)
In this article we consider the case of orbifolds with boundary and corner.
\par
To state Theorem \ref{constthm} later we review the definition of Kuranishi structure in this section.
Let $\mathcal M$ be a compact metrizable space.

\begin{defn}\label{defnKchart}
A {\it Kuranishi chart}
\index{Kuranishi chart} of $\mathcal M$ is $\mathcal U =(U,\mathcal E,\psi,s)$ with
the following properties.
\begin{enumerate}
\item
$U$ is an (effective) orbifold.
\item
$\mathcal E$ is an orbi-bundle on $U$.
\item
$s$ is a smooth section of $\mathcal E$.
\item
$\psi : s^{-1}(0) \to \mathcal M$ is a homeomorphism onto an open set.
\end{enumerate}
We call $U$ a {\it Kuranishi neighborhood},\index{Kuranishi neighborhood} $\mathcal E$ an
{\it obstruction bundle},\index{obstruction bundle}
$s$ a {\it Kuranishi map}\index{Kuranishi map} and $\psi$ a {\it parametrization}.\index{parametrization}

If $U'$ is an open subset of $U$, then by restricting $\mathcal E$, $\psi$ and $s$
to $U'$, we obtain a
Kuranishi chart, which we write $\mathcal U\vert_{U'}$
and call an {\it open subchart}.\index{open subchart}
\par
The {\it dimension} 
$\mathcal U =(U,\mathcal E,\psi,s)$
is by definition 
$
\dim \mathcal U = \dim U  - {\rm rank}~ \mathcal E.
$
Here ${\rm rank}~ \mathcal E$ is the dimension  of the fiber $\mathcal E \to U$.
\end{defn}
\begin{defn}\label{defKchart}
Let $\mathcal U = (U,\mathcal E,\psi,s)$, $\mathcal U'
= (U',\mathcal E',\psi',s')$ be Kuranishi charts of $\mathcal M$.
An {\it embedding} of Kuranishi charts $: \mathcal U\to \mathcal U'$
\index{embedding of Kuranishi charts}
is  a pair $(\varphi,\widehat\varphi)$ with the following properties.
\begin{enumerate}
\item
$\varphi : U \to U'$ is an embedding of orbifolds.
\item
$\widehat\varphi : \mathcal E \to \mathcal E'$ is an embedding of orbi-bundles
over $\varphi$.
\item
$\widehat\varphi \circ s = s' \circ \varphi$.
\item
$\psi' \circ \varphi = \psi$ holds on $s^{-1}(0)$.
\item
For each $x \in U$ with $s(x) = 0$, the derivative
$
D_{\varphi(x)}s'
$
induces an isomorphism
\begin{equation}\label{form3.1111}
\frac{T_{\varphi(x)}U'}{(D_x\varphi)(T_xU)}
\cong
\frac{\mathcal E'_{\varphi(x)}}{\widehat\varphi(\mathcal E_x)}.
\end{equation}
\end{enumerate}
If $\dim U = \dim U'$ in addition, we call $(\varphi,\widehat\varphi)$ an {\it open embedding}.
\index{open embedding of Kuranishi chart}
\end{defn}
\begin{defn}\label{coordinatechangedef}
Let $\mathcal U_1 = (U_1,\mathcal E_1,\psi_1,s_1)$, $\mathcal U_2
= (U_2,\mathcal E_2,\psi_2,s_2)$ be Kuranishi charts of $\mathcal M$.
A {\it coordinate change in weak sense}\index{coordinate change} from $\mathcal U_1$
to
$\mathcal U_2$ is $(U_{21},\varphi_{21},\widehat\varphi_{21})$
with the following properties (1) and (2):
\begin{enumerate}
\item
$U_{21}$ is an open subset of $U_1$.
\item
$(\varphi_{21},\widehat\varphi_{21})$ is an
embedding of Kuranishi charts $: \mathcal U_1\vert_{U_{21}}
\to \mathcal U_2$.
\end{enumerate}
\end{defn}
\begin{defn}\label{kstructuredefn}
A {\it Kuranishi structure}\index{Kuranishi structure} $\widehat{\mathcal U}$ of $\mathcal M$
assigns a Kuranishi chart $\mathcal U_p
= (U_p,\mathcal E_p,\psi_p,s_p)$
with $p \in {\rm Im}(\psi_p)$  to each $p \in \mathcal M$ and a coordinate change in weak sense
$(U_{pq},\varphi_{pq},\widehat\varphi_{pq}) : \mathcal U_q \to \mathcal U_p$
to each $p$ and $q \in {\rm Im}(\psi_p)$ such that
$q \in \psi_{q}(U_{pq}\cap s_{q}^{-1}(0))$ and
the following holds
for each $r \in \psi_q(s_q^{-1}(0) \cap U_{pq})$.
\par
We put
$U_{pqr} = \varphi_{qr}^{-1}(U_{pq}) \cap U_{pr}$. Then we have
\begin{equation}\label{form3333}
\varphi_{pr}\vert_{U_{pqr}} = \varphi_{pq}\circ
\varphi_{qr}\vert_{U_{pqr}} 
,\qquad
\hat\varphi_{pr}\vert_{\pi^{-1}(U_{pqr})} = \hat\varphi_{pq}\circ
\hat\varphi_{qr}\vert_{\pi^{-1}(U_{pqr})}. 
\end{equation}
\par
We also require that the dimension of $\mathcal U_p$ is independent of 
$p$ and call it the {\it dimension} of $\widehat{\mathcal U}$.
When $U_{p}$ has corner, we call $\widehat{\mathcal U}$ a Kuranishi structure with corner.
\end{defn}
So far in this section, we consider orbifolds, orbibundles, embeddings between them, sections of $C^{\infty}$
class. The notion of Kuranishi structure we defined then is one of $C^{\infty}$ class.
By considering those objects of $C^{n}$ class ($1 \le n < \infty$) instead, we 
define the notion of Kuranishi structure 
of  $C^{n}$ class.
\begin{rem}
The definition of Kuranishi structure here 
is equivalent to the definition of Kuranishi structure with 
tangent bundle in \cite[Section A1]{fooobook2},\footnote{There is no 
mathematical change of the definition of Kuranishi structure since then.} where 
certain errors in \cite{FO} were corrected.\footnote{None of those errors 
affect any of the applications of Kuranishi structure and virtual fundamental chain.}
\end{rem}
\begin{defn}
Let $\widehat{\mathcal U}$ be a Kuranishi structure of $\mathcal M$.
We replace $\mathcal U_p$ by its open subchart containing 
$\psi^{-1}_p(p)$ and restrict coordinate changes in the obvious way.
We then obtain a Kuranishi structure of $\mathcal M$.
We call such a Kuranishi structure an {\it open substructure.}
\par
We say two Kuranishi structures $\widehat{\mathcal U}$, $\widehat{\mathcal U}'$
determine the same {\it germ of Kuranishi structures},\index{germ of Kuranishi structures}
if they have open substructures which are isomorphic.
\footnote{Here two Kuranishi structures $\widehat{\mathcal U} = (\{\mathcal U_p\},\{(U_{pq},
{\varphi}_{pq},\widehat{\varphi}_{pq},)\}$, $\widehat{\mathcal U}' = (\{\mathcal U'_p\},\{(U'_{pq},
{\varphi}'_{pq},\widehat{\varphi}'_{pq},)\}$ are {\it isomorphic} if
there exist diffeomorphisms of orbifolds $\phi_p : U_p \to U'_p$ between the 
Kuranishi neighborhoods, covered by isomorphisms of obstruction bundles $\widehat\phi_p : \mathcal E_p \to \mathcal E'_p$, such that, Kuranishi maps, parametrizations and coordinate 
changes commute with them. (We also require $\phi_q(U_{pq}) = U'_{pq}$.)}
\end{defn}
\begin{defn}\label{mapkura}
Let $\widehat{\mathcal U}$ be a Kuranishi structure  of $\mathcal M$.
\begin{enumerate}
\item
A {\it strongly continuous map $\widehat f$}\index{strongly continuous map}
from $(\mathcal M;\widehat{\mathcal U})$ to
a topological space $Y$ assigns a continuous map $f_{p}$ from
$U_{p}$ to $Y$ to each $p\in X$ such that
$f_p \circ \varphi_{pq} = f_q$ holds on $U_{pq}$.
\item
In the situation of (1), the map $f:\mathcal M \to Y$ defined by
$f(p) = f_p(p)$ is a continuous map from $\mathcal M$ to $Y$.
We call $f : \mathcal M \to Y$ the {\it underlying continuous map}\index{underlying continuous map} of $\widehat f$.
\item
When $Y$ is a smooth manifold, we say $\widehat f$ is {\it strongly smooth}\index{strongly smooth}
if each $f_p$ is smooth.
\item
A strongly smooth map is said to be {\it weakly submersive}\index{weakly submersive}
if each 
$f_p$ is a submersion.
\end{enumerate}
\end{defn}

\section{Construction of Kuranishi structure}
\label{sec;kuraconst}

\subsection{Statement}
\label{subsec:kuraconststate}

We say two obstruction bundle data $(\{\mathscr U_{\bf p}\},\{E_{\bf p}({\bf x})\})$ 
and $(\{\mathscr U'_{\bf p}\},\{E'_{\bf p}({\bf x})\})$ determine the same 
{\it germ} if \index{germ of obstruction bundle data}
$
E_{\bf p}({\bf x}) = E'_{\bf p}({\bf x})
$
for every ${\bf x} \in \mathscr U_{\bf p} \cap \mathscr U'_{\bf p}$.

\begin{thm}\label{constthm}
\begin{enumerate}
\item
To arbitrary obstruction bundle data of the 
moduli space ${\mathcal M}_{k+1,\ell}(X,L,J;\beta)$
we can associate a germ of a Kuranishi structure on ${\mathcal M}_{k+1,\ell}(X,L,J;\beta)$
in a canonical way.
\item
If two obstruction bundle data determine the same germ 
then the induced Kuranishi structures determine the same germ.
\item
The evaluation maps 
${\rm ev}_{j}$ ($j=0,1,\dots,k$), ${\rm ev}^{\rm int}_{i}$ ($i=1,\dots,\ell$)
are the underlying continuous maps of strongly smooth maps.
\end{enumerate}
\end{thm}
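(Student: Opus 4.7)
The plan is to build each Kuranishi chart from obstruction bundle data by solving a perturbed Cauchy--Riemann equation with the obstruction spaces $E_{\bf p}({\bf x})$ playing the role of cokernel, then to read off coordinate changes from a common \emph{ambient set} of perturbed solutions.

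First I would fix ${\bf p} \in \mathcal M_{k+1,\ell}(X,L,J;\beta)$ together with stabilization and trivialization data $\frak W_{\bf p}$ (Definition \ref{situation38}). Using Lemma \ref{lem34} and the canonical smooth family of source curves $(\frak x,\vec\epsilon)\mapsto \Sigma_{\bf p}(\vec\epsilon)\subset \Sigma_{\frak x}$ from Lemma \ref{lem3838}, I parametrize a neighborhood of the source curve of ${\bf p}\cup\vec{\frak w}_{\bf p}$ in $\mathcal M^{\rm d}_{k+1,\ell+\ell'}$, with gluing parameters $(r_j,\sigma_i)$ for each boundary/interior node. On this family I would consider the equation
\begin{equation*}
\overline{\partial} u \equiv 0 \pmod{E_{\bf p}({\bf x})}
\end{equation*}
for maps $u\colon\Sigma_{\frak x}\to X$ near $u_{\bf p}$ in the sense of Definition \ref{defn411}. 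By the transversality condition (Definition \ref{defn54564}) together with the exponential decay estimate of \cite{foooexp} applied across the neck regions $\Sigma_{\bf x}\setminus\Sigma_{\bf x}(\vec\epsilon)$, the linearized operator is surjective and depends smoothly on $\frak x$; the implicit function theorem produces a smooth finite-dimensional family $\widetilde U_{\bf p}$ of approximate solutions carrying a smooth vector bundle $\widetilde{\mathcal E}_{\bf p}$ whose fibre over ${\bf x}$ is $E_{\bf p}({\bf x})$.

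Next I would cut $\widetilde U_{\bf p}$ by local transversal slices at each of the added marked points $\vec{\frak w}_{\bf p}$, turning $\vec{\frak w}_{\bf x}$ from a free parameter into a function of ${\bf x}$; this passes to the \emph{stabilized} problem with source $(\Sigma_{\bf x},\vec z_{\bf x},\vec{\frak z}_{\bf x}\cup\vec{\frak w}_{\bf x})$ but records only the forgetful image in $\mathcal X_{k+1,\ell}(X,L,J;\beta)$. (Compared to \cite{foootech}, the order here is transversal slice first, finite-dimensional reduction after; cf.\ the footnote in Section \ref{subsec;discsstatement}.) The group $\mathrm{Aut}^+({\bf p})$ acts on the resulting set by Definition \ref{defn37}(3) and Condition \ref{conds55}, and the effectivity clause of Condition \ref{conds55} gives an effective orbifold quotient $U_{\bf p}=\widetilde U_{\bf p}/\mathrm{Aut}({\bf p})$, equipped with the orbibundle $\mathcal E_{\bf p}$ descended from $\widetilde{\mathcal E}_{\bf p}$, Kuranishi map $s_{\bf p}({\bf x})=\overline\partial u_{\bf x}$, and parametrization $\psi_{\bf p}\colon s_{\bf p}^{-1}(0)\to\mathcal M_{k+1,\ell}(X,L,J;\beta)$ which is a homeomorphism onto a neighborhood by Lemma \ref{lem413} and elliptic regularity.

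For the coordinate changes in (1), let ${\bf q}\in \psi_{\bf p}(s_{\bf p}^{-1}(0))$; I would take the ambient set $\widetilde U_{\bf p}$ itself, view the analogous thickened set $\widetilde U_{\bf q}$ (built from $\frak W_{\bf q}$ and $E_{\bf q}$) as the sublocus where $\overline{\partial} u\in E_{\bf q}({\bf x})$, and use the semi-continuity $E_{\bf q}({\bf x})\subseteq E_{\bf p}({\bf x})$ (Definition \ref{defn52}) to embed $\widetilde U_{\bf q}\hookrightarrow\widetilde U_{\bf p}$ once the extra interior marked points and the two local trivializations at ${\bf q}$ are compared by the transversal slice mechanism just described. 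Lemma \ref{conds55lem} together with the invariance of the constructions under $\mathrm{Aut}^+({\bf q})$ makes this embedding equivariant, so it descends to $(\varphi_{pq},\widehat\varphi_{pq})\colon\mathcal U_{\bf q}|_{U_{pq}}\hookrightarrow\mathcal U_{\bf p}$ with $\widehat\varphi_{pq}$ the tautological inclusion of fibres; the tangent-obstruction isomorphism (\ref{form3.1111}) is then immediate from the definition of the modified Cauchy--Riemann equation. Because $\varphi_{pq}$ is read off from a single ambient set on which nothing is quotiented, the cocycle identity (\ref{form3333}) holds on the nose on $U_{pqr}=\varphi_{qr}^{-1}(U_{pq})\cap U_{pr}$ --- this is exactly the simplification referred to in Remark \ref{lem710}(2), and it is what makes part (1) canonical. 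Part (2) then follows because when two germs agree, the two ambient sets literally coincide as subsets of $\mathcal X_{k+1,\ell}(X,L,J;\beta)$, so after possibly shrinking each chart to an open subchart the two Kuranishi structures are isomorphic. For (3), the evaluation maps are $\mathrm{ev}_j({\bf x})=u_{\bf x}(z_{{\bf x},j})$ and $\mathrm{ev}^{\rm int}_i({\bf x})=u_{\bf x}(\frak z_{{\bf x},i})$ pulled back to $U_{\bf p}$ through $\widehat\Phi_{\frak x;\vec\epsilon}$; they are smooth on each $U_{\bf p}$ and compatible with $\varphi_{pq}$ by construction.

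The hard part of all this is smoothness, not the scheme above. One must verify that $\frak x\mapsto u_{\bf x}$ is a smooth (not merely continuous) map into an appropriate space of maps, uniformly across the boundary of moduli where gluing parameters $(r_j,\sigma_i)\to 0$; this is where the exponential decay estimates of \cite{foooexp} are indispensable. The smoothness of $E_{\bf p}({\bf x})$ in ${\bf x}$ (Definition \ref{defn8911}, postponed to Section \ref{sec;obstbundleexpdecay}) is the other ingredient that makes $\mathcal E_{\bf p}$ a genuine $C^\infty$ orbibundle and the surjectivity estimate for the linearization uniform near the nodal stratum. Granting these analytic inputs, the remaining verifications --- open embedding property, Hausdorffness of $U_{\bf p}$, independence of the germ on auxiliary choices, and extension across corners --- are local and routine, and will be carried out in the subsequent subsections.
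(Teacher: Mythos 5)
Your proposal is correct and takes essentially the same route as the paper: the charts are the solution sets of $\overline{\partial}u_{\bf x}\in E_{\bf p}({\bf x})$, given their smooth structure by the gluing/exponential-decay analysis, then cut by the local transversals $\vec{\mathcal N}_{\bf p}$ and quotiented by ${\rm Aut}({\bf p})$ (effective by Condition~\ref{conds55}); the coordinate changes are the tautological inclusions supplied by semi-continuity, the cocycle identity is free because everything is compared inside a common ambient object --- the paper uses the bare sets $\mathcal X_{k+1,\ell}(X,L,J;\beta)$ and $\mathcal{EX}_{k+1,\ell}(X,L,J;\beta)$ rather than your thickened chart $\widetilde U_{\bf p}$, which makes the set-theoretic identity (\ref{form3d333}) literally trivial without any comparison of added marked points --- and parts (2) and (3) are tautological, with the hard analytic content (smoothness of charts and of coordinate changes) deferred exactly as in Sections~\ref{sec;obstbundleexpdecay}--\ref{sec:changesmoo}. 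One small correction: your parenthetical ``transversal slice first, finite-dimensional reduction after'' matches neither your own sketch nor Section~\ref{sec;Kuracharsmooth}, where $U_{{\bf p}^+}$ is produced by gluing first and only afterwards cut down to $V_{\bf p}$ by the conditions $u_{\bf x}(\frak z_{{\bf x},\ell+i})\in\mathcal N_{{\bf p},i}$ (Definition~\ref{defn910}); also the Newton iteration yields exact, not approximate, solutions of the $E_{\bf p}$-perturbed equation.
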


In this section we prove Theorem \ref{constthm}
except the part where smoothness of obstruction bundle data (Definition \ref{defn51} (2)) concerns, 
which will be discussed in Sections \ref{sec;obstbundleexpdecay}, \ref{sec;Kuracharsmooth}, \ref{sec:changesmoo}.

\subsection{Construction of Kuranishi charts}
\label{subsec:kurastconst}

Let $(\{\mathscr U_{\bf p}\},\{E_{\bf p}({\bf x})\})$ be 
obstruction bundle data at ${\bf p}$.
We will define a Kuranishi chart of ${\mathcal M}_{k+1,\ell}(X,L,J;\beta)$ at ${\bf p}$ 
using this data.

\begin{defn}\label{defn72}
We define
$
U_{\bf p}
$
to be the set of all ${\bf x} \in \mathscr U_{\bf p}$ such that
\begin{equation}\label{form71}
\overline{\partial} u_{\bf x} \in E_{\bf p}(\bf x).
\end{equation}
(\ref{form71})
is independent of the choice of representative ${\bf x}$ because of Remark \ref{rem5757} (*).
We also put
$$
{\mathcal E}_{\bf p} = 
\bigcup_{{\bf x} \in U_{\bf p}} E_{{\bf p}}({\bf x})/{\rm Aut}({\bf x}) \times \{{\bf x}\}.
$$
Here the group ${\rm Aut}^+({\bf x})$ acts on $E_{{\bf p}}({\bf x})$ by 
Definition \ref{defn51} (5).
We have a natural projection $\pi : \mathcal E_{\bf p} \to U_{\bf p}$.
\footnote{To get an obstruction bundle we divide $E_{\bf p}({\bf x})$ by 
${\rm Aut}({\bf x})$ not by ${\rm Aut}^+({\bf x})$.}
We define a map $\frak s_{\bf p} : U_{\bf p} \to \mathcal E_{\bf p}$ by 
$$
\frak s_{\bf p}({\bf x}) = [\overline{\partial} u_{\bf x},{\bf x}] \in {\mathcal E}_{\bf p}.
$$
(The right hand side is independent of the choice of representative 
of ${\bf x}$.)
\end{defn}
\begin{lem}\label{lem73}
After replacing $\mathscr U_{\bf p}$ by a smaller neighborhood if necessary, 
$
U_{\bf p}
$ has a structure of (effective) smooth orbifold.
$\mathcal E_{\bf p}$ becomes the underlying topological 
space of a smooth orbi-bundle on $U_{\bf p}$ and $\pi : \mathcal E_{\bf p} \to U_{\bf p}$
is its projection.
$\frak s_{\bf p}$ becomes a smooth section of $\mathcal E_{\bf p}$.
\end{lem}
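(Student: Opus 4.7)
The plan is to construct local orbifold charts for $U_{\bf p}$ around each ${\bf q}\in U_{\bf p}$ by solving the equation $\overline\partial u \in E_{\bf p}(\cdot)$ via a finite dimensional implicit function theorem, and then to check that the obstruction spaces glue into an orbi-bundle on which $\frak s_{\bf p}$ is a smooth section.

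First, I would fix stabilization and trivialization data $\frak W_{\bf q}$ for ${\bf q}$. By Lemma \ref{lem34} and Lemma \ref{lem3838}, a neighborhood of the source curve of ${\bf q}\cup\vec{\frak w}_{\bf q}$ is smoothly parametrized by $\frak x\in \prod_a \mathcal V_a \times [0,c)^{m_{\rm d}} \times (D^2_\circ(c))^{m_{\rm s}}$ together with a smooth family of embeddings $\widehat\Phi_{\frak x;\vec\epsilon}:\Sigma_{\bf q}(\vec\epsilon)\to \Sigma_{\frak x}$. I would then parametrize a neighborhood of ${\bf q}$ in $\mathscr U_{\bf p}$ by pairs $(\frak x,\xi)$, where $\xi$ is a small section of $u_{\bf q}^*TX$ over $\Sigma_{\bf q}$ with boundary values in $u_{\bf q}^*TL$, cut off on the neck regions, and the corresponding map $u_{(\frak x,\xi)}:\Sigma_{\frak x}\to X$ is obtained by pregluing $\exp_{u_{\bf q}}\xi$ through $\widehat\Phi_{\frak x;\vec\epsilon}$ and the $C^\infty$ trivializations $\phi_a$.

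The defining equation of $U_{\bf p}$ becomes $F(\frak x,\xi):=\overline\partial u_{(\frak x,\xi)}\in E_{\bf p}(\Phi(\frak x),\xi)$. The $\xi$-linearization of $F$ at $(\frak x_0,0)$, where $\Phi(\frak x_0)={\bf q}$, is $D_{u_{\bf q}}\overline\partial$, supplemented by infinitesimal deformations of the source curve in the $\frak x$-direction. By the transversality condition at ${\bf p}$ (Definition \ref{defn54564}), together with the semi-continuity $E_{\bf q}({\bf q})\subseteq E_{\bf p}({\bf q})$ and upper semi-continuity of $\operatorname{coker} D_{u_{\bf q}}\overline\partial$ in the stable map topology, the combined linearization is surjective modulo $E_{\bf p}({\bf q})$ after shrinking $\mathscr U_{\bf p}$ if necessary. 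The implicit function theorem then presents $U_{\bf p}$ near ${\bf q}$ as a smooth finite dimensional submanifold of the parameter space, of dimension $\operatorname{ind}(D_{u_{\bf q}}\overline\partial)+\operatorname{rank} E_{\bf p}({\bf q})$, and exhibits $\frak s_{\bf p}({\bf x})=\overline\partial u_{\bf x}$ as a smooth section of $\bigcup_{\bf x} E_{\bf p}({\bf x})$. The finite group ${\rm Aut}({\bf q})$ acts on this local slice by Lemma \ref{conds55lem}; effectiveness of that action, which is the content of the second half of Condition \ref{conds55}, ensures that the quotient is an effective orbifold chart. Varying ${\bf q}$, a local trivialization of $\mathcal E_{\bf p}$ is supplied by the canonical identifications $E_{\bf p}({\bf x})\cong E_{\bf p}({\bf q})$ obtained by transporting the (nodal-supported-free) sections through the analytic families of coordinates at the nodes.

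The hard part will be verifying honest $C^\infty$-smoothness of the parametrization $(\frak x,\xi)\mapsto u_{(\frak x,\xi)}$, of $F$, and of the bundle $\mathcal E_{\bf p}$, across the gluing parameters at the corner stratum $\{r_j=0\}\cup\{\sigma_i=0\}$, where the pregluing and the identification of obstruction spaces are not smooth in any naive sense. Genuine smoothness here relies on the exponential decay estimates of \cite{foooexp}; this is precisely what Definition \ref{defn51} (2) (smoothness of obstruction bundle data), to be formulated as Definition \ref{defn8911}, is designed to encode. The proof of this smoothness, and hence the completion of the proof of the lemma, is postponed to Sections \ref{sec;obstbundleexpdecay} and \ref{sec;Kuracharsmooth} in line with the remark at the end of Subsection \ref{subsec:kuraconststate}.
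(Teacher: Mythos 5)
Your overall strategy---reduce the equation $\overline\partial u_{\bf x}\in E_{\bf p}({\bf x})$ to finite dimensions via the stabilization/trivialization data, get transversality from Definition \ref{defn51} (3), obtain the finite group action from Lemma \ref{conds55lem} and effectiveness from Condition \ref{conds55}, and defer the genuinely hard smoothness across the gluing parameters to the exponential decay estimates of \cite{foooexp}---is the same in spirit as the paper's, which likewise postpones the proof of Lemma \ref{lem73} to Sections \ref{sec;obstbundleexpdecay} and \ref{sec;Kuracharsmooth}. However, there is a concrete gap: your parameter space is too big by $2\ell'$ dimensions and you never cut it down. A pair $(\frak x,\xi)$ records a deformation of the \emph{stabilized} curve ${\bf q}\cup\vec{\frak w}_{\bf q}$, hence also the positions of the $\ell'$ auxiliary interior marked points; after forgetting these points to land in $\mathscr U_{\bf p}\subset\mathcal X_{k+1,\ell}(X,L,J;\beta)$, the assignment $(\frak x,\xi)\mapsto{\bf x}$ is many-to-one with $2\ell'$-dimensional fibers (move $\vec{\frak w}$ slightly and adjust $\frak x,\xi$ accordingly). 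Consequently the solution set of $F(\frak x,\xi)\in E_{\bf p}(\cdot)$ produced by your implicit function theorem is a thickened space with $2\ell'$ extra dimensions; it is not a chart of $U_{\bf p}$ and does not have the dimension you state. The paper removes exactly this redundancy with local transversals: it first builds the thickened space $U_{{\bf p}^+}$ (Proposition \ref{prop94}), then cuts it by the conditions $u_{\bf x}({\frak z}_{{\bf x},\ell+i})\in\mathcal N_{{\bf p},i}$ to get $V_{\bf p}$ (Definition \ref{defn81}, Definition \ref{defn910}, Lemma \ref{lem91111}), and uses the uniqueness statement of Lemma \ref{lem832} to identify $V_{\bf p}/{\rm Aut}({\bf p})$ with a neighborhood of ${\bf p}$ in $U_{\bf p}$. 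Some such slicing step is indispensable in your argument as well.

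Two further points. First, your justification of surjectivity of the linearization at ${\bf q}$ modulo $E_{\bf p}({\bf q})$ cites the semi-continuity $E_{\bf q}({\bf q})\subseteq E_{\bf p}({\bf q})$, which is beside the point: that is Definition \ref{defn52}, used for coordinate changes, and $E_{\bf q}$ is not even defined when $u_{\bf q}$ is not pseudo-holomorphic. What is needed is that the transversality of Definition \ref{defn54564}, assumed only at ${\bf p}$, persists at nearby glued curves; this is itself a gluing statement (long necks, changing domains), which the paper handles by running the alternating-method/Newton iteration at ${\bf p}$ itself, so that Definition \ref{defn51} (3) is invoked only where it is hypothesized. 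Second, by constructing separate charts at every ${\bf q}\in U_{\bf p}$ with groups ${\rm Aut}({\bf q})$ you take on the additional burden of proving these charts are mutually compatible (essentially the work of Section \ref{sec:changesmoo}), and Condition \ref{conds55} gives effectiveness only of the ${\rm Aut}({\bf p})$-action at ${\bf p}$; the paper sidesteps both issues by exhibiting the whole of $U_{\bf p}$ as the single global quotient $V_{\bf p}/{\rm Aut}({\bf p})$.
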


We use smoothness of $E_{\bf p}(\bf x)$ (Definition \ref{defn51} (2)) 
and \cite[Theorem 6.4]{foooexp}
to prove Lemma \ref{lem73}. See Section \ref{sec;Kuracharsmooth}.
\par
We define $\psi_{\bf p} : \frak s_{\bf p}^{-1}(0) \to {\mathcal M}_{k+1,\ell}(X,L,J;\beta)$
as follows.
If ${\bf x} \in \frak s_{\bf p}^{-1}(0)$ then $\overline{\partial} u_{\bf x}=0$
by definition.
Therefore ${\bf x}$ represents an element of ${\mathcal M}_{k+1,\ell}(X,L,J;\beta)$.
We define $\psi_{\bf p}({\bf x})$ to be the element of  ${\mathcal M}_{k+1,\ell}(X,L,J;\beta)$
represented by ${\bf x}$.

\begin{lem}\label{lemm74}
$(U_{\bf p},\mathcal E_{\bf p},\frak s_{\bf p},\psi_{\bf p})$
is a Kuranishi chart of ${\mathcal M}_{k+1,\ell}(X,L,J;\beta)$ at ${\bf p}$.
\end{lem}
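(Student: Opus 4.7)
The plan is to read off the four conditions of Definition \ref{defnKchart} together with the condition ${\bf p}\in\mathrm{Im}(\psi_{\bf p})$, using Lemma \ref{lem73} to supply the differential-geometric items and reserving a small topological check for the last item.

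First I note that items (1), (2), (3) of Definition \ref{defnKchart} — that $U_{\bf p}$ is an effective orbifold, $\mathcal E_{\bf p}$ is a smooth orbi-bundle over it, and $\frak s_{\bf p}$ is a smooth section — are precisely the content of Lemma \ref{lem73}, which we may invoke. The basepoint condition ${\bf p}\in\mathrm{Im}(\psi_{\bf p})$ is also immediate: since ${\bf p}$ is pseudoholomorphic, $\overline{\partial}u_{\bf p}=0\in E_{\bf p}({\bf p})$ by Definition \ref{defn51}(1), so ${\bf p}\in U_{\bf p}$ with $\frak s_{\bf p}({\bf p})=0$, and $\psi_{\bf p}({\bf p})={\bf p}$ by the definition of $\psi_{\bf p}$.

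The only substantive task is to verify condition (4), that $\psi_{\bf p}\colon \frak s_{\bf p}^{-1}(0)\to \mathcal M_{k+1,\ell}(X,L,J;\beta)$ is a homeomorphism onto an open subset. I would break this into four short steps. \emph{Injectivity:} if ${\bf x},{\bf x}'\in\frak s_{\bf p}^{-1}(0)$ satisfy $\psi_{\bf p}({\bf x})=\psi_{\bf p}({\bf x}')$, then ${\bf x}$ and ${\bf x}'$ are isomorphic in $\mathcal M_{k+1,\ell}(X,L,J;\beta)$; by Definition \ref{defn3144} and the construction of $\mathscr U_{\bf p}\subset \mathcal X_{k+1,\ell}(X,L,J;\beta)$ in Definition \ref{defn72} (where we quotient $E_{\bf p}({\bf x})$ by $\mathrm{Aut}({\bf x})$ using Remark \ref{rem5757}(*)), they represent the same point of $U_{\bf p}$. \emph{Surjectivity onto a neighborhood:} for any ${\bf q}\in\mathcal M_{k+1,\ell}(X,L,J;\beta)$ lying in $\mathscr U_{\bf p}$, we have $\overline{\partial}u_{\bf q}=0\in E_{\bf p}({\bf q})$ trivially, so ${\bf q}\in U_{\bf p}$ and $\frak s_{\bf p}({\bf q})=0$ with $\psi_{\bf p}({\bf q})={\bf q}$; hence $\psi_{\bf p}(\frak s_{\bf p}^{-1}(0))\supset \mathscr U_{\bf p}\cap\mathcal M_{k+1,\ell}(X,L,J;\beta)$, which is open in $\mathcal M_{k+1,\ell}(X,L,J;\beta)$ by Proposition \ref{prop43}.

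\emph{Continuity in both directions:} this is where I expect the main obstacle. The orbifold structure produced by Lemma \ref{lem73} is built (as will be detailed in Sections \ref{sec;obstbundleexpdecay} and \ref{sec;Kuracharsmooth}) using the local trivialization data of Section \ref{subsec:Localtrivialization} together with a finite-dimensional reduction via the gluing map $\widehat{\Phi}_{\frak x;\vec\epsilon}$ of Lemma \ref{lem3838}. By construction, a neighborhood basis at ${\bf p}$ in $U_{\bf p}$ is given by subsets consisting of ${\bf x}=\Phi(\frak x)$ whose associated glued map $u_{\bf x}\circ\widehat{\Phi}_{\frak x;\vec\epsilon}$ is $C^2$-close to $u_{\bf p}$ on the thick part and has small diameter on the neck region. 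This is exactly the $\epsilon$-closeness of Definition \ref{defn411}. Hence the inclusion $U_{\bf p}\hookrightarrow\mathcal X_{k+1,\ell}(X,L,J;\beta)$ is a topological embedding for the partial topology, and its restriction to $\frak s_{\bf p}^{-1}(0)$ composed with the natural inclusion $\mathcal M\subset\mathcal X$ agrees with $\psi_{\bf p}$. Combined with Proposition \ref{prop43} (which identifies the induced topology on $\mathcal M$ with the stable map topology) and Lemma \ref{lem413} (which ensures the partial topology is independent of auxiliary choices up to equivalence), this shows $\psi_{\bf p}$ is a homeomorphism onto its image. The technical point I would be most careful about is matching the neighborhood basis of the orbifold chart with the $\epsilon$-closeness basis, which amounts to recalling that the parametrization used in Lemma \ref{lem73} is precisely $\Phi$ composed with the $\widehat{\Phi}_{\frak x;\vec\epsilon}$-trivialization, up to the deformation of the map that solves (\ref{form71}).
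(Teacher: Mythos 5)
Your proposal is correct and follows the paper's own route: the paper simply notes that the statement is immediate from Lemma \ref{lem73} together with the definitions, and your write-up is exactly that argument spelled out — Lemma \ref{lem73} supplies Definition \ref{defnKchart} (1)–(3), while (4) and the basepoint condition reduce to set-theoretic bookkeeping inside $\mathcal X_{k+1,\ell}(X,L,J;\beta)$ plus the partial-topology framework of Section \ref{sec;ecloseness} (Proposition \ref{prop43}, Lemma \ref{lem413}), with the topological compatibility already built into the construction of the chart via $\epsilon$-closeness neighborhoods.
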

This is immediate from Lemma \ref{lem73} and the definition.

\subsection{Construction of coordinate change}
\label{subsec:cchangeconst}

\begin{shitu}
Let $(\{\mathscr U_{\bf p}\},\{E_{\bf p}({\bf x})\})$ be 
obstruction bundle data.
Suppose ${\bf q} \in \mathscr U_{\bf p} \cap \mathcal M_{k+1,\ell}(X,L,J;\beta)$. 
Let $(U_{\bf p},\mathcal E_{\bf p},\frak s_{\bf p},\psi_{\bf p})$ 
(resp. $(U_{\bf q},\mathcal E_{\bf q},\frak s_{\bf q},\psi_{\bf q})$) 
be the Kuranishi chart at ${\bf p}$ (resp. ${\bf q}$) obtained by 
Lemma \ref{lemm74}.
$\diamond$
\end{shitu}
We put
$
U_{{\bf p}{\bf q}} = U_{\bf q} \cap \mathscr U_{\bf p}.
$
Let ${\bf x} \in U_{{\bf p}{\bf q}}$. Then by Definition \ref{defn51} (4) 
and Definition \ref{defn72} we have
$$
\overline \partial u_{\bf x} \in
E_{\bf q}({\bf x}) \subseteq E_{\bf p}({\bf x}).
$$
Thus $U_{{\bf p}{\bf q}} \subseteq U_{{\bf p}}$.
(Note both are subsets of $\mathcal X_{k+1,\ell}(X,L,J;\beta)$.)
Let $\varphi_{{\bf p}{\bf q}} : U_{{\bf p}{\bf q}} \to U_{{\bf p}}$
be the inclusion map.
\par
To define the bundle map part of the coordinate change
we introduce:
\begin{defn}
We consider a pair $(((\Sigma,\vec z,\vec{\frak z}),u),V)$
where $((\Sigma,\vec z,\vec{\frak z}),u)$ is a representative 
of an element of $\mathcal X_{k+1,\ell}(X,L,J;\beta)$ and 
$V \in L^2_{0}(\Sigma;u^*TX\otimes \Lambda^{01})$.
\par
We say $(((\Sigma,\vec z,\vec{\frak z}),u),V)$ is equivalent to 
$(((\Sigma',\vec z^{\,\prime},\vec{\frak z}^{\,\prime}),u'),V')$
if there exists a map $v : \Sigma \to \Sigma'$ which becomes an 
isomorphism $((\Sigma,\vec z,\vec{\frak z}),u) \to ((\Sigma',\vec z^{\,\prime},\vec{\frak z}^{\,\prime}),u')$ 
in the sense of Definition \ref{defn3144}
and 
$$
v_*(V) = V'.
$$
Note $v$ induces a map 
$
v_* : L^2_{0}(\Sigma;u^*TX\otimes \Lambda^{01})
\to L^2_{0}(\Sigma';(u')^*TX\otimes \Lambda^{01}).
$
\par
We denote by $\mathcal{EX}_{k+1,\ell}(X,L,J;\beta)$
\index{00E3X_{k+1,\ell}(X,L,J;\beta)@$\mathcal{EX}_{k+1,\ell}(X,L,J;\beta)$}
the set of all such equivalence classes of $(((\Sigma,\vec z,\vec{\frak z}),u),V)$.
\par
There exists an obvious projection
$
\pi : \mathcal{EX}_{k+1,\ell}(X,L,J;\beta) \to \mathcal{X}_{k+1,\ell}(X,L,J;\beta).
$
If ${\bf x}$ is represented by $((\Sigma_{\bf x},\vec z_{\bf x},\vec{\frak z}_{\bf x}),u_{\bf x})$
then the fiber $\pi^{-1}({\bf x})$ is canonically identified with 
$
L^2_{0}(\Sigma_{\bf x};u_{\bf x}^*TX\otimes \Lambda^{01})/{\rm Aut}(\bf x).
$
Here the action of ${\rm Aut}(\bf x)$ is defined in the same way as (\ref{form5666}).
\end{defn}
Let $(U_{\bf p},\mathcal E_{\bf p},\frak s_{\bf p},\psi_{\bf p})$ be a 
Kuranishi chart as in Lemma \ref{lemm74}.
By definition the total space of $\mathcal E_{\bf p}$, which we denote also by 
$\mathcal E_{\bf p}$ by an abuse of notation, is canonically embedded into 
$\mathcal{EX}_{k+1,\ell}(X,L,J;\beta)$ such that the next diagram commutes.

\begin{equation}\label{diag00rt1333}
\begin{CD}
\mathcal E_{\bf p}
@ >>>
\mathcal{EX}_{k+1,\ell}(X,L,J;\beta)
\\
@ VV{\pi_{\bf p}}V @ VV{\pi}V \\
U_{\bf p} @>>> 
\mathcal{X}_{k+1,\ell}(X,L,J;\beta)
\end{CD}
\end{equation}
Let ${\bf q} \in \psi_{\bf p}(s_{\bf p}^{-1}(0))$.
Then by definition $\mathcal E_{\bf q}\vert_{U_{{\bf p}{\bf q}}}$ ($= \pi_{\bf q}^{-1}(U_{{\bf p}{\bf q}}) \subset 
\mathcal E_{\bf q}$)
is a subset of $\mathcal E_{\bf p}$, 
when we regard them as subsets of $\mathcal{EX}_{k+1,\ell}(X,L,J;\beta)$.
\par
We define $\widehat{\varphi}_{{\bf p}{\bf q}}$ to be the inclusion 
map $\mathcal E_{\bf q}\vert_{U_{{\bf p}{\bf q}}} \to \mathcal E_{\bf p}$.

\begin{lem}\label{lem7878}
The pair $({\varphi}_{{\bf p}{\bf q}},\widehat{\varphi}_{{\bf p}{\bf q}})$
is a coordinate change from 
$(U_{\bf q},\mathcal E_{\bf q},\frak s_{\bf q},\psi_{\bf q})$
to
$(U_{\bf p},\mathcal E_{\bf p},\frak s_{\bf p},\psi_{\bf p})$.
\end{lem}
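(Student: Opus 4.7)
The plan is to verify the five conditions of Definition \ref{defKchart} (and Definition \ref{coordinatechangedef}) one by one for the pair $(\varphi_{{\bf p}{\bf q}},\widehat\varphi_{{\bf p}{\bf q}})$, relying on the set-theoretic inclusions $\mathcal E_{\bf q}\vert_{U_{{\bf p}{\bf q}}} \hookrightarrow \mathcal E_{\bf p}$ inside the ambient set $\mathcal{EX}_{k+1,\ell}(X,L,J;\beta)$ together with the semi-continuity axiom (Definition \ref{defn51}(4)) and the transversality axiom (Definition \ref{defn54564}).

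First I would show that $U_{{\bf p}{\bf q}} = U_{\bf q} \cap \mathscr U_{\bf p}$ is open in $U_{\bf q}$. Openness in the ambient set $\mathcal X_{k+1,\ell}(X,L,J;\beta)$ follows because $\mathscr U_{\bf p}$ is by definition a neighborhood of ${\bf p}$ in $\mathcal X_{k+1,\ell}(X,L,J;\beta)$ and the partial topology is equivalent to the intrinsic one on $U_{\bf q}$ coming from Lemma \ref{lem73}. Because both $U_{\bf p}$ and $U_{\bf q}$ are cut out of the same ambient set $\mathcal X_{k+1,\ell}(X,L,J;\beta)$ by the same type of equation $\overline\partial u_{\bf x} \in E_{\bullet}({\bf x})$, and because $E_{\bf q}({\bf x})\subseteq E_{\bf p}({\bf x})$ forces any solution on the ${\bf q}$-side to be a solution on the ${\bf p}$-side, the map $\varphi_{{\bf p}{\bf q}}$ is simply the inclusion. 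The fact that this inclusion is a smooth embedding of orbifolds will follow from the smooth structure produced in Lemma \ref{lem73}, since the charts on $U_{\bf q}$ are obtained by intersecting the ambient local model with the smaller linear constraint $E_{\bf q}({\bf x})$ (this is where smoothness of obstruction bundle data, deferred to Sections \ref{sec;Kuracharsmooth}--\ref{sec:changesmoo}, is invoked). The invariance of the inclusion under $\mathrm{Aut}(\bf x)$ (and Condition \ref{conds55}) makes the map descend to an embedding of effective orbifolds.

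Next I would verify the bundle-level statements. By construction $\widehat\varphi_{{\bf p}{\bf q}}$ is the inclusion of $\mathcal E_{\bf q}\vert_{U_{{\bf p}{\bf q}}}$ into $\mathcal E_{\bf p}$ induced by the fibrewise inclusion $E_{\bf q}({\bf x})\subseteq E_{\bf p}({\bf x})$ (Definition \ref{defn51}(4)), and compatibility of $\pi_{\bf p}, \pi_{\bf q}$ with $\pi$ in diagram \eqref{diag00rt1333} makes it a bundle map covering $\varphi_{{\bf p}{\bf q}}$. Smoothness again reduces to the smoothness of the family $\{E_{\bf p}({\bf x})\}$ in the sense of Definition \ref{defn8911}. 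The identities $\widehat\varphi_{{\bf p}{\bf q}}\circ\frak s_{\bf q} = \frak s_{\bf p}\circ\varphi_{{\bf p}{\bf q}}$ and $\psi_{\bf p}\circ\varphi_{{\bf p}{\bf q}} = \psi_{\bf q}$ on $\frak s_{\bf q}^{-1}(0)$ are tautological, since all three maps $\frak s_{\bullet}, \psi_{\bullet}, \varphi_{{\bf p}{\bf q}}$ are defined by the same underlying operation $[{\bf x}]\mapsto [\overline\partial u_{\bf x},{\bf x}]$ and by the inclusion respectively.

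The main obstacle will be the tangent bundle condition \eqref{form3.1111}. At a point ${\bf x}\in\frak s_{\bf q}^{-1}(0)\cap U_{{\bf p}{\bf q}}$ both $T_{{\bf x}}U_{\bf q}$ and $T_{{\bf x}}U_{\bf p}$ are realized as kernels of the projected linearized Cauchy-Riemann operator modulo $E_{\bf q}({\bf x})$, respectively $E_{\bf p}({\bf x})$, on the ambient space $W^2_{m+1}(\Sigma_{\bf x},\partial\Sigma_{\bf x};u_{\bf x}^*TX,u_{\bf x}^*TL)$; the desired isomorphism
\[
\frac{T_{\bf x}U_{\bf p}}{(D_{\bf x}\varphi_{{\bf p}{\bf q}})(T_{\bf x}U_{\bf q})}
\;\cong\;
\frac{(\mathcal E_{\bf p})_{\bf x}}{\widehat\varphi_{{\bf p}{\bf q}}((\mathcal E_{\bf q})_{\bf x})}
\;=\;\frac{E_{\bf p}({\bf x})}{E_{\bf q}({\bf x})}
\]
induced by $D_{u_{\bf x}}\overline\partial$ is then a purely linear-algebraic consequence of the transversality axiom (Definition \ref{defn54564}) applied to both $E_{\bf p}$ and $E_{\bf q}$, via the snake lemma applied to the two rows obtained from \eqref{form523} modulo $E_{\bf q}({\bf x})\subseteq E_{\bf p}({\bf x})$. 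Carrying this out rigorously at a general ${\bf x}\in U_{{\bf p}{\bf q}}$ (not only at ${\bf p}$ or ${\bf q}$) requires knowing that the transversality at ${\bf p}$ propagates to nearby ${\bf x}$; this follows from the surjectivity of $D_{u_{\bf x}}\overline\partial$ modulo $E_{\bf p}({\bf x})$ being an open condition, together with semi-continuity. Once this is in place, all conditions of Definition \ref{defKchart} are verified and Lemma \ref{lem7878} follows.
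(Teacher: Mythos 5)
The set-theoretic parts of your argument (that $U_{{\bf p}{\bf q}}\subseteq U_{\bf p}$, that $\varphi_{{\bf p}{\bf q}},\widehat\varphi_{{\bf p}{\bf q}}$ are the inclusions, and that they commute with $\frak s_\bullet$ and $\psi_\bullet$) are correct and indeed tautological; the linear-algebra sketch for condition (\ref{form3.1111}) is also essentially right, granted that surjectivity of $D_{u_{\bf x}}\overline\partial$ modulo $E_{\bf q}({\bf x})$ propagates from ${\bf q}$ to nearby ${\bf x}$. But there is a genuine gap at the step you dispose of in one sentence: ``the fact that this inclusion is a smooth embedding of orbifolds will follow from the smooth structure produced in Lemma \ref{lem73}, since the charts on $U_{\bf q}$ are obtained by intersecting the ambient local model with the smaller linear constraint.'' This is not how the charts are built, and the ambient set $\mathcal X_{k+1,\ell}(X,L,J;\beta)$ carries \emph{no} structure whatsoever (see Remark \ref{lem710}~(2): it is used only to verify set-theoretic identities). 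The chart at ${\bf q}$ is constructed from data chosen at ${\bf q}$ --- its own weak stabilization $\vec{\frak w}_{\bf q}$ (in general $\ell''\ne\ell'$ added points in different positions), its own analytic families of coordinates, trivializations and local transversals --- and is realized as a $C^n$ submanifold via $({\rm Pr}^{\rm source}_{\bf q},{\rm Pr}^{\rm map}_{\bf q})$ into $\mathcal M^{\rm d,\log}_{k+1,\ell+\ell''}\times L^2_{m}(\Sigma_{{\bf q}^+}(\vec\epsilon\,'),\dots)$, while the chart at ${\bf p}$ is realized via the entirely different data $\frak W_{\bf p}$. There is no ambient smooth structure from which compatibility of these two models could be inherited.

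Proving that the inclusion is of $C^n$ class, and moreover an embedding, with respect to these two independently constructed structures is precisely the content the paper defers to Section \ref{sec:changesmoo}: one must show the reparametrization map $\Psi_{{\bf p},{\bf q}}$ of Definition \ref{defn102} and the map $\Xi_{{\bf p},{\bf q}}$ of Definition \ref{defn105} are $C^n$ (Proposition \ref{prop103}, Lemma \ref{lem106}, via the universal family argument and diagram (\ref{diagramunivn})), invoke the composition lemma \ref{lem109} (which costs derivatives, whence the need to take $m_2\gg m_1$ and Lemma \ref{lem108}), and then chase diagram (\ref{diagramCn}). Even after smoothness, the embedding property is not automatic: the paper's Proposition \ref{lem101010} needs a separate five-step argument, whose key input is the existence of specially adapted trivialization data at ${\bf q}$ (Lemma \ref{lem1244}, Sublemma \ref{sublem125}) making diagram (\ref{diagram12-2}) commute, followed by unique continuation to get injectivity of the differential. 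None of this is supplied or replaceable by your appeal to ``intersecting with a smaller linear constraint,'' so the proposal as written does not establish the lemma; it reproves the easy set-theoretic half and assumes the analytic half.
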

This is nothing but \cite[Theorem 8.32]{foooexp}, once the notion of 
smoothness of $E_{\bf p}({\bf x})$ will be clarified. See Subsection \ref{subsec:smoproof}.

\subsection{Wrapping up the construction of Kuranishi structure}
\label{subsec:cchkuraeconst}

\begin{lem}\label{lem79}
Let ${\bf p}$, ${\bf q} \in {\rm Im}(\psi_{\bf p})$, ${\bf r} \in \psi_{\bf q}(s_{\bf q}^{-1}(0) 
\cap U_{{\bf p}{\bf q}})$.
We put
$U_{{\bf p}{\bf q}{\bf r}} = \varphi_{{\bf q}{\bf r}}^{-1}(U_{{\bf p}{\bf q}}) \cap U_{{\bf p}{\bf r}}$. 
Then we have
\begin{equation}\label{form3d333}
\aligned
\varphi_{{\bf p}{\bf r}}\vert_{U_{{\bf p}{\bf q}{\bf r}}} = \varphi_{{\bf p}{\bf q}}\circ
\varphi_{{\bf q}{\bf r}}\vert_{U_{{\bf p}{\bf q}{\bf r}}},
\qquad
\hat\varphi_{{\bf p}{\bf r}}\vert_{\pi^{-1}(U_{{\bf p}{\bf q}{\bf r}})} = \hat\varphi_{{\bf p}{\bf q}}\circ
\hat\varphi_{{\bf q}{\bf r}}\vert_{\pi^{-1}(U_{{\bf p}{\bf q}{\bf r}})}. 
\endaligned
\end{equation}
\end{lem}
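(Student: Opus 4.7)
The plan is to exploit the fact that by construction all the Kuranishi neighborhoods $U_{\bf p}$ are defined as \emph{subsets} of the common ambient set $\mathcal X_{k+1,\ell}(X,L,J;\beta)$, and all the obstruction bundle total spaces $\mathcal E_{\bf p}$ are defined as subsets of the common ambient set $\mathcal{EX}_{k+1,\ell}(X,L,J;\beta)$ (via diagram (\ref{diag00rt1333})). Under this point of view, each coordinate change $\varphi_{{\bf p}{\bf q}}$ is literally the set-theoretic inclusion $U_{{\bf p}{\bf q}} = U_{\bf q}\cap \mathscr U_{\bf p} \hookrightarrow U_{\bf p}$, and each $\hat\varphi_{{\bf p}{\bf q}}$ is literally the inclusion $\mathcal E_{\bf q}\vert_{U_{{\bf p}{\bf q}}} \hookrightarrow \mathcal E_{\bf p}$. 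This is the simplification referred to in Remark \ref{lem710} (2) of the introduction.

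First, I would unfold the definition of $U_{{\bf p}{\bf q}{\bf r}}$: since ${\bf r} \in \psi_{\bf q}(s_{\bf q}^{-1}(0)\cap U_{{\bf p}{\bf q}})$, the point ${\bf r}$ lies in $U_{{\bf p}{\bf q}}\cap \mathcal M_{k+1,\ell}(X,L,J;\beta)\subset \mathscr U_{\bf p}\cap \mathscr U_{\bf q}$, so by the semi-continuity property in Definition \ref{defn52}, applied at the pairs $({\bf p},{\bf q})$, $({\bf q},{\bf r})$, $({\bf p},{\bf r})$, we obtain $E_{\bf r}({\bf x})\subseteq E_{\bf q}({\bf x})\subseteq E_{\bf p}({\bf x})$ for every ${\bf x}\in \mathscr U_{\bf p}\cap \mathscr U_{\bf q}\cap \mathscr U_{\bf r}$. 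In particular, on the set $U_{{\bf p}{\bf q}{\bf r}}= \varphi_{{\bf q}{\bf r}}^{-1}(U_{{\bf p}{\bf q}})\cap U_{{\bf p}{\bf r}}$, every element ${\bf x}$ satisfies $\overline\partial u_{\bf x}\in E_{\bf r}({\bf x})\subseteq E_{\bf q}({\bf x})\subseteq E_{\bf p}({\bf x})$, confirming the chain of inclusions $U_{{\bf p}{\bf q}{\bf r}}\subset U_{{\bf p}{\bf q}}\subset U_{\bf p}$ viewed inside the ambient set.

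The first identity in (\ref{form3d333}) is then tautological: both $\varphi_{{\bf p}{\bf r}}\vert_{U_{{\bf p}{\bf q}{\bf r}}}$ and $\varphi_{{\bf p}{\bf q}}\circ \varphi_{{\bf q}{\bf r}}\vert_{U_{{\bf p}{\bf q}{\bf r}}}$ are the inclusion of $U_{{\bf p}{\bf q}{\bf r}}$ into $U_{\bf p}$ regarded as subsets of $\mathcal X_{k+1,\ell}(X,L,J;\beta)$, so they coincide as maps. The second identity is verified in exactly the same way at the level of $\mathcal E_{\bf p}$: the point is that the embeddings $\mathcal E_{\bullet}\hookrightarrow \mathcal{EX}_{k+1,\ell}(X,L,J;\beta)$ in diagram (\ref{diag00rt1333}) are canonical (they simply send a representative $(({\rm source}),u,V)$ to its class in $\mathcal{EX}$), so the restricted bundle $\mathcal E_{\bf r}\vert_{U_{{\bf p}{\bf q}{\bf r}}}$ sits inside $\mathcal E_{\bf q}\vert_{U_{{\bf p}{\bf q}{\bf r}}}$ which sits inside $\mathcal E_{\bf p}\vert_{U_{{\bf p}{\bf q}{\bf r}}}$, and both compositions $\hat\varphi_{{\bf p}{\bf r}}$ and $\hat\varphi_{{\bf p}{\bf q}}\circ \hat\varphi_{{\bf q}{\bf r}}$ are the inclusion of the smallest into the largest.

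The only subtle point, which I regard as the main (but minor) obstacle, is that the underlying sets are sets of \emph{equivalence classes} under isomorphisms of source curves (Definitions \ref{defn3144} and \ref{defn411}), and the obstruction bundles are quotients by ${\rm Aut}({\bf x})$; so strictly speaking one must check that the ambient inclusion is compatible with these quotients. This is guaranteed by Remark \ref{rem5757}(*) on the base and by Condition \ref{conds55} together with Lemma \ref{conds55lem} on the fibres: the latter ensures that the inclusion $E_{\bf r}({\bf x})\subseteq E_{\bf q}({\bf x})\subseteq E_{\bf p}({\bf x})$ is equivariant under ${\rm Aut}({\bf x})$, so it descends to the quotients forming $\mathcal E_{\bf r}$, $\mathcal E_{\bf q}$, $\mathcal E_{\bf p}$. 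Once this compatibility with automorphism groups is noted, the cocycle conditions (\ref{form3d333}) follow from the transitivity of inclusion.
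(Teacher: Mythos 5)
Your proof is correct and follows essentially the same route as the paper: the coordinate changes are, by construction, inclusions of subsets of the ambient sets $\mathcal X_{k+1,\ell}(X,L,J;\beta)$ and $\mathcal{EX}_{k+1,\ell}(X,L,J;\beta)$, so both sides of (\ref{form3d333}) are the identity on these subsets and the equalities are immediate. The extra remarks you add (semi-continuity to confirm the inclusions and equivariance for the quotient bundles) are consistent with, and already built into, the paper's setup.
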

\begin{proof}
If we regard the domain and the target of both sides of (\ref{form3d333})
as subsets of $\mathcal{X}_{k+1,\ell}(X,L,J;\beta)$
or of $\mathcal{EX}_{k+1,\ell}(X,L,J;\beta)$
then the both sides are the identity map. 
Therefore the equalities are obvious.
\end{proof}
\begin{rem}\label{lem710}
\begin{enumerate}
\item 
The orbifold we use are always effective and maps between them are 
embeddings. Therefore to check the equality of the two maps it suffices to show 
that they coincide set-theoretically.
This fact simplifies the proof.
\item
The proof of Lemma \ref{lem79} given above is  simpler than the proof in 
\cite[Section 24]{foootech} etc.
This is because we use the ambient {\it set} $\mathcal{X}_{k+1,\ell}(X,L,J;\beta)$.
\par
Note however we do {\it not} use {\it any} structure of 
$\mathcal{X}_{k+1,\ell}(X,L,J;\beta)$.
The ambient set is used only to show the set-theoretical equality (\ref{form3d333}).
It seems to the authors that putting various structures such as topology 
on $\mathcal{X}_{k+1,\ell}(X,L,J;\beta)$ is rather cumbersome
since this infinite dimensional `space' can be pathological.
Using it only as a set and  proving set-theoretical equality seems easier to carry out. 
Since it makes the proof of Lemma \ref{lem79} simpler, it is 
worth using this ambient {\it set}.
\end{enumerate}
\end{rem}
The proof of Theorem \ref{constthm} (1) is complete.
The proof of Theorem \ref{constthm} (2) is immediate from 
construction and is omitted.

\subsection{Evaluation maps}
\label{subsec:evsmooth}

We study the evaluation maps in this subsection.
\begin{lem}
The evaluation maps ${\rm ev}_j : 
\mathcal{M}_{k+1,\ell}(X,L,J;\beta) \to L$ and 
${\rm ev}^{\rm int}_i: 
\mathcal{M}_{k+1,\ell}(X,L,J;\beta) \to X$
are strongly continuous.
\end{lem}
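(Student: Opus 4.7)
The plan is to construct, for each Kuranishi chart $(U_{\bf p},\mathcal E_{\bf p},\frak s_{\bf p},\psi_{\bf p})$ produced in Lemma \ref{lemm74}, the natural lifts
$$
({\rm ev}_j)_{\bf p} : U_{\bf p} \to L, \qquad ({\rm ev}^{\rm int}_i)_{\bf p} : U_{\bf p} \to X
$$
defined by
$({\rm ev}_j)_{\bf p}({\bf x}) = u_{\bf x}(z_{{\bf x},j})$ and $({\rm ev}^{\rm int}_i)_{\bf p}({\bf x}) = u_{\bf x}(\frak z_{{\bf x},i})$, for any representative ${\bf x}=((\Sigma_{\bf x},\vec z_{\bf x},\vec{\frak z}_{\bf x}),u_{\bf x})$. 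These are well-defined on the set $U_{\bf p}\subset\mathcal X_{k+1,\ell}(X,L,J;\beta)$ because, per Definition \ref{defn3144}, representatives of the same element of $\mathcal X_{k+1,\ell}(X,L,J;\beta)$ differ by an \emph{isomorphism} (not merely an extended isomorphism), which by items (ii)(iii) of Definition \ref{stabilitydefn27} fixes the boundary marked points, fixes the interior marked points (since $\sigma={\rm id}$) and intertwines the $u$'s.

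Continuity follows directly from the definition of $\epsilon$-closeness in Definition \ref{defn411}. First I would observe that every marked point of ${\bf p}$ lies in the interior of some irreducible component of $\Sigma_{\bf p}$, hence for $\vec\epsilon$ sufficiently small it lies in $\Sigma_{\bf p}(\vec\epsilon)$. For ${\bf x}\in B_\epsilon(\mathcal X_{k+1,\ell};{\bf p},\frak W_{\bf p})$, Lemma \ref{lem3838} provides the smooth embedding $\widehat\Phi_{\frak x;\vec\epsilon}:\Sigma_{\bf p}(\vec\epsilon)\to\Sigma_{\bf x}$ which preserves marked points; Definition \ref{defn411}(5) then gives a $C^2$-bound of order $\epsilon$ between $u_{\bf x}\circ\widehat\Phi_{\frak x;\vec\epsilon}$ and $u_{\bf p}\vert_{\Sigma_{\bf p}(\vec\epsilon)}$, and in particular evaluation at the corresponding marked point differs by $O(\epsilon)$. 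Combined with the fact, noted in Subsection \ref{subsec:complexstrumod} and Lemma \ref{lem3838}, that the gluing maps $\Phi$ and $\widehat\Phi_{\vec\epsilon}$ provide local coordinates on the orbifold $U_{\bf p}$, this yields continuity of $({\rm ev}_j)_{\bf p}$ and $({\rm ev}^{\rm int}_i)_{\bf p}$ on $U_{\bf p}$.

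The compatibility $({\rm ev}_j)_{\bf p}\circ\varphi_{{\bf p}{\bf q}} = ({\rm ev}_j)_{\bf q}$ on $U_{{\bf p}{\bf q}}$ (and likewise for ${\rm ev}^{\rm int}_i$) is immediate in the spirit of Remark \ref{lem710}: both $U_{{\bf p}{\bf q}}$ and its image lie inside the ambient set $\mathcal X_{k+1,\ell}(X,L,J;\beta)$, and $\varphi_{{\bf p}{\bf q}}$ is set-theoretically the inclusion constructed in Subsection \ref{subsec:cchangeconst}, so both sides equal $u_{\bf x}(z_{{\bf x},j})$ on any ${\bf x}\in U_{{\bf p}{\bf q}}$. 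The invariance under ${\rm Aut}({\bf x})$ required for $({\rm ev}_j)_{\bf p}$ to descend to the orbifold $U_{\bf p}$ is automatic since true automorphisms fix both the marked points and $u_{\bf x}$. Finally, the underlying continuous map is the usual evaluation at the given marked point, completing the verification of Definition \ref{mapkura}(1).

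The only mild subtlety---and hence the step that requires the most care---is the interior evaluation at a point $\frak z_i$ that lies at a node of $\Sigma_{\bf p}$: such an $i$ cannot in fact occur since by Definition \ref{defn24222}(3) interior marked points are distinct from nodal points, but one must still check that after small deformation the image of $\frak z_i$ under $\widehat\Phi_{\frak x;\vec\epsilon}$ remains in the ``non-neck'' region $\Sigma_{\bf x}(\vec\epsilon)$, which is ensured by choosing the analytic coordinate neighborhoods at nodes disjoint from the neighborhoods used for marked points in the local trivialization data of Definition \ref{defn37}. No genuine obstacle is anticipated.
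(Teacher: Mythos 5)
Your proposal is correct and follows essentially the same route as the paper: define the lifts by $({\rm ev}_j)_{\bf p}({\bf x})=u_{\bf x}(z_{{\bf x},j})$, $({\rm ev}^{\rm int}_i)_{\bf p}({\bf x})=u_{\bf x}(\frak z_{{\bf x},i})$ and observe that compatibility with the coordinate changes is immediate because $\varphi_{{\bf p}{\bf q}}$ is set-theoretically an inclusion in the ambient set $\mathcal X_{k+1,\ell}(X,L,J;\beta)$. The extra details you supply (well-definedness under isomorphisms, continuity via Definition \ref{defn411} and Lemma \ref{lem3838}, descent to the orbifold quotient) are consistent with, and merely spell out, what the paper leaves as obvious.
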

\begin{proof}
An element of $U_{\bf p}$  as defined in Definition \ref{defn72}
consists of ${\bf x} = ((\Sigma_{\bf x},\vec z_{\bf x},\vec{\frak z}_{\bf x}),u_{\bf x})$.
We define
$
{\rm ev}_{{\bf p},j}({\bf x}) = u_{{\bf x}}(z_{{\bf x},j}), 
$
$
{\rm ev}^{\rm int}_{{\bf p},i}({\bf x}) = u_{{\bf x}}(\frak z_{{\bf x},i}).
$
It is obvious that they are compatible with the coordinate change.
\end{proof}
It follows from the  construction of smooth structure of $U_{\bf p}$
(in Sections \ref{sec;Kuracharsmooth} and \ref{sec:Cmugen})  that 
${\rm ev}_{{\bf x},j}({\bf x})$ and 
${\rm ev}^{\rm int}_{{\bf x},i}({\bf x})$ are smooth.
So ${\rm ev}_j$ and ${\rm ev}^{\rm int}_i$ are strongly smooth.

\begin{conds}\label{cond22}
We say that $E_{\bf p}(\bf p)$ satisfies the {\it mapping transversality 
condition}\index{mapping transversality 
condition} for ${\rm ev}_0$ if the map 
$$
{\rm Ev}_0 : (D_{u_{{\bf p}}}\overline\partial)^{-1}(E_{\bf p}({\bf p})) \to T_{{\rm ev}_0({\bf p})}L
$$
is surjective. Here ${\rm Ev}_0$ is defined as follows.
Let $\sum V_a$ be an element of $(D_{u_{{\bf p}}}\overline\partial)^{-1}(E_{\bf p}({\bf p}))$.
Suppose $z_0$ is in the component $\Sigma_{a_0}$.
Then
$
{\rm Ev}_0(\sum V_a) = V_{a_0}(z_0).
$
\end{conds}
\begin{lem}\label{lem71213}
If Condition \ref{cond22} is satisfied then 
${\rm ev}_0 : 
\mathcal{M}_{k+1,\ell}(X,L,J;\beta) \to L$
is weakly submersive.
\end{lem}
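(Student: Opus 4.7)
\medskip

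\noindent\textbf{Proof proposal.} The plan is to compute the derivative of the evaluation map $\mathrm{ev}_{{\bf p},0}:U_{\bf p}\to L$ at the distinguished point ${\bf p}\in U_{\bf p}\cap \frak s_{\bf p}^{-1}(0)$, identify it with the map ${\rm Ev}_0$ from Condition~\ref{cond22}, and then propagate the surjectivity to a neighborhood by an openness argument. The key point is that, by the construction of $U_{\bf p}$ in Definition~\ref{defn72}, the Kuranishi neighborhood is locally modeled on the space of $(((\Sigma_{\bf x},\vec z_{\bf x},\vec{\frak z}_{\bf x}),u_{\bf x}))$ with $\overline\partial u_{\bf x}\in E_{\bf p}({\bf x})$, so that the tangent space at ${\bf p}$ is canonically identified (by the implicit function theorem in the thickened problem, combined with the automorphism quotient in Condition~\ref{conds55}) with
\[
T_{\bf p}U_{\bf p}\;\cong\;(D_{u_{\bf p}}\overline\partial)^{-1}(E_{\bf p}({\bf p}))\,/\,\frak{aut}(\Sigma_{\bf p},\vec z_{\bf p},\vec{\frak z}_{\bf p}).
\]
This identification is exactly the one underlying the smoothness statement of Lemma~\ref{lem73} whose proof is deferred to Section~\ref{sec;Kuracharsmooth}, and I would invoke it here.

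\medskip

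First I would write down the differential. For a tangent vector represented by $V=\sum_a V_a\in (D_{u_{\bf p}}\overline\partial)^{-1}(E_{\bf p}({\bf p}))$, varying $u_{\bf p}$ in the direction $V$ moves the image of the boundary marked point $z_0\in \Sigma_{{\bf p}}(a_0)$ by $V_{a_0}(z_0)\in T_{u_{\bf p}(z_0)}L$. Elements of $\frak{aut}(\Sigma_{\bf p},\vec z_{\bf p},\vec{\frak z}_{\bf p})$ are holomorphic vector fields on $\Sigma_{\bf p}$ that vanish at all marked points, so their contribution evaluates to zero at $z_0$ and the formula descends to the quotient. Therefore $d_{\bf p}\mathrm{ev}_{{\bf p},0}$ is exactly the map ${\rm Ev}_0$ of Condition~\ref{cond22}, and the assumption delivers surjectivity of $d_{\bf p}\mathrm{ev}_{{\bf p},0}$.

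\medskip

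Next I would spread this surjectivity to a neighborhood. Surjectivity of a linear map between finite-dimensional spaces is an open condition, and by the smooth dependence of the family $\{D_{u_{\bf x}}\overline\partial\}$, of $E_{\bf p}({\bf x})$, and of the evaluation functional on ${\bf x}\in \mathscr U_{\bf p}$ (which is where Definition~\ref{defn51}(2) and the smooth structure of Lemma~\ref{lem73} are used), the differential $d_{\bf x}\mathrm{ev}_{{\bf p},0}$ remains surjective for ${\bf x}$ in a neighborhood of ${\bf p}$. Shrinking $\mathscr U_{\bf p}$ accordingly, $\mathrm{ev}_{{\bf p},0}:U_{\bf p}\to L$ is a submersion on its domain. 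Compatibility with the coordinate changes $(\varphi_{{\bf p}{\bf q}},\hat\varphi_{{\bf p}{\bf q}})$ of Lemma~\ref{lem7878} is automatic since on the overlap $U_{{\bf p}{\bf q}}$ both $\mathrm{ev}_{{\bf p},0}$ and $\mathrm{ev}_{{\bf q},0}$ are given by the same formula ${\bf x}\mapsto u_{\bf x}(z_{{\bf x},0})$; this is precisely the strong smoothness already established above. Hence the strongly smooth map $\widehat{\mathrm{ev}_0}$ is weakly submersive in the sense of Definition~\ref{mapkura}(4).

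\medskip

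The main obstacle I anticipate is the clean identification of $T_{\bf x}U_{\bf p}$ at a general point ${\bf x}\neq {\bf p}$, because at ${\bf x}$ the source curve may have different combinatorics (fewer nodes) than $\Sigma_{\bf p}$ and the linearized operator must be compared across gluing parameters. However, since the boundary marked point $z_0$ is never a node and the construction of $U_{\bf p}$ via the maps $\widehat\Phi_{\frak x;\vec\epsilon}$ of Lemma~\ref{lem3838} makes the location of $z_0$ depend smoothly on $\frak x$ and $\vec\epsilon$, the openness of surjectivity persists through the gluing region. All of this rests on the smoothness results of Sections~\ref{sec;Kuracharsmooth} and~\ref{sec:Cmugen}, which I would simply quote.
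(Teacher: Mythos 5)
Your proposal is correct and follows essentially the same route as the paper, whose entire proof is the observation that ${\rm Ev}_0$ induces the differential of ${\rm ev}_{{\bf p},0}$ at ${\bf p}$ (using the remark, made just before in Subsection \ref{subsec:Cnkurastru}, that $T_{\bf p}V_{\bf p}$ contains $(D_{u_{\bf p}}\overline{\partial})^{-1}(E_{\bf p}({\bf p}))/\frak{aut}(\Sigma_{\bf p},\vec z_{\bf p},\vec{\frak z}_{\bf p})$), the rest being declared immediate. Your additional steps—checking that $\frak{aut}$ evaluates to zero at $z_0$ and spreading surjectivity by openness after shrinking the chart—are exactly the details the paper leaves implicit, and they are fine since Kuranishi structures are taken up to germ.
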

\begin{proof}
It is easy to see that ${\rm Ev}_0$ induces the differential of the 
map ${\rm ev}_{{\bf p},0}$ at ${\bf p}$.
The lemma is an immediate consequence of this fact.
\end{proof}
We can define the mapping transversality condition for other marked points and 
generalize Lemma \ref{lem71213} in the obvious way.

\section{Smoothness of obstruction bundle data}
\label{sec;obstbundleexpdecay}

In this section we define Condition (2) in Definition \ref{defn51}.

\subsection{Trivialization of families of function spaces}
\label{subsec:trivobst}

\begin{rem}\label{shitu84}
We choose a unitary connection on $TX$ and fix it.
\end{rem}

\begin{shitu}\label{situ85}
Let ${\bf p} \in \mathcal M_{k+1,\ell}(X,L;\beta)$. 
We take stabilization  and trivialization data $\frak W_{\bf p}$, 
part of which are the weak stabilization data $\vec {\frak w}_{\bf p}$ at ${\bf p}$ 
consisting of $\ell'$ extra marked points.
We assume $E_{\bf p}({\bf x})$ satisfies Definition \ref{defn51} (1)(3)(5).
$\diamond$
\end{shitu}

Note ${\bf p}\cup \vec {\frak w}_{\bf p} \in \mathcal M_{k+1,\ell+\ell'}(X,L;\beta)$.
Let ${\bf y} = ((\Sigma_{\bf y},\vec z_{\bf y},\vec {\frak z}_{\bf y}),u_{\bf y})$ be an 
element of $\mathcal X_{k+1,\ell+\ell'}(X,L;\beta)$ which is $\epsilon_0$-close to 
${\bf p}\cup \vec {\frak w}_{\bf p}$.
We apply Lemma \ref{lem34} to ${\bf p}\cup \vec {\frak w}_{\bf p}$ 
and obtain  $\frak y$, an element of the domain of $\Phi$ in (\ref{form33}),
such that $\Phi(\frak y) = (\Sigma_{\bf y},\vec z_{\bf y},\vec{\frak z}_{\bf y})$.
\par
By Lemma \ref{lem3838} we obtain a smooth embedding
$
\hat{\Phi}_{\frak y,\vec \epsilon} : \Sigma_{{\bf p}\cup \vec {\frak w}_{\bf p}}(\vec \epsilon) \to \Sigma_{\bf y}
$
which sends $\vec z_{\bf p}$, $\vec{\frak z}_{\bf p}\cup \vec{\frak w}_{\bf p}$
to $\vec z_{\bf y}$, $\vec{\frak z}_{\bf y}$, respectively.
We remark
$
\Sigma_{\bf y}(\vec \epsilon) = \hat{\Phi}_{\frak y,\vec \epsilon}(\Sigma_{\bf p}(\vec \epsilon)).
$
We put ${\bf x} = \frak{forget}_{\ell+\ell',\ell}(\bf y)$
and obtain 
$
E_{\bf p}({\bf x}) \subset 
C^2(\Sigma_{\bf x};u_{{\bf x}}^*TX\otimes \Lambda^{01})
$. 
Note $\Sigma_{\bf x} = \Sigma_{\bf y}$ and $u_{{\bf x}} = u_{{\bf y}}$.
We also remark 
$\Sigma_{{\bf p}\cup \vec {\frak w}_{\bf p}} = \Sigma_{\bf p}$, 
$u_{{\bf p}\cup \vec {\frak w}_{\bf p}} = u_{\bf p}$.
\par
We define a linear map 
\index{00P3_{{\bf y}}@$\mathcal P_{{\bf y}}$}
\begin{equation}\label{form8000}
\mathcal P_{{\bf y}} : C^2(\Sigma_{\bf y}(\vec \epsilon);u_{{\bf y}}^*TX\otimes \Lambda^{01})
\to 
C^2(\Sigma_{\bf p};u_{{\bf p}}^*TX\otimes \Lambda^{01})
\end{equation}
below.
We first define a bundle map
\begin{equation}\label{form8181}
u_{{\bf y}}^*TX \to u_{{\bf p}}^*TX
\end{equation}
over the diffeomorphism $\hat{\Phi}_{\frak y,\vec \epsilon}^{-1}$.
Let $z \in \Sigma_{{\bf p}\cup \vec {\frak w}_{\bf p}}(\vec \epsilon)$.
By our choice, the distance between $u_{{\bf y}}(\hat{\Phi}_{\frak y,\vec \epsilon}(z))$ and $u_{{\bf p}}(z)$ 
is smaller than $\epsilon_0$. We may choose $\epsilon_0$ smaller 
than the injectivity radius of the Riemannian metric in Remark \ref{rem499}.
Therefore there exists a unique minimal geodesic joining $u_{{\bf y}}(\hat{\Phi}_{\frak y,\vec \epsilon}(z))$ and $u_{{\bf p}}(z)$.
We take a parallel transport by the connection in 
Remark \ref{shitu84} along this geodesic.
We thus obtain (\ref{form8181}). This bundle map is complex linear, 
since the connection in Remark \ref{shitu84} is unitary.
\par
We next take the differential of $\hat{\Phi}_{\frak y,\vec \epsilon}$ to obtain 
a bundle map 
$
\Lambda^1(\Sigma_{\bf y}(\vec \epsilon))
\to \Lambda^1(\Sigma_{{\bf p}\cup \vec {\frak w}_{\bf p}}(\vec \epsilon)).
$
We take its complex linear part to obtain
\begin{equation}\label{form8281}
\Lambda^{01}(\Sigma_{\bf y}(\vec \epsilon))
\to \Lambda^{01}(\Sigma_{{\bf p}\cup \vec {\frak w}_{\bf p}}(\vec \epsilon)).
\end{equation}
This is a complex linear bundle map over $\hat{\Phi}_{\frak y,\vec \epsilon}^{-1}$.
In case ${\bf y} = {\bf p}\cup \vec {\frak w}_{\bf p}$ this is the identity map.
So if we take $\epsilon_0$ sufficiently small, (\ref{form8281}) is an isomorphism.
\par
Taking a tensor product of (\ref{form8181}) and (\ref{form8281}) over $\C$ we obtain 
a bundle isomorphism
\begin{equation}\label{form8333}
u_{{\bf y}}^*TX\otimes \Lambda^{01} \to u_{{\bf p}}^*TX\otimes \Lambda^{01}
\end{equation}
over $\hat{\Phi}_{\frak y,\vec \epsilon}^{-1}$.
Roughly speaking the smoothness of $E_{\bf p}({\bf x})$ means that 
$\mathcal P_{{\bf y}}(E_{\bf p}({\bf x}))$ depends smoothly on ${\bf y}$.
We will formulate it precisely in the next subsection.

\subsection{The smoothness condition of  obstruction bundle data}
\label{subsec:smoothobst}

\begin{defn}\label{defn860}
Suppose we are in Situation \ref{situ85}.
We say $E_{\bf p}({\bf x})$ is {\it independent of $u_{\bf x}\vert_{\rm neck}$}
\index{independent of $u_{\bf x}\vert_{\rm neck}$}
if the following holds for some $\epsilon_0$, $\vec\epsilon$.
\par
Let ${\bf y},{\bf y}'$ be elements of
$\mathcal X_{k+1,\ell+\ell'}(X,L;\beta)$ which are $\epsilon_0$-close to 
${\bf p}\cup \vec {\frak w}_{\bf p}$.
We put ${\bf x} = \frak{forget}_{\ell+\ell',\ell}(\bf y)$ and
${\bf x}' = \frak{forget}_{\ell+\ell',\ell}({\bf y}')$.
(Note $\Sigma_{{\bf x}} = \Sigma_{{\bf y}}$, $\Sigma_{{\bf x}'} = \Sigma_{{\bf y}'}$.)
We assume that there exists $v : \Sigma_{{\bf x}} \to \Sigma_{{\bf x}'}$
such that
\begin{enumerate}
\item $v$ is biholomorphic and sends $\vec z_{{\bf y}}$, $\vec {\frak z}_{{\bf y}}$ to $\vec z_{{\bf y}'}$, $\vec {\frak z}_{{\bf y}'}$, respectively.
\item $v(\Sigma_{\bf y}(\vec \epsilon)) = \Sigma_{{\bf y}'}(\vec \epsilon)$
and the equality
$
u_{{\bf y}'} \circ v
 = u_{\bf y}$
holds on $\Sigma_{\bf y}(\vec \epsilon)$.
\end{enumerate}
Then we require that all the elements of 
$E_{\bf p}({\bf x})$ (resp. $E_{\bf p}({\bf x}')$)
are supported on $\Sigma_{\bf y}(\vec \epsilon)$ 
(resp. $\Sigma_{{\bf y}'}(\vec \epsilon)$)  and 
$
\mathcal P_{{\bf y}}(E_{\bf p}({\bf x})) = \mathcal P_{{\bf y}'}(E_{\bf p}({\bf x}')).
$
\end{defn}
This is a part of the definition of the smoothness of obstruction bundle data, that is, 
Definition \ref{defn51} (2).
To formulate the main part of this condition we use the next:
\begin{defn}\label{defn87}
Let $\mathbb H$ be a Hilbert space and $\{E(\xi)\}$ a family of 
finite dimensional linear subspaces of $\mathbb H$ parametrized by 
$\xi \in Y$, where $Y$ is a Hilbert manifold.
We say $\{E(\xi)\}$ is a {\it $C^n$ family}
if there exists a finite number of  $C^n$ maps:
$e_i : Y \to \mathbb H$ ($i=1,\dots,N$) such that for each $\xi \in Y$, 
$(e_1(\xi),\dots,e_N(\xi))$ is a basis of $E(\xi)$.
\end{defn}
Suppose we are in Situation \ref{situ85}.
In particular we have chosen ${\frak W}_{\bf p}$.
We assume $E_{\bf p}({\bf x})$ is independent of $u_{\bf x}\vert_{\rm neck}$.
We take $\vec\epsilon$ which is sufficiently smaller 
than the one appearing in Definition \ref{defn860}.
We put ${\bf p}^+ = {\bf p} \cup \vec{\frak w}_{\bf p}$
where $\vec{\frak w}_{\bf p}$ is a part of ${\frak W}_{\bf p}$.\index{00P++@${\bf p}^+$}
We consider the map (\ref{form33})
\begin{equation}\label{form8585}
\Phi : 
\prod_{a\in \mathcal A_{{\bf p}}^{\rm s} \cup \mathcal A_{{\bf p}}^{\rm d}} \mathcal V^+_a 
\times [0,c)^{m_{\rm d}} \times (D^2_{\circ}(c))^{m_{\rm s}}
\to \mathcal M^{\rm d}_{k+1,\ell},
\end{equation}
for ${\bf p}^+$. Here we decompose $(\Sigma_{\bf p^+},\vec z_{\bf p},\vec{\frak z}_{\bf p}
\cup \vec{\frak w}_{\bf p})$ into irreducible components and let $\mathcal V^+_a$ be the 
deformation parameter space of each irreducible component 
$
(\Sigma_{\bf p}(a),\vec z_{\bf p}(a),\vec{\frak z}_{\bf p}(a)
\cup \vec{\frak w}_{\bf p}(a)).
$
Here 
$$
\aligned\vec{z}_{\bf p}(a) &= (\vec{z}_{\bf p} \cap \Sigma_{\bf p}(a)) \cup 
\{\text{boundary nodes in  $\Sigma_{\bf p}(a)$}\} \\
\vec{\frak z}_{\bf p}(a) &= (\vec{\frak z}_{\bf p} \cap \Sigma_{\bf p}(a)) \cup 
\{\text{interior nodes in  $\Sigma_{\bf p}(a)$}\} \\ 
\vec{\frak w}_{\bf p}(a) &= \vec{\frak w}_{\bf p} \cap \Sigma_{\bf p}(a).
\endaligned$$
\par
Now we take the direct product 
\index{00V3({{\bf p}^+};\vec\epsilon)@$\mathcal V({{\bf p}^+};\vec\epsilon)$}
\begin{equation}\label{form86}
\mathcal V({{\bf p}^+};\vec\epsilon) = 
\prod_{a\in \mathcal A_{{\bf p}}^{\rm s} \cup \mathcal A_{{\bf p}}^{\rm d}}\mathcal V_a^+
\times
\prod_{j=1}^{m_{\rm d}} [0,{\epsilon_{j}^{\rm d}})
\times
\prod_{i=1}^{m_{\rm s}} D^2_{\circ}({\epsilon_{i}^{\rm s}}).
\end{equation}
Note we have already taken $\varphi_{a,i}^{\rm s}$, $\varphi_{a,j}^{\rm d}$, $\phi_{a}$
as a part of $\frak W_{\bf p}$.
\par
To each $\frak y \in \mathcal V({\bf p}^+;\vec\epsilon)$ we  
associate a marked nodal disk $(\Sigma_{\frak y},\vec z_{\frak y},\vec{\frak z}_{\frak y})$ 
by Lemma \ref{lem34}.
We also obtain a diffeomorphism
$
\widehat{\Phi}_{\frak y,\vec\epsilon}: \Sigma_{{\bf p}^+}(\vec\epsilon) \to \Sigma_{\frak y}(\vec\epsilon)
$
by Lemma \ref{lem3838}.
\par
Let $\mathscr L_m$ be a small neighborhood of $u_{{\bf p}^+}\vert_{\Sigma_{{\bf p}^+}(\vec\epsilon)}$
in $L^2_{m}((\Sigma_{{\bf p}^+}(\vec\epsilon),\partial\Sigma_{{\bf p}^+}(\vec\epsilon));X,L)$.
We will associate 
a finite dimensional subspace 
$
E_{\bf p;{\frak W}_{\bf p}}({\frak y},u')$
of 
$L^2_m(\Sigma_{{\bf p}^+}(\vec \epsilon);u_{{\bf p}}^*TX\otimes \Lambda^{01})
$
to $({\frak y},u') \in \mathcal V({\bf p}^+;\vec\epsilon) \times \mathscr L_m$ below.
\par
We assume $E_{\bf p}(\bf x)$ is independent of $u_{\bf x}\vert_{\rm neck}$
and consider 
$$
u'' = u' \circ \widehat{\Phi}_{\frak y,\vec\epsilon}^{-1} : \Sigma_{\frak y}(\vec\epsilon) \to X.
$$
We can extend $u''$ to $\Sigma_{\frak y}$ (by modifying it near the small 
neighborhood of the boundary of $\Sigma_{\frak y}(\vec\epsilon)$), still denoted by $u''$,
so that $u''\vert_{\Sigma_{\frak y} \setminus \Sigma_{\frak y}(\vec\epsilon)}$ has diameter $<\epsilon_0$.\footnote{
Note $u'\vert_{\partial \Sigma_{\frak y}(\vec\epsilon)}$ has diameter $< \epsilon'$ (in the sense of Definition \ref{defn41444})
with $\epsilon' < \epsilon_0$, and $\epsilon_0$ is smaller than the injectivity radius of $X$. 
We can use these facts to show the existence of $u''$.\label{fn16}}
\par
We now take 
${\bf y} = ((\Sigma_{\frak y},\vec z_{\frak y},\vec{\frak z}_{\frak y} \cup \vec{\frak w}_{\frak y}),u'')$
and  ${\bf x} = \frak{forget}_{\ell+\ell',\ell}(\bf y)$.
Then using $E_{\bf p}({\bf x})$ we define
\begin{equation}\label{form255}
E_{{\bf p};{\frak W}_{\bf p}}({\frak y},u') 
=
\mathcal P_{{\bf y}}(E_{\bf p}({\bf x})).
\end{equation}
Since $E_{\bf p}({\bf x})$ is independent of $u_{\bf x}\vert_{\rm neck}$ 
this is independent of the choice of the extension of $u''$.
As a part of our condition, we require 
$$
E_{{\bf p};{\frak W}_{\bf p}}({\frak y},u') 
\subset L^2_m(\Sigma_{\bf p};u_{{\bf p}}^*TX\otimes \Lambda^{01}).
$$
See Definition \ref{defn86} (1).
This is a finite dimensional subspace of 
$L^2_m(\Sigma_{\bf p};u_{{\bf p}}^*TX\otimes \Lambda^{01})$
depending on ${\bf p},{\frak W}_{\bf p}$ and ${\frak y},u'$.
\par
\begin{defn}\label{defn86}
We say $E_{\bf p}({\bf x})$ {\it depends smoothly on ${\bf x}$ with respect to 
$({\bf p},{\frak W}_{\bf p})$} if the following holds.
For each $n$ there exists $m_0$ such that if $m \ge m_0$ and 
$\vec {\epsilon}$ is small then:
\begin{enumerate}
\item Elements of $E_{\bf p}({\bf x})$ are of $L^2_m$ class if $u_{\bf x}$ is  of $L^2_m$ class.
\item If ${\bf x} = \frak{forget}_{\ell+\ell',\ell}({\bf y})$, $(\Sigma_{\bf y},\vec z_{\bf y},\vec{\frak z}_{\bf y}) = \Phi(\frak y)$
(where $\frak y$ and ${\bf y}$ are as above)
then the supports of elements of 
$E_{\bf p}({\bf x})$ are contained in $\Sigma_{\bf y}(\vec \epsilon)$.
\item
$E_{{\bf p};{\frak W}_{\bf p}}({\frak y},u')$ 
is a $C^n$ family 
parametrized by $({\frak y},u')$ in the sense of Definition \ref{defn87}.
\end{enumerate}
\end{defn}
\begin{rem}\label{rem83}
Let ${\bf r} = ((\Sigma_{{\bf r}},\vec z_{{\bf r}},\vec {\frak z}_{{\bf r}}),u_{{\bf r}}) 
\in \mathcal{X}_{k+1,\ell}(X,L,J;\beta)$ be an element of $\mathscr U_{\bf p}$
such that $u_{{\bf r}}$ is smooth but not necessarily pseudo holomorphic.
We can still define the notion of stabilization and trivialization data $\frak W_{{\bf r}}$ in the same 
way as Definition \ref{situation38}.\index{00W4_{\bf p}@$\frak W_{\bf p}$, $\frak W_{\bf r}$}
\end{rem}
\begin{defn}\label{defn8911}
We say $E_{\bf p}({\bf x})$ {\it depends smoothly on ${\bf x}$}\index{depends smoothly on ${\bf x}$}
if:
\begin{enumerate}\label{defn899}
\item  $E_{\bf p}({\bf x})$ is  independent of $u_{\bf x}\vert_{\rm neck}$.
\item  $E_{\bf p}({\bf x})$ depends smoothly on ${\bf x}$ with respect to 
$({\bf p},{\frak W}_{\bf p})$ for {\it any} choice of ${\frak W}_{\bf p}$.
\item
Let  ${\bf r} = ((\Sigma_{{\bf r}},\vec z_{{\bf r}},\vec {\frak z}_{{\bf r}}),u_{{\bf r}}) 
\in \mathcal{X}_{k+1,\ell}(X,L,J;\beta)$ be as in Remark \ref{rem83}.
Then for any $({\bf r},{\frak W}_{{\bf r}})$, the same conclusion 
as (2) holds.
\end{enumerate}
\end{defn}
We will elaborate (3) at the end of this subsection.
\begin{rem}
In our previous writing such as \cite{FO,foootech,foooexp}
we defined the obstruction spaces $E_{\bf p}({\bf x})$ 
in the way we will describe in  Section \ref{sec:exiobst}.
We will prove in Section \ref{sec:exiobst} that it satisfies Definition \ref{defn899}.
\par
On the other hand, the gluing analysis such as those in \cite{foooexp}
works not only for this particular choice but also for more general 
$E_{\bf p}({\bf x})$ which satisfies Definition \ref{defn899}.
In fact, in some situation such as in \cite{foootoric32,FuFu5} (where we studied 
an action of a compact Lie group on the target space), we used 
somewhat different choice of $E_{\bf p}({\bf x})$ where
Definition \ref{defn899} is also satisfied.
(See \cite[Subsection 7.4]{FuFu5} and \cite[Appendix]{foootoric32}, for example.) 
Other methods of defining $E_{\bf p}({\bf x})$ 
may be useful also in the future in some other situations.
\par
Therefore, formulating the condition for $E_{\bf p}({\bf x})$ to  satisfy, such as 
Definition \ref{defn899}, rather than using some specific choice 
of $E_{\bf p}({\bf x})$ is more flexible and widens the scope of  
its applications.
\end{rem}
We now explain Definition \ref{defn8911} (3).
We can construct a Kuranishi structure of $C^n$ class for any but fixed $n$ 
without using this condition.
This condition is used to obtain a Kuranishi structure of $C^{\infty}$ class.
See Section \ref{sec:Cmugen}.
\par
Let ${\bf r}$ be as in Remark \ref{rem83}.
We can define the notion of stabilization and trivialization data ${\frak W}_{{\bf r}}$.
We also define $\mathcal V({\bf r}\cup \vec{\frak w}_{{\bf r}};\vec\epsilon)$ in the same way as (\ref{form86}).
Then, for each $\frak y\in\mathcal V({\bf r}\cup \vec{\frak w}_{{\bf r}};\vec\epsilon)$, we can 
associate $(\Sigma_{\frak y},\vec z_{\frak y},\vec{\frak z}_{\frak y})$ 
in the same way  and 
obtain a diffeomorphism
$
\widehat{\Phi}_{\frak y,\vec\epsilon}: \Sigma_{{\bf r}\cup \vec{\frak w}_{{\bf r}}}(\vec\epsilon) \to \Sigma_{\frak y}(\vec\epsilon).
$
Let $\mathscr L_m$ be a small neighborhood of $u_{{\bf r}\cup \vec{\frak w}_{{\bf r}}}\vert_{\Sigma_{{\bf r}\cup \vec{\frak w}_{{\bf r}}}(\vec\epsilon)}$
in $L^2_{m}((\Sigma_{{\bf r}\cup \vec{\frak w}_{\bf r}},\partial\Sigma_{{\bf r}
\cup \vec{\frak w}_{{\bf r}}}(\vec\epsilon));X,L)$.
Now for each $u' \in \mathscr L_m$ and 
${\frak y}\in\mathcal V({\bf r}\cup \vec{\frak w}_{{\bf r}};\vec\epsilon)$ we use 
$E_{\bf p}({\bf x})$\footnote{This is $E_{\bf p}({\bf x})$
and is not $E_{{\bf r}}({\bf x})$. The later is not defined.} for ${\bf x} = \frak{forget}_{\ell+\ell',\ell}
(\Phi_{{\bf r} \cup \vec{\frak w}_{\bf r}}({\frak y}))$ in the same way as above 
to define 
$
E_{{\bf p};{{\frak W}_{{\bf r}}}}({\frak y},u')
\subset L^2_m(\Sigma_{{\bf r}\cup \vec{\frak w}_{{\bf r}}}(\vec\epsilon);u_{{\bf r}}^*TX\otimes \Lambda^{01}).
$
Definition \ref{defn8911} (3) requires that this is a $C^n$ family 
parameterized by  $({\frak y},u')$ for any ${\frak W}_{{\bf r}}$ if $m$ is large and $\vec{\epsilon}$ 
is small.

\section{Kuranishi charts are of $C^n$ class}
\label{sec;Kuracharsmooth}

In this section we review how the gluing analysis (especially those 
detailed in \cite{foooexp}) implies that the construction of 
Section \ref{sec;kuraconst} provides Kuranishi charts of $C^n$ class.
In other words we prove the $C^n$ version of Lemma \ref{lem73}.

\subsection{Another smooth structure on the moduli space of source curves}
\label{subsec:smoothanother}

As was explained in \cite[Remark A1.63]{fooobook2} the standard smooth structure 
of $\mathcal M_{k+1,\ell}^{\rm d}$ is not appropriate to define 
smooth Kuranishi charts of $\mathcal M_{k+1,\ell}^{\rm d}(X,L;\beta)$.
Following the discussion of \cite[Subsection A1.4]{fooobook2}, 
we will define another smooth structure on $\mathcal M_{k+1,\ell}^{\rm d}$ 
in this subsection.
(The notion of profile due to Hofer, Wysocki and Zehnder \cite[Section 2.1]{hwze}
is related to this point.)
We consider the map (\ref{form33}).
\par
Let $r_j \in [0,c)$ ($j=1,\dots,m_{\rm d}$) be the standard coordinates of $[0,c)^{m_{\rm d}}$ 
and $\sigma_i \in D^2_{\circ}(c)$ ($i=1,\dots,m_{\rm s}$)  
the standard coordinates of $(D^2_{\circ}(c))^{m_{\rm s}}$.
\index{00R1dj@$r_j$}\index{00Sigma1sj@$\sigma_i$}
We put
\index{00T1dj@$T^{\rm d}_j$}\index{00T1sj@$T^{\rm s}_i$}
\index{00Thetai@$\theta_i$}
\begin{equation}\label{form91}
\aligned
&T^{\rm d}_j = -\log r_j \in \R_{+} \cup \{\infty\},  \\
&T^{\rm s}_i = -\log \vert\sigma_i\vert\in \R_{+}\cup \{\infty\}, 
\quad \theta_i = -\rm{Im}(\log \sigma_i) \in \R/2\pi\Z.
\endaligned
\end{equation}
We then define 
\index{00S1j@$s_j$}
\index{00Rhoi@$\rho_i$}
\begin{equation}\label{form92}
s_j = 1/T^{\rm d}_j \in [0,-1/\log c), 
\quad
\rho_i =  \exp(\theta_i\sqrt{-1})/T^{\rm s}_j \in D^2(-1/\log c).
\end{equation}
\par
Composing  these coordinate changes with the map $\Phi$ in Lemma \ref{lem34}, we define
\begin{equation}\label{form9393}
\Phi_{s,\rho} : 
\prod_{a\in \mathcal A_{\bf p}^{\rm s} \cup \mathcal A_{\bf p}^{\rm d}} \mathcal V_a 
\times [0,-1/\log c)^{m_{\rm d}} \times (D^2_{\circ}(-1/\log c))^{m_{\rm s}}
\to \mathcal M^{\rm d}_{k+1,\ell}.
\end{equation}
\begin{lem}\label{lem91}
There exists a unique structure of smooth manifold with 
corners on $\mathcal M^{\rm d}_{k+1,\ell}$ 
such that $\Phi_{s,\rho}$ is a diffeomorphism onto its image for each 
${\bf p} \in \mathcal M^{\rm d}_{k+1,\ell}$.
\end{lem}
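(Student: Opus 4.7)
Uniqueness is essentially immediate from the statement: if two smooth structures on $\mathcal M^{\rm d}_{k+1,\ell}$ both make every $\Phi_{s,\rho}$ a diffeomorphism onto its image, then on each chart the identity map is locally a composition $\Phi_{s,\rho}\circ\Phi_{s,\rho}^{-1}$ of diffeomorphisms, hence smooth. For existence the task is to exhibit the atlas $\{\Phi_{s,\rho}^{\bf p}\}_{\bf p}$ and to verify that whenever the images of two such charts overlap, the transition map
\[
(\Phi_{s,\rho}^{{\bf p}'})^{-1}\circ\Phi_{s,\rho}^{\bf p}
\]
is smooth between products of the form $(\prod_a\mathcal V_a)\times[0,c')^{m_{\rm d}}\times(D^2_\circ(c'))^{m_{\rm s}}$ with corners, i.e.\ smooth up to and including every stratum.

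Since the question is local I would reduce to the case ${\bf p}'\in{\rm Im}(\Phi^{\bf p})$, so that both parametrizations gluing from ${\bf p}$ are available. The transition then decomposes into (a) a change of the $C^\infty$ trivializations $\phi_a$ and of the deformation parameters on the irreducible components, which is manifestly smooth in ${\bf x}_a\in\mathcal V_a$, and (b) a change of analytic families of coordinates $\varphi^{\rm s}_{a,i},\varphi^{\rm d}_{a,j}$ at nodes that remain nodal, which alters the gluing parameters. Two such analytic families $\varphi,\varphi'$ differ by a holomorphic reparametrization $z\mapsto f({\bf x},z)$ with $f({\bf x},0)=0$ and $\partial_z f({\bf x},0)\neq 0$, depending holomorphically on ${\bf x}$. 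A direct Taylor/residue calculation using the gluing relation $z_1z_2=\sigma$ (resp.\ for the real gluing $r$) shows that the new gluing parameter has the form
\[
\sigma'=\lambda({\bf x})\,\sigma\bigl(1+\eta({\bf x},\sigma)\bigr),
\]
with $\lambda({\bf x})=\partial_z f_1({\bf x},0)\partial_z f_2({\bf x},0)\neq 0$ holomorphic in ${\bf x}$ and $\eta$ smooth with $\eta({\bf x},0)=0$. Passing to $T=-\log|\sigma|$, $\theta=-{\rm Im}\log\sigma$ this yields
\[
T'=T-\log|\lambda({\bf x})|+g({\bf x},T,\theta),\qquad \theta'=\theta+\arg\lambda({\bf x})+h({\bf x},T,\theta),
\]
where $g,h$ and all of their ${\bf x},T,\theta$-derivatives of every order are $O(e^{-cT})$ as $T\to\infty$; this is precisely the exponential decay estimate recorded in \cite[Section 8]{foooexp} (and ultimately in \cite[Theorem 6.4]{foooexp}).

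The crucial point is then elementary: substituting $s=1/T$ converts a function of $T$ whose derivatives all decay like $e^{-cT}$ into a $C^\infty$ function of $s$ on $[0,-1/\log c)$ all of whose derivatives at $s=0$ vanish (since $e^{-c/s}$ is Schwartz-flat at $0$). Hence the change-of-coordinates expression for $(s_j,\rho_i)\mapsto(s'_j,\rho'_i)$ extends smoothly across the locus $s_j=0$, $\rho_i=0$ and respects the corner stratification since $s_j=0$ corresponds exactly to $r_j=0$. Combining with the smoothness of part (a) gives smoothness of the full transition map, producing the desired atlas. The main obstacle is the analytic one, namely obtaining the exponential decay of $g,h$ together with derivatives, uniformly in the deformation parameter ${\bf x}$ and simultaneously in all gluing necks; this is nontrivial but is the content of the cited exponential decay results, so for the present lemma it may be quoted and the remaining argument becomes a purely formal verification.
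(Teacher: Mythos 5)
Your proposal follows essentially the same route as the paper: both reduce to comparing the two parametrizations at a pair ${\bf p}$, ${\bf q}\in{\rm Im}(\Phi^{\bf p})$, show that the discrepancy of the cylindrical coordinates $T,\theta$ (equivalently, that $\sigma^{\bf p}/\sigma^{\bf q}$ is a nowhere vanishing holomorphic function, the paper's way of phrasing your $\sigma'=\lambda({\bf x})\sigma(1+\eta)$) decays exponentially with all derivatives by quoting the estimates of \cite[Section 8, Sublemma 8.29]{foooexp}, handle the boundary gluing parameter by doubling, and conclude smoothness in the $(s,\rho)$ coordinates from the flatness of $e^{-c/s}$ at $s=0$ together with strata-wise smoothness of the remaining (deformation-parameter) components. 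The only cosmetic difference is your attribution of the decay to \cite[Theorem 6.4]{foooexp} (which concerns the glued maps) rather than to the source-curve estimate in Section 8, but the argument itself matches the paper's proof.
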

Note in this subsection ${\bf p}, {\bf q}$ are elements of 
$\mathcal M^{\rm d}_{k+1,\ell}$ and not of $\mathcal M^{\rm d}_{k+1,\ell}(X,L,J;\beta)$.
\begin{proof}
During this proof we write $\Phi^{\bf p}$ etc. to clarify that it is associated to 
${\bf p} \in \mathcal M^{\rm d}_{k+1,\ell}$.
We denote by $\frak v^{\bf p}_a$ an 
element of the first factor of the domain of (\ref{form9393})
for ${\bf p}$.
\par
Suppose 
${\bf q} \in {\rm Im}(\Phi^{\bf p})$.
Then $m_{\rm d}^{\bf q} \le m_{\rm d}^{\bf p}$, $m_{\rm s}^{\bf q} \le m_{\rm s}^{\bf p}$.
We may enumerate the marked points so that the $j$-th boundary node 
(resp. the $i$-th interior node) of $\Sigma_{\bf p}$ corresponds to the
$j$-th boundary node (resp. the $i$-th interior node) of $\Sigma_{\bf q}$ for 
$j=1,\dots,m_{\rm d}^{\bf q}$ (resp. $i=1,\dots,m_{\rm s}^{\bf q}$).
Then we can easily prove the next inequalities:
\begin{equation}\label{form94}
\aligned
\left\Vert \nabla^{n-1} \frac{\partial}{\partial T_{j}^{\rm d,\bf q}}(T^{\rm d,\bf p}_{j_0} - T^{\rm d,\bf q}_{j_0})
\right\Vert &\le C_n e^{-c_n T_{j}^{\rm d,\bf q}} \\
\left\Vert \nabla^{n-1} \frac{\partial}{\partial T_{i}^{{\rm s},\bf q}}(T^{\rm d,\bf p}_{j_0} - T^{\rm d,\bf q}_{j_0})
\right\Vert &\le C_n e^{-c_n T_{i}^{{\rm s},\bf q}} \\
\left\Vert \nabla^{n-1} \frac{\partial}{\partial \theta_{i}^{\bf q}}(T^{\rm d,\bf p}_{j_0} - T^{\rm d,\bf q}_{j_0})
\right\Vert &\le C_n e^{-c_n T_{i}^{{\rm s},\bf q}} 
\endaligned 
\end{equation}
for $j_0 = 1,\dots,m_{\rm d}^{\bf q}$.
Here $\nabla^{n-1}$ is the ($n-1$)-th derivatives on the variables  $\frak v^{\bf q}_a$, $T_{j}^{\rm d,\bf q}$
$T_{i}^{\rm s,\bf q}$, $\theta_{i}^{\bf q}$
and $\Vert \cdot \Vert$ is the $C^0$ norm.
\par
In fact, to prove the 2nd and 3rd inequalities of (\ref{form94}), we use the fact that 
$\sigma^{\bf p}_i$, $\sigma^{\bf q}_i$ are holomorphic functions defining the same 
divisor to show that  $\sigma^{\bf p}_i/\sigma^{\bf q}_i$ is a nowhere vanishing holomorphic 
function. Then in the same way as \cite[Sublemma 8.29]{foooexp}
we obtain the 2nd and 3rd inequalities of (\ref{form94}).
The 1st inequality is proved in the same way by taking the double 
as in Section \ref{subsec;universal}.
\par
We can prove the same inequality with $T^{\rm d,\bf p}_{j_0} - T^{\rm d,\bf q}_{j_0}$
replaced by $T^{\rm s,\bf p}_{i_0} - T^{\rm s,\bf q}_{i_0}$,
$\theta^{\bf p}_{i_0} - \theta^{\bf q}_{i_0}$, ($i_0 = 1,\dots,m_{\rm s}^{\bf p}$), $s_{j_0}$ (${j_0}>m_{\rm d}^{\bf q}$), 
$\sigma_{i_0}$ (${i_0}>m_{\rm s}^{\bf q}$) or
coordinates of $\frak v_a^{\bf p}$.
\par
In fact, the estimates for $s_{j_0}$ (${j_0}>m_{\rm d}^{\bf q}$), 
$\sigma_{i_0}$ (${i_0}>m_{\rm s}^{\bf q}$) or  coordinates of $\frak v_a^{\bf p}$ are proved using the fact that 
they are smooth functions of $r_j^{\bf q}$, $\frak v_a^{\bf q}$ and $\sigma_i^{\bf q}$.
\par
These facts combined with strata-wise smoothness of 
$(\Phi_{s,\rho}^{\bf p})^{-1}\circ \Phi_{s,\rho}^{\bf q}$ 
imply that the coordinate change 
$(\Phi_{s,\rho}^{\bf p})^{-1}\circ \Phi_{s,\rho}^{\bf q}$  is smooth.
The lemma is a consequence of this fact.
\end{proof}
Hereafter we write $\mathcal M^{{\rm d}.\log}_{k+1,\ell}$  
\index{00M3^{{\rm d}.\log}_{k+1,\ell}@$\mathcal M^{{\rm d}.\log}_{k+1,\ell}$}
when we use the smooth structure given in Lemma \ref{lem91}.

\subsection{Gluing analysis: review}
\label{subsec:glue}

We review the conclusion of the gluing analysis of \cite[Theorem 6.4]{foooexp} in this subsection.
\par
We take $m$ sufficiently larger than $n$. Especially it is larger than $m_0$ 
appearing in Definition \ref{defn86}.
Let $\{E_{\bf p}({\bf x})\}$ be  obstruction bundle data at ${\bf p} \in \mathcal M^{\rm d}_{k+1,\ell}(X,L,J;\beta)$.
We fix the stabilization and trivialization data ${\frak W}_{\bf p}$
and put ${\bf p}^+ = {\bf p} \cup \vec{\frak w}_{\bf p}$.
We decompose $\Sigma_{{\bf p}^+}$ into irreducible components 
$$
\Sigma_{{\bf p}^+}
= \bigcup_{a \in \mathcal A_{\bf p}^{\rm s}} \Sigma_{{\bf p}^+}(a)
\cup \bigcup_{a \in \mathcal A_{\bf p}^{\rm d}} \Sigma_{{\bf p}^+}(a).
$$
Let ${\bf p}^+_a$ be as in (\ref{formpa1})(\ref{formpa2})
\footnote{Note we consider ${\bf p}^+$ here in place of ${\bf p}$ in (\ref{formpa1})(\ref{formpa2}).} and
${\mathcal V}^+_a$  a neighborhood of the source curve of ${\bf p}^+_a$ in 
$\mathcal M^{\rm s,reg}_{\ell(a)}$ or $\mathcal M^{\rm d,reg}_{k(a)+1,\ell(a)}$.
We put
$$
\mathcal V^+ = \prod {\mathcal V}^+_a.
$$
For $\frak v \in \mathcal V^+$ we obtain $(\Sigma(\frak v),\vec z(\frak v),\vec{\frak z}(\frak v)) 
= \Phi(\frak v)$
with the same number of irreducible components as $
\Sigma_{{\bf p}^+}$.
(Namely we put all the gluing parameters to be $0$.)
Using the given trivialization data we obtain a diffeomorphism
$\widehat{\Phi}_{\frak v} : \Sigma_{{\bf p}^+} 
\to \Sigma(\frak v)$ which preserves the marked and the nodal points.
\begin{defn}\label{defn9292}
By $\mathscr V({\bf p};E_{\bf p}(\cdot);\epsilon_0)$, 
\index{00V3({\bf p};E_{\bf p}(\cdot);\epsilon_0)@$\mathscr V({\bf p};E_{\bf p}(\cdot);\epsilon_0)$}
we denote the set of pairs $(\frak v,u')$
such that:
\begin{enumerate}
\item $\frak v \in \mathcal V^+$.
\item  $u' : (\Sigma(\frak v),\partial \Sigma(\frak v)) 
\to (X,L)$ is an $L^2_m$ map such that the $L^2_m$-difference between $u' \circ \widehat{\Phi}_{\frak v}$ 
and $u$ is smaller than $\epsilon_0$.
\item
\begin{equation}\label{form9595}
\overline{\partial}u' \in E_{{\bf p}}(\frak v,u').
\end{equation}
Here
$
E_{{\bf p}}(\frak v,u')
= 
E_{{\bf p}}(\frak{forget}_{\ell+\ell',\ell}(\Phi(\frak v),u'))
\subset L^2_m(\Sigma(\frak v);(u')^*TX\otimes \Lambda^{01})
$
is the case of $E_{\bf p}({\bf x})$ when ${\bf x} = \frak{forget}_{\ell+\ell',\ell}({\bf y})$,
${\bf y} = ((\Sigma(\frak v),\vec z(\frak v),\vec{\frak z}(\frak v)),u')$.
\end{enumerate}
\end{defn}
We define maps
\begin{equation}\label{map96}
{\rm Pr}^{\rm source} :
\mathscr V({\bf p};E_{\bf p}(\cdot);\epsilon_0) 
\to \prod_{a \in \mathcal A_{\bf p}^{\rm s}} \mathcal M^{\rm s,reg}_{\ell(a)} 
\times \prod_{a \in \mathcal A_{\bf p}^{\rm d}} \mathcal M^{\rm d,reg}_{k(a)+1,\ell(a)},
\end{equation}
\begin{equation}
\aligned
{\rm Pr}^{\rm map} :
&\mathscr V({\bf p};E_{\bf p}(\cdot);\epsilon_0) \\
&\to
\prod_{a \in \mathcal A_{\bf p}^{\rm s}} L^2_m(\Sigma_{{\bf p}^+}(a);X) 
\times \prod_{a \in \mathcal A_{\bf p}^{\rm d}} L^2_m(\Sigma_{{\bf p}^+}(a),\partial\Sigma_{{\bf p}^+}(a);X,L)
\endaligned
\end{equation}
by
$$
{\rm Pr}^{\rm source}(\frak v,u') = \frak v \quad \text{and} \quad
{\rm Pr}^{\rm map}(\frak v,u') = 
\left(
u' \circ \widehat{\Phi}_{\frak v}\vert_{\Sigma_{{\bf p}^+}(a)} : a \in \mathcal A_{\bf p}^{\rm s}\cup \mathcal A_{\bf p}^{\rm d}
\right).
$$
\begin{lem}
There exists a unique $C^n$ structure on $\mathscr V({\bf p};E_{\bf p}(\cdot);\epsilon_0)$ 
such that $({\rm Pr}^{\rm source},{\rm Pr}^{\rm map})$ is a $C^n$ embedding.
\par
Moreover the action of ${\rm Aut}^+(\bf p)$ is of $C^n$ class and $({\rm Pr}^{\rm source},{\rm Pr}^{\rm map})$
is ${\rm Aut}^+(\bf p)$-equivariant.
\end{lem}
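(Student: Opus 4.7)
The approach is to deduce the lemma from the gluing theorem \cite[Theorem 6.4]{foooexp}, specialized to our obstruction bundle data. First I would verify that $({\rm Pr}^{\rm source},{\rm Pr}^{\rm map})$ is injective: knowing $\frak v$ together with the trivialization data $\frak W_{\bf p}$ pins down the diffeomorphism $\widehat{\Phi}_{\frak v}$ of Lemma \ref{lem3838}, and then knowing each component restriction $u'\circ\widehat{\Phi}_{\frak v}\vert_{\Sigma_{{\bf p}^+}(a)}$ recovers $u'$ on the glued source $\Sigma(\frak v)$. Given injectivity, uniqueness of the $C^n$ structure is automatic, because a $C^n$ embedding is forced to pull back the smooth structure on its image.

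The existence part --- exhibiting the image as a $C^n$ submanifold of the target, or equivalently producing a $C^n$ inverse to $({\rm Pr}^{\rm source},{\rm Pr}^{\rm map})$ onto its image --- is the substantive content. I would appeal to the three inputs that make \cite[Theorem 6.4]{foooexp} work in our setting: (i) the transversality condition (Definition \ref{defn54564}), so that $D_{u_{{\bf p}}}\overline\partial$ is surjective modulo $E_{\bf p}({\bf p})$; (ii) the $C^n$-dependence of $\{E_{{\bf p}}(\frak v,u')\}$ on $(\frak v,u')$ guaranteed by Definition \ref{defn8911}(2), once $m$ is chosen large enough relative to $n$ as dictated by Definition \ref{defn86}; (iii) the exponential-decay estimates that yield a Newton-iteration solution of (\ref{form9595}) depending $C^n$-smoothly on $\frak v$ and on the component-wise map data extracted by ${\rm Pr}^{\rm map}$. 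The target of the embedding is a genuine $C^n$ manifold --- a product of the finite-dimensional strata $\mathcal V^+_a$ with Sobolev map spaces, with no gluing-parameter strata to worry about since the gluing parameters have been kept separate from $\frak v$ --- so the implicit function theorem applies at the level of this target and identifies the image as the zero set of a $C^n$ section of finite corank.

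The ${\rm Aut}^+({\bf p})$-equivariance then follows because the group acts on each $\mathcal V^+_a$ and on each map-factor of the target by the tautological permutation and pullback, and because $\{E_{{\bf p}}(\frak v,u')\}$ is preserved under this action by Lemma \ref{conds55lem}; thus (\ref{form9595}) is preserved, $\mathscr V({\bf p};E_{\bf p}(\cdot);\epsilon_0)$ is invariant, ${\rm Pr}^{\rm source}$ and ${\rm Pr}^{\rm map}$ are tautologically equivariant, and the induced action on $\mathscr V$ is $C^n$ because the action on the ambient target is $C^n$ and the embedding is $C^n$. The main obstacle --- already present in \cite[\S A1.4]{fooobook2} --- is the mismatch between the Sobolev index $m$ and the desired differentiability class $n$: for each fixed $n$ one must take $m$ sufficiently large, which is why only a $C^n$ (for any but fixed $n$) structure is claimed here, with the promotion to $C^\infty$ postponed to Section \ref{sec:Cmugen}.
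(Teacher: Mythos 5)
Your argument is essentially the paper's: the paper proves this lemma by combining the transversality condition (Definition \ref{defn51} (3)), the smoothness of the family $E_{\bf p}(\cdot)$ (Definition \ref{defn51} (2)), Lemma \ref{conds55lem} for the ${\rm Aut}^+({\bf p})$-equivariance, and the implicit function theorem, which is exactly the core of your existence, smoothness and equivariance steps. Your appeal to \cite[Theorem 6.4]{foooexp}, exponential decay and Newton iteration is superfluous at this stage --- as you yourself note, no gluing parameters appear in $\mathscr V({\bf p};E_{\bf p}(\cdot);\epsilon_0)$, so the plain Hilbert-manifold implicit function theorem suffices; the gluing analysis only enters later, in Proposition \ref{prop94}.
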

\begin{proof}
This is a consequence of Definition \ref{defn51} (2)(3), Lemma \ref{conds55lem} and the implicit function theorem.
In fact Definition \ref{defn51} (3) implies that hypothesis of the implicit function theorem
is satisfied.
\end{proof}
$\mathscr V({\bf p};E_{\bf p}(\cdot);\epsilon_0)$ is a part of the `thickened' moduli space 
consisting of elements that have the same number of nodal points as ${\bf p}$.
We next include the gluing parameter.
Recall from
Definition \ref{defn72} that 
$
U_{{\bf p}^+}
$ for ${\bf p}^+ = {\bf p} \cup \vec{\frak w}_{\bf p}$
\index{00U1p+@$U_{{\bf p}^+}$}
is the set of all ${\bf x} \in \mathscr U_{{\bf p}^{+}}$ such that
\begin{equation}\label{eq9898}
\overline{\partial} u_{\bf x} \in E_{\bf p}(\bf x).
\end{equation}
Here $\mathscr U_{{\bf p}^{+}}$ is $B_{\epsilon_0}(\mathcal{X}_{k+1,\ell}(X,L,J;\beta),
{\bf p}\cup \vec{\frak w}_{\bf p},\frak W_{{\bf p} \cup \vec {\frak w}_{\bf p}})$ 
for some small $\epsilon_0$.
\par
We define a map 
\begin{equation}
\aligned
{\rm Pr}^{\rm map} :
U_{{\bf p}^+} \to L^2_m(\Sigma_{{\bf p} \cup \vec {\frak w}_{\bf p}}(\vec{\epsilon}),
\partial \Sigma_{{\bf p} \cup \vec {\frak w}_{\bf p}}(\vec{\epsilon});X,L)
\endaligned
\end{equation}
below. We first observe that ${\bf p} \cup \vec {\frak w}_{\bf p}$ has no 
nontrivial automorphism.
(It may have a nontrivial extended automorphism.)
Therefore if ${\bf x} \in \mathscr U_{{\bf p}^{+}}$
and $\epsilon_0$ is sufficiently small there exist {\it unique}
$\frak v \in \mathcal V^+$ and $(s_j)_{j=1}^{m_{\rm d}},(\rho_i)_{i=1}^{m_{\rm d}}$
such that 
$$
(\Sigma_{\bf x},\vec z_{\bf x},\vec{\frak z}_{\bf x}) 
= 
\Phi_{s,\rho}(\frak v,(s_j),(\rho_i)),
$$
where $\Phi_{s,\rho}$ is as in (\ref{form9393}).
We put $\frak y = (\frak v,(s_j),(\rho_i))$.
Then by Lemma \ref{lem3838} we obtain a smooth embedding:
$
\widehat{\Phi}_{\frak y;\vec \epsilon} : \Sigma_{{\bf p}^+}(\vec \epsilon) \to \Sigma_{\frak y}
= \Sigma_{\bf x}.
$
We define 
\index{00P1rmap@${\rm Pr}^{\rm map}$}
\begin{equation}
{\rm Pr}^{\rm map}({\bf x}) = u_{\bf x} \circ \widehat{\Phi}_{\frak y;\vec \epsilon}
: 
 \Sigma_{{\bf p}^+}(\vec \epsilon) \to X.
\end{equation}
We also define
$
{\rm Pr}^{\rm source} :
U_{{\bf p}^+}
\to \mathcal M^{\rm d,log}_{k+1,\ell+\ell'}
$\index{00P1rsoiurce@${\rm Pr}^{\rm source}$}
by
$
{\rm Pr}^{\rm source}({\bf x}) = [\Sigma_{\bf x},\vec z_{\bf x},\vec{\frak z}_{\bf x}].
$
They together define :
\begin{equation}\label{form911}
({\rm Pr}^{\rm source},{\rm Pr}^{\rm map}) : U_{{\bf p}^+} 
\to 
\mathcal M^{\rm d,log}_{k+1,\ell+\ell'}\times 
L^2_m(\Sigma_{{\bf p}^+}(\vec{\epsilon}),
\partial \Sigma_{{\bf p}^+}(\vec{\epsilon});X,L).
\end{equation}
The target of the map (\ref{form911}) has a structure of Hilbert manifold 
since it is a direct product of a Hilbert space and a smooth manifold.
\begin{prop}\label{prop94}
If $m$ is large enough  compared to $n$ and $\epsilon_0$, $\vec{\epsilon}$ are small,
then the image of the map (\ref{form911}) is a finite dimensional submanifold 
of $C^n$ class.
\par
Moreover the map (\ref{form911}) is injective.
\end{prop}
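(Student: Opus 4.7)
The plan is to show that both assertions follow from a $C^n$-parameterization of $U_{{\bf p}^+}$ by a finite-dimensional manifold with corners, supplied by the gluing analysis of \cite[Theorem 6.4]{foooexp}. The parameter space will be $\mathscr V({\bf p}; E_{\bf p}(\cdot); \epsilon_0)$, as in Definition \ref{defn9292}, augmented by the logarithmic gluing parameters $(s_j) \in [0, -1/\log c)^{m_{\rm d}}$ and $(\rho_i) \in D^2(-1/\log c)^{m_{\rm s}}$ from Lemma \ref{lem91}. I would first produce a unique element of $U_{{\bf p}^+}$ from these data by gluing, then verify that composition with $({\rm Pr}^{\rm source}, {\rm Pr}^{\rm map})$ yields a $C^n$ embedding onto the claimed image.

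The first step is to invoke \cite[Theorem 6.4]{foooexp} to show that, for each $(\frak v, u'_a) \in \mathscr V({\bf p}; E_{\bf p}(\cdot); \epsilon_0)$ and each choice of gluing parameters $((s_j), (\rho_i))$, there exists a unique map $u_{\bf x} : \Sigma_{\bf x} \to X$ close to the pre-glued approximate solution and satisfying $\overline{\partial} u_{\bf x} \in E_{\bf p}({\bf x})$. This uses the smoothness hypothesis on $E_{\bf p}$ (Definition \ref{defn8911}), the transversality hypothesis (Definition \ref{defn54564}), and the independence of $E_{\bf p}$ from $u_{\bf x}\vert_{\rm neck}$ (Definition \ref{defn860}). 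The theorem further asserts that the assignment $(\frak v, u'_a, (s_j), (\rho_i)) \mapsto {\bf x}$ is of $C^n$ class jointly in all parameters, provided $m$ is sufficiently large compared to $n$ and $\epsilon_0$, $\vec\epsilon$ are sufficiently small. Composing with $({\rm Pr}^{\rm source}, {\rm Pr}^{\rm map})$ yields a $C^n$ map from a finite-dimensional manifold with corners to the Hilbert-manifold target; combined with injectivity (shown below) and the injectivity of the differential (which follows because the source parameters $\frak y$ are recovered from the first component and the component-wise map data $u'_a$ from the second), this identifies the image as a $C^n$ submanifold.

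For injectivity of (\ref{form911}), suppose $({\rm Pr}^{\rm source}({\bf x}), {\rm Pr}^{\rm map}({\bf x})) = ({\rm Pr}^{\rm source}({\bf x}'), {\rm Pr}^{\rm map}({\bf x}'))$. The equality of the first components places $(\Sigma_{\bf x}, \vec z_{\bf x}, \vec{\frak z}_{\bf x})$ and $(\Sigma_{{\bf x}'}, \vec z_{{\bf x}'}, \vec{\frak z}_{{\bf x}'})$ at the same point of $\mathcal M^{\rm d,\log}_{k+1, \ell+\ell'}$. Because ${\bf p}^+ = {\bf p} \cup \vec{\frak w}_{\bf p}$ is source stable with trivial (not merely finite) automorphism group, the source parameters $\frak y$ and $\frak y'$ coincide, and $\Sigma_{\bf x}$, $\Sigma_{{\bf x}'}$ are canonically identified via the trivializations $\widehat{\Phi}_{\frak y, \vec\epsilon}$ of Lemma \ref{lem3838}. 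The equality of the second components then gives $u_{\bf x} = u_{{\bf x}'}$ on the thick part $\Sigma_{{\bf p}^+}(\vec\epsilon)$; the uniqueness clause of the gluing theorem extends this to the whole surface, so ${\bf x} = {\bf x}'$.

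The main obstacle will be $C^n$ regularity across the strata $s_j = 0$ or $\rho_i = 0$, i.e.\ near the nodal boundary, where partial derivatives in the original coordinates $r_j$, $\sigma_i$ blow up. This is resolved by two ingredients working in concert: the logarithmic smooth structure of Lemma \ref{lem91}, which absorbs these blow-ups into a bona fide $C^\infty$ atlas; and the exponential decay estimates of \cite{foooexp} on the neck region of $u_{\bf x}$, which furnish bounds on partial derivatives of order up to $n$ in the logarithmic coordinates. Together they are precisely what \cite[Theorem 6.4]{foooexp} is designed to package, and with $m$ chosen large enough compared to $n$ the Sobolev embedding $L^2_m \hookrightarrow C^n$ lines up to yield the required regularity.
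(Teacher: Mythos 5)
Your overall strategy is the paper's: parametrize $U_{{\bf p}^+}$ by $\mathscr V({\bf p};E_{\bf p}(\cdot);\epsilon_0)$ together with the logarithmic gluing parameters, build the glued solutions via \cite{foooexp}, and get $C^n$ regularity of ${\rm Pr}^{\rm map}\circ{\rm Glue}$ from the exponential decay estimates (the inequalities (\ref{form9224})) in the log coordinates, with $m$ large relative to $n$. That part is fine.

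The gap is in the injectivity/immersion step, which is precisely the content of the proposition that does not follow by merely quoting the gluing theorem. First, your appeal to ``the uniqueness clause of the gluing theorem'' to upgrade equality of ${\rm Pr}^{\rm map}({\bf x})={\rm Pr}^{\rm map}({\bf x}')$ on the thick part to ${\bf x}={\bf x}'$ does not apply: that uniqueness statement concerns the solution produced from a \emph{fixed} datum $(\frak v,u')$ near its pre-gluing, whereas here ${\bf x}$ and ${\bf x}'$ may a priori arise from different data $u'\ne u''$; whether two different data can produce solutions with the same restriction to $\Sigma_{{\bf p}^+}(\vec\epsilon)$ is exactly the question. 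Second, and for the same reason, your claim that the differential is injective ``because the component-wise map data $u'_a$ are recovered from the second component'' begs the question: ${\rm Pr}^{\rm map}$ only records the restriction of the already-glued-and-corrected solution to the thick part, and recovering $(\frak v,u')$ from it is the nontrivial point. The paper closes this as follows: fixing the source datum and viewing $u'\mapsto u((\frak v,u'),(T^{\rm d}_j),(T^{\rm s}_i,\theta_i);\cdot)$ as a map into $L^2_m$ on the thick part, the estimates (\ref{form9224}) reduce the embedding statement to the case of zero gluing parameters (after shrinking the neighborhood of ${\bf p}$); there the map is literally the restriction map $u'\mapsto u'\vert_{\Sigma_{{\bf p}^+}(\vec\epsilon)}$, and since elements of $E_{\bf p}({\bf x})$ are supported in the thick part, solutions are genuinely $J$-holomorphic near the nodes, so unique continuation shows the restriction map (and its linearization) is injective. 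Your argument is missing this reduction-plus-unique-continuation mechanism, and without it neither the injectivity of (\ref{form911}) nor the immersion property needed for the image to be a $C^n$ submanifold is established.
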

\begin{proof}
Below we explain how we use 
\cite[Theorem 6.4]{foooexp}  to prove Proposition \ref{prop94}.
\cite{foooexp} discusses the case when $\Sigma_{{\bf p}}$ has 
two irreducible components. However the argument there 
can be easily generalized to the case when it has arbitrarily many 
irreducible components. (See also 
\cite[Section 19]{foootech} where the same gluing analysis is discussed 
in the general case.)
We consider 
\begin{equation}\label{new912}
\mathcal V = 
\mathscr V({\bf p};E_{\bf p}(\cdot);\epsilon_0)
\times
\prod_{j=1}^{m_{\rm d}} [0,{\epsilon_{j}^{\rm d}})
\times
\prod_{i=1}^{m_{\rm s}} D^2_{\circ}({\epsilon_{i}^{\rm s}}).
\end{equation}
We change the variables from $r_j \in [0,{\epsilon_{j}^{\rm d}})$, 
$\sigma_i \in D^2_{\circ}({\epsilon_{i}^{\rm s}})$
to
$s_j \in [0,-1/\log(\epsilon_{j}^{\rm d}))$,
$\rho_i \in D^2_{\circ}(-1/\log({\epsilon_{i}^{\rm s}}))$
by (\ref{form92}).
We write $\mathcal V^{\log}$ 
\index{00V3^{\log}@$\mathcal V^{\log}$} when we use the smooth structure 
so that $s_j$, $\rho_i$ are the coordinates.
\par
\begin{rem}
The identity map $\mathcal V^{\log} \to \mathcal V$ is smooth 
but $\mathcal V \to \mathcal V^{\log}$ is not smooth.
\end{rem}

In \cite[Theorems 3.13,8.16]{foooexp} the map
$
{\rm Glue} : \mathcal V^{\log} \to U_{{\bf p}^+}
$
is constructed as follows.
\par
Let $((\frak v,u'),(r_j),(\sigma_i)) \in \mathcal V^{\log}$.
We put 
$
(\Sigma_{\bf x},\vec z_{\bf x},\vec{\frak z}_{\bf x}) = \Phi(\frak v,(r_j),(\sigma_i)).
$
(Namely we glue the source curve $\Sigma_{\frak v} = \Phi(\frak v)$ by 
using the gluing parameter $(r_j),(\sigma_i)$.) 
\par
Using
$u' : \Sigma_{\frak v} \to X$ and a partition of unity 
we obtain a map 
$u_{(0)} : \Sigma_{\bf x} \to X$
(In other words, this is the map \cite[(5.4)]{foooexp}
and is the map obtained by `pre-gluing'.)
The map $u_{(0)}$ mostly satisfies the equation  (\ref{eq9898}).
However at the neck region  $\overline{\partial}u_{(0)}$ has certain error term.
We can solve the linearized equation 
of (\ref{eq9898})  using the assumption Definition \ref{defn51} (3)
and the `alternating method'. Then by Newton's iteration scheme 
we inductively obtain $u_{(a)}$ ($a=1,2,3,\dots$).
By using Definition \ref{defn51} (2) 
we can carry out the estimate we need to work out 
this iteration process (\cite[Section 5]{foooexp}).
Then $\lim_{a \to \infty} u_{(a)}$ converges to a 
solution of (\ref{eq9898}), which is by definition $u_{\bf x}$.
We define
\index{00G1lue@${\rm Glue}$}
$$
{\rm Glue}((\frak v,u'),(r_j),(\sigma_i)) = ((\Sigma_{\bf x},\vec z_{\bf x},\vec{\frak z}_{\bf x}),u_{\bf x}).
$$
Replacing $\mathcal V^{\log}$ by its open subset, the map ${\rm Glue}$ defines a bijection between  
$\mathcal V^{\log}$ and $U_{{\bf p}^+}$.
(This is a consequence of \cite[Section 7]{foooexp}.)
\par
To prove Proposition \ref{prop94} it suffices to show 
that $({\rm Pr}^{\rm source},{\rm Pr}^{\rm map})\circ{\rm Glue}$ is a $C^n$ embedding.
Note the smooth coordinates we use here are $s_j$ and $\rho_i$ given in 
(\ref{form92}).
By definition and Lemma \ref{lem91}, the map ${\rm Pr}^{\rm source} \circ {\rm Glue}$
is a smooth submersion with respect to this smooth structure.
\par
We use the coordinates $T^{\rm d}_j$, $T^{\rm s}_i$ and $\theta_i$
in place of $s_j$ and $\rho_i$ for the gluing parameter
and denote
$$
({\rm Pr}^{\rm map}({\rm Glue}((\frak v,u'),(T^{\rm d}_j),(T^{\rm s}_i,\theta_i))))(z)
= u(((\frak v,u'),(T^{\rm d}_j),(T^{\rm s}_i,\theta_i));z).
$$
Here $z \in \Sigma_{{\bf p} \cup \vec {\frak w}_{\bf p}}(\vec{\epsilon})$
is the domain variable of 
${\rm Pr}^{\rm map}({\rm Glue}(\frak v,(T^{\rm d}_j),(T^{\rm s}_i,\theta_i)))$.
Then the conclusion of \cite[Theorem 6.4]{foooexp} is the next inequalities:
\begin{equation}\label{form9224}
\aligned
\left\Vert \nabla^{n'-1} \frac{\partial}{\partial T_{j}^{\rm d}}
(u(((\frak v,u'),(T^{\rm d}_j),(T^{\rm s}_i,\theta_i));\cdot))
\right\Vert_{L^2_{m-n'}} &\le C_n e^{-c_n T^{\rm d}_{j}} \\
\left\Vert \nabla^{n'-1} \frac{\partial}{\partial T_{i}^{{\rm s}}}
(u(((\frak v,u'),(T^{\rm d}_j),(T^{\rm s}_i,\theta_i));\cdot))
\right\Vert_{L^2_{m-n'}} &\le C_n e^{-c_n T_{i}^{\rm s}} \\
\left\Vert \nabla^{n'-1} \frac{\partial}{\partial \theta_{i}}
(u(((\frak v,u'),(T^{\rm d}_j),(T^{\rm s}_i,\theta_i));\cdot))
\right\Vert_{L^2_{m-n'}} &\le C_n e^{-c_n T_{i}^{\rm s}} 
\endaligned 
\end{equation}
for $j = 1,\dots,m_{\rm d}^{\bf q}$ and $n' \le n$.
Here $\nabla^{n'-1}$ is the ($n'-1$)-th derivatives on the variables  $\frak v^{\bf q}_a$, $T_{j}^{\rm d,\bf q}$
$T_{i}^{\rm s,\bf q}$, $\theta_{i}^{\bf q}$.
\par
From these inequalities it is easy to see that ${\rm Pr}^{\rm map} \circ {\rm Glue}$ 
is of $C^n$ class.
\par
We now fix $(\frak v,(T^{\rm d}_j),(T^{\rm s}_i,\theta_i))$
and consider the map
\begin{equation}\label{form913}
u' \mapsto u((\frak v,u'),(T^{\rm d}_j),(T^{\rm s}_i,\theta_i));\cdot)
\end{equation}
This is a map
$$
\mathscr L \to L^2_m(\Sigma_{{\bf p}^+}(\vec{\epsilon}),
\partial \Sigma_{{\bf p}^+}(\vec{\epsilon});X,L)
$$
where $\mathscr L$ is the set of $u'$ satisfying Definition \ref{defn9292} (2)(3).
\par
To complete the proof of Proposition \ref{prop94} it suffices to 
show that (\ref{form913}) is a $C^n$ {\it embedding}.
Using (\ref{form9224}) again it suffices to prove it in case $\Phi(\frak v,(T^{\rm d}_j),(T^{\rm s}_i,\theta_i))
= (\Sigma_{{\bf p}^+},\vec z_{{\bf p}^+},\vec {\frak z}_{{\bf p}^+})$
(by taking a smaller neighborhood of ${\bf p}$ if necessary).
In that case (\ref{form913}) is nothing but the restriction map.
Therefore by the unique continuation (\ref{form913}) is a $C^n$ embedding.
\end{proof}

\begin{lem}\label{lem9555}
The group ${\rm Aut}^+({\bf p})$ of extended automorphisms of ${\bf p}$ 
has $C^n$ action on $U_{{\bf p}^+}$.
The map (\ref{form911}) is ${\rm Aut}^+({\bf p})$ 
equivariant.
\end{lem}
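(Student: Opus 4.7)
The plan is to split the proof into three steps: (i) define the ${\rm Aut}^+({\bf p})$-action on $U_{{\bf p}^+}$; (ii) verify ${\rm Aut}^+({\bf p})$-equivariance of the map (\ref{form911}); (iii) deduce $C^n$ smoothness of the action from the fact that (\ref{form911}) is a $C^n$ embedding onto a $C^n$ submanifold.

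First I would define the action. For $v \in {\rm Aut}^+({\bf p})$, Definition \ref{defn46} (4) ensures that $v$ is also an extended automorphism of ${\bf p}^+ = {\bf p} \cup \vec{\frak w}_{\bf p}$, permuting both $\vec{\frak z}_{\bf p}$ and $\vec{\frak w}_{\bf p}$ among themselves. By the discussion preceding Lemma \ref{conds55lem}, $v$ induces an action $v_*$ on $\mathscr U_{{\bf p}^+}$ sending $(\Sigma_{\bf x}, \vec z_{\bf x}, \vec{\frak z}_{\bf x})$ to itself after relabeling the last $\ell'$ interior marked points according to the permutation induced by $v$, and covered by a biholomorphism $\hat v : \Sigma_{\bf x} \to \Sigma_{v_*({\bf x})}$ with $u_{v_*({\bf x})}\circ \hat v = u_{\bf x}$. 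Lemma \ref{conds55lem} then gives $\hat v_*(E_{\bf p}({\bf x})) = E_{\bf p}(v_*({\bf x}))$, so $\overline\partial u_{\bf x} \in E_{\bf p}({\bf x})$ if and only if $\overline\partial u_{v_*({\bf x})} \in E_{\bf p}(v_*({\bf x}))$; hence the action preserves $U_{{\bf p}^+}$.

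Next I would check equivariance of (\ref{form911}). For the source factor this is the induced action on $\mathcal M^{\rm d,\log}_{k+1,\ell+\ell'}$ permuting the last $\ell'$ interior marked points, which is smooth (in fact the action on the Deligne--Mumford space is holomorphic, and the passage to the logarithmic smooth structure of Lemma \ref{lem91} is compatible). For the map factor, compatibility of $\frak W_{\bf p}$ with the extended automorphism group (Definition \ref{defn37} (3)) means that the analytic families of coordinates and the $C^\infty$-trivializations $\phi_a$ are ${\rm Aut}^+({\bf p})$-invariant, so $\widehat\Phi_{\frak y; \vec\epsilon}$ intertwines $v$ and $\hat v$, and hence
\[
u_{v_*({\bf x})}\circ \widehat\Phi_{v_*(\frak y); \vec\epsilon} = v_*\bigl(u_{\bf x}\circ \widehat\Phi_{\frak y; \vec\epsilon}\bigr),
\]
where $v_*$ on the right is the induced isometry of $L^2_m(\Sigma_{{\bf p}^+}(\vec\epsilon), \partial\Sigma_{{\bf p}^+}(\vec\epsilon); X, L)$ given by precomposition with the restriction of $\hat v$ to $\Sigma_{{\bf p}^+}(\vec\epsilon)$. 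This establishes the equivariance of ${\rm Pr}^{\rm map}$.

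Finally, for the $C^n$ smoothness: by Proposition \ref{prop94} the map (\ref{form911}) is a $C^n$ embedding onto a $C^n$ submanifold of the target. The action of ${\rm Aut}^+({\bf p})$ on the target is smooth --- on the first factor it is a smooth (biholomorphic) automorphism of $\mathcal M^{\rm d,\log}_{k+1,\ell+\ell'}$, and on the second it is a linear continuous (hence smooth) self-map of a Hilbert space. The equivariance established in step (ii) says that this smooth action preserves the image of (\ref{form911}), and since (\ref{form911}) is a $C^n$ diffeomorphism onto its image, pulling back yields a $C^n$ action on $U_{{\bf p}^+}$ making (\ref{form911}) equivariant. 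The main (but mild) obstacle is the bookkeeping verification that the $\frak W_{{\bf p}^+}$ chosen to build $\widehat\Phi_{\frak y;\vec\epsilon}$ really is ${\rm Aut}^+({\bf p})$-invariant; this is exactly what Definition \ref{defn37} (3) was set up to guarantee.
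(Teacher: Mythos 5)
Your proposal is correct and follows essentially the same route as the paper, which disposes of this lemma in one line as an immediate consequence of Lemma \ref{conds55lem}: your steps (i)--(iii) simply unpack that, using the ${\rm Aut}^+({\bf p})$-invariance of the data $\frak W_{\bf p}$ (Definition \ref{defn37} (3), Definition \ref{defn46} (4)) together with $\hat v_*(E_{\bf p}({\bf x})) = E_{\bf p}(v_*({\bf x}))$ to get the action and the equivariance of (\ref{form911}), and then transporting smoothness through the $C^n$ embedding of Proposition \ref{prop94}. The only slip is notational: the action on the $L^2_m$ factor is precomposition with $v^{\pm 1}\vert_{\Sigma_{{\bf p}^+}(\vec\epsilon)}$ (the automorphism of the fixed curve $\Sigma_{{\bf p}^+}$), not with a restriction of $\hat v$, which is defined on $\Sigma_{\bf x}$.
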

\begin{proof}
This is immediate from Lemma \ref{conds55lem}.
\end{proof}

Thus we obtain a $C^n$ orbifold $U_{{\bf p}^+}/{\rm Aut}({\bf p})$ with ${\bf p}^+ 
={\bf p}\cup \vec{\frak{w}}_{\bf p}$.

\subsection{Local transversal and stabilization data}
\label{subsec:stabfunc}

The $C^n$ orbifold $U_{{\bf p}^+}/{\rm Aut}({\bf p})$ obtained in the last subsection 
is not the Kuranishi neighborhood appearing in the Kuranishi chart we look for.
In fact it still contains the extra parameters 
to move $(\ell+1)$-th, \dots, $(\ell+\ell')$-th interior marked points.
To kill these parameters we proceed in the same way as \cite[appendix]{FO}
to use local transversals.
We use the same trick in Section \ref{sec:exiobst} to prove the 
existence of  obstruction bundle data.

\begin{defn}\label{defn81}
Let ${\bf p} = ((\Sigma_{\bf p},\vec z_{\bf p},\vec {\frak z}_{\bf p}),u_{\bf p}) 
\in \mathcal{M}_{k+1,\ell}(X,L,J;\beta)$.
{\it Stabilization data at ${\bf p}$} are by definition 
weak stabilization data  $\vec {\frak w}_{\bf p} = ({\frak w}_{{\bf p},1},\dots,{\frak w}_{{\bf p},\ell'})$
as in Definition \ref{defn46}
together with $\vec{\mathcal N}_{\bf p} = \{ \mathcal N_{{\bf p},i} \mid i=1,\dots,\ell'\}$
\index{00N3vec@$\vec{\mathcal N}_{\bf p}$} which have the following 
properties.
\par
\begin{enumerate}
\item $\mathcal N_{{\bf p},i}$ is a codimension $2$ submanifold of $X$.
\item There exists a neighborhood $U_i$ of $\frak w_{{\bf p},i}$ in $\Sigma_{\bf p}$ 
such that $u_{\bf p}(U_i)$ intersects transversally with $\mathcal N_{{\bf p},i}$ at unique point $u_{\bf p}(\frak w_{{\bf p},i})$.
Moreover, the restriction of $u_{\bf p}$ to $U_i$ is a smooth embedding.
\item Suppose $v : \Sigma_{\bf p} \to \Sigma_{\bf p}$ is an extended automorphism of 
${\bf p}$ and  $v(\frak w_{{\bf p},i}) = \frak w_{{\bf p},i'}$.
Then $\mathcal N_{{\bf p},i} = \mathcal N_{{\bf p},i'}$ and $v(U_i) = U_{i'}$.
\end{enumerate}
We call $\mathcal N_{{\bf p},i}$ a {\it local transversal}
and $\vec{\mathcal N}_{\bf p}$ local transversals.\index{local transversal}
\end{defn}
Local transversals are used to choose $\ell'$ additional marked points in a canonical way  
for each ${\bf x} \in \mathcal{X}_{k+1,\ell}(X,L,J;\beta)$.
Lemma \ref{lem832} below formulates it precisely.

\begin{shitu}\label{shitu82}
Let ${\bf p} = ((\Sigma_{\bf p},\vec z_{\bf p},\vec {\frak z}_{\bf p}),u_{\bf p}) 
\in \mathcal{M}_{k+1,\ell}(X,L,J;\beta)$.
We take its stabilization data  $(\vec {\frak w}_{\bf p},\vec{\mathcal N}_{\bf p})$.
We also take 
$\{\varphi_{a,i}^{\rm s}\}, \{\varphi_{a,j}^{\rm d}\},
\{\phi_a\}$
so that $\frak W_{\bf p} = (\vec {\frak w}_{\bf p},\{\varphi_{a,i}^{\rm s}\}, \{\varphi_{a,j}^{\rm d}\},
\{\phi_a\})$ become  stabilization and trivialization data 
in the sense of Definition \ref{situation38}.
We call $(\frak W_{\bf p},\vec{\mathcal N}_{\bf p})$ {\it strong stabilization data}.\index{strong stabilization data}
$\diamond$
\end{shitu}

\begin{lem}\label{lem832}
Suppose we are in Situation \ref{shitu82}.
There exists $\epsilon_0 > 0$ and $o(\epsilon)$ with $\lim_{\epsilon \to 0}o(\epsilon) = 0$ that have 
the following properties.\par   
If ${\bf x} \in B_{\epsilon}(\mathcal{X}_{k+1,\ell}(X,L,J;\beta);{\bf p};\frak W_{\bf p})$,
$\epsilon \in (0,\epsilon_0)$, then there exists $\vec {\frak w}_{\bf x}$ such that:
\begin{enumerate}
\item 
$
{\bf x} \cup \vec {\frak w}_{\bf x} \in 
B_{o(\epsilon)}(\mathcal{X}_{k+1,\ell+\ell'}(X,L,J;\beta);{\bf p}
\cup \vec {\frak w}_{\bf p};\frak W_{{\bf p}\cup \vec {\frak w}_{\bf p}}).
$
Note the right hand side is defined in Definition \ref{defn411}.
\item
$u_{\bf x}({\frak w}_{{\bf x},i}) \in \mathcal N_{{\bf p},i}$ for $i=1,\dots,\ell'$.
\end{enumerate}
\par
Moreover $\vec {\frak w}_{\bf x}$ satisfying (1)(2) is unique up to the action of 
${\rm Aut}({\bf p})$.
Elements of ${\rm Aut}^+({\bf p})$ preserve $\vec {\frak w}_{\bf x}$ as a set.
\end{lem}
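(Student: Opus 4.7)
The plan is to construct each $\frak w_{{\bf x},i}$ by applying the implicit function theorem to the map $u_{\bf x}$ near the original stabilization point $\frak w_{{\bf p},i}$, exploiting the transversal intersection of $u_{\bf p}(U_i)$ with $\mathcal N_{{\bf p},i}$ guaranteed by Definition \ref{defn81}(2). First I choose $\vec\epsilon$ small enough that $\frak w_{{\bf p},i}$ lies in the interior of $\Sigma_{\bf p}(\vec\epsilon)$ and in $U_i$ for every $i$; this is possible because the $\frak w_{{\bf p},i}$ are away from the nodes of $\Sigma_{\bf p}$. For ${\bf x} \in B_\epsilon(\mathcal X_{k+1,\ell}(X,L,J;\beta);{\bf p},\frak W_{\bf p})$ with $\epsilon<\epsilon_0$, set $\tilde u_{\bf x} := u_{\bf x}\circ \widehat\Phi_{\frak x;\vec\epsilon}$. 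Definition \ref{defn411}(5) gives $\|\tilde u_{\bf x}-u_{\bf p}\|_{C^2(\Sigma_{\bf p}(\vec\epsilon))}<\epsilon$. Since $u_{\bf p}|_{U_i}$ is a smooth embedding meeting the codimension-$2$ submanifold $\mathcal N_{{\bf p},i}$ transversally at the single point $\frak w_{{\bf p},i}$, a quantitative implicit function theorem produces a unique $\tilde{\frak w}_{{\bf x},i}$ in a uniform neighborhood of $\frak w_{{\bf p},i}$ with $\tilde u_{\bf x}(\tilde{\frak w}_{{\bf x},i})\in \mathcal N_{{\bf p},i}$, satisfying $\mathrm{dist}(\tilde{\frak w}_{{\bf x},i},\frak w_{{\bf p},i}) = o(\epsilon)$. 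Define $\frak w_{{\bf x},i} := \widehat\Phi_{\frak x;\vec\epsilon}(\tilde{\frak w}_{{\bf x},i})$; this satisfies (2) by construction.

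To verify (1), I check the six conditions of Definition \ref{defn411} for ${\bf x}\cup\vec{\frak w}_{\bf x}$ relative to ${\bf p}\cup\vec{\frak w}_{\bf p}$ with constant $o(\epsilon)$. Conditions (1)--(3) are immediate (source stability of ${\bf x}\cup\vec{\frak w}_{\bf x}$ follows from openness of stability together with stability of ${\bf p}\cup\vec{\frak w}_{\bf p}$). Condition (4) — closeness in $\mathcal M^{\rm d}_{k+1,\ell+\ell'}$ — follows from the smooth diffeomorphism $\Phi$ of Lemma \ref{lem34} applied to ${\bf p}\cup \vec{\frak w}_{\bf p}$: replacing one interior marked point by an $o(\epsilon)$-perturbation in $\Sigma_{\bf p}(\vec\epsilon)$ moves the image under $\Phi^{-1}$ by $o(\epsilon)$, and combined with the already-controlled closeness of ${\bf x}$ to ${\bf p}$, this yields the required $o(\epsilon)$-closeness. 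Conditions (5)--(6) for the augmented pair follow from the same conditions for $({\bf x},{\bf p})$, since $u_{\bf x}$ is unchanged and the additional marked points lie in $\Sigma_{\bf x}(\vec\epsilon)$.

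For uniqueness, suppose $\vec{\frak w}'_{\bf x}$ is another choice. Condition (1) places the enlarged stable curve $(\Sigma_{\bf x},\vec z_{\bf x},\vec{\frak z}_{\bf x}\cup\vec{\frak w}'_{\bf x})$ in the $o(\epsilon)$-neighborhood of ${\bf p}\cup\vec{\frak w}_{\bf p}$ in $\mathcal M^{\rm d}_{k+1,\ell+\ell'}$; since the representative of a point of this moduli space is unique up to the action of extended automorphisms of ${\bf p}\cup\vec{\frak w}_{\bf p}$, there is a permutation $\sigma$ of $\{1,\dots,\ell'\}$ induced by an element of $\mathrm{Aut}^+({\bf p})$ with $\frak w'_{{\bf x},i}$ close to $\frak w_{{\bf p},\sigma(i)}$. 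Because $\mathcal N_{{\bf p},i}=\mathcal N_{{\bf p},\sigma(i)}$ and $v(U_i)=U_{\sigma(i)}$ by Definition \ref{defn81}(3), the uniqueness clause in the implicit function theorem forces $\frak w'_{{\bf x},i}=\frak w_{{\bf x},\sigma(i)}$. In particular the unordered set $\{\frak w_{{\bf x},i}\}$ is preserved by $\mathrm{Aut}^+({\bf p})$, and when $\sigma=\mathrm{id}$ (the case of elements of $\mathrm{Aut}({\bf p})$) the tuple itself is unique. The main obstacle is keeping the implicit function constants uniform in ${\bf x}$ and $\vec\epsilon$ — one must ensure that the lower bound on the transversality constant of $u_{\bf p}|_{U_i}$ with $\mathcal N_{{\bf p},i}$ survives the $C^2$-perturbation to $\tilde u_{\bf x}$, and that the radius of validity of the implicit function theorem exceeds the displacement $o(\epsilon)$; both are standard but require a careful choice of $\epsilon_0$ depending only on the data $(\frak W_{\bf p},\vec{\mathcal N}_{\bf p})$.
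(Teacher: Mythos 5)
Your construction of $\vec{\frak w}_{\bf x}$ (existence) is essentially the paper's argument: the definition of $\epsilon$-closeness supplies an initial identification $\widehat{\Phi}_{\frak x;\vec\epsilon}$, and then the implicit function theorem, together with the $C^2$-closeness of $u_{\bf x}\circ\widehat{\Phi}_{\frak x;\vec\epsilon}$ to $u_{\bf p}$ and the transversality in Definition \ref{defn81} (2), moves the marked points onto the transversals; checking (1) afterwards is routine. That half is fine.

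The uniqueness half, however, has a genuine gap. You deduce from condition (1) that, up to an element of ${\rm Aut}^+({\bf p})$, the points $\frak w'_{{\bf x},i}$ must lie close to (the images of) $\frak w_{{\bf p},\sigma(i)}$, citing uniqueness of representatives of a point of $\mathcal M^{\rm d}_{k+1,\ell+\ell'}$ up to extended automorphisms. But the two augmented curves $(\Sigma_{\bf x},\vec z_{\bf x},\vec{\frak z}_{\bf x}\cup\vec{\frak w}_{\bf x})$ and $(\Sigma_{\bf x},\vec z_{\bf x},\vec{\frak z}_{\bf x}\cup\vec{\frak w}'_{\bf x})$ are in general two \emph{different} nearby points of the moduli space, so that uniqueness statement does not apply, and closeness in $\mathcal M^{\rm d}_{k+1,\ell+\ell'}$ alone does not control where the extra points sit on the fixed curve $\Sigma_{\bf x}$: the underlying marked curve $(\Sigma_{\bf x},\vec z_{\bf x},\vec{\frak z}_{\bf x})$ may have unstable components with a positive-dimensional reparametrization group, and two tuples related by a large reparametrization can both be $o(\epsilon)$-close to ${\bf p}\cup\vec{\frak w}_{\bf p}$ as abstract marked curves. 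What pins the positions down is the map part of the closeness, i.e.\ that \emph{both} parametrizations $\widehat{\Phi}_{\frak x;\vec\epsilon}$, $\widehat{\Phi}_{\frak x';\vec\epsilon}$ nearly intertwine $u_{\bf x}$ with $u_{\bf p}$ (conditions (5)(6)), so that their ``difference'' is close to an extended automorphism of ${\bf p}$ (not merely of the source curve); only then does your IFT local uniqueness apply. The paper avoids this comparison altogether: it observes that uniqueness up to ${\rm Aut}({\bf p})$ is clear when ${\bf x}={\bf p}$, and reduces the general case to that one by a $C^2$-small isotopy between $u_{\bf x}$ and $u_{\bf p}$ defined outside the neck region. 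So either insert the argument that the two parametrizations differ by something close to an element of ${\rm Aut}^+({\bf p})$ (using (5)(6) and stability of the map ${\bf p}$), or replace your comparison by the isotopy/continuity reduction.
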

\begin{proof}
According to Definition \ref{defn411}, 
${\bf x} \in B_{\epsilon}(\mathcal{X}_{k+1,\ell}(X,L,J;\beta);{\bf p};\frak W_{\bf p})$,
implies that there {\it exists} $\vec {\frak w}^0_{\bf x}$ such that
$$
{\bf x} \cup \vec {\frak w}^0_{\bf x} \in 
B_{o(\epsilon)}(\mathcal{X}_{k+1,\ell+\ell'}(X,L,J;\beta);{\bf p}\cup \vec {\frak w}_{\bf p};\frak W_{{\bf p}\cup \vec {\frak w}_{\bf p}}).
$$
We  use the implicit function theorem and the fact 
that $u_{\bf x}$ is $C^2$ close to $u_{\bf p}$ to
prove that 
there exists $\vec {\frak w}_{\bf x}$ in a small neighborhood of $\vec {\frak w}^0_{\bf x}$
such that (2) is satisfied. 
It is then easy to see that (1) is also satisfied.
\par
In case ${\bf x} = {\bf p}$, the uniqueness of $\vec {\frak w}_{\bf x}$ up to 
${\rm Aut}({\bf p})$ action is obvious.
We can use the $C^2$ small isotopy between $u_{\bf x}$ and $u_{\bf p}$
(defined outside of the neck region) to reduce  the proof for the general case 
to the case ${\bf x} = {\bf p}$.
\end{proof}

\subsection{$C^n$ structure of the Kuranishi chart}
\label{subsec:Cnkurastru}

We now prove the $C^n$-version of Lemma \ref{lem73}
using the construction of the last two subsections.
Suppose we are in the situation of Proposition \ref{prop94}.
In addition to the stabilization and trivialization data 
$\frak W_{\bf p}$ we have already chosen  
local transversals $\vec{\mathcal N}_{\bf p}$ so that
$(\frak W_{\bf p},\vec{\mathcal N}_{\bf p})$ are strong stabilization data.

\begin{defn}\label{defn910}
We define $V_{\bf p}$ 
\index{00V3p@$V_{\bf p}$} to be the subset of $U_{{\bf p}^+}$
(with ${\bf p}^+ = {\bf p} \cup \vec{\frak w}_{\bf p}$)
consisting of ${\bf x} = ((\Sigma_{\bf x},\vec z_{\bf x},\vec{\frak z}_{\bf x}),u_{\bf x})$ 
such that
\begin{equation}
u_{\bf x}({\frak z}_{{\bf x},\ell+i}) \in \mathcal N_{{\bf p},i},
\quad \text{for $i=1,\dots,\ell'$}.
\end{equation}
\end{defn}
We remark that the points ${\frak z}_{{\bf x},\ell+i}$, $i=1,\dots,\ell'$, correspond to the additional 
marked points $\vec{\frak w}_{\bf p}$.
\begin{lem}\label{lem91111}
$V_{\bf p}$ is a $C^n$ submanifold of $U_{{\bf p}^+}$
if $\epsilon_0$ and $\vec{\epsilon}$ are sufficiently small.
\end{lem}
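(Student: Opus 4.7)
The plan is to realize $V_{\bf p}$ as the transverse preimage of $\prod_{i=1}^{\ell'} \mathcal{N}_{{\bf p},i}$ under a $C^n$ evaluation map, and then invoke the implicit function theorem in the $C^n$ category. Concretely, I set
\begin{equation*}
\mathrm{ev}_i^+ : U_{{\bf p}^+} \to X, \qquad \mathrm{ev}_i^+({\bf x}) = u_{{\bf x}}(\frak{z}_{{\bf x},\ell+i}),
\end{equation*}
for $i = 1, \dots, \ell'$, so that $V_{\bf p} = \bigcap_{i=1}^{\ell'}(\mathrm{ev}_i^+)^{-1}(\mathcal{N}_{{\bf p},i})$. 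It then suffices to check that the product $\mathrm{ev}^+ = (\mathrm{ev}_1^+, \dots, \mathrm{ev}_{\ell'}^+)$ is of class $C^n$ and is transverse to $\prod_i \mathcal{N}_{{\bf p},i}$, after possibly shrinking $\epsilon_0$ and $\vec\epsilon$.

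First, I would verify $C^n$ smoothness of each $\mathrm{ev}_i^+$. By the defining compatibility of the $C^\infty$ trivialization $\phi_a$ with the tautological section $\frak{s}^{\rm s}_{\ell+i}$, the \emph{moving} marked point $\frak{z}_{{\bf x},\ell+i}$ is the image of the \emph{fixed} point $\frak{w}_{{\bf p},i} \in \Sigma_{{\bf p}^+}$ under $\widehat\Phi_{\frak{y};\vec\epsilon}$; hence $\mathrm{ev}_i^+({\bf x}) = ({\rm Pr}^{\rm map}({\bf x}))(\frak{w}_{{\bf p},i})$. For $\vec\epsilon$ small, $\frak{w}_{{\bf p},i}$ lies in the interior of $\Sigma_{{\bf p}^+}(\vec\epsilon)$ away from nodes and from the neck region, and for $m$ large relative to $n$ (as in Proposition \ref{prop94}), Sobolev embedding makes point evaluation at $\frak{w}_{{\bf p},i}$ a bounded linear map $L^2_m(\Sigma_{{\bf p}^+}(\vec\epsilon); X) \to X$. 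Composing this linear functional with the $C^n$ embedding $({\rm Pr}^{\rm source}, {\rm Pr}^{\rm map})$ yields $\mathrm{ev}_i^+$ as a $C^n$ map.

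Next, I would establish transversality at ${\bf x} = {\bf p}^+$. Inside $\mathcal{V}_a^+$ there is a real two-dimensional family of deformations that moves only the $(\ell+i)$-th added marked point on the irreducible component $\Sigma_{\bf p}(a)$ containing $\frak{w}_{{\bf p},i}$, leaving every other marked point, node, $u'$, and gluing parameter fixed. Tracing through the identification of the previous paragraph, the derivative of $\mathrm{ev}_i^+$ along these deformations is precisely $du_{\bf p}\vert_{T_{\frak{w}_{{\bf p},i}}\Sigma_{\bf p}}$. By Definition \ref{defn81}(2), $u_{\bf p}\vert_{U_i}$ is a smooth embedding and $u_{\bf p}(U_i)$ meets $\mathcal{N}_{{\bf p},i}$ transversally at $u_{\bf p}(\frak{w}_{{\bf p},i})$, so $du_{\bf p}(T_{\frak{w}_{{\bf p},i}}\Sigma_{\bf p})$ is a two-plane transverse to $T\mathcal{N}_{{\bf p},i}$ at that point. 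Since the deformations for different $i$ are supported in pairwise disjoint neighborhoods of $\Sigma_{{\bf p}^+}$, they act independently, and the total differential
\begin{equation*}
T_{{\bf p}^+}U_{{\bf p}^+} \longrightarrow \bigoplus_{i=1}^{\ell'} \frac{T_{u_{\bf p}(\frak{w}_{{\bf p},i})}X}{T_{u_{\bf p}(\frak{w}_{{\bf p},i})}\mathcal{N}_{{\bf p},i}}
\end{equation*}
is surjective. Surjectivity is an open condition, so after further shrinking $\epsilon_0$ and $\vec\epsilon$ it holds throughout $U_{{\bf p}^+}$; the $C^n$ implicit function theorem then presents $V_{\bf p}$ as a $C^n$ submanifold of codimension $2\ell'$.

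The main technical point I anticipate is the first step: although intuitively clear, the reduction of $u_{\bf x}(\frak{z}_{{\bf x},\ell+i})$ to the value of an $L^2_m$ map at a fixed base point $\frak{w}_{{\bf p},i}\in \Sigma_{{\bf p}^+}$ has to be carried out using the trivialization data together with the exponential decay estimates (\ref{form9224}), so that the $C^n$ structure on $U_{{\bf p}^+}$ supplied by Proposition \ref{prop94}—not merely its stratum-wise smoothness—can be used. Once that reduction is in place, the transversality calculation is pointwise and straightforward.
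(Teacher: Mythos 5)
Your argument is the same as the paper's: the identification $\mathrm{ev}_i^+({\bf x})={\rm Pr}^{\rm map}({\bf x})(\frak w_{{\bf p},i})$ together with Proposition \ref{prop94} gives $C^n$ smoothness, transversality is checked at ${\bf p}^+$ and propagated by openness after shrinking $\epsilon_0,\vec\epsilon$, and Definition \ref{defn81} (2) supplies the pointwise transversality. However, there is one step you pass over silently, and it is exactly the step the paper spends half of its proof on: when you differentiate $\mathrm{ev}_i^+$ along the two-dimensional family in $\mathcal V_a^+$ that moves the $(\ell+i)$-th added marked point with $u'=u_{\bf p}$ held fixed, you are implicitly asserting that this family lies in $U_{{\bf p}^+}$ (equivalently, is tangent to the image submanifold produced by Proposition \ref{prop94}). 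That is not automatic: $U_{{\bf p}^+}$ is cut out inside the ambient parameter space by the condition $\overline\partial u_{\bf x}\in E_{\bf p}({\bf x})$ (Definition \ref{defn72}), and the obstruction space $E_{\bf p}({\bf x})$ changes as the added marked points move, so an arbitrary source deformation with $u'$ frozen need not stay in the solution set. It does here only because $\overline\partial u_{\bf p}$ is not merely an element of $E_{\bf p}({\bf p}\cup\vec{\frak w}_{\bf p})$ but is zero, hence lies in $E_{\bf p}({\bf p}\cup\vec{\frak w}'_{\bf p})$ for every nearby $\vec{\frak w}'_{\bf p}$; this is the sentence you need to add before computing the derivative as $du_{\bf p}\vert_{T_{\frak w_{{\bf p},i}}\Sigma_{\bf p}}$.

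The omission matters beyond bookkeeping: this is the one place in the proof where pseudo-holomorphicity of $u_{\bf p}$ is used, and the paper returns to it in Subsection \ref{sub:smcochange}, where the analogue of Lemma \ref{lem91111} for a non-holomorphic center ${\bf r}$ cannot be proved this way and requires a separate argument. So the needed fix is one line, but without it the transversality computation rests on tangent vectors that have not been shown to belong to $T_{{\bf p}^+}U_{{\bf p}^+}$. The rest of your proposal (the Sobolev/point-evaluation remark, the independence of the families for distinct $i$, and the final appeal to the $C^n$ implicit function theorem) is fine and matches the intended proof.
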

\begin{proof}
By definition
\begin{equation}\label{formfor911}
u_{\bf x}({\frak z}_{{\bf x},\ell+i})
= {\rm Pr}^{\rm map}({\bf x})(\frak w_{{\bf p},i}).
\end{equation}
Therefore ${\bf x} \mapsto u_{\bf x}({\frak z}_{{\bf x},\ell+i})$
is a $C^n$ map by Proposition \ref{prop94}.
It suffices to show that this map is transversal to $\mathcal N_{{\bf p},i}$.
\par
By taking $\epsilon_0$ and $\vec{\epsilon}$ small, it suffices to show the transversality 
for ${\bf p}^+ = {\bf p} \cup \vec{\frak w}_{\bf p}$.
Note that if $\vec{\frak w}'_{\bf p}$ is sufficiently close to 
$\vec{\frak w}_{\bf p}$ then ${\bf p} \cup \vec{\frak w}'_{\bf p} \in U_{{\bf p}^+}$.
In fact since $\overline\partial u_{\bf p}$ is not only an element of 
$E_{\bf p}({\bf p} \cup \vec{\frak w}_{\bf p})$ but also zero, the element ${\bf p} \cup \vec{\frak w}'_{\bf p}$ 
still satisfies the condition $\overline\partial u_{\bf p} \in E_{\bf p}({\bf p} \cup \vec{\frak w}'_{\bf p})$
after we move $\vec{\frak w}'_{\bf p}$.
Therefore Definition \ref{defn81} (2) implies that the map 
${\bf x} \mapsto u_{\bf x}({\frak z}_{{\bf x},\ell+i})$ 
is transversal to $\mathcal N_{{\bf p},i}$.
\end{proof}

\begin{lem}
$V_{\bf p}$ is invariant under the action of the group 
${\rm Aut}^+({\bf p})$.
\end{lem}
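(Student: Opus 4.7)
The plan is to unwind how ${\rm Aut}^+({\bf p})$ acts on $U_{{\bf p}^+}$ and compare this to the defining condition (\ref{formfor911}) of $V_{\bf p}$, using the equivariance built into Definition \ref{defn81}~(3) of the local transversals.

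First, I would fix $v \in {\rm Aut}^+({\bf p})$. Since $v$ is an extended automorphism of ${\bf p}$, it preserves the original interior marked points $\vec{\frak z}_{\bf p}$ as a set, and by Definition \ref{defn46}~(4) it also preserves $\vec{\frak w}_{\bf p}$ as a set. Hence $v$ determines a permutation $\tau$ of $\{1,\dots,\ell'\}$ with $v(\frak w_{{\bf p},j}) = \frak w_{{\bf p},\tau(j)}$. By Definition \ref{defn81}~(3) we then have the key equivariance
\[
\mathcal N_{{\bf p},\tau(j)} = \mathcal N_{{\bf p},j} \qquad (j=1,\dots,\ell').
\]

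Next I would use the description of the action recalled right after Remark \ref{rem5757}: for ${\bf x} \in U_{{\bf p}^+}$ the element $v_*({\bf x})$ admits a biholomorphism $\hat v : \Sigma_{\bf x} \to \Sigma_{v_*({\bf x})}$ with $u_{v_*({\bf x})} \circ \hat v = u_{\bf x}$, sending the $(\ell+j)$-th interior marked point of ${\bf x}$ to the $(\ell+\tau(j))$-th interior marked point of $v_*({\bf x})$. Thus
\[
u_{v_*({\bf x})}\bigl(\frak z_{v_*({\bf x}),\,\ell+\tau(j)}\bigr) \;=\; u_{\bf x}\bigl(\frak z_{{\bf x},\,\ell+j}\bigr).
\]
If ${\bf x} \in V_{\bf p}$, the right-hand side lies in $\mathcal N_{{\bf p},j}$ by Definition \ref{defn910}, which equals $\mathcal N_{{\bf p},\tau(j)}$ by the preceding paragraph. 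Setting $i = \tau(j)$ gives $u_{v_*({\bf x})}(\frak z_{v_*({\bf x}),\ell+i}) \in \mathcal N_{{\bf p},i}$ for every $i$, so $v_*({\bf x}) \in V_{\bf p}$.

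There is no real obstacle here: the argument is essentially bookkeeping, and the entire content is that the choice of local transversals $\vec{\mathcal N}_{\bf p}$ was made ${\rm Aut}^+({\bf p})$-equivariantly in Definition \ref{defn81}~(3), while the ${\rm Aut}^+({\bf p})$-action on $U_{{\bf p}^+}$ intertwines evaluation at the additional marked points by the same permutation $\tau$. The only thing to double-check is that one consistently uses the lift $\hat v$ and the relation $u_{v_*({\bf x})} \circ \hat v = u_{\bf x}$ from the paragraph preceding Lemma \ref{conds55lem}, rather than confusing it with $u_{\bf x} \circ \hat v$.
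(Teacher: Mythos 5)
Your proof is correct and follows essentially the same route as the paper, which deduces the lemma from the ${\rm Aut}^+({\bf p})$-action on $U_{{\bf p}^+}$ (Lemma \ref{lem9555}, built from the lift $\hat v$ with $u_{v_*({\bf x})}\circ \hat v = u_{\bf x}$) together with the equivariance of the local transversals in Definition \ref{defn81}~(3); you have merely written out the bookkeeping with the permutation $\tau$ explicitly.
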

\begin{proof}
This is a consequence of Lemma \ref{lem9555} and 
Definition \ref{defn81} (3).
\end{proof}
The set $U_{\bf p}$ as in Definition \ref{defn72}
is an open neighborhood of ${\bf p}$ in $V_{\bf p}/{\rm Aut}({\bf p})$ 
by Lemma \ref{lem832}. Therefore it has a structure of $C^n$ orbifold.
We remark that the tangent space of $V_{\bf p}$ at ${\bf p}$ contains 
$(D_{u_{\bf p}}\overline{\partial})^{-1}(E_{\bf p}({\bf p}))/\frak{aut}(\Sigma_{\bf p},\vec z_{\bf p},\vec{\frak z}_{\bf p})$.
Therefore the second half of Condition \ref{conds55} implies that  $V_{\bf p}/{\rm Aut}({\bf p})$
is an effective orbifold.
We thus have proved the $C^n$ version of the first statement of Lemma \ref{lem73}.
\par
We next study the bundles.
On 
$L^2_m(\Sigma_{{\bf p}}(\vec{\epsilon}),
\partial \Sigma_{{\bf p}}(\vec{\epsilon});X,L)$
there exists a bundle of $C^{n}$ class 
whose fiber at $h$ is
$
L^2_m(\Sigma_{{\bf p}}(\vec{\epsilon});h^*TX \otimes \Lambda^{01}).
$
We pull it back to $V_{\bf p}$ by ${\rm Pr}^{\rm map}$.
Then the bundle whose fiber at ${\bf x} \in V_{\bf p} \subset U_{{\bf p}^+}$
is $E_{\bf p}(\frak{forget}_{\ell'+\ell,\ell}({\bf x}))$ is its $C^n$ subbundle by 
Definition \ref{defn86}. Let $\tilde {\mathcal E}_{\bf p}$ be this subbundle.
We can show that the ${\rm Aut}^+({\bf p})$ action on $V_{\bf p}$ lifts to a $C^n$ action
on $\tilde{\mathcal E}_{\bf p}$ by Lemma \ref{conds55lem}.
We thus obtain a required $C^n$ (orbi)bundle 
${\mathcal E}_{\bf p} = \tilde{\mathcal E}_{\bf p}/{\rm Aut}({\bf p})$.
\par
It is easy to check that ${\bf x} 
\mapsto \frak s({\bf x}) = \overline{\partial} u_{\bf x} \in E_{\bf p}({\bf x})$
is a $C^n$ section.
We have thus proved the $C^n$ version of Lemma \ref{lem73}.
\qed

\section{Coordinate change is of $C^n$ class}
\label{sec:changesmoo}

\subsection{The main technical result}
\label{subsec:maintech}

\begin{shitu}\label{soti1011}
Let ${\bf p} \in \mathcal M_{k+1,\ell}(X,L;\beta)$.
We take its strong stabilization data
$(\frak W_{\bf p},\vec{\mathcal N}_{\bf p})$, where 
$\frak W_{\bf p} = (\vec{\frak w}_{\bf p},\{\varphi^{{\rm s},{\bf p}}_{a,i}\},
\{\varphi^{{\rm d},{\bf p}}_{a,j}\},\{\phi^{\bf p}_a\})$ are 
stabilization and trivialization data.
$\vec{\frak w}_{\bf p}$ consist of $\ell'$ additional marked points and so
${\bf p} \cup \vec{\frak w}_{\bf p} \in \mathcal M_{k+1,\ell+\ell'}(X,L;\beta)$.
Suppose ${\bf q} \in \mathcal M_{k+1,\ell}(X,L;\beta)$ is $\epsilon$-close to 
${\bf p}$ and
take its stabilization and trivialization data 
$\frak W_{\bf q} = (\vec{\frak w}_{\bf q},\{\varphi^{{\rm s},{\bf q}}_{a,i}\},
\{\varphi^{{\rm d},{\bf q}}_{a,j}\},\{\phi^{\bf q}_a\})$.
$\vec{\frak w}_{\bf q}$ consist of $\ell''$ additional marked points and so
${\bf q} \cup \vec{\frak w}_{\bf q} \in \mathcal M_{k+1,\ell+\ell''}(X,L;\beta)$.
$\diamond$
\end{shitu}
By the definition of $\epsilon$-closeness there exist 
$\ell'$ additional marked points ${}_{\bf p}\vec{\frak w}_{\bf q}$ on 
$
\Sigma_{\bf q}$ and 
$
\frak q \in \prod_{a\in \mathcal A_{\bf p}^{\rm s} \cup \mathcal A_{{\bf p}^+}^{\rm d}} \mathcal V^{{\bf p}^+}_a 
\times [0,c)^{m_{\rm d}} \times (D^2_{\circ}(c))^{m_{\rm s}}
$ such that
$$
(\Sigma_{\bf q},\vec z_{\bf q},\vec{\frak z}_{\bf q}\cup {}_{\bf p}\vec{\frak w}_{\bf q})
= \Phi_{{\bf p}^+}({\frak q}).
$$
Here $\Phi_{{\bf p}^+}$ is the map $\Phi$ in (\ref{form33}).
(Here we apply Lemma \ref{lem34} to ${\bf p}^+ = {\bf p} \cup \vec{\frak w}_{\bf p}$
in place of ${\bf p}$ there to obtain the map $\Phi_{{\bf p}^+}$.)
By Lemma \ref{lem832} we may assume
$$
u_{\bf q}({}_{\bf p}{\frak w}_{{\bf q},i}) \in \mathcal N_{{\bf p},i}
$$
in addition.
By Lemma \ref{lem3838} we obtain a smooth embedding
$$
\widehat{\Phi}_{{\bf p}^+;\frak q,\vec{\epsilon}} : \Sigma_{{\bf p}^+}(\vec{\epsilon}) \to 
\Sigma_{\bf q}
$$
whose image is by definition $\Sigma_{\bf q}(\vec{\epsilon})$.
Here and hereafter we include ${\bf p}^+$ in the notation $\widehat{\Phi}_{{\bf p}^+;\frak q,\vec{\epsilon}}$.
We do so in order to distinguish (\ref{phiqqq}) from (\ref{Phippp}) for example. 
\par
We take $\epsilon'$, $\vec{\epsilon'}$ sufficiently small 
compared to $\epsilon$ and $\vec{\epsilon}$.
Let ${\bf x} \in \mathcal X_{k+1,\ell}(X,L;\beta)$.
Suppose ${\bf x} \cup {}_{\bf q}\vec{\frak w}_{{\bf x}} \in \mathcal X_{k+1,\ell+\ell''}(X,L;\beta)$ 
is $\epsilon'$-close to 
${\bf q}^+ = {\bf q} \cup \vec{\frak w}_{\bf q}$.  
Then there exists 
${\frak x}_{\bf q}$ such that
$
(\Sigma_{{\bf x}},\vec z_{{\bf x}},\vec{\frak z}_{{\bf x} }\cup {}_{\bf q}\vec{\frak w}_{{\bf x}})
= \Phi_{{\bf q}^+}({\frak x}_{\bf q}).
$
Again by Lemma \ref{lem3838} we obtain a smooth embedding
\begin{equation}\label{phiqqq}
\widehat{\Phi}_{{\bf q}^+;{\frak x}_{\bf q},\vec{\epsilon'}} : \Sigma_{{\bf q}^+}(\vec{\epsilon'}) \to 
\Sigma_{{\bf x}}
\end{equation}
whose image is $\Sigma_{\bf x}(\vec{\epsilon'})$ by definition.
\par
By Lemma \ref{lem832} there exists a unique $\ell'$-tuple of additional 
marked points ${}_{\bf p}\vec{\frak w}_{\bf x}$ on $\Sigma_{{\bf x}}$ 
\index{00WPX@${}_{\bf p}\vec{\frak w}_{\bf x}$}
such that:
\begin{conds}\label{conds102}
\begin{enumerate}
\item 
${\bf x} \cup {}_{\bf p}\vec{\frak w}_{{\bf x}}$ is $(o(\epsilon) + o(\epsilon'))$-close to
${\bf p} \cup \vec{\frak w}_{\bf p}$.
\item
$u_{\bf x}({}_{\bf p}{\frak w}_{{\bf x},i}) \in \mathcal N_{{\bf p},i}$.
\item
$\widehat{\Phi}_{{\bf q}^+;\frak x_{\bf q}\vec{\epsilon'}}({}_{\bf p}{\frak w}_{{\bf q},i})$
is $o(\epsilon')$-close to ${}_{\bf p}{\frak w}_{{\bf x},i}$.
\end{enumerate}
\end{conds}
In fact the existence of ${}_{\bf p}\vec{\frak w}_{\bf x}$ 
satisfying Condition \ref{conds102} (1)(2) directly follows from Lemma \ref{lem832}.
Such ${\bf x} \cup {}_{\bf p}\vec{\frak w}_{\bf x}$ is unique up to the ${\rm Aut}({\bf p})$ action.
With Condition \ref{conds102} (3) in addition it becomes unique.

\begin{figure}[h]
\centering
\includegraphics[scale=0.9]{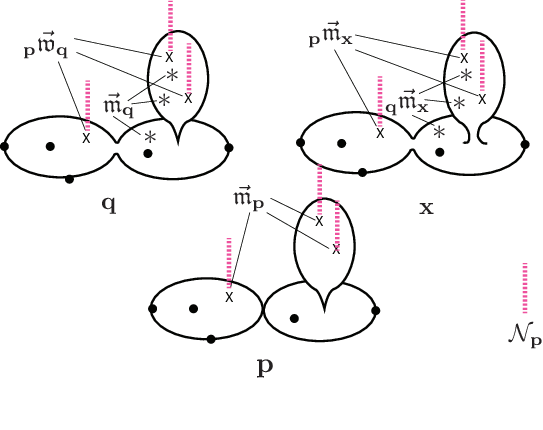}
\caption{${\bf p}$,${\bf q}$ and ${\bf x}$}
\label{zu3}
\end{figure}

Now by Lemmas \ref{lem34} and \ref{lem3838} we obtain
$
\frak x_{\bf p} \in \prod_{a\in \mathcal A_{\bf p}^{\rm s} \cup \mathcal A_{{\bf p}}^{\rm d}} \mathcal V^{{\bf p}^+}_a 
\times [0,c)^{m_{\rm d}} \times (D^2_{\circ}(c))^{m_{\rm s}}
$
with 
$
{\bf x} \cup {}_{\bf p}\vec{\frak w}_{{\bf x}}
= \Phi_{{\bf p}^+}(\frak x_{\bf p})
$
and a smooth embedding
\begin{equation}\label{Phippp}
\widehat{\Phi}_{{\bf p}^+,\frak x_{\bf p},\vec{\epsilon}} : \Sigma_{{\bf p}^+}(\vec{\epsilon}) \to 
\Sigma_{\bf x}
\end{equation}
whose image is by definition $\Sigma_{\bf x}(\vec{\epsilon})$.
\par
Since $\vec{\epsilon'}$ is sufficiently small compared to $\vec{\epsilon}$ we have
\begin{equation}\label{10333}
\Sigma_{\bf x}(\vec{\epsilon}) \subset \Sigma_{\bf x}(\vec{\epsilon'}).
\end{equation}
(The right hand side is the image of $\Sigma_{{\bf q}^+}(\vec{\epsilon'})$ by (\ref{phiqqq})
and the left hand side is the image of $\Sigma_{{\bf p}^+}(\vec{\epsilon})$ by (\ref{Phippp}).)
Now we define a map
\begin{equation}\label{10111}
\Psi_{{\bf p},{\bf q};{\bf x}\cup {}_{\bf q}\vec{\frak w}_{{\bf x}}}
= 
\widehat{\Phi}_{{\bf q}^+,{\frak x}_{\bf q},\vec{\epsilon'}}^{-1} \circ 
\widehat{\Phi}_{{\bf p}^+,\frak x_{\bf p},\vec{\epsilon}}
: \Sigma_{{\bf p}^+}(\vec{\epsilon}) \to 
\Sigma_{{\bf q}^+}(\vec{\epsilon'}).
\end{equation}
This is a family of smooth open embeddings parametrized by 
${\bf x}\cup {}_{\bf q}\vec{\frak w}_{{\bf x}}$, with the domain and target independent of ${\bf x}\cup {}_{{\bf q}}\vec{\frak w}_{{\bf x}}$.
Proposition \ref{prop103} below claims that it is a $C^n$ family if $m$ is sufficiently larger 
than $n$. To precisely state it  we need to choose a 
coordinate of the set of the objects ${\bf x}\cup {}_{\bf q}\vec{\frak w}_{{\bf x}}$.
The way to do so is similar to Definition \ref{defn86} and the paragraph thereafter. The detail follows.
\par
We take the direct product (Compare with (\ref{form86}).)
\begin{equation}\label{form862}
\mathcal V({{\bf q}^+};\vec{\epsilon'}) = 
\prod_{a\in \mathcal A_{{\bf q}}^{\rm s} \cup \mathcal A_{{\bf q}}^{\rm d}}\mathcal V_a^{{\bf q}^+}
\times
\prod_{j=1}^{m_{\rm d}} [0,{\epsilon_{j}^{\prime \rm d}})
\times
\prod_{i=1}^{m_{\rm s}} D^2_{\circ}({\epsilon_{i}^{\prime \rm s}}).
\end{equation}
and consider the map
$
\Phi_{{\bf q}^+} : 
\mathcal V({{\bf q}^+};\vec{\epsilon'})\to \mathcal M^{\rm d}_{k+1,\ell+\ell''},
$
where $\mathcal V^{{\bf q}^+}_a$ is the deformation space of an irreducible component  
of ${\bf q}^+ ={\bf q} \cup \vec{\frak w}_{\bf q}$.
(This is the map (\ref{form8585}) by taking ${\bf q}^+$ in place of ${\bf p}$ in (\ref{form8585}).
$\mathcal V^{+}_a$ is $\mathcal V^{{\bf q}^+}_a$ here.)
We denote its image by
\begin{equation}
\overline{\mathcal V}({{\bf q}^+};\vec{\epsilon'})
= 
\Phi_{{\bf q}^+}(\mathcal V({{\bf q}^+};\vec{\epsilon'}))
\subset \mathcal M^{\rm d}_{k+1,\ell+\ell''}.
\end{equation}
\index{00V3qepsilonVV @$\overline{\mathcal V}({{\bf q}^+};\vec{\epsilon'})$}
This is a neighborhood of the source curve of ${\bf q}^+$ in $\mathcal M^{\rm d}_{k+1,\ell+\ell''}$.

Let ${\frak a} = (\Sigma_{\frak a},\vec z_{\frak a},\vec{\frak z}_{\frak a}) 
\in \overline{\mathcal V}({{\bf q}^+};\vec{\epsilon'})$.
\index{00A5@${\frak a}$}
We put $(\Sigma_{\frak a},\vec z_{\frak a},\vec{\frak z}_{\frak a}) = \Phi_{{\bf q}^+}(\frak y)$
and  obtain a diffeomorphism
$
\widehat{\Phi}_{{\bf q}^+,\frak a,\vec\epsilon} = 
\widehat{\Phi}_{{\bf q}^+,\frak y,\vec\epsilon}: \Sigma_{{\bf q}^+}(\vec{\epsilon'}) \to \Sigma_{\frak a}(\vec{\epsilon'}),
$
by Lemma \ref{lem3838}.
Let $\mathscr L^{\bf q}_m$ be a small neighborhood of $u_{{\bf q}^+}\vert_{\Sigma_{{\bf q}^+}(\vec{\epsilon'})}$
in the space $L^2_{m}((\Sigma_{{\bf q}^+}(\vec{\epsilon'}),\partial\Sigma_{{\bf q}^+}(\vec{\epsilon'}));X,L)$.
\par
For $({\frak a},u') \in \overline{\mathcal V}({\bf q}^+;\vec{\epsilon'}) \times \mathscr L^{\bf q}_m$ as above,
we consider 
\begin{equation}\label{form1040400}
u'' = u' \circ \widehat{\Phi}_{{\bf q}^+,\frak a,\vec{\epsilon'}}^{-1} : \Sigma_{\frak a}(\vec{\epsilon'}) \to X.
\end{equation}
We can extend $u''$ to $\Sigma_{\frak a}$ (by modifying it near the small 
neighborhood of the boundary of $\Sigma_{\frak a}(\vec{\epsilon'})$)
so that $u''\vert_{\Sigma_{\frak a} \setminus \Sigma_{\frak a}(\vec{\epsilon'})}$ has diameter $<\epsilon_0$.
See footnote \ref{fn16}.
\par
Put ${\bf x} \cup \vec{\frak w}_{\bf x} = ({\frak a},u'')$
and  consider
$
\Psi_{{\bf p},{\bf q};{\bf x}\cup {}_{\bf q}\vec{\frak w}_{{\bf x}}},
$
as in (\ref{10111}).
We remark that during the construction of (\ref{10111}) the map $u_{\bf x}$ 
is used only to determine ${}_{\bf p}\vec{\frak w}_{\bf x}$ by requiring Condition \ref{conds102}
 (2).
Therefore the way to extend $u''$ to the neck region does not affect 
$
\Psi_{{\bf p},{\bf q};{\bf x}\cup {}_{\bf q}\vec{\frak w}_{{\bf x}}}
$.
\begin{defn}\label{defn102}
We define 
\index{00YPsi_{{\bf p},{\bf q}}@$\Psi_{{\bf p},{\bf q}}$}
$
\Psi_{{\bf p},{\bf q}} : \overline{\mathcal V}({{\bf q}^+};\vec{\epsilon'}) \times \mathscr L^{\bf q}_m 
\times \Sigma_{{\bf p}^+}(\vec{\epsilon}) \to 
\Sigma_{{\bf q}^+}(\vec{\epsilon'})
$
by
$$
\Psi_{{\bf p},{\bf q}}(\frak a,u',z) = 
\Psi_{{\bf p},{\bf q};{\bf x}\cup {}_{\bf q}\vec{\frak w}_{{\bf x}}}(z).
$$
\end{defn}
\begin{prop}\label{prop103}
If $m$ is sufficiently larger than $n$ then $\Psi_{{\bf p},{\bf q}}$ is a $C^n$ map.
In addition, it is $C^{\infty}$ in the direction of $\Sigma_{{\mathbf p}^+} (\vec{\epsilon})$.
\end{prop}
We will prove this proposition in Subsection \ref{subsec:techlemproof}.
Proposition \ref{prop103} is used in Subsection \ref{subsec:smoproof} to show the $C^n$ version of Lemma \ref{lem7878}.
We also use it in Section \ref{sec:exiobst} to prove the existence of obstruction bundle data.
We also use Lemma \ref{lem106}.
\begin{defn}\label{defn105}
We define
$
\Xi_{{\bf p},{\bf q}} : \overline{\mathcal V}({{\bf q}^+};\vec{\epsilon'}) \times \mathscr L^{\bf q}_m \to 
\mathcal M^{\rm d}_{k+1,\ell+\ell'}
$
\index{00Xipq@$\Xi_{{\bf p},{\bf q}}$}
by
$$
\Xi_{{\bf p},{\bf q}}(\frak a,u') = 
(\Sigma_{\bf x},\vec z_{\bf x},\vec{\frak z}_{\bf x} \cup {}_{\bf p}\vec{\frak w}_{\bf x}).
$$
Here ${\bf x} \cup {}_{\bf q}\vec{\frak w}_{\bf x} = (\frak a,u'')$,
$u''$ is the extension of (\ref{form1040400}) as above,
and ${}_{\bf p}\vec{\frak w}_{\bf x}$ is determined by Condition \ref{conds102} from
${\bf x} \cup {}_{\bf q}\vec{\frak w}_{\bf x}$.
\end{defn}
\begin{lem}\label{lem106}
If $m$ is sufficiently larger than $n$ then $\Xi_{{\bf p},{\bf q}}$ is $C^n$ map.
\end{lem}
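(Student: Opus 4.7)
The plan is to split $\Xi_{{\bf p},{\bf q}}$ into two pieces: a source-curve assignment $\frak a\mapsto(\Sigma_{\bf x},\vec z_{\bf x},\vec{\frak z}_{\bf x})$ depending only on $\frak a$, and the location of the $\ell'$ new marked points ${}_{\bf p}\vec{\frak w}_{\bf x}$ depending on $(\frak a,u')$. The first piece is essentially the forgetful map from (a neighborhood of ${\bf q}^+$ in) $\mathcal M^{\rm d,log}_{k+1,\ell+\ell''}$ to $\mathcal M^{\rm d,log}_{k+1,\ell}$; its smoothness in the log smooth structure follows from the same type of exponential estimates (\ref{form94}) used in the proof of Lemma~\ref{lem91}. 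For the second piece, I will invoke the implicit function theorem applied to the transversality condition cutting out ${}_{\bf p}\vec{\frak w}_{\bf x}$.

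The points ${}_{\bf p}{\frak w}_{{\bf x},i}$ are uniquely characterized by Lemma~\ref{lem832} combined with Condition~\ref{conds102}: in particular $u_{\bf x}({}_{\bf p}{\frak w}_{{\bf x},i})\in\mathcal N_{{\bf p},i}$, and Condition~\ref{conds102}(3) pins down the branch. Because $\vec{\epsilon'}$ is chosen small enough, ${}_{\bf p}{\frak w}_{{\bf x},i}$ lies in the image of $\widehat{\Phi}_{{\bf q}^+,\frak a,\vec{\epsilon'}}$; writing ${}_{\bf p}{\frak w}_{{\bf x},i}=\widehat{\Phi}_{{\bf q}^+,\frak a,\vec{\epsilon'}}(\tilde p_i)$ and using (\ref{form1040400}), the transversality condition reduces to $u'(\tilde p_i)\in\mathcal N_{{\bf p},i}$. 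This formulation is independent of the (non-canonical) choice of extension $u''$ of $u'$ into the neck region, which is important.

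For $m$ sufficiently larger than $n$, the evaluation map $(u',\tilde p)\mapsto u'(\tilde p)$ is of class $C^n$ from $\mathscr L^{\bf q}_m\times\Sigma_{{\bf q}^+}(\vec{\epsilon'})$ to $X$ by Sobolev embedding. The transversality of $u_{\bf p}$ to $\mathcal N_{{\bf p},i}$ at $\frak w_{{\bf p},i}$ (Definition~\ref{defn81}(2)) passes to $u'$ near $u_{{\bf q}^+}|_{\Sigma_{{\bf q}^+}(\vec{\epsilon'})}$ by $\epsilon$-closeness, so the implicit function theorem produces $\tilde p_i$ as a $C^n$ function of $u'$. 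Composing with the smooth family $\widehat{\Phi}_{{\bf q}^+,\frak a,\vec{\epsilon'}}$ (Lemma~\ref{lem3838}) makes ${}_{\bf p}{\frak w}_{{\bf x},i}$ a $C^n$ section of the universal curve over $\overline{\mathcal V}({{\bf q}^+};\vec{\epsilon'})\times\mathscr L^{\bf q}_m$.

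Finally, to read off the log coordinates of $\Xi_{{\bf p},{\bf q}}(\frak a,u')$ in a chart $\Phi^{{\bf p}^+}_{s,\rho}$ centered at ${\bf p}^+$, one assembles the source curve (smooth in $\frak a$) with the $C^n$-varying ${}_{\bf p}\vec{\frak w}_{\bf x}$. The main technical obstacle is this last comparison: the log chart around ${\bf p}^+$ and the log chart inherited from $\frak a$ via marked-point replacement are different, and one must check that the transition remains $C^n$ in the presence of the varying marked points. The argument is parallel to the chart transition estimate in Lemma~\ref{lem91}: the component parameters and gluing parameters in the ${\bf p}^+$ chart are determined by holomorphic/algebraic formulas in those of the ${\bf q}^+$ chart, and the exponential estimates of the type (\ref{form94}) carry over with ${}_{\bf p}\vec{\frak w}_{\bf x}$ treated as $C^n$ sections, yielding the claim.
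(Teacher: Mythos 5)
Your treatment of the marked points is exactly the paper's route: your $\tilde p_i$, cut out by $u'(\tilde p_i)\in\mathcal N_{{\bf p},i}$ via the implicit function theorem, is precisely the paper's map $\Xi_i(\frak a,u')=\widehat{\Phi}^{-1}_{{\bf q}^+,\frak a,\vec{\epsilon'}}({}_{\bf p}\frak w_{{\bf x},i})$ of Lemma \ref{lem11009}, proved there by the same ingredients (a defining function for $\mathcal N_{{\bf p},i}$, $C^n$-ness of evaluation for $m\gg n$, openness of transversality, independence of the neck extension $u''$), and composing with $\widehat{\Phi}_{{\bf q}^+,\frak a,\vec{\epsilon'}}$ gives the paper's sections $\xi_i$. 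Where you genuinely diverge is the final step. The paper does not compare charts at ${\bf p}^+$ at all: it pulls back the universal family to $\overline{\mathcal V}({{\bf q}^+};\vec{\epsilon'})\times\mathscr L^{\bf q}_m$, replaces the last $\ell''$ sections by the $C^n$ sections $\xi_1,\dots,\xi_{\ell'}$, and invokes the universality of $\pi:\mathcal C^{\rm d}_{k+1,\ell+\ell'}\to\mathcal M^{\rm d}_{k+1,\ell+\ell'}$ (Diagram (\ref{diagramunivn})) to get a $C^n$ classifying map $F$, with $\Xi_{{\bf p},{\bf q}}=F$ by inspection. Your alternative --- reading off the ${\bf p}^+$-chart parameters from the ${\bf q}^+$-chart data plus the new marked-point positions --- is the elementary route the paper itself hints at (the remark that in genus $0$ one can use cross ratios to see $F,\hat F$ are $C^n$), but as written it rests on the one assertion that carries the remaining content: that the component and gluing parameters near ${\bf p}^+$ depend in a $C^n$ way on the re-marking. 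The estimates (\ref{form94}) proved in Lemma \ref{lem91} concern a fixed collection of marked points and do not by themselves handle markings that move with $u'$; to complete your argument you must either carry out that finite-dimensional computation (e.g. cross ratios after doubling, using that the new points stay in a compact region away from the necks) or simply import the paper's universality argument, which is exactly what that device buys.

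Two smaller points. First, your opening factorization through the forgetful map to $\mathcal M^{\rm d}_{k+1,\ell}$ is shaky: $(\Sigma_{\bf x},\vec z_{\bf x},\vec{\frak z}_{\bf x})$ is in general not source stable (this is why $\vec{\frak w}_{\bf p}$ was introduced), so the forgetful map contracts components and does not retain the curve you need; fortunately your final assembly does not actually use this factorization. Second, you argue directly with the log structures on source and target, whereas Lemma \ref{lem106} as stated concerns $\overline{\mathcal V}({{\bf q}^+};\vec{\epsilon'})$ and $\mathcal M^{\rm d}_{k+1,\ell+\ell'}$ with their standard structures; since only the identity $\mathcal V^{\log}\to\mathcal V$ is smooth, neither version formally implies the other, so you should either prove the stated version and then upgrade as in the Remark following Lemma \ref{lem106}, or state explicitly that you are proving the log version used in Diagram (\ref{diagramCn}).
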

Lemma \ref{lem106} is proved in Subsection \ref{subsec:techlemproof}.
\par
\begin{rem}
Note we use the smooth structure on $\overline{\mathcal V}({\bf q}^+;\vec{\epsilon'}) \cong \mathcal V({\bf q}^+;\vec{\epsilon'})$
whose coordinates of 
gluing parameters are $r_j \in [0,{\epsilon_{j}^{\prime \rm d}})$
and $\sigma_i \in D^2_{\circ}({\epsilon_{i}^{\prime \rm s}})$.
We use $s_j$ and $\rho_i$ as in (\ref{form92})
to define an alternative smooth structure 
on $\overline{\mathcal V}({\bf q}^+;\vec{\epsilon'})$,
and write it as $\overline{\mathcal V}({\bf q}^+;\vec{\epsilon'})^{\log}$.
We remark that Proposition \ref{prop103} 
and Lemma \ref{lem106} imply the 
same conclusion with $\overline{\mathcal V}({\bf q}^+;\vec{\epsilon'})$
replaced by $\overline{\mathcal V}({\bf q}^+;\vec{\epsilon'})^{\log}$ and 
$\mathcal M^{\rm d}_{k+1,\ell+\ell'}
$ by $\mathcal M^{\rm d,\log}_{k+1,\ell+\ell'}$.
As for Proposition \ref{prop103} this follows from the fact that the identity map
$\overline{\mathcal V}({{\bf q}^+};\vec{\epsilon'})^{\log}
\to \overline{\mathcal V}({\bf q}^+;\vec{\epsilon'})$ is smooth.
As for Lemma \ref{lem106} the proof that the `$\log$' version follows 
from the original version is similar 
to the proof of Lemma \ref{lem91} using a formula similar to (\ref{form94}).
\end{rem}

\subsection{Proof of Proposition \ref{prop103}}
\label{subsec:techlemproof}

In this subsection we prove Proposition \ref{prop103} and 
Lemma \ref{lem106}.
We use the notation of Subsection \ref{subsec:maintech}.
In this subsection we use $\overline{\mathcal V}({{\bf q}^+};\vec{\epsilon'})$ but 
not $\overline{\mathcal V}({{\bf q}^+};\vec{\epsilon'})^{\log}$.
\par
We first define a map
$
\Xi_i :  \overline{\mathcal V}({{\bf q}^+};\vec{\epsilon'}) \times \mathscr L^{\bf q}_m 
\to \Sigma_{\bf q}
$
for $i=1,\dots,\ell'$ by the equality:
\begin{equation}
\Xi_i(\frak a,u')
= \widehat{\Phi}^{-1}_{{\bf q}^+,\frak a,\vec{\epsilon'}}\,({}_{\bf p}\frak w_{{\bf x},i}).
\end{equation}
Here ${}_{\bf p}\vec{\frak w}_{\bf x}$ is determined by Condition \ref{conds102} from
${\bf x} \cup {}_{\bf q}\vec{\frak w}_{\bf x} = (\frak a,u'')$.

\begin{lem}\label{lem11009}
$\Xi_i$ is a $C^n$ map. 
\end{lem}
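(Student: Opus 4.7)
The plan is to reduce the implicit characterization of $\Xi_i$ to a standard implicit function theorem application on the \emph{fixed} source $\Sigma_{{\bf q}^+}(\vec{\epsilon'})$, where the required $C^n$ regularity of the relevant evaluation map follows from a Sobolev embedding for $m$ sufficiently large.

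First I would rewrite the defining equation in source coordinates. By Definition \ref{defn102} we have $(\Sigma_{\bf x},\vec z_{\bf x},\vec{\frak z}_{\bf x}\cup {}_{\bf q}\vec{\frak w}_{\bf x}) = \frak a$, and on the image $\Sigma_{\bf x}(\vec{\epsilon'}) = \widehat{\Phi}_{{\bf q}^+,\frak a,\vec{\epsilon'}}(\Sigma_{{\bf q}^+}(\vec{\epsilon'}))$ the map $u_{\bf x}=u''$ coincides with $u' \circ \widehat{\Phi}_{{\bf q}^+,\frak a,\vec{\epsilon'}}^{-1}$. Taking $\vec{\epsilon'}$ small enough that ${}_{\bf p}\frak w_{{\bf q},i}$ lies strictly inside $\Sigma_{{\bf q}^+}(\vec{\epsilon'})$ and setting $\zeta_i := \Xi_i(\frak a,u')$, the requirement $u_{\bf x}({}_{\bf p}\frak w_{{\bf x},i}) \in \mathcal N_{{\bf p},i}$ from Condition \ref{conds102}(2) becomes
\[
u'(\zeta_i) \in \mathcal N_{{\bf p},i},
\]
while Condition \ref{conds102}(3) localizes $\zeta_i$ near the fixed point ${}_{\bf p}\frak w_{{\bf q},i}$. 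Crucially, $\widehat{\Phi}_{{\bf q}^+,\frak a,\vec{\epsilon'}}$ and its inverse cancel in this reformulation, so $\Xi_i$ depends only on $u'$, with no residual $\frak a$-dependence.

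Next I would apply the implicit function theorem. Choose a local chart $B$ around ${}_{\bf p}\frak w_{{\bf q},i}$ in $\Sigma_{{\bf q}^+}(\vec{\epsilon'})$ and a smooth tubular projection $\pi_{\mathcal N}$ onto the $\mathbb R^2$-normal of $\mathcal N_{{\bf p},i}$ near $u_{{\bf q}^+}({}_{\bf p}\frak w_{{\bf q},i})$. Setting
\[
F : \mathscr L^{\bf q}_m \times B \to \mathbb R^2, \qquad F(u',\zeta) = \pi_{\mathcal N}(u'(\zeta)),
\]
the point $\zeta_i$ is characterized as the unique zero of $F(u',\cdot)$ close to ${}_{\bf p}\frak w_{{\bf q},i}$. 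At the base point $(u_{{\bf q}^+},{}_{\bf p}\frak w_{{\bf q},i})$ the partial $\partial F/\partial \zeta$ equals $d\pi_{\mathcal N} \circ d u_{{\bf q}^+}$, an isomorphism of $2$-dimensional real vector spaces: the transversality of $u_{\bf p}$ to $\mathcal N_{{\bf p},i}$ at $\frak w_{{\bf p},i}$ supplied by Definition \ref{defn81}(2) transfers to $u_{{\bf q}^+}$ at ${}_{\bf p}\frak w_{{\bf q},i}$ via the $C^2$ closeness built into $\epsilon$-closeness of ${\bf q}$ to ${\bf p}$.

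The main technical input is $C^n$ regularity of the evaluation map $(u',\zeta) \mapsto u'(\zeta)$ from $\mathscr L^{\bf q}_m \times B$ to $X$. This is exactly where the choice of $m$ enters: by the Sobolev embedding $L^2_m \hookrightarrow C^n$ on a two-dimensional surface (valid for $m \geq n+2$), evaluation is jointly $C^n$; composition with the smooth $\pi_{\mathcal N}$ then makes $F$ of class $C^n$, and the implicit function theorem produces $\zeta_i = \Xi_i(u')$ as a $C^n$ function of $u'$, hence of $(\frak a,u')$. The real obstacle lies in the bookkeeping of the second paragraph, namely collapsing the a priori $\frak a$-dependent construction of ${}_{\bf p}\frak w_{{\bf x},i}$ to a pure implicit equation in $u'$; once that collapse is secured, the analytic step is entirely standard.
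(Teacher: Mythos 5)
Your proposal is correct and follows essentially the same argument as the paper: compose with a local defining submersion $h$ for $\mathcal N_{{\bf p},i}$, obtain joint $C^n$ regularity of the evaluation map by Sobolev embedding for $m$ large, and conclude via the implicit function theorem. The explicit cancellation of the $\frak a$-dependence is already implicit in the paper's formula $\hat\Xi_i(\frak a,u',z)=h(u''(z))=h(u'(z))$; your making it explicit is a tidy simplification, though not strictly required since $\widehat{\Phi}_{{\bf q}^+,\frak a,\vec{\epsilon'}}$ depends smoothly on $\frak a$ in any case.
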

\begin{proof}
Let $\frak U_i$ be a neighborhood of $\mathcal N_{{\bf p},i}$ in $X$ and 
$h=(h_1,h_2) : \frak U_i \to \R^2$  a smooth map such that
$h^{-1}(0) = \mathcal N_{{\bf p},i}$ and $dh_1$ is linearly independent to $dh_2$
on $\frak U_i$.
\par
Let $U_i$ be a neighborhood of ${}_{\bf p}\frak w_{{\bf q};i}$ in $\Sigma_{\bf q}$.
We define a map
$
\hat\Xi_i : \overline{\mathcal V}({{\bf q}^+};\vec{\epsilon'}) \times \mathscr L^{\bf q}_m 
\times U_i \to \R^2
$
by
$
\hat\Xi_i(\frak a,u',z) = h(u''(z))
$
where $u''$ is as in (\ref{form1040400}).
For each fixed $(\frak a,u')$ the element $0 \in \R^2$ is a regular value 
of the restriction of $\hat\Xi_i$ to $\{(\frak a,u')\} \times U_i$.
Moreover $\hat\Xi_i$ is a $C^n$ map if $m$ is sufficiently larger than $n$.
Lemma \ref{lem11009} then is a consequence of the implicit function theorem.
\end{proof}
We pull back the universal family 
$\pi : \mathcal C^{\rm d}_{k+1,\ell+\ell''} \to \mathcal M^{\rm d}_{k+1,\ell+\ell''}$, 
by the inclusion map 
$\overline{\mathcal V}({{\bf q}^+};\vec{\epsilon'})
\to \mathcal M^{\rm d}_{k+1,\ell+\ell''}$ 
and take a direct product with $\mathscr L^{\bf q}_m$.
We thus obtain
\begin{equation}\label{form109}
\mathcal C({\bf q}^+) \to \overline{\mathcal V}({{\bf q}^+};\vec{\epsilon'}) \times \mathscr L^{\bf q}_m.
\end{equation}
It comes with $k+1$ sections $\frak s^{\rm d}_{0},\dots,\frak s^{\rm d}_{k}$ 
corresponding to the $k+1$ boundary marked points and $\ell + \ell''$ sections $\frak s^{\rm s}_{1},\dots,\frak s^{\rm s}_{\ell+\ell''}$
corresponding to the $\ell +\ell''$ interior marked points.
\par
We consider sections $\xi_1,\dots,\xi_{\ell'}$ defined by $\Xi_i$ ($i=1,\dots,\ell'$).
Then (\ref{form109}) together with $k+1$ sections $\frak s^{\rm d}_{0},\dots,\frak s^{\rm d}_{k}$
and $\ell+\ell'$ sections $\frak s^{\rm s}_{1},\dots,\frak s^{\rm s}_{\ell},\xi_1,\dots,\xi_{\ell'}$
becomes a family of nodal disks with $k+1$ boundary marked points and $\ell+\ell'$ interior marked 
points. Therefore by the universality of $\pi : \mathcal C^{\rm d}_{k+1,\ell+\ell'} \to \mathcal M^{\rm d}_{k+1,\ell+\ell'}$
we obtain the next commutative diagram.
\begin{equation}\label{diagramunivn}
\begin{CD}
\mathcal C({\bf q}^+)
@ >{\hat F}>>
\mathcal C^{\rm d}_{k+1,\ell+\ell'} \\
@ VV{(\ref{form109})}V @VV{\pi}V 
\\
\overline{\mathcal V}({{\bf q}^+};\vec{\epsilon'}) \times \mathscr L^{\bf q}_m @>{F}>>
\mathcal M^{\rm d}_{k+1,\ell+\ell'}
\end{CD}
\end{equation}
Here the horizontal arrows $\hat F$, $F$ are $C^n$ maps  satisfying:
$\hat F \circ \frak s_j^{\rm d} = \frak s_j^{\rm d} \circ F$ for $j=0,\dots,k$:
$\hat F \circ \frak s_i^{\rm s} = \frak s_i^{\rm s} \circ F$ for $i=1,\dots,\ell$:
$\hat F \circ \xi_i = \frak s_{\ell + i}^{\rm s} \circ F$ for $i=1,\dots,\ell'$:
(\ref{diagramunivn}) is a Cartesian square:
$\hat F$ is fiber-wise holomorphic.
\par
We can prove the existence of such $\hat F$ and $F$ by taking the double and 
using the corresponding universality statement of the Deligne-Mumford moduli space
of marked spheres.
\begin{rem}
In our genus $0$ case, we can use cross ratio to give an 
elementary proof of the fact that $\hat F$, $F$ are $C^n$ maps.
 A similar facts can be proved for the case of arbitrary 
genus.
\end{rem}

\begin{proof}[Proof of Lemma \ref{lem106}]
$\Xi_{{\bf p},{\bf q}}$ is nothing but the map $F$ in Diagram (\ref{diagramunivn}).
\end{proof}
\begin{proof}[Proof of Proposition \ref{prop103}]
We consider the next diagram.
\begin{equation}\label{dianew}
\nonumber
\begin{CD}
\overline{\mathcal V}({{\bf q}^+};\vec{\epsilon'}) \times \mathscr L^{\bf q}_m \times \Sigma_{{\bf q}^+}(\vec{\epsilon'})
@>{\Phi_1}>>
\mathcal C({\bf q}^+)
@ >{\hat F}>>
\mathcal C^{\rm d}_{k+1,\ell+\ell'}
@<{\Phi_2}<<
\mathcal V \times \Sigma_{{\bf p}^+}(\vec{\epsilon})
\end{CD}
\end{equation}
Here $\Phi_1$ is the map which is $\widehat{\Phi}_{{\bf q}^+,\frak a,\vec{\epsilon'}}$
(see (\ref{phiqqq}))
on the fiber of $(\frak a,u') \in \overline{\mathcal V}({{\bf q}^+};\vec{\epsilon'}) \times \mathscr L^{\bf q}_m$.
$\mathcal V$ is a small neighborhood of ${\bf p}^+ = {\bf p} \cup \vec{\frak w}_{\bf p}$ 
in $\mathcal M_{k+1,\ell+\ell'}^{\rm d}$.
$\Phi_2$ is the map which is $\widehat{\Phi}_{{\bf p}^+,\frak x_{\bf p},\vec{\epsilon}}$
(see (\ref{Phippp})) on the fiber of $\Phi_{{\bf p}^+}(\frak x_{\bf p})$.
\par
By (\ref{10333}), we choose $\vec{\epsilon '}$ sufficiently small compared to $\vec{\epsilon}$ and 
take fiberwise inverse to $\hat F \circ \Phi_1$ on the image of $\Phi_2$ and 
compose it fiberwise with $\Phi_2$ to obtain a
$C^n$ map, that is,
$$
{\mathcal V}({\mathbf q}^+; \vec{\epsilon '}) \times {\mathscr L}^{\mathbf q}_m \times \Sigma_{{\mathbf p}^+}(\vec{\epsilon}) \to \Sigma_{{\mathbf q}^+}(\vec{\epsilon'}).
$$
This is nothing but the map $\Psi_{{\bf p},{\bf q}}$ in Definition \ref{defn102}.
\end{proof}

\subsection{Proof of the fact that coordinate change is of $C^n$ class}
\label{subsec:smoproof}

In this subsection we will prove the $C^n$ version of Lemma \ref{lem7878} using Proposition \ref{prop103}.

For given $n$, let  $m_1,m_2$  both be sufficiently large 
and $\vec{\epsilon}$,$\epsilon$ so small that we obtain Kuranishi charts of $C^n$ class by
the argument 
of Subsections \ref{subsec:glue}, \ref{subsec:Cnkurastru}.

\begin{lem}\label{lem108}
The $C^n$ structures on a neighborhood of ${\bf p}$ in $V_{\bf p}$ obtained 
in Subsections \ref{subsec:glue}, \ref{subsec:Cnkurastru} 
using $L^2_m$ space with $m=m_1$ coincides with one using $L^2_m$ space with $m=m_2$.
\end{lem}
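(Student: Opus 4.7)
The plan is to show that for $m_1 \le m_2$ (assumed without loss of generality) the identity map of $V_{\bf p}$, viewed between the two $C^n$-structures, is a $C^n$-diffeomorphism in both directions. Recall that each smooth structure is pulled back from the Hilbert manifold target of the embedding $({\rm Pr}^{\rm source}, {\rm Pr}^{\rm map})$ in (\ref{form911}), and that the ${\rm Pr}^{\rm source}$ component takes values in $\mathcal M^{\rm d,\log}_{k+1,\ell+\ell'}$ and is independent of $m$. Thus the only thing to compare is the map ${\rm Pr}^{\rm map}$ valued in $L^2_{m_1}$ versus $L^2_{m_2}$.

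The direction from $\mathcal S_{m_2}$ to $\mathcal S_{m_1}$ is routine: the continuous linear inclusion
$$
L^2_{m_2}(\Sigma_{{\bf p}^+}(\vec\epsilon),\partial\Sigma_{{\bf p}^+}(\vec\epsilon);X,L) \hookrightarrow L^2_{m_1}(\Sigma_{{\bf p}^+}(\vec\epsilon),\partial\Sigma_{{\bf p}^+}(\vec\epsilon);X,L)
$$
is smooth (being the restriction of a bounded linear map between Banach spaces), so composing ${\rm Pr}^{\rm map}_{m_2}$ with this inclusion yields ${\rm Pr}^{\rm map}_{m_1}$, and the identity $\mathcal S_{m_2}\to\mathcal S_{m_1}$ is $C^n$.

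For the opposite direction, the key point is that the Kuranishi neighborhood $V_{\bf p}$, as a set, is constructed via the gluing map ${\rm Glue}$ whose domain $\mathcal V^{\log}$ in (\ref{new912}) is parametrized by the pair $(\frak v,u')$ together with the finite-dimensional gluing parameters $(T_j^{\rm d}),(T_i^{\rm s},\theta_i)$. For sufficiently large $m$ both ${\rm Glue}_{m_1}$ and ${\rm Glue}_{m_2}$ are built by the same Newton iteration scheme of \cite[Section 5]{foooexp}, and by elliptic regularity the $L^2_{m_1}$-fixed point coincides with the $L^2_{m_2}$-fixed point whenever $u'$ lies in $L^2_{m_2}$. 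Consequently the two ${\rm Glue}$ maps have set-theoretically identical image $V_{\bf p}$ and their transition map is the identity of $\mathcal V^{\log}$, which is obviously $C^n$. Combining this with the fact that each ${\rm Glue}_{m_i}$ is a $C^n$-diffeomorphism onto $(V_{\bf p},\mathcal S_{m_i})$ (as shown in the proof of Proposition \ref{prop94}) gives the $C^n$-compatibility.

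The main obstacle is thus to justify that the derivative estimates used to prove Proposition \ref{prop94} hold simultaneously for both $m_1$ and $m_2$. For this one invokes the exponential decay estimates (\ref{form9224}): these estimates are stated in $L^2_{m-n'}$-norm for \emph{any} sufficiently large $m$, with the constants $C_n, c_n$ depending only on $n$ and on the fixed background geometry, not on $m$. Hence choosing $m_1, m_2 \ge m_0(n)$ large enough, both ${\rm Pr}^{\rm map}_{m_1}$ and ${\rm Pr}^{\rm map}_{m_2}$ are of $C^n$ class in the common finite-dimensional coordinates on $\mathcal V^{\log}$, and compose with ${\rm Glue}^{-1}$ to yield the desired $C^n$-equivalence. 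The only calculation required beyond quoting \cite[Theorem 6.4]{foooexp} is to verify that the Newton iteration converges in the stronger $L^2_{m_2}$-norm whenever it converges in $L^2_{m_1}$ and $u'$ is of $L^2_{m_2}$-class, which is standard from the elliptic regularity of the linearized operator $D_{u}\overline\partial$.
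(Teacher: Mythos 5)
Your easy direction --- post-composing ${\rm Pr}^{\rm map}$ with the continuous linear inclusion of the higher-regularity Sobolev space into the lower one --- is exactly the mechanism the paper uses, and in the paper it is essentially the \emph{whole} proof: one first observes that any solution of $\overline\partial u'\in E_{\bf p}(\frak a,u')$ is automatically of $C^\infty$ class by elliptic bootstrapping (so the underlying sets and charts can be compared at all), and then notes that a finite dimensional $C^n$ submanifold of $\mathcal M^{\rm d,log}_{k+1,\ell+\ell'}\times L^2_{m_1}(\cdots)$ is carried by the obvious embedding to a $C^n$ submanifold of $\mathcal M^{\rm d,log}_{k+1,\ell+\ell'}\times L^2_{m_2}(\cdots)$; since the identity is then a $C^n$ bijective map with injective differential between $C^n$ manifolds of the same finite dimension, both structures on $U_{{\bf p}^+}$ (hence on $V_{\bf p}$) coincide. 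No re-examination of the gluing construction is needed.

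Your reverse direction, however, has a genuine gap. You assert that the two gluing maps ``have set-theoretically identical image $V_{\bf p}$ and their transition map is the identity of $\mathcal V^{\log}$, which is obviously $C^n$.'' But $\mathcal V^{\log}$ in (\ref{new912}) does not carry an $m$-independent $C^n$ structure: its factor $\mathscr V({\bf p};E_{\bf p}(\cdot);\epsilon_0)$ receives its $C^n$ structure precisely from the embedding $({\rm Pr}^{\rm source},{\rm Pr}^{\rm map})$ into a product involving $L^2_m$ spaces, i.e.\ from the very $L^2_m$-dependent device whose $m$-independence is being proved; declaring the identity of $\mathcal V^{\log}$ ``obviously $C^n$'' is therefore circular --- it is the zero-gluing-parameter case of the lemma itself, and must be settled by the same inclusion argument as above. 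Relatedly, your identification of the Newton fixed points is phrased ``whenever $u'$ lies in $L^2_{m_2}$,'' whereas what is needed (and what the paper states first) is that \emph{every} element of the $m_1$-chart is automatically of $L^2_{m_2}$, indeed $C^\infty$, class because the equation (\ref{form9595}) bootstraps regularity. Finally, the claim that the constants in (\ref{form9224}) are independent of $m$ is neither justified nor needed: $m_1,m_2$ are fixed and Proposition \ref{prop94} applies to each separately. Once you insert the bootstrapping observation and replace the ``identity of $\mathcal V^{\log}$'' step by the inclusion-of-Hilbert-spaces argument (equal finite dimension plus injective differential then yields the inverse for free), your argument closes --- but at that point it has collapsed to the paper's short proof, and the detour through ${\rm Glue}$ and the exponential decay estimates is superfluous.
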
 
\begin{proof}
We first observe that the solution of (\ref{form9595}) is automatically 
of $C^{\infty}$ class.
This is a consequence of standard bootstrapping argument.
($u' \in L^2_m$ implies $E_{{\bf p}}(\frak a,u')$ consists of $L^2_m$ sections 
and so by (\ref{form9595}) $u' \in L^2_{m+1}$.)
Let $m_1 > m_2$. Then 
a finite dimensional $C^n$ submanifold of 
$\mathcal M^{{\rm d},\log}_{k+1,\ell+\ell'} 
\times L^2_{m_1}(\Sigma_{{\bf p}^+}(\vec{\epsilon}),
\partial \Sigma_{{\bf p}^+}(\vec{\epsilon});X,L)$
becomes a  $C^n$ submanifold of 
$\mathcal M^{{\rm d},\log}_{k+1,\ell+\ell'} 
\times L^2_{m_2}(\Sigma_{{\bf p}^+}(\vec{\epsilon}),
\partial \Sigma_{{\bf p}^+}(\vec{\epsilon});X,L)$
by the obvious embedding.
Therefore the two $C^n$ structures of $U_{{\bf p}^+}$ coincide.
$V_{\bf p}$ is a $C^n$ submanifold of $U_{{\bf p}^+}$ and so its two $C^n$ structures
coincide.
\end{proof}
\begin{lem}\label{lem109}
Let $M_1,M_2,X$ be finite dimensional $C^{\infty}$
manifolds and $K_1 \subset M_1$,$K_2 \subset M_2$
relatively compact open subsets and let $\mathbb A$ be a Hilbert manifold.
Let 
$
\psi : \mathbb A \times M_1 \to M_2
$
be a $C^{m_1}$ map and $C^{\infty}$ in the direction of $M_1$. 
Suppose $m_1,m_2-m_1$ are sufficiently large compared to $n$.
We assume:
\begin{enumerate}
\item
For each $\frak a \in  \mathbb A$, 
$x \mapsto \psi(\frak a,x)$ is an open embedding $M_1 \to M_2$.
\item
$\psi(\mathbb A \times K_1) \subset K_2$.
\end{enumerate}
Then 
the map
$
\psi_* : \mathbb A \times L^2_{m_2}(K_2,X) \to L^2_{m_1}(K_1,X)
$
defined by 
$$
(\psi_*(\frak a,h))(z) = h(\psi(\frak a,z))
$$
induces a $C^n$-map ${\mathbb A} \to C^{\infty}(L^2_{m_2}(K_2,X),L^2_{m_1}(K_1,X))$.
\end{lem}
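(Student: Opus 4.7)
The plan is to reduce to a local assertion in Euclidean coordinates, compute all derivatives of $\psi_*$ explicitly via the Fa\`a di Bruno formula, and bound each resulting term by means of classical Sobolev composition and multiplication estimates. First I would localize: using relative compactness of $K_1$ and $K_2$, cover $K_i$ by finitely many coordinate charts of $M_i$ and $X$ by charts with Euclidean target; then a smooth partition of unity on $K_1$, together with the observation that $\psi(\mathfrak{a}, \cdot)$ sends each sufficiently small piece of $K_1$ into a fixed chart of $K_2$ uniformly for $\mathfrak{a}$ in a small neighborhood, reduces everything to the case $M_i = \mathbb{R}^{d_i}$, $X = \mathbb{R}^N$, with $K_1, K_2$ bounded open and $\mathbb{A}$ an open subset of a Hilbert space $\mathbb{H}$. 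Since $\psi_*$ is manifestly linear in $h$, only derivatives in the $\mathfrak{a}$-variable produce new difficulties, and it suffices to estimate $D^j_{\mathfrak{a}} \psi_*(\mathfrak{a}, h)$ for $j \leq n$.

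Applying the Fa\`a di Bruno formula to $h \circ \psi(\mathfrak{a}, \cdot)$, one obtains $D^j_{\mathfrak{a}} \psi_*(\mathfrak{a}, h)(\delta \mathfrak{a}_1, \dots, \delta \mathfrak{a}_j)$ as a finite sum of terms of the schematic form
\[
(D^\alpha h)(\psi(\mathfrak{a}, z)) \cdot \prod_s D^{\beta_s}_{\mathfrak{a}} \psi(\mathfrak{a}, z),
\]
with $|\alpha| \leq j$ and the $\beta_s$ partitioning the $j$ test directions among the $\psi$-factors. Taking $m_1$ further spatial derivatives to compute the $L^2_{m_1}$ norm turns this into a finite sum of terms involving $(D^{\alpha'} h)(\psi(\mathfrak{a}, z))$ with $|\alpha'| \leq j + m_1$, together with mixed derivatives of $\psi$ of total order $\leq j + m_1$. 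The analytic ingredients needed are (i) boundedness of the composition operator $f \mapsto f \circ \psi(\mathfrak{a}, \cdot)$ from $L^2_\mu(K_2)$ to $L^2_\mu(K_1)$ in the allowed range of $\mu$, which holds since $\psi(\mathfrak{a}, \cdot)$ is an open embedding with Jacobian bounded above and below uniformly on the relatively compact region; (ii) the Sobolev algebra property of $L^2_{m_1}$ once $m_1 > d_1/2$; and (iii) the chain-rule identity linking spatial derivatives of $h \circ \psi$ to $L^2_{m_1 + j}$ norms of $h$ pulled back through $\psi$. Assembling these produces a bound of the form
\[
\|D^j_{\mathfrak{a}} \psi_*(\mathfrak{a}, h)\|_{\mathrm{op}} \leq C \cdot P(\|\psi\|_{C^{m_1 + n}}) \cdot \|h\|_{L^2_{m_1 + n}(K_2)},
\]
finite under the stated hypothesis, and continuity in $(\mathfrak{a}, h)$ of each multilinear operator follows from continuity of Sobolev multiplication and of composition in its diffeomorphism argument. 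Hence $\psi_*$ is $C^n$ by the standard characterization of $C^n$ regularity via continuous multilinear derivatives.

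The main technical obstacle is the careful bookkeeping of the loss of derivatives: each differentiation in $\mathfrak{a}$ drags along one extra derivative of $h$ through the chain rule and consumes one additional order of spatial regularity of $\psi$, which is precisely why both $m_1$ and $m_2 - m_1$ must be taken large compared to $n$ (respectively to satisfy the Sobolev algebra threshold $m_1 > d_1/2$, and to feed $n$ chain-rule derivatives into $h$ without exceeding the available regularity $L^2_{m_2}$). A subsidiary point is that $\mathbb{A}$ is only a Hilbert manifold: each $\mathfrak{a}$-derivative $D^j_{\mathfrak{a}} \psi(\mathfrak{a}, z)$ must be interpreted as a continuous $j$-multilinear form on $T_{\mathfrak{a}} \mathbb{A}$, but since test vectors are contracted pointwise in $z$ and commute with the spatial differentiation used in step (iii), the Fa\`a di Bruno expansion and all multiplicative estimates carry over to this Hilbert-manifold parameter space without change.
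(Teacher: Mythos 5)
The paper declares this lemma ``easy and standard'' and omits the proof, so there is no paper argument to compare against; judged on its own, your proposal is the standard argument (localize, Fa\`a di Bruno, Sobolev multiplication and composition estimates, continuity of the multilinear derivatives) and it is essentially correct.

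One caveat worth flagging: your final operator-norm bound displays $P(\|\psi\|_{C^{m_1+n}})$ and you assert this is ``finite under the stated hypothesis,'' but the lemma only hypothesizes $\psi$ of class $C^n$, which does not control $C^{m_1+n}$ norms when $m_1>0$. Indeed, taking $m_1$ spatial derivatives of $h\circ\psi(\frak a,\cdot)$ already requires $z$-derivatives of $\psi$ of order up to $m_1$, so the literal hypothesis is insufficient and the estimate as you have written it genuinely needs $\psi$ to be at least $C^{m_1+n}$ (or, more precisely, to admit mixed derivatives of order up to $n$ in $\frak a$ and $m_1$ in $z$). This is a looseness in the lemma's phrasing rather than a defect in your argument: in the paper's application $\psi=\Psi_{{\bf p},{\bf q}}$ can be arranged to be $C^{n'}$ for any prescribed $n'$ by taking the Sobolev exponent in Proposition \ref{prop103} large enough, so the stronger regularity is available. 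You should either read the hypothesis as ``$C^N$ for $N$ sufficiently large compared to $m_1+n$'' or explicitly add that assumption before invoking the bound.
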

\begin{proof}
This is easy and standard. We omit the proof.
\end{proof}
Suppose we are in Situation \ref{soti1011}.
We use notations in Subsection \ref{subsec:maintech}.
We consider the next diagram.
\begin{equation}\label{diagramCn}
\begin{CD}
U_{{\bf q}^+}
@ > {({\rm Pr}_{\bf q}^{\rm source},{\rm Pr}_{\bf q}^{\rm map})} >>
\overline{\mathcal V}({{\bf q}^+};\vec{\epsilon'})^{\log}
\times L^2_{m_2}(\Sigma_{{\bf q}^+}(\vec{\epsilon'}),
\partial \Sigma_{{\bf q}^+}(\vec{\epsilon'});X,L)
\\
@ VV{\varphi}V @ VV{(\Xi_{{\bf p},{\bf q}},\psi)}V \\
U_{{\bf p}^+} @>{({\rm Pr}_{\bf p}^{\rm source},{\rm Pr}_{\bf p}^{\rm map})}>>
\mathcal M^{\rm d,log}_{k+1,\ell+\ell'}\times 
L^2_{m_1}(\Sigma_{{\bf p}^+}(\vec{\epsilon}),
\partial \Sigma_{{\bf p}^+}(\vec{\epsilon});X,L)
\end{CD}
\end{equation}
See Subsection \ref{subsec:glue} for the definition of the horizontal arrows.
(Note $\overline{\mathcal V}({{\bf q}^+};\vec{\epsilon'})^{\log}$ 
is an open neighborhood of ${\bf q}^+$ in $\mathcal M^{\rm d,log}_{k+1,\ell+\ell''}$.)
\par
The map $\psi$ in the right vertical arrow is defined by
\begin{equation}\label{newfor108}
\psi(\frak a,u')(z) = u'(\Psi_{{\bf p},{\bf q}}(\frak a,u',z)),
\end{equation}
where $\Psi_{{\bf p},{\bf q}}$ is defined by Definition \ref{defn102}.
The map $\Xi_{{\bf p},{\bf q}}$ in the right vertical arrow is as in Definition \ref{defn105}.
\par
The left vertical arrow ${\varphi}$ is defined by
\begin{equation}\label{defnphi}
{\bf x} \cup {}_{\bf q}\vec{\frak w}_{{\bf x}}\quad \mapsto \quad{\bf x} \cup  {}_{\bf p}\vec{\frak w}_{{\bf x}},
\end{equation}
where ${}_{\bf p}\vec{\frak w}_{{\bf x}}$ is determined by Condition \ref{conds102}.
The commutativity of the diagram is immediate from the definitions.
\par
The horizontal arrows are $C^n$ embeddings by Proposition \ref{prop94}.
(We use the smooth structure $\overline{\mathcal V}({{\bf q}^+};\vec{\epsilon'})^{\log}$
here.)
The right vertical arrow is a $C^n$ map by Proposition \ref{prop103}
and Lemmas \ref{lem106},\ref{lem108},\ref{lem109}.
Therefore ${\varphi}$ is a $C^n$ map.
\par
Now we take local transversals $\vec{\mathcal N}_{\bf q}$ such that 
$(\frak W_{\bf q},\vec{\mathcal N}_{\bf q})$ is a strong stabilization data.
We define $V_{\bf q} \subset U_{{\bf q}^+}$ by using it. (See Definition \ref{defn910}.)
Using Condition \ref{conds102} (2), which we required for ${}_{\bf p}\vec{\frak w}_{{\bf x}}$, the image of ${\varphi}$  is in 
$V_{\bf p} \subset U_{{\bf p}^+}$ 
and the restriction of $\varphi$ to $V_{\bf q}$ induces the map
$
\varphi_{{\bf p}{\bf q}} : V_{\bf q}/{\rm Aut}({\bf q}) \to V_{\bf p}/{\rm Aut}({\bf p}).
$
Therefore $\varphi_{{\bf p}{\bf q}}$ is a $C^n$ map.
Note $U_{\bf q} = V_{\bf q}/{\rm Aut}(\bf q)$, $U_{\bf p} = V_{\bf p}/{\rm Aut}(\bf p)$.
\begin{prop}\label{lem101010}
The $C^n$ map
$
{\varphi}_{{\bf p}{\bf q}} : U_{\bf q} \to U_{\bf p}
$
becomes a $C^n$ embedding if we take a smaller neighborhood  $U'_{\bf q}$ of $[{\bf q}]$ in $U_{\bf q}$ .
\end{prop}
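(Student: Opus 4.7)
The proof plan combines the already-established $C^n$ smoothness of $\varphi_{{\bf p}{\bf q}}$ with two further ingredients: topological/set-theoretic injectivity, and injectivity of the differential at ${\bf q}$. The former is essentially automatic from the construction in Subsection \ref{subsec:cchangeconst} and also handles the orbifold isotropy compatibility, while the latter is the main analytic content and will be verified using the diagram (\ref{diagramCn}).

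Recall that $\varphi_{{\bf p}{\bf q}}$ is, after passing to the quotient by automorphism groups, induced by the set-theoretic inclusion $U_{{\bf p}{\bf q}} \subseteq U_{\bf p}$ in the ambient set $\mathcal X_{k+1,\ell}(X,L,J;\beta)$. Distinct ${\rm Aut}({\bf q})$-orbits in a small neighborhood of ${\bf q}$ in $V_{\bf q}$ therefore represent distinct equivalence classes in $\mathcal X_{k+1,\ell}(X,L,J;\beta)$, and hence map to distinct ${\rm Aut}({\bf p})$-orbits in $V_{\bf p}$. The induced injection ${\rm Aut}({\bf q}) \hookrightarrow {\rm Aut}({\bf p})$ of isotropy groups arises from the fact that any automorphism of ${\bf q}$ preserves its underlying element and hence extends to an automorphism of its image in $V_{\bf p}$, by the canonical choice of the additional marked points ${}_{\bf p}\vec{\frak w}_{{\bf q}}$ via Lemma \ref{lem832}.

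For the differential at ${\bf q}$, I work with the lifted map $\tilde\varphi_{{\bf p}{\bf q}}: V_{\bf q} \to V_{\bf p}$ and use diagram (\ref{diagramCn}). By Proposition \ref{prop94}, the horizontal arrows are $C^n$ embeddings onto finite-dimensional $C^n$ submanifolds of the ambient Hilbert manifolds, so it suffices to verify that the right vertical arrow $(\Xi_{{\bf p},{\bf q}},\psi)$ has injective differential when restricted to the tangent space of the image of the top embedding at ${\bf q}$. Differentiating $\psi(\frak a,u')(z) = u'(\Psi_{{\bf p},{\bf q}}(\frak a,u',z))$ at $(\frak a_0, u_{{\bf q}^+}|_{\Sigma_{{\bf q}^+}(\vec{\epsilon'})})$ in a direction $(\delta\frak a,\delta u)$ yields a pullback term $\delta u \circ \Psi_{{\bf p},{\bf q}}(\frak a_0, u_0, \cdot)$ plus a correction from the variation of $\Psi_{{\bf p},{\bf q}}$. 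Combined with injectivity of $d\Xi_{{\bf p},{\bf q}}$ on the relevant subspace (from Lemma \ref{lem106} together with the transversality of $\mathcal N_{{\bf p},i}$), vanishing of the total differential forces the underlying variations of source curve and map to vanish on the overlap $\Sigma_{{\bf p}^+}(\vec\epsilon) \subset \Sigma_{{\bf q}^+}(\vec{\epsilon'})$. Unique continuation for the linearized Cauchy-Riemann equation modulo the finite-dimensional space $E_{\bf q}({\bf q})$ then propagates this vanishing across the neck regions to all of $\Sigma_{\bf q}$.

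The main obstacle is handling the combined variation $(\delta\frak a, \delta u)$ rigorously: $\delta\frak a$ includes variation of the gluing parameters, which degenerate at the nodal stratum. It is precisely for this reason that the logarithmic smooth structure on $\mathcal M^{{\rm d},\log}_{k+1,\ell+\ell''}$ is used, so that the exponential decay estimates (\ref{form9224}) from \cite[Theorem 6.4]{foooexp} ensure that the derivatives extend continuously across that stratum and are compatible with the $L^2_m$ norms defining the $C^n$ structure. Once injectivity of the differential at ${\bf q}$ is established, continuity of the differential yields injectivity on an open neighborhood $V'_{\bf q}$ of ${\bf q}$; combined with the topological injectivity above and the rank theorem for $C^n$ maps between finite-dimensional manifolds, this shows that $\tilde\varphi_{{\bf p}{\bf q}}|_{V'_{\bf q}}$ is a $C^n$ embedding. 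Descending via the injection ${\rm Aut}({\bf q}) \hookrightarrow {\rm Aut}({\bf p})$ then yields the desired $C^n$ embedding of orbifolds $\varphi_{{\bf p}{\bf q}}: U'_{\bf q} \to U_{\bf p}$.
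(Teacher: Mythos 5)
There is a genuine gap at the central analytic step. Your plan rests on verifying that the right vertical arrow $(\Xi_{{\bf p},{\bf q}},\psi)$ of Diagram (\ref{diagramCn}) has injective differential on the tangent space of the image of $V_{\bf q}$, for the \emph{given, arbitrary} data $\frak W_{\bf q}$, $\vec{\mathcal N}_{\bf q}$. But the differential of $\psi(\frak a,u')(z)=u'(\Psi_{{\bf p},{\bf q}}(\frak a,u',z))$ contains, besides the pullback term $\delta u\circ\Psi_{{\bf p},{\bf q}}$, the term $du'\cdot D_{(\frak a,u')}\Psi_{{\bf p},{\bf q}}(\delta\frak a,\delta u)$ coming from the implicit re-determination of ${}_{\bf p}\vec{\frak w}_{\bf x}$ through the transversals $\mathcal N_{{\bf p},i}$ (Condition \ref{conds102}); you acknowledge this correction but never control it, and there is no a priori reason it cannot cancel the pullback term. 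Likewise, the injectivity of $d\Xi_{{\bf p},{\bf q}}$ ``on the relevant subspace'' is not supplied by Lemma \ref{lem106}, which asserts only that $\Xi_{{\bf p},{\bf q}}$ is of $C^n$ class; since $\Xi_{{\bf p},{\bf q}}$ maps the infinite-dimensional space $\overline{\mathcal V}({\bf q}^+;\vec{\epsilon'})\times\mathscr L^{\bf q}_m$ to the finite-dimensional $\mathcal M^{\rm d}_{k+1,\ell+\ell'}$, any injectivity statement must concern precisely the restriction to $T V_{\bf q}$, which is what has to be proved. So the step ``vanishing of the total differential forces the variations to vanish on the overlap'' is asserted rather than established, and the subsequent unique continuation argument has nothing to propagate.

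The paper circumvents this computation entirely, and this is the structural idea your proposal is missing. In Step 4 it \emph{chooses} the data at ${\bf q}$ compatibly with those at ${\bf p}$: Lemma \ref{lem1244} and Sublemma \ref{sublem125} produce $\frak W_{\bf q}$ making Diagram (\ref{diagram12-2}) commute with right vertical arrow $(\frak a,u')\mapsto(\frak a,u'\circ\phi)$ for a fixed smooth embedding $\phi$, and one takes $\mathcal N_{{\bf q},i}=\mathcal N_{{\bf p},i}$. Then on $V_{\bf q}$ one gets $\Xi_{{\bf p},{\bf q}}(\frak a,u')=\frak a$ and $\psi(\frak a,u')=u'\circ\phi$ (Corollary \ref{cor1014}), so the chart representation of the coordinate change is literally a restriction map and injectivity of its differential follows from unique continuation, with no correction term to control. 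The independence of the conclusion from the choice of data at ${\bf q}$ is then handled separately (Steps 1--3, for ${\bf p}={\bf q}$) not by a differential computation but by exhibiting the inverse coordinate change and using the set-theoretic identities of Lemma \ref{lem79}, so that change-of-data maps are diffeomorphisms; Step 5 combines the two. To repair your argument you would either have to reproduce this two-step reduction (compatible choice plus change-of-data), or genuinely estimate $D\Psi_{{\bf p},{\bf q}}$ and prove the restricted injectivity of $d\Xi_{{\bf p},{\bf q}}$ directly, neither of which your proposal currently does. The set-theoretic injectivity and the isotropy embedding ${\rm Aut}({\bf q})\hookrightarrow{\rm Aut}({\bf p})$ parts of your proposal are fine and agree with the paper.
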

\begin{proof}
The proof is divided into 5 steps.
In the first 3 steps we assume ${\bf p} = {\bf q}$.
We take two different choices of strong stabilization data $(\frak W^o_{\bf p},\vec{\mathcal N}_{\bf p}^o)$ ($o=1,2$), 
and obstruction bundle data $E^o_{\bf p}({\bf x})$
($o=1,2$) at ${\bf p}$ with $E^1_{\bf p}({\bf x}) \subseteq E^2_{\frak p}({\bf x})$.
We then obtain $U_{\bf p}^o$, $V_{\bf p}^o$ and $\varphi_{21} : U_{\bf p}^1 \to U_{\bf p}^2$.
We proved already that $\varphi_{21}$ is a $C^n$ map. We will prove that it is a $C^n$ embedding.
\par\smallskip
\noindent(Step 1): The case ${\bf p} = {\bf q}$, $(\frak W^1_{\bf p},\vec{\mathcal N}_{\bf p}^1) 
= (\frak W^2_{\bf p},\vec{\mathcal N}_{\bf p}^2)$.
It is easy to see that $\varphi_{21}$ is an embedding in this case.
\par\smallskip
\noindent(Step 2): The case ${\bf p} = {\bf q}$, $E^1_{\bf p}({\bf x})=E^2_{\bf p}({\bf x})$,
but $(\frak W^1_{\bf p},\vec{\mathcal N}_{\bf p}^1) 
\ne (\frak W^2_{\bf p},\vec{\mathcal N}_{\bf p}^2)$.
In this case we can exchange the role of $(\frak W^1_{\bf p},\vec{\mathcal N}_{\bf p}^1)$ 
and $(\frak W^2_{\bf p},\vec{\mathcal N}_{\bf p}^2)$ and obtain $\varphi_{12}$.
Then in the same way as the proof of Lemma \ref{lem79} we can show that 
$\varphi_{21} \circ \varphi_{12}$ and $\varphi_{12} \circ \varphi_{21}$ are identity maps.
Therefore they are $C^n$ diffeomorphisms.
\par\smallskip
\noindent(Step 3): The case ${\bf p} = {\bf q}$ in general. 
Note if we have three choices $o=1,2,3$ then we can show $\varphi_{32} \circ \varphi_{21} = \varphi_{31}$
in the same way as Lemma \ref{lem79}. Therefore combining Step 1 and Step 2 we can prove this case.
\par\smallskip
\noindent(Step 4): Suppose we are given a strong stabilization data $(\frak W_{\bf p},\vec{\mathcal N}_{\bf p})$ 
at ${\bf p}$. Let ${\bf q}$ be sufficiently 
close to ${\bf p}$. We also assume that we are given obstruction bundle data $E_{\bf p}({\bf x})$ and 
$E_{\bf q}({\bf x})$ at ${\bf p}$ and ${\bf q}$ respectively, such that $E_{\bf q}({\bf x}) = E_{\bf p}({\bf x})$
when both sides are defined. We will prove that there {\it exist} 
strong stabilization data $(\frak W_{\bf q},\vec{\mathcal N}_{\bf q})$ at ${\bf q}$
such that the map $\varphi_{{\bf p}{\bf q}}$ is an open embedding.
\par
The proof is based on the next lemma.
We take weak stabilization data $\vec{\frak w}_{\bf q} = ({\frak w}_{{\bf q},i})$ at ${\bf q}$ such that 
Condition \ref{conds102} (1)(2) with ${\bf x}$ replaced by ${\bf q}$ is satisfied.
Note $\ell' = \#\vec {\frak w}_{\bf p} = \#\vec {\frak w}_{\bf q} = \ell''$ in our case.

\begin{lem}\label{lem1244}
We can choose 
$\{\varphi_{{\bf q},a,i}^{\rm s}\}, \{\varphi_{{\bf q},a,j}^{\rm d}\},
\{\phi_{{\bf q},a}\}$ 
so that the next diagram commutes.
\begin{equation}\label{diagram12-2}
\begin{CD}
U'_{{\bf q}^+}
@ >{({\rm Pr}^{\rm source},{\rm Pr}^{\rm map}_{\bf q})}>>
\!\!\!\!\!\!\!\!\!\!\!\!\!\!\!\!\!\!\!\!\!\!\!\!\!\!\!\!\!\!\!\!\!\!\!\!\!\!\!\!\!\!\!\!\!\!\!\!\!\!\!\!\!\!\!\!\!\!\!\!\!\!\!\!\!
\mathcal M^{\rm d,log}_{k+1,\ell+\ell'}\atop
\times L^2_{m'}(\Sigma_{{\bf q} \cup \vec {\frak w}_{\bf q}}(\vec{\epsilon'}),
\partial \Sigma_{{\bf q} \cup \vec {\frak w}_{\bf q}}(\vec{\epsilon'});X,L)
  \\
@ VVV @VVV 
\\
U_{{\bf p}^+}
@ >{({\rm Pr}^{\rm source},{\rm Pr}^{\rm map}_{\bf p})}>>
\!\!\!\!\!\!\!\!\!\!\!\!\!\!\!\!\!\!\!\!\!\!\!\!\!\!\!\!\!\!\!\!\!\!\!\!\!\!\!\!\!\!\!\!\!\!\!\!\!\!\!\!\!\!\!\!\!\!\!\!\!\!\!\!\!
\mathcal M^{\rm d,log}_{k+1,\ell+\ell'}
\atop \times \ L^2_m(\Sigma_{{\bf p} \cup \vec {\frak w}_{\bf p}}(\vec{\epsilon}),
\partial \Sigma_{{\bf p} \cup \vec {\frak w}_{\bf p}}(\vec{\epsilon});X,L)
\end{CD}
\end{equation}
The left vertical arrow is the inclusion map. 
\par
There exists a 
smooth embedding 
$\phi : \Sigma_{{\bf p} \cup \vec {\frak w}_{\bf p}}(\vec{\epsilon}) \to 
\Sigma_{{\bf q} \cup \vec {\frak w}_{\bf q}}(\vec{\epsilon'})$ 
such that
$$
(\frak a,u') \mapsto (\frak a,u'\circ \phi)
$$
is the map in the right vertical arrow.
($\frak a \in \mathcal M^{\rm d,log}_{k+1,\ell+\ell'}$.)
\par
The number  $m'$ can
be arbitrary large.
Here $U'_{{\bf q}^+}$ is a neighborhood in $U_{{\bf p}^+}$ of ${\bf q}^+$ 
which depends on $m'$.  $\vec{\epsilon'}$ depends on $m'$ also.\footnote{
The last part of lemma is not used here but will be used in Section \ref{sec:Cmugen}
.}
\end{lem}

Postponing the proof of the lemma until Subsection \ref{sub:kurasmstrlem} we continue the proof.
We take $\mathcal N_{{\bf q},i} = 
\mathcal N_{{\bf p},i}$.
Since $\vec{\frak w}_{\bf q}$ satisfies Condition \ref{conds102} (1)(2), this choice of 
$\mathcal N_{{\bf q},i}$ satisfies the conditions of 
Definition \ref{defn81}.
\par
The commutativity of Diagram (\ref{diagram12-2})
implies the next:
\begin{cor}\label{cor1014}
Consider Diagram (\ref{diagramCn}) and $V_{\bf q} \subset U_{{\bf q}^+}$.
If $(\frak a,u') \in ({\rm Pr}^{\rm source},{\rm Pr}_{\bf q}^{\rm map})(V_{{\bf q}})$
then 
$
\Xi_{{\bf p},{\bf q}}(\frak a,u') = \frak a$
and 
$\psi(\frak a,u') = u'\circ \phi$. 
Here $\psi$ is the map in the right vertical arrow of Diagram (\ref{diagramCn})
and $\phi$ is as in Lemma \ref{lem1244}.
\end{cor}

\begin{proof}
Let ${\bf x} \cup {}_{\bf q}\vec{\frak w}_{\bf x} \in V_{{\bf q}} \subset U_{{\bf q}^+}$.
Note by shrinking $U_{{\bf q}^+}$ we may assume $U_{{\bf q}^+} \subset U_{{\bf p}^+}$.
Then $\mathcal N_{{\bf q},i} = \mathcal N_{{\bf p},i}$ implies
${\bf x} \cup {}_{\bf q}\vec{\frak w}_{\bf x} \in V_{{\bf p}}$.
In other words ${}_{\bf q}\vec{\frak w}_{\bf x} = 
{}_{\bf p}\vec{\frak w}_{\bf x}$.
Therefore $\varphi({\bf x} \cup {}_{\bf q}\vec{\frak w}_{\bf x})
= {\bf x} \cup {}_{\bf q}\vec{\frak w}_{\bf x}$.
Here $\varphi$ is the map in the left vertical arrow of Diagram (\ref{diagramCn}).
(See (\ref{defnphi}).)
\par
Put 
$(\frak a,u') = ({\rm Pr}^{\rm source}{\rm Pr}_{\bf q}^{\rm map})({\bf x} \cup {}_{\bf q}\vec{\frak w}_{\bf x})$.
The commutativity of Diagram (\ref{diagramCn}) 
implies 
$({\rm Pr}_{\bf p}^{\rm source},{\rm Pr}_{\bf p}^{\rm map})({\bf x} \cup {}_{\bf q}\vec{\frak w}_{\bf x}) 
= (\Xi_{{\bf p},{\bf q}}(\frak a,u'),\psi(\frak a,u')).
$
On the other hand the commutativity of Diagram (\ref{diagram12-2}) 
implies 
$
({\rm Pr}_{\bf p}^{\rm source},{\rm Pr}_{\bf p}^{\rm map})({\bf x} \cup {}_{\bf q}\vec{\frak w}_{\bf x}) 
= 
(\frak a,u' \circ \phi).
$
The corollary follows.
\end{proof}
The injectivity of the differential of $(\Xi_{{\bf p},{\bf q}},\psi)$ 
on the tangent space of $V_{{\bf q}}$ is now a consequence of 
Corollary \ref{cor1014}
and unique continuation.
\par\smallskip
\noindent(Step 5):  The general case follows from Step 3 and Step 4 using Lemma \ref{lem79}.
\end{proof}
The proof of the fact that $\widehat{\varphi}_{{\bf p}{\bf q}}$ 
is a $C^n$ embedding of (orbi)bundles is similar.
Definition \ref{defKchart} (3)(4)(5) are clear from construction.

\subsection{Proof of Lemma \ref{lem1244}
}
\label{sub:kurasmstrlem}
To complete the proof of Proposition \ref{lem101010} and of $C^n$ analogue of Lemma \ref{lem7878}
it remains to prove Lemma \ref{lem1244}.
\begin{proof}[Proof of Lemma \ref{lem1244}]
Commutativity of the first factor (${\rm Pr}^{\rm source}$) is obvious.
The commutativity of the second factor (${\rm Pr}^{\rm map}$) is an issue.
Put 
\begin{equation}\label{form8622}
\aligned
&\mathcal V({{\bf p}^+};\vec{\epsilon}) = 
\prod_{a\in \mathcal A_{{\bf p}}^{\rm s} \cup \mathcal A_{{\bf p}}^{\rm d}}\mathcal V_a^{{\bf p}^+}
\times
\prod_{j=1}^{m_{\rm d}^{\bf p}} [0,{\epsilon_{j}^{\rm d}})
\times
\prod_{i=1}^{m_{\rm s}^{\bf p}} D^2_{\circ}({\epsilon_{i}^{\rm s}}), \\
&\mathcal V({{\bf q}^+};\vec{\epsilon'}) = 
\prod_{b\in \mathcal A_{{\bf q}}^{\rm s} \cup \mathcal A_{{\bf q}}^{\rm d}}\mathcal V_b^{{\bf q}^+}
\times
\prod_{j=1}^{m_{\rm d}^{\bf q}} [0,{\epsilon_{j}^{\prime \rm d}})
\times
\prod_{i=1}^{m_{\rm s}^{\bf q}} D^2_{\circ}({\epsilon_{i}^{\prime \rm s}}).
\endaligned
\end{equation}
An element ${\bf x} \cup \vec{\frak w}_{\bf x}$ in a neighborhood of 
its source curve (an element of $\mathcal M^{\rm d}_{k+1,\ell+\ell'}$) is written 
as 
$$
{\bf x} \cup \vec{\frak w}_{\bf x}= \Phi_{{\bf q}^+}(\frak x_{\bf q}) = \Phi_{{\bf p}^+}(\frak x_{\bf p}),
$$
where $\frak x_{\bf q} \in \mathcal V({{\bf q}^+};\vec{\epsilon'})$ 
and $\frak x_{\bf p} \in \mathcal V({{\bf p}^+};\vec{\epsilon})$.
Also there exists $\frak q_{\bf p} \in \mathcal V({{\bf p}^+};\vec{\epsilon})$
such that ${\bf q}^+ = \Phi_{{\bf p}^+}(\frak q_{\bf p})$.
$\Phi_{{\bf p}^+}$, $\Phi_{{\bf q}^+}$ are defined by Lemma \ref{lem34} using $\frak W_{\bf p}$,$\frak W_{\bf q}$,
respectively.
\begin{sublem}\label{sublem125}
We can choose $\{\varphi_{{\bf q},b,i}^{\rm s}\}, \{\varphi_{{\bf q},b,j}^{\rm d}\},
\{\phi_{{\bf q},b}\}$ so that the next diagram commutes.
\begin{equation}\label{diagram12-3}
\begin{CD}
\Sigma_{{\bf q}^+}(\vec{\epsilon'})
@ >{\hat{\Phi}_{{\bf q}^+,\frak x_{\bf q},\vec{\epsilon'}}}>>
\Sigma_{\bf x\cup \vec{\frak w}_{\bf x}}(\vec{\epsilon'}) \\
@ A{\hat{\Phi}_{{\bf p}^+,\frak q_{\bf p},\vec{\epsilon}}}AA 
@ AAA 
\\
\Sigma_{{\bf p}^+}(\vec{\epsilon})
@ >{\hat{\Phi}_{{\bf p}^+,\frak x_{\bf p},\vec{\epsilon}}}>>
\Sigma_{\bf x\cup \vec{\frak w}_{\bf x}}(\vec{\epsilon})
\end{CD}
\end{equation}
The right vertical arrow is an inclusion and other arrows are 
as in Lemma \ref{lem3838}.
\end{sublem}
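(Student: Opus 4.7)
The plan is to inherit $\frak W_{\bf q}$ from $\frak W_{\bf p}$ by pushforward via the smoothing map $\hat{\Phi}_{{\bf p}^+,\frak q_{\bf p},\vec{\epsilon}}$, so that the plumbing construction on the ${\bf q}^+$ side factors through the one on the ${\bf p}^+$ side. Concretely, the nodes of $\Sigma_{{\bf q}^+}$ correspond bijectively to those nodes of $\Sigma_{{\bf p}^+}$ whose gluing parameter in $\frak q_{\bf p}$ vanishes; call this subset $\mathcal N_0$. At each $\frak n \in \mathcal N_0$ the map $\hat{\Phi}_{{\bf p}^+,\frak q_{\bf p},\vec{\epsilon}}$ is biholomorphic on a small neighborhood of $\frak n$, so I would define $\varphi^{{\rm s},{\bf q}}_{b,i}$ (resp.\ $\varphi^{{\rm d},{\bf q}}_{b,j}$) as the pushforward of the corresponding analytic family from $\frak W_{\bf p}$, and extend it holomorphically over the full parameter space $\mathcal V_b^{{\bf q}^+}$ using the fact that $\mathcal V_b^{{\bf q}^+}$ embeds into a product of the $\mathcal V_a^{{\bf p}^+}$'s for the components $a$ lying over $b$.

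Next I would construct the $C^\infty$ trivializations $\phi_{{\bf q},b}$. Each component $\Sigma_{{\bf q}^+}(b)$ is obtained by plumbing together the components $\Sigma_{{\bf p}^+}(a)$ lying over $b$ along the smoothed nodes in $\mathcal N\setminus\mathcal N_0$. I would define $\phi_{{\bf q},b}$ on the image of $\Sigma_{{\bf p}^+}(a)\cap\Sigma_{{\bf p}^+}(\vec\epsilon)$ (viewed inside $\Sigma_{{\bf q}^+}(b)$ via $\hat{\Phi}_{{\bf p}^+,\frak q_{\bf p},\vec{\epsilon}}$) to equal $\phi_{{\bf p},a}$ under that identification, and extend across the thin annular neck regions by the plumbing formula using the analytic families $\varphi^{{\rm s},{\bf p}}$, $\varphi^{{\rm d},{\bf p}}$ at the smoothed nodes. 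The compatibility condition of Definition \ref{defn377} for $\frak W_{\bf p}$ makes this extension canonical and automatically guarantees that the resulting $\phi_{{\bf q},b}$ is compatible with $\{\varphi^{{\rm s},{\bf q}}, \varphi^{{\rm d},{\bf q}}\}$ in the sense of the same definition.

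With these choices, both $\Phi_{{\bf p}^+}$ and $\Phi_{{\bf q}^+}$ are instances of the same plumbing construction of Lemma \ref{lem34}, simply iterated differently. Associativity of plumbing then exhibits, for each $\frak x_{\bf q}\in\mathcal V({\bf q}^+;\vec{\epsilon'})$, a unique $\frak x_{\bf p}\in\mathcal V({\bf p}^+;\vec\epsilon)$ with $\Phi_{{\bf p}^+}(\frak x_{\bf p}) = \Phi_{{\bf q}^+}(\frak x_{\bf q})$, and unwinding the construction of Lemma \ref{lem3838} yields the identity
\[
\hat{\Phi}_{{\bf q}^+,\frak x_{\bf q},\vec{\epsilon'}} \circ \hat{\Phi}_{{\bf p}^+,\frak q_{\bf p},\vec{\epsilon}} \;=\; \iota \circ \hat{\Phi}_{{\bf p}^+,\frak x_{\bf p},\vec{\epsilon}},
\]
where $\iota : \Sigma_{{\bf x}\cup\vec{\frak w}_{\bf x}}(\vec\epsilon) \hookrightarrow \Sigma_{{\bf x}\cup\vec{\frak w}_{\bf x}}(\vec{\epsilon'})$ is the right vertical arrow of Diagram (\ref{diagram12-3}). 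This is exactly the commutativity asserted, since both sides paste together the same local pieces $\phi_{{\bf p},a}$ along the same analytic coordinates, the only difference being that the left-hand side makes an intermediate stop in $\Sigma_{{\bf q}^+}$.

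The main obstacle I expect is the stitching step in the second paragraph: one must produce a single $C^\infty$ trivialization $\phi_{{\bf q},b}$ out of several $\phi_{{\bf p},a}$'s across the smoothed necks, and check that the result depends smoothly on the parameter in $\mathcal V_b^{{\bf q}^+}$. This is essentially what made Definition \ref{defn377} a nontrivial requirement in the first place, but since it has been imposed on $\frak W_{\bf p}$ by hypothesis, the extension is forced and depends smoothly by construction, leaving only bookkeeping.
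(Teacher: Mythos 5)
Your construction is essentially the paper's own: you transport the analytic families of coordinates at the unresolved nodes from $\frak W_{\bf p}$ through the identification $\mathcal V_b^{{\bf q}^+}\subset \prod_{a\in\mathcal A(b)}\mathcal V_a^{{\bf p}^+}\times\{\text{gluing parameters}\}$ (this is (\ref{form126})), build $\phi_{{\bf q},b}$ out of the $\phi_{{\bf p},a}$ on the thick part, and get commutativity of (\ref{diagram12-3}) by construction, exactly as in the paper. The one inaccurate point is your claim that the extension of $\phi_{{\bf q},b}$ across the smoothed neck regions is \emph{forced} by Definition \ref{defn377}: that definition constrains the trivialization only near the nodes (marked points) of the component $\Sigma_{{\bf q}^+}(b)$, and there is no canonical ``plumbing formula'' identification between necks glued with different parameters which matches the prescribed maps on both boundary circles; the paper extends over the necks \emph{arbitrarily}, noting that this extension must depend on the gluing-parameter factor of (\ref{form126}) and that its existence requires shrinking $\mathcal V_b^{{\bf q}^+}$ (cf.\ \cite[Remark 23.5]{foootech}). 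This slip is harmless for the sublemma, since the diagram only involves the image of $\hat{\Phi}_{{\bf p}^+,\frak q_{\bf p},\vec{\epsilon}}$, i.e.\ the thick part of $\Sigma_{{\bf q}^+}$, which is disjoint from the necks; and near the unresolved nodes the Definition \ref{defn377} compatibility of the new data does follow automatically from that of $\frak W_{\bf p}$, as you assert.
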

It is easy to see from definition that Sublemma \ref{sublem125} implies 
Lemma \ref{lem1244}. In fact the smooth open embedding  
$\phi : \Sigma_{{\bf p} \cup \vec {\frak w}_{\bf p}}(\vec{\epsilon}) \to 
\Sigma_{{\bf q} \cup \vec {\frak w}_{\bf q}}(\vec{\epsilon'})$
mentioned in the statement of Lemma \ref{lem1244}
is the map $\hat{\Phi}_{{\bf p}^+,\frak q_{\bf p},\vec{\epsilon}}$ 
appearing in Diagram (\ref{diagram12-3}). 
\end{proof}
\begin{proof}[Proof of Sublemma \ref{sublem125}]
The proof is similar to the discussion of \cite[Section 23]{foootech}.
We first define $\varphi_{{\bf q},b,i}^{\rm s}$ and $\varphi_{{\bf q},b,i}^{\rm t}$, analytic families of coordinates 
at the nodal points of $\Sigma_{\bf q}$. An irreducible component $\Sigma_{{\bf q}^+}(b)$
of $\Sigma_{{\bf q}^+}$
is obtained by gluing several irreducible components 
$\{\Sigma_{{\bf p}^+}(a)\mid a \in \mathcal A(b)\}$ of $\Sigma_{{\bf p}^+}$.
Here $\mathcal A(b) \subset \mathcal A_{{\bf p}}^{\rm s} \cup \mathcal A_{{\bf p}}^{\rm d}$.
We may identify
\begin{equation}\label{form126}
\mathcal V_b^{{\bf q}^+} 
\subset \prod_{a\in \mathcal A(b)}\mathcal V_a^{{\bf p}^+}
\times \text{Some of the gluing parameters}.
\end{equation}
Here the second factor of the right hand side consists of the gluing parameters 
of the nodes ${\frak n}$ of $\Sigma_{{\bf p}^+}$ such that $\{\frak n\}
= \Sigma_{{\bf p}^+}(a) \cap \Sigma_{{\bf p}^+}(a')$ with $a,a' \in \mathcal A(b)$.
\begin{figure}[h]
\centering
\includegraphics[scale=0.7]{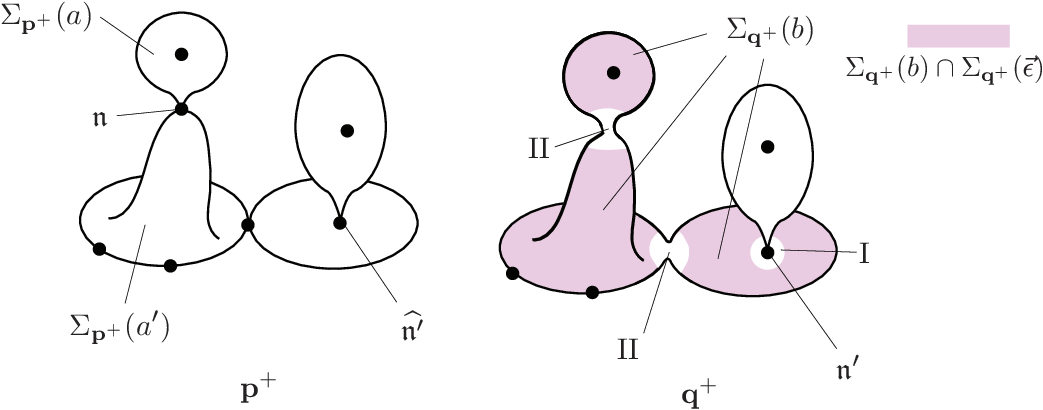}
\caption{${\bf p}^+$ and ${\bf q}^+$}
\label{zu2}
\end{figure}
We will define an analytic family of coordinates at a node $\frak n'$ in 
$\Sigma_{{\bf q}^+}(b)$. It is a family parametrized by 
$\mathcal V_b^{{\bf q}^+}$. There exists  $a \in \mathcal A(b)$
such that $\frak n'$ corresponds to a nodal point 
$\widehat{\frak n'}$ of $\Sigma_{{\bf p}^+}$ in $\Sigma_{{\bf p}^+}(a)$.
Then using $\varphi_{{\bf p},a,i}^{\rm s}$ or $\varphi_{{\bf p},a,i}^{\rm t}$ we can find a $\mathcal V_a^{{\bf p}^+}$ 
parametrized family of coordinates at this nodal point $\widehat{\frak n'}$.
We regard it as a $\mathcal V_b^{{\bf q}^+}$ parametrized family 
using the identification (\ref{form126}).
We thus obtain $\varphi_{{\bf q},b,i}^{\rm s}$ and $\varphi_{{\bf q},b,i}^{\rm t}$.
\par
We next define $\phi_{{\bf r},b}$.
This is a trivialization of the universal family of deformations of $\Sigma_{{\bf q}^+}(b)$
(equipped with marked or nodal points on it). The parameter space of this deformation is 
$\mathcal V_b^{{\bf r}^+}$.
In other words if $\Sigma_{\frak v}(b)$ together with marked points
is an object corresponding to $\frak v \in  \mathcal V_b^{{\bf q}^+}$,
the datum $\phi_{{\bf q},b}$ must be a diffeomorphism 
\begin{equation}\label{form127}
\Sigma_{{\bf q}^+}(b) \cong \Sigma_{\frak v}(b).
\end{equation}
\par
Note the data $\{\phi_{{\bf p},a}\mid a \in \mathcal A(b)\}$ 
and $\{\varphi_{{\bf p},a,i}^{\rm s}\}$, $\{\varphi_{{\bf p},a,i}^{\rm t}\}$
determine a 
smooth embedding 
$\Sigma_{{\bf q}^+}(b)\cap \Sigma_{{\bf q}^+}(\vec{\epsilon}) \to \Sigma_{\frak v}(b)$
uniquely such that Diagram (\ref{diagram12-3}) commutes there.
(This family of embeddings is parametrized by $\prod_{a\in \mathcal A(b)}\mathcal V_a^{{\bf p}^+}$.)
We extend the family to the required family  of diffeomorphisms (\ref{form127})
as follows.
\par
We remark that $\Sigma_{{\bf q}^+}(b) \setminus \Sigma_{{\bf q}^+}(\vec{\epsilon})$
is a union of the following two kinds of connected components.
(See Figure \ref{zu2}.)
\begin{enumerate}
\item[(I)] A neighborhood of a nodal point of $\Sigma_{{\bf q}^+}$ contained in $\Sigma_{{\bf q}^+}(b)$.
\item[(II)] A neck region in $\Sigma_{{\bf q}^+}(b)$. It corresponds to a nodal point of $\Sigma_{{\bf p}^+}$
which is resolved when we obtain $\Sigma_{{\bf q}^+}$ from $\Sigma_{{\bf p}^+}$.
\end{enumerate}
\par
To the part (I) we extend the embedding 
$\Sigma_{{\bf q}^+}(b)\cap \Sigma_{{\bf q}^+}(\vec{\epsilon}) \to \Sigma_{\frak v}(b)$ 
using the analytic families of coordinates $\varphi_{{\bf q},b,i}^{\rm s}$ or $\varphi_{{\bf q},b,i}^{\rm t}$
we produced above. In fact requiring Definition \ref{defn377} to be satisfied 
makes such an extension  unique.
\par
We extend it to the part (II) in an arbitrary way. 
We can prove the existence of such an extension by choosing $\mathcal V_b^{{\bf q}^+}$ small.
(The extension depends not only on the first factor but also on the second factor of (\ref{form126}).)
See \cite[Remark 23.5]{foootech} for example for detail.
\par
The commutativity of Diagram (\ref{diagram12-3}) is then immediate from construction.
In fact if 
$
\frak v_{\bf q} = 
\left(
(\frak v_b)_{b \in \mathcal A_{{\bf q}}^{\rm s} \cup \mathcal A_{{\bf q}}^{\rm d}};(0,\dots,0),
(0,\dots,0)
\right)
$
(namely if all the gluing parameters are $0$)
then this is the way how $\phi_{{\bf q},b}$ is 
chosen.
Then by the way how $\varphi_{{\bf q},b,i}^{\rm s}$ and $\varphi_{{\bf q},b,i}^{\rm t}$
are chosen Diagram (\ref{diagram12-3}) 
commutes  when gluing parameters are nonzero as well.
We thus proved Sublemma \ref{sublem125} and the $C^n$ analogue of Lemma \ref{lem7878}.
\end{proof}

\begin{rem}\label{lem1015}
In the proof of Lemma \ref{lem1244} and Sublemma \ref{sublem125} we never used the pseudo holomorphicity 
of $u_{\bf q}$. Therefore Lemma \ref{lem1244} and Sublemma \ref{sublem125} 
still hold if we replace ${\bf q}$
by ${\bf r} = ((\Sigma_{\bf r},\vec z_{\bf r},\vec{\frak z}_{\bf r}),u_{\bf r})$ such that
$u_{\bf r}$ is smooth and ${\bf r}$ is $\epsilon$-close to ${\bf p}$.
\end{rem}

\section{Existence of obstruction bundle data}
\label{sec:exiobst}

In this section we prove:
\begin{thm}\label{them111}
There exists an obstruction bundle data $\{E_{\bf p}(\bf x)\}$ of the 
moduli space ${\mathcal M}_{k+1,\ell}(X,L,J;\beta)$.
We may choose it so that Condition \ref{cond22} is satisfied.
\end{thm}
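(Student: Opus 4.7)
The plan is to build the obstruction bundle data first locally around each point ${\bf p}\in\mathcal{M}_{k+1,\ell}(X,L,J;\beta)$, then patch the local spaces together using a summation that is forced to respect semi-continuity. I would first fix strong stabilization data $(\frak W_{\bf p},\vec{\mathcal N}_{\bf p})$ at each ${\bf p}$ and choose a finite-dimensional, ${\rm Aut}^+({\bf p})$-invariant subspace $E^0_{\bf p}({\bf p})\subset C^\infty(\Sigma_{\bf p};u_{\bf p}^*TX\otimes\Lambda^{01})$ whose sections are supported in the interior of $\Sigma_{\bf p}(\vec\epsilon)$, away from the nodes. This subspace will be chosen large enough to meet simultaneously the transversality condition of Definition \ref{defn54564}, the effectiveness part of Condition \ref{conds55}, and the mapping transversality for ${\rm ev}_0$ of Condition \ref{cond22}. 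Existence reduces to a linear-algebra/density exercise: start from any basis of a complement of ${\rm Im}\,D_{u_{\bf p}}\overline\partial$ consisting of $C^\infty$ sections supported away from nodes (using elliptic regularity and density), average over the finite group ${\rm Aut}^+({\bf p})$, and enlarge by adding sections supported near $z_0$ to achieve Condition \ref{cond22} and by adding further sections to ensure effectiveness of the ${\rm Aut}({\bf p})$-action on $(D_{u_{\bf p}}\overline\partial)^{-1}(E^0_{\bf p}({\bf p}))/\frak{aut}$.

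Next, for ${\bf x}$ in a small neighborhood $\mathscr U^0_{\bf p}$ of ${\bf p}$, I will define $E^0_{\bf p}({\bf x})$ by transplanting $E^0_{\bf p}({\bf p})$ along the diffeomorphism $\widehat\Phi_{\frak x,\vec\epsilon}$ of Lemma \ref{lem3838} together with parallel transport via the fixed unitary connection of Remark \ref{shitu84}; equivalently, $E^0_{\bf p}({\bf x})$ is the preimage of $E^0_{\bf p}({\bf p})$ under the map $\mathcal P_{\bf y}$ of (\ref{form8000}), where ${\bf y}={\bf x}\cup{}_{\bf p}\vec{\frak w}_{\bf x}$ and ${}_{\bf p}\vec{\frak w}_{\bf x}$ is determined by Lemma \ref{lem832} using $\vec{\mathcal N}_{\bf p}$. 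Independence of $u_{\bf x}|_{\rm neck}$ (Definition \ref{defn860}) is immediate from the support condition; invariance under ${\rm Aut}^+({\bf x})$ comes from the ${\rm Aut}^+({\bf p})$-invariance of $E^0_{\bf p}({\bf p})$ combined with Lemma \ref{conds55lem}; transversality, effectiveness, and mapping transversality, being open conditions, persist on $\mathscr U^0_{\bf p}$ after shrinking. The smoothness clauses of Definition \ref{defn8911} follow from the explicit form of the transplantation: clause (2) is a direct consequence of Proposition \ref{prop103}, while clause (3), which concerns non-holomorphic ${\bf r}$, is handled by Remark \ref{lem1015}.

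To globalize, I will use compactness of $\mathcal{M}_{k+1,\ell}(X,L,J;\beta)$ to pick a finite collection ${\bf p}_1,\dots,{\bf p}_N$ together with an inner cover $\{K^0_\alpha\}$ (each $K^0_\alpha$ relatively compact in a larger $\mathscr U^0_\alpha$ on which $E^0_{{\bf p}_\alpha}$ is defined and satisfies all the local properties). For an arbitrary ${\bf p}\in\mathcal M$, set $A({\bf p})=\{\alpha:{\bf p}\in K^0_\alpha\}$, which is nonempty, and take a neighborhood $\mathscr U_{\bf p}$ contained in $\bigcap_{\alpha\in A({\bf p})}\mathscr U^0_\alpha$ and disjoint from every $K^0_\beta$ with $\beta\notin A({\bf p})$; the second property is achievable because each such $K^0_\beta$ is closed and does not contain ${\bf p}$. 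Then define
\begin{equation}
E_{\bf p}({\bf x})=\sum_{\alpha\in A({\bf p})}E^0_{{\bf p}_\alpha}({\bf x}),\qquad{\bf x}\in\mathscr U_{\bf p}.
\end{equation}
For ${\bf q}\in\mathscr U_{\bf p}\cap\mathcal M$, the disjointness condition forces $A({\bf q})\subseteq A({\bf p})$, yielding the required semi-continuity $E_{\bf q}({\bf x})\subseteq E_{\bf p}({\bf x})$ on $\mathscr U_{\bf p}\cap\mathscr U_{\bf q}$. Transversality, effectiveness, mapping transversality, and invariance under extended automorphisms are all preserved under taking sums.

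The main obstacle I anticipate is verifying the smoothness condition for the summed spaces: a sum of $C^n$ families of finite-dimensional subspaces is itself a $C^n$ family only when the dimension of the sum is locally constant. I plan to address this by choosing the initial $E^0_{{\bf p}_\alpha}({\bf p}_\alpha)$ generically and by shrinking $\mathscr U^0_\alpha$ if necessary, so that the transplanted summands $\{E^0_{{\bf p}_\alpha}({\bf x})\}_{\alpha\in A({\bf p})}$ always remain in direct-sum position over $\mathscr U_{\bf p}$; concatenating a smooth frame for each summand then produces a $C^n$ frame for $E_{\bf p}({\bf x})$. A delicate sub-issue is that the different summands use transplantations built from distinct stabilization data $(\frak W_{{\bf p}_\alpha},\vec{\mathcal N}_{{\bf p}_\alpha})$ centered at different ${\bf p}_\alpha$, so one must compare these to stabilization data centered at ${\bf p}$ itself; this comparison is precisely what Proposition \ref{prop103} together with the logarithmic version of Lemma \ref{lem106} is designed to accomplish.
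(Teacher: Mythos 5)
Your proposal follows the paper's structure closely: local construction via transplantation of an $\mathrm{Aut}^+(\mathbf{p})$-invariant subspace $E^0_{\mathbf{p}}(\mathbf{p})$ along $\mathcal P_{\mathbf{y}}^{-1}$, then globalization by summing over an inner cover indexed by $I(\mathbf{q})=\{i : \mathbf{q}\in K(\mathbf{p}_i)\}$, with semi-continuity forced by closedness of the $K(\mathbf{p}_i)$. This is exactly the argument of Subsections \ref{sub:localconst}--\ref{sub:globalclnst}.

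The gap is in your treatment of the direct-sum condition, which you correctly identify as the main obstacle but then dispose of with ``choosing the initial $E^0_{{\bf p}_\alpha}({\bf p}_\alpha)$ generically and by shrinking $\mathscr U^0_\alpha$ if necessary.'' Neither of these two suggestions is sufficient as stated. Shrinking does not help: the cover must remain a cover of a compact space, so you cannot shrink the sets away from the problematic overlaps without introducing new index sets $A(\mathbf{p})$ that again require a direct sum. And ``generic'' is not a priori available, for two reasons. First, the perturbation of $E^0_{{\bf p}_\alpha}({\bf p}_\alpha)$ must preserve $\mathrm{Aut}^+(\mathbf{p}_\alpha)$-invariance (Condition \ref{conds55}), so it lives only in an equivariant Grassmannian, not the full Grassmannian; one cannot invoke an unrestricted genericity argument. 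Second, the ``bad'' locus is parametrized by a thickened moduli space whose dimension is not small, and whether a dimension-count works depends on the codimension of the bad locus inside a Grassmannian whose ambient dimension you get to choose. The paper's Lemma \ref{lem1116}, through Lemma \ref{lem1118} and its equivariant refinement Lemma \ref{lemma1110} (Schur's lemma decomposition into isotypical pieces, with the inequality $a+gD\sum d_i<d'$), makes precise the amount by which $E^0_{\mathbf{p}_i}(\mathbf{p}_i)$ must first be enlarged into a space $E^+(\mathbf{p}_i)$ with sufficiently high multiplicity of each irreducible representation, so that a small equivariant perturbation avoiding the bad locus exists; the inductive scheme on $I=1,\dots,\mathscr P$ then propagates the direct-sum property. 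Your argument has no analogue of this enlargement step nor of the dimension count, and without it the claim that a generic equivariant choice works is unjustified — indeed it is false if $\dim E^0_{{\bf p}_\alpha}(\mathbf{p}_\alpha)$ is too small relative to the number of charts and the dimension of the Kuranishi neighborhoods.
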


\subsection{Local construction of obstruction bundle data}
\label{sub:localconst}

Let ${\bf p} \in {\mathcal M}_{k+1,\ell}(X,L,J;\beta)$.
We will construct an obstruction bundle data $E_{\bf q;\bf p}(\bf x)$
at ${\bf q}$ when ${\bf q}$ is in a small neighborhood of ${\bf p}$.

\begin{lem}\label{lem1122}
There exists a finite dimensional subspace 
$E^0_{\bf p}(\bf p)$ of $C^{\infty}(\Sigma_{\bf p},u_{\bf p}^*TX \otimes \Lambda^{01})$ 
(the set of smooth sections)
such that:
\begin{enumerate}
\item The supports of elements of $E^0_{\bf p}(\bf p)$ are away from 
nodal points.
\item $E^0_{\bf p}(\bf p)$ satisfies the transversality condition 
in Definition \ref{defn54564}.
\item $E^0_{\bf p}(\bf p)$ is invariant under the ${\rm Aut}^+({\bf p})$ 
action in the sense of Condition \ref{conds55}.
\end{enumerate}
We may choose $E^0_{\bf p}(\bf p)$ so that it also satisfies Condition \ref{cond22}
and the second half of Condition \ref{conds55} holds.
\end{lem}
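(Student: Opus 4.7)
The plan is to build $E^0_{\bf p}({\bf p})$ through a sequence of enlargements, starting from the minimal data needed to get Fredholm transversality and then adding more smooth sections to arrange the remaining conditions. By Fredholmness of $D_{u_{\bf p}}\overline{\partial}$ in (\ref{form523}), its image has finite codimension in $L^2_m(\Sigma_{\bf p};u_{\bf p}^*TX\otimes\Lambda^{01})$. Since smooth sections whose support is disjoint from the (finite) nodal set are dense in $L^2_m$, I can pick finitely many such sections $e_1,\dots,e_N$ whose classes span the cokernel of $D_{u_{\bf p}}\overline{\partial}$. Setting $E^{(1)} := \mathrm{span}_{\mathbb R}\{e_1,\dots,e_N\}$ gives a space of smooth sections supported away from nodal points and satisfying the transversality condition of Definition \ref{defn54564}.

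Next, to enforce the invariance required by the first half of Condition \ref{conds55}, I replace $E^{(1)}$ by $E^{(2)} := \sum_{v\in{\rm Aut}^+({\bf p})} v_*(E^{(1)})$. Since ${\rm Aut}^+({\bf p})$ is finite by source stability, $E^{(2)}$ is still finite-dimensional, still consists of smooth sections supported off the nodes, and enlarging a transversal complement preserves transversality. By construction $E^{(2)}$ is ${\rm Aut}^+({\bf p})$-invariant, so properties (1), (2), and (3) hold.

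To secure Condition \ref{cond22}, I fix a basis $w_1,\dots,w_{\dim L}$ of $T_{{\rm ev}_0({\bf p})}L$. On the irreducible component $\Sigma_{\bf p}(a_0)$ containing $z_0$, each $w_i$ is realized as $V_i(z_0)$ for some smooth section $V_i\in W^2_{m+1}(\Sigma_{\bf p},\partial\Sigma_{\bf p};u_{\bf p}^*TX,u_{\bf p}^*TL)$, which one produces by lifting $w_i$ through a trivialization of $u_{\bf p}^*TL$ near $z_0$ and multiplying by a bump function supported in a small neighborhood disjoint from the nodes and from the other marked points. Enlarging to $E^{(3)} := E^{(2)} + \sum_i \sum_{v\in{\rm Aut}^+({\bf p})} v_*(\mathrm{span}\{D_{u_{\bf p}}\overline{\partial}V_i\})$ keeps us within smooth sections supported away from nodes, preserves the previous properties, and places each $V_i$ in $(D_{u_{\bf p}}\overline{\partial})^{-1}(E^{(3)})$, so that ${\rm Ev}_0$ becomes surjective.

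For the effectiveness clause in the second half of Condition \ref{conds55}, I proceed through ${\rm Aut}({\bf p})$ one nontrivial element at a time (possible because the group is finite). For each $\gamma\ne 1$, I pick a point $p_\gamma\in\Sigma_{\bf p}$ with $\gamma(p_\gamma)\ne p_\gamma$, a small disc $U_\gamma$ around $p_\gamma$ disjoint from $\gamma(U_\gamma)$, from the nodes, from all marked points, and from the supports used in earlier steps, and a smooth section $W_\gamma$ supported in $U_\gamma$ whose value at some point is linearly independent from the image of $du_{\bf p}$ applied to $\mathfrak{aut}(\Sigma_{\bf p},\vec z_{\bf p},\vec{\frak z}_{\bf p})$ at that point. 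Adding the ${\rm Aut}^+({\bf p})$-orbit of $D_{u_{\bf p}}\overline{\partial}W_\gamma$ to the obstruction space puts $W_\gamma$ in the preimage, and the disjoint-support arrangement forces $\gamma_* W_\gamma - W_\gamma$ to have a nonzero component transverse to $\mathfrak{aut}$, breaking the triviality of the $\gamma$-action on the quotient. Iterating over the finitely many nontrivial elements yields the desired $E^0_{\bf p}({\bf p})$. The main obstacle I anticipate is this last step: care is needed so that breaking triviality for each $\gamma$ does not disturb the earlier properties, and this is handled by the disjointness of the successively chosen supports together with the fact that each added section is symmetrized by the finite group ${\rm Aut}^+({\bf p})$.
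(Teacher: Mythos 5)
Your overall scheme---start from a finite set of smooth sections supported away from the nodes that spans the cokernel of (\ref{form523}), then successively enlarge and average over the finite group ${\rm Aut}^+({\bf p})$ to get invariance, surjectivity of ${\rm Ev}_0$, and effectiveness (all four requirements being preserved under enlargement of $E$ and under symmetrization)---is exactly the standard argument the paper has in mind; the paper omits the proof, describing it as a consequence of the Fredholm property of (\ref{form523}) and unique continuation. However, two of your justifications fail as written, and both are repaired by precisely that unique continuation input. First, the density claim is false for the Sobolev indices actually in use: on a two-dimensional domain $L^2_m\hookrightarrow C^0$ for $m\ge 2$, so evaluation at a nodal point is a nonzero continuous functional vanishing on every section supported away from the nodes; such sections are therefore \emph{not} dense in $L^2_m$. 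The conclusion you need (that they still surject onto the cokernel) is true, but requires an argument: represent the cokernel by smooth solutions of the formal adjoint equation (elliptic regularity) and use unique continuation to see that a nonzero such solution cannot vanish on the open complement of the nodes, so pairing with sections supported off the nodes separates the cokernel; alternatively, verify the transversality of Definition \ref{defn54564} for $m=0$, where your density argument is valid, and invoke the paper's remark that the condition is independent of $m$.

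Second, the pointwise criterion in your effectiveness step can be impossible to satisfy. For example, if $\dim_{\R}X=2$ and a sphere component carries only the nodal point, with $u_{\bf p}$ a double cover and $\gamma$ the deck transformation, then at every point moved by $\gamma$ the space $\{du_{\bf p}(\xi)(z):\xi\in\frak{aut}\}$ is all of $T_{u_{\bf p}(z)}X$, while the points where it is smaller ($du_{\bf p}=0$) are fixed by $\gamma$; so no admissible $p_\gamma$ exists. The disjoint-support idea does work, but the verification must be global rather than pointwise: take \emph{any} nonzero smooth $W_\gamma$ compactly supported in $U_\gamma$ with $U_\gamma\cap\gamma(U_\gamma)=\emptyset$ and add the ${\rm Aut}^+({\bf p})$-orbit of $D_{u_{\bf p}}\overline\partial W_\gamma$ to $E$. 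If $\gamma_*W_\gamma-W_\gamma=du_{\bf p}(\xi)$ with $\xi\in\frak{aut}(\Sigma_{\bf p},\vec z_{\bf p},\vec{\frak z}_{\bf p})$, restriction to $U_\gamma$ gives $du_{\bf p}(\xi)=-W_\gamma$ there, hence $du_{\bf p}(\xi)=0$ on the nonempty open set $U_\gamma\setminus{\rm supp}\,W_\gamma$. If $u_{\bf p}$ is constant on that component this forces $W_\gamma=0$; otherwise the critical points of $u_{\bf p}$ are isolated, so the holomorphic vector field $\xi$ vanishes on an open set, hence on the whole component, and again $W_\gamma=0$, a contradiction. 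With these two repairs your construction of $E^0_{\bf p}({\bf p})$, including Condition \ref{cond22} and both halves of Condition \ref{conds55}, is correct.
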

This is a standard consequence of the Fredholm property
of the operator (\ref{form523}) and unique continuation.
We omit the proof.
\par
We next take a strong stabilization data 
$(\frak W_{\bf p},\vec{\mathcal N}_{\bf p})$
as in Situation \ref{shitu82}.
\par
Let ${\bf x} \in {\mathcal X}_{k+1,\ell}(X,L,J;\beta)$
be $\epsilon$-close to ${\bf p}$.
Using Lemma \ref{lem832} and the definition of $\epsilon$-closeness, 
we can find ${}_{\bf p}\vec{\frak w}_{\bf x}$
such that 
${\bf x} \cup {}_{\bf p}\vec{\frak w}_{\bf x}$
is $o(\epsilon)$-close to 
${\bf p} \cup \vec{\frak w}_{\bf p}$
and 
$
u_{\bf x}({}_{\bf p}\vec{\frak w}_{{\bf x},i}) 
\in \mathcal N_{{\bf p},i}.
$
Moreover the choice of such ${}_{\bf p}\vec{\frak w}_{\bf x}$ 
is unique up to ${\rm Aut}({\bf x})$ action.
(We also remark that ${\rm Aut}({\bf x})$ is canonically embedded to ${\rm Aut}({\bf p})$.)
\par
Now we proceed in the same way as Subsection \ref{subsec:trivobst}.
We put ${\bf y} = {\bf x} \cup {}_{\bf p}\vec{\frak w}_{\bf x}$
and obtain $\frak y$ with 
$\Phi_{{\bf p}^+}(\frak y) = {\bf y}$.
We then obtain the map (\ref{form8000}).
\begin{equation}\label{form80002}
\mathcal P_{{\bf y}} : C^2(\Sigma_{\bf y}(\vec \epsilon);u_{{\bf y}}^*TX\otimes \Lambda^{01})
\to 
C^2(\Sigma_{\bf p};u_{{\bf p}}^*TX\otimes \Lambda^{01}).
\end{equation}
Note the image of $\mathcal P_{{\bf y}}$ is 
$C^2(\Sigma_{\bf p}(\vec \epsilon);u_{{\bf p}}^*TX\otimes \Lambda^{01})$.
We may take $\vec{\epsilon}$ so that $E^0_{\bf p}(\bf p)$ is contained 
in the image of (\ref{form80002}).
\begin{defn}\label{defn113}
We define 
\begin{equation}\label{defnofE}
E_{\bf p}^0({\bf x})
= 
\mathcal P_{{\bf y}}^{-1}(E^0_{\bf p}(\bf p)).
\end{equation}
Since the choice of ${}_{\bf p}\vec{\frak w}_{\bf x}$ is unique up to 
${\rm Aut}({\bf x}) \subseteq {\rm Aut}({\bf p})$ action, 
Lemma \ref{lem1122} (3) implies that the right hand side of 
(\ref{defnofE}) is independent of the choice of ${}_{\bf p}\vec{\frak w}_{\bf x}$.
\par
We also define
\begin{equation}\label{defnofE2}
E_{{\bf q};{\bf p}}({\bf x}) = E_{\bf p}^0({\bf x})
\end{equation}
if ${\bf q} \in {\mathcal M}_{k+1,\ell}(X,L,J;\beta)$ 
is sufficiently close to ${\bf p}$ and 
${\bf x}$ is $\epsilon$-close to ${\bf q}$.
\end{defn}

\begin{prop}\label{prop114}
If ${\bf q}$ is sufficiently close to ${\bf p}$
then $\{E_{{\bf q};{\bf p}}({\bf x})\}$ is an 
obstruction bundle data at ${\bf q}$.
\end{prop}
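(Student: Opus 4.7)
The plan is to verify conditions (1), (2), (3), (5) of Definition \ref{defn51} (condition (4) is not required at a single base point) for the assignment ${\bf x} \mapsto E^0_{\bf p}({\bf x})$ of Definition \ref{defn113}, combining the properties of the fixed reference space $E^0_{\bf p}({\bf p})$ from Lemma \ref{lem1122} with the $C^n$ gluing machinery of Sections \ref{sec;Kuracharsmooth} and \ref{sec:changesmoo}. Condition (1) and the well-definedness condition (*) of Remark \ref{rem5757} are essentially immediate: the bundle isomorphism $\mathcal P_{\bf y}$ of (\ref{form8000}) covers $\widehat\Phi_{\frak y,\vec\epsilon}^{-1}$, so its image consists precisely of sections supported in $\Sigma_{{\bf p}^+}(\vec\epsilon)$; by Lemma \ref{lem1122} (1) the subspace $E^0_{\bf p}({\bf p})$ is contained there for small $\vec\epsilon$ and is supported away from nodes, hence $\mathcal P_{\bf y}^{-1}(E^0_{\bf p}({\bf p}))$ is a finite-dimensional subspace of $C^2(\Sigma_{\bf x};u_{\bf x}^*TX\otimes\Lambda^{01})$ with support in $\Sigma_{\bf y}(\vec\epsilon)$. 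Independence of the choice of ${}_{\bf p}\vec{\frak w}_{\bf x}$, unique up to ${\rm Aut}({\bf x})$ by Lemma \ref{lem832}, follows from the ${\rm Aut}^+({\bf p})$-invariance of $E^0_{\bf p}({\bf p})$ in Lemma \ref{lem1122} (3) combined with the equivariance of $\mathcal P_{\bf y}$ in its fibre variables; the same two ingredients, coupled with the canonical lift ${\rm Aut}^+({\bf x})\hookrightarrow {\rm Aut}^+({\bf p})$ supplied by Lemma \ref{lem832}, yield Condition \ref{conds55} for $E^0_{\bf p}({\bf x})$.

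For the transversality condition of Definition \ref{defn54564} applied at ${\bf x}={\bf q}$, Lemma \ref{lem1122} (2) gives ${\rm Im}(D_{u_{\bf p}}\overline\partial) + E^0_{\bf p}({\bf p}) = L^2_m(\Sigma_{\bf p};u_{\bf p}^*TX\otimes\Lambda^{01})$. When ${\bf q}$ is sufficiently close to ${\bf p}$, the operator $D_{u_{\bf q}}\overline\partial$ and the transported subspace $E^0_{\bf p}({\bf q}) = \mathcal P_{{\bf y}_{\bf q}}^{-1}(E^0_{\bf p}({\bf p}))$ are produced by a pre-gluing from the ${\bf p}$-side data, and the alternating-method estimates used to establish the gluing theorem in \cite[Section 5]{foooexp} show that the surjectivity of ${\rm Im}(D_{u_{\bf q}}\overline\partial) + E^0_{\bf p}({\bf q})\to L^2_m(\Sigma_{\bf q};u_{\bf q}^*TX\otimes\Lambda^{01})$ persists in a neighborhood of ${\bf p}\cup\vec{\frak w}_{\bf p}$. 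The second half of Condition \ref{conds55} is inherited from its counterpart in Lemma \ref{lem1122} by the same openness argument.

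The remaining step, and the main technical obstacle, is the smoothness condition of Definition \ref{defn8911}. Independence of $u_{\bf x}|_{\rm neck}$ (Definition \ref{defn860}) is automatic from the support property: if $v$ identifies two ${\bf y},{\bf y}'$ on the non-neck region, then $\mathcal P_{\bf y}$ and $\mathcal P_{{\bf y}'}$ are conjugated by $v_*$, and the required equality follows. To verify the $C^n$ family condition of Definition \ref{defn86} (3) for an arbitrary stabilization and trivialization datum $\frak W_{\bf q}$, I would pick a smooth basis $e_1,\dots,e_N$ of $E^0_{\bf p}({\bf p})$; the corresponding basis $\mathcal P_{\bf y}^{-1}(e_i)$ of $E^0_{\bf p}({\bf x})$, pulled back to the fixed model $\Sigma_{{\bf q}^+}(\vec{\epsilon'})$ via Lemmas \ref{lem832} and \ref{lem3838}, is the composition of (a) the coordinate-change maps $\Psi_{{\bf p},{\bf q}}$ and $\Xi_{{\bf p},{\bf q}}$ of Definitions \ref{defn102} and \ref{defn105}, of class $C^n$ by Proposition \ref{prop103} and Lemma \ref{lem106}, (b) fibre-wise parallel transport along the minimal geodesic between $u_{\bf p}$ and $u_{\bf x}$, which is $C^n$ in $u'$ provided $m$ is sufficiently larger than $n$, and (c) the complex-linear part of the differential of $\widehat\Phi_{\frak y,\vec\epsilon}$. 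Since a composition of $C^n$ maps is $C^n$, Definition \ref{defn87} is satisfied. The case of a not-necessarily-pseudo-holomorphic reference point ${\bf r}$ demanded by Definition \ref{defn8911} (3) is handled identically by invoking Remark \ref{lem1015}, which extends Proposition \ref{prop103} and Lemma \ref{lem106} to that setting, completing the verification of all items of Definition \ref{defn51} required at a single base point.
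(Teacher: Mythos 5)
Your proposal is correct and follows essentially the same route as the paper: it checks Definition \ref{defn51} (1)(2)(3)(5) for $E_{{\bf q};{\bf p}}({\bf x})$, getting (1) and (5) from Lemma \ref{lem1122}, propagating the transversality (3) from ${\bf p}$ to nearby ${\bf q}$ by gluing analysis, and proving the smoothness (2) by expressing the transported subspaces through $\Psi_{{\bf p},{\bf q}}$, $\Xi_{{\bf p},{\bf q}}$, parallel transport and the complex-linear part of the derivative, exactly as in the paper's Subsection \ref{sub:localsmooth} via Proposition \ref{prop103} and Lemma \ref{lem106}. The only cosmetic differences are that the paper invokes Mrowka's Mayer--Vietoris principle (citing \cite[Proposition 7.1.27]{fooobook2}) where you appeal to the alternating-method estimates, that it records only $C^{n-1}$ (rather than $C^n$) regularity of the resulting family --- immaterial since $n$ is arbitrary --- and that for the non-pseudo-holomorphic base point it simply observes pseudo-holomorphicity of $u_{\bf q}$ was never used rather than citing Remark \ref{lem1015}.
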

\begin{proof}
By Lemma \ref{lem413},
$E_{{\bf q};{\bf p}}({\bf x})$ is defined 
if ${\bf x}$ is $\delta$-close to ${\bf q}$
for some small $\delta$.
We will check Definition \ref{defn51} (1)(2)(3)(5).
(1) is obvious from definition.
(5) follows from Lemma \ref{lem1122} (3).
(3) holds if ${\bf q} = {\bf p}$ by Lemma \ref{lem1122} (2).
Then using Mrowka's Mayer-Vietoris principle 
it holds if ${\bf q}$ is sufficiently close to ${\bf p}$.
See \cite[Proposition 7.1.27]{fooobook2}.
We will prove (2) (smoothness of $E_{{\bf q};{\bf p}}({\bf x})$) in the next subsection.

\subsection{Smoothness of obstruction bundle data}
\label{sub:localsmooth}

In this subsection we prove that $E_{{\bf q};{\bf p}}({\bf x})$
(which is defined in Definition \ref{defn113})
is smooth in the sense of Definition \ref{defn8911}.
The proof is based on Proposition \ref{prop103}.
\par
Let ${\bf q} \in {\mathcal M}_{k+1,\ell}(X,L,J;\beta)$
and  take stabilization and trivialization data
$\frak W_{\bf q}$. In other words (together with 
the strong stabilization data at ${\bf p}$ which we have taken 
in the last subsection), we are in Situation \ref{soti1011}.
We use the notations of Subsection \ref{subsec:maintech}.
\par
We remark that the role of ${\bf p}$ in Definition \ref{defn8911}
is taken by ${\bf q}$ here.
\par
We take
$
\mathcal V({{\bf q}^+};\vec{\epsilon'}) 
$ as in (\ref{form862}) and $
\overline{\mathcal V}({{\bf q}^+};\vec{\epsilon'}) 
= \Phi_{{\bf q^+}}({\mathcal V}({{\bf q}^+};\vec{\epsilon'}))$.
For $\frak y \in \mathcal V({{\bf q}^+};\vec{\epsilon'})$ and 
$u' \in \mathscr L^{\bf q}_m$ we obtain
$
u'' = u' \circ \widehat{\Phi}_{{\bf q}^+,\frak y,\vec{\epsilon'}}^{-1} : \Sigma_{\frak y}(\vec{\epsilon'}) \to X
$ as in
(\ref{form1040400}).
We want to prove that 
the family of linear subspaces 
$
\mathcal P_{{\bf y}}(E_{{\bf q};{\bf p}}({\bf x}))
$
(see (\ref{form255}) and (\ref{form8000})) is $C^n$ family when we move  $\frak y,u'$.
Here  ${\bf x} \in \mathcal X_{k+1,\ell}(X,L,J;\beta)$ and  ${\bf y} = {\bf x} \cup {}_{\bf q}\vec{\frak w}_{\bf x}= ((\Sigma_{\frak y},\vec z_{\frak y},\vec{\frak z}_{\frak y}),u'')$.
We take ${}_{\bf p}\vec{\frak w}_{\bf x}$ so that Condition \ref{conds102} is satisfied.
In view of Definition \ref{defn113}, we will study the composition:
\begin{equation}
\aligned
\mathcal P_{{\bf x}\cup {}_{\bf q}\vec{\frak w}_{\bf x}} &\circ \mathcal P_{{\bf x}\cup {}_{\bf p}\vec{\frak w}_{\bf x}}^{-1} : 
C^{\infty}(\Sigma_{{\bf p}^+}(\vec{\epsilon});u_{{\bf p}}^*TX\otimes \Lambda^{01})
\\
&\to 
L^2_m(\Sigma_{\frak y}(\vec{\epsilon'});u_{{\bf x}}^*TX\otimes \Lambda^{01}) 
\to 
L^2_m(\Sigma_{{\bf q}^+}(\vec{\epsilon'});u_{{\bf q}}^*TX\otimes \Lambda^{01})
\endaligned
\end{equation}
and study $C^n$ dependence of the image of $E^0_{\bf p}({\bf p})$ by this map.
\begin{rem}
Note when we define $\mathcal P_{{\bf x}\cup {}_{\bf q}\vec{\frak w}_{\bf x}}$
we take $\bf q$ and ${\bf x}\cup {}_{\bf q}\vec{\frak w}_{\bf x}$ 
in place of $\bf p$ and ${\bf y}$ in (\ref{form8000}).
When we define $\mathcal P_{{\bf x}\cup {}_{\bf p}\vec{\frak w}_{\bf x}}$
we take $\bf p$ and ${\bf x}\cup {}_{\bf p}\vec{\frak w}_{\bf x}$ 
in place of $\bf p$ and ${\bf y}$ in (\ref{form8000}).
\end{rem}
We remark that Definition \ref{defn86} (1)(2) and Definition \ref{defn8911} (1) is 
obvious from construction. To prove Definition \ref{defn86} (3) (and so 
Definition \ref{defn8911} (2)) it suffices to prove the next lemma 
\footnote{In fact we can use partition of unity to reduce the case of general ${\bf e}$ supported 
away from node to the case of one with small support.}.
\begin{lem}
If ${\bf e} \in C^{\infty}(\Sigma_{\bf p};u_{{\bf p}}^*TX\otimes \Lambda^{01})$ is a 
smooth section which has a small compact support in $\Sigma_{{\bf p}^+}(\vec\epsilon)$, then
the map $({\frak y},u') \mapsto (\mathcal P_{{\bf x}\cup {}_{\bf q}\vec{\frak w}_{\bf x}} \circ \mathcal P_{{\bf x}\cup {}_{\bf p}\vec{\frak w}_{\bf x}}^{-1})({\bf e})$
is a $C^{n-1}$ map\footnote{Recall that in Proposition \ref{prop103}
we obtain a $C^n$ map 
$\Psi_{\bf p,\bf q}$.} to $L^2_m(\Sigma_{\bf q}(\vec{\epsilon'});u_{{\bf q}}^*TX\otimes \Lambda^{01})
$.
\end{lem}
\begin{proof}
Let $U^{\bf p}$ be a support of ${\bf e}$ and $z^{\bf p}$  its coordinate.
Let $\Psi_{{\bf p},{\bf q}}$ be as in Definition \ref{defn102}.
We may assume $\Psi_{{\bf p},{\bf q}}(\overline{\mathcal V}({{\bf q}^+};\vec{\epsilon'})\times \mathscr L^{\bf q}_m \times U^{\bf p})$
is contained in a single chart $U^{\bf q}$ and let $z^{\bf q}$ be its coordinate.
\par
We may also assume that $u'(U^{\bf q})$ for any $u'\in \mathscr L^{\bf q}_m$ and $u_{\bf p}(U^{\bf p})$ are 
contained in a single chart $U^{X}$ of $X$ and denote by 
${\frak e}_1,\dots,{\frak e}_d$, sections of the complex 
vector bundle $TX$ on $U^X$ which give a basis at all points.
\par
We put $\frak a = \Phi_{{\bf q}^+}(\frak y) \in \overline{\mathcal V}({{\bf q}^+};\vec{\epsilon'})$.
During the calculation below we write $\Psi_{{\bf p},{\bf q}}(z) = \Psi_{{\bf p},{\bf q}}(\frak a,u',z)$
for simplicity.
By definition the map $\mathcal P_{{\bf x}\cup {}_{\bf q}\vec{\frak w}_{\bf x}} \circ \mathcal P_{{\bf x}\cup {}_{\bf p}\vec{\frak w}_{\bf x}}^{-1}$ is induced by a bundle map
which is the tensor product of two bundle maps. One of them (See (\ref{form8181}))
is a composition of the parallel transports
\begin{equation}\label{form117}
T_{u_{\bf p}(\Psi_{{\bf p},{\bf q}}^{-1}(z^{\bf q}))}X \longrightarrow T_{u'(z^{\bf q})}X \longrightarrow T_{u_{\bf q}(z^{\bf q})}X.
\end{equation}
The other is (See (\ref{form8281}).)
\begin{equation}\label{form118}
\Lambda^{01}_{\Psi_{{\bf p},{\bf q}}^{-1}(z^{\bf q})}\Sigma_{\bf p} \longrightarrow 
\Lambda^{01}_{\hat{\Phi}_{{\bf q}^+,\frak y,\vec{\epsilon'}}(z^{\bf q})}\Sigma_{\bf x}
\longrightarrow
\Lambda^{01}_{z^{\bf q}}\Sigma_{\bf q}.
\end{equation}
The arrows in (\ref{form118}) are the complex linear parts of the derivatives of 
$\widehat{\Phi}_{{\bf q}^+,\frak y,\vec{\epsilon'}} \circ \Psi_{{\bf p},{\bf q}}$
and of $\hat{\Phi}_{{\bf q}^+,\frak y,\vec{\epsilon'}}^{-1}$.
In fact, by Definition \ref{defn102} and (\ref{10111}), 
$\widehat{\Phi}_{{\bf q}^+,\frak y,\vec{\epsilon'}} \circ \Psi_{{\bf p},{\bf q}}
= \widehat{\Phi}_{{\bf p}^+,\frak x_{\bf p},\vec{\epsilon}}$, 
where $\frak x_{\bf p}$ is defined by ${\bf x} \cup {}_{\bf p}\vec{\frak w}_{\bf x} = {\Phi}_{{\bf p}^+}(\frak x_{\bf p})$.
\par
If $h = \sum_i h_i \frak e_i$ is a section of $u_{\bf p}^*TX$ on $U^{\bf p}$ then (\ref{form117}) sends 
$h$ to the section
$$
z^{\bf q} \mapsto \sum_{i,j} h_i(\Psi_{{\bf p},{\bf q}}(\frak a,u',z^{\bf q})) G_{ij}(u_{\bf q}(z^{\bf q}),u'(z^{\bf q}),u_{\bf p}(\Psi_{{\bf p},{\bf q}}(\frak a,u',z^{\bf q}))) \frak e_j
$$
Here $G_{ij}$ is a smooth function on $U^X \times U^X \times U^X$.
\par
We take a $\frak y$ parametrized (smooth) family of complex structures $j_{\frak y}$ on $U^{\bf q}$,
which is a pull back of the complex structure on $\Sigma_{\bf x} = \Sigma_{\frak y}$ by $\widehat{\Phi}_{{\bf q}^+,\frak y,\vec{\epsilon'}}$.
Then (\ref{form118}) is a composition of 
$$
(D\Psi_{{\bf p},{\bf q}})^{01} : \Lambda^{01}_{\Psi_{{\bf p},{\bf q}}^{-1}(z^{\bf q})}U^{\bf p} \to 
\Lambda^{01}_{z^{\bf q}}(U^{\bf q},j_{\frak y})
\quad
\text{and}
\quad
({\rm id})^{01} :  \Lambda^{01}_{z^{\bf q}}(U^{\bf q},j_{\frak y})
\to \Lambda^{01}_{z^{\bf q}}(U^{\bf q},j_{0}).
$$
(Here $j_0$ is the complex structure of $U^{\bf q} \subset \Sigma_{\bf q}$.)
Therefore using Proposition  \ref{prop103} there exists a $C^{n-1}$ function 
$f(\frak y,u',z^{\bf q})$ such that (\ref{form118})  is written as 
$$
d\overline z^{\bf p}  \mapsto f(\frak y,u',z^{\bf q}) d\overline z^{\bf q}.
$$
We put ${\bf e} = \sum_i {\bf e}_i(z^{\bf p}) \frak e_i \otimes d\overline z^{\bf p}$.
Here ${\bf e}_i$ are smooth functions.
Then
$$
\aligned
&(\mathcal P_{{\bf x}\cup {}_{\bf q}\vec{\frak w}_{\bf x}} \circ \mathcal P_{{\bf x}\cup {}_{\bf p}\vec{\frak w}_{\bf x}}^{-1})({\bf e}) \\
&= 
f(\frak y,u',z^{\bf q})\sum_{i,j}
{\bf e}_i(\Psi_{{\bf p},{\bf q}}(\frak a,u',z^{\bf q})) G_{ij}(u_{\bf q}(z^{\bf q}),u'(z^{\bf q}),u_{\bf p}(\Psi_{{\bf p},{\bf q}}(\frak a,u',z^{\bf q}))) 
\frak e_j \otimes d\overline z^{\bf q}.
\endaligned
$$
Since ${\bf e}_i$, $u_{\bf p}$, $u_{\bf q}$, $G_{ij}$ are smooth, $f$ is of $C^{n-1}$ class, 
$\Psi_{{\bf p},{\bf q}}$ is of $C^n$ class, 
and $\frak a = \Phi_{{\bf q}^+}(\frak y)$ with $\Phi_{{\bf q}^+}$ smooth, this is a $C^{n-1}$ map of $(\frak y,u',z^{\bf 	q})$, as required.
\end{proof}
Definition \ref{defn8911} (2) is now proved.
We remark that we never used the fact that $u_{\bf q}$ is pseudo holomorphic in the above proof.
Therefore the proof of Definition \ref{defn8911} (3) is the same.\footnote{The proof 
of Lemma \ref{lem832} we gave in this article uses the fact that $u_{\bf p}$ there is pseudo holomorphic. 
We however never take local transversals to ${\bf q}$ in this subsection and so the pseudo-holomorphicity is 
not needed here.}
The proof of Proposition \ref{prop114} is complete.
\end{proof}

\subsection{Global construction of obstruction bundle data}
\label{sub:globalclnst}

In this and the next subsections we prove Theorem \ref{them111}.
The proof is based on the argument of \cite[page 1003-1004]{FO}.
For each ${\bf p} \in \mathcal M_{k+1,\ell}(X,L;\beta)$
we use Proposition \ref{prop114} to find its neighborhood $\frak U({\bf p})$ in $\mathcal M_{k+1,\ell}(X,L;\beta)$
such that if ${\bf q} \in \frak U({\bf p})$ then
$E_{{\bf q};{\bf p}}({\bf x})$ is an obstruction bundle data at ${\bf q}$.
We take a compact subset $K(\bf p)$ 
\index{00K1p@$K(\bf p)$} 
of $\frak U({\bf p})$ such that $K({\bf p})$ 
is the closure of an open neighborhood $K_{o}({\bf p})$ of ${\bf p}$.
\par
We note that during the construction of $E_{{\bf q};{\bf p}}({\bf x})$ 
we have chosen a linear subspace $E^0_{\bf p}(\bf p)$ 
(See Lemma \ref{lem1122}.) as well as 
strong stabilization data at ${\bf p}$.
\par
Since $\mathcal M_{k+1,\ell}(X,L;\beta)$ is compact, we can find a 
finite subset $\{{\bf p}_1,\dots,{\bf p}_{\mathscr P}\}$ of $\mathcal M_{k+1,\ell}(X,L;\beta)$
such that 
\begin{equation}\label{2199}
\bigcup_{i=1}^{\mathscr P} K_{o}({\bf p}_i) = \mathcal M_{k+1,\ell}(X,L;\beta).
\end{equation}
For ${\bf q} \in \mathcal M_{k+1,\ell}(X,L;\beta)$ we put
$
I({\bf q}) = \{ i \in \{1,\dots,\mathscr P\} \mid {\bf q} \in K({\bf p}_i) \}.
$\index{00I1qq@$I({\bf q})$}

\begin{lem}\label{lem1116}
We may perturb $E^0_{{\bf p}_i}({\bf p}_i)$ by an arbitrary small amount in $C^{2}$ norm 
so that the following holds.
For each ${\bf q} \in \mathcal M_{k+1,\ell}(X,L;\beta)$
the sum $\sum_{i \in I({\bf q})} E_{{\bf q},{\bf p}_i}({\bf q})$
of vector subspaces in $C^{\infty}(\Sigma_{\bf q}(\vec{\epsilon'});u_{{\bf q}}^*TX\otimes \Lambda^{01})$
is a direct sum
\begin{equation}
\bigoplus_{i \in I({\bf q})} E_{{\bf q},{\bf p}_i}({\bf q}).
\end{equation}
\end{lem}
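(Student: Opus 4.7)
The plan is a standard genericity argument: the direct-sum condition is, pointwise in ${\bf q}$, an open-and-dense condition on the perturbation parameters, and it is open in ${\bf q}$, so compactness and a codimension count will let us achieve it globally by an arbitrarily small perturbation.

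First I would observe that, by Definition \ref{defn113}, each $E_{{\bf q},{\bf p}_i}({\bf q})=\mathcal P^{-1}_{{\bf q}\cup{}_{{\bf p}_i}\vec{\frak w}_{\bf q}}(E^0_{{\bf p}_i}({\bf p}_i))$ depends injectively-linearly on the seed $E^0_{{\bf p}_i}({\bf p}_i)$. Hence a $C^2$-small perturbation of $E^0_{{\bf p}_i}({\bf p}_i)$ produces, via the injective linear map $\mathcal P^{-1}$, a $C^2$-small perturbation of $E_{{\bf q},{\bf p}_i}({\bf q})$ simultaneously for all ${\bf q}\in K({\bf p}_i)$. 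Moreover, by the smoothness of obstruction bundle data (Proposition \ref{prop114}, cf. Definition \ref{defn51}(2)), the family $E_{{\bf q},{\bf p}_i}({\bf q})$ depends continuously on ${\bf q}$, so $\dim\sum_{i\in I({\bf q})}E_{{\bf q},{\bf p}_i}({\bf q})$ is lower semi-continuous in ${\bf q}$. Equality with the (upper-bound) value $\sum_{i\in I({\bf q})}\dim E^0_{{\bf p}_i}({\bf p}_i)$ is therefore an open condition on ${\bf q}$, so the failure set is closed, hence compact in $\mathcal M_{k+1,\ell}(X,L;\beta)$.

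Next, for a single fixed ${\bf q}$, the failure of the direct-sum condition for the finite-dimensional subspaces $\{E_{{\bf q},{\bf p}_i}({\bf q})\}_{i\in I({\bf q})}$ of the infinite-dimensional ambient space cuts out a proper Schubert-type subvariety in the product of Grassmannians parametrizing the perturbations: for fixed $E^0_{{\bf p}_j}({\bf p}_j)$ with $j\ne i$, the set of $E^0_{{\bf p}_i}({\bf p}_i)\in\mathrm{Gr}(d_i,N)$ such that $E_{{\bf q},{\bf p}_i}({\bf q})$ meets $\sum_{j\ne i}E_{{\bf q},{\bf p}_j}({\bf q})$ non-trivially has codimension $N-\sum_j d_j+1$ (where $N$ is the dimension of the ambient section space from which we allow perturbations). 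This codimension can be made as large as we wish by enlarging $N$, which is possible because the space of smooth sections of $u_{{\bf p}_i}^*TX\otimes\Lambda^{01}$ with support away from the nodes is infinite-dimensional.

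To globalize, parametrize perturbations by a finite-dimensional manifold $\mathcal G$ (an open neighborhood of the basepoint $g_0$ in an appropriate product of Grassmannians), and form the closed bad locus
\begin{equation*}
\tilde{\mathcal B}=\bigl\{(g,{\bf q})\in\mathcal G\times\mathcal M_{k+1,\ell}(X,L;\beta)\,:\,\text{the direct-sum condition at }{\bf q}\text{ fails under }g\bigr\}.
\end{equation*}
By the previous paragraph, each ${\bf q}$-slice of $\tilde{\mathcal B}$ has codimension $\ge N-\sum_j d_j+1$ in $\mathcal G$, and by enlarging $\mathcal G$ we arrange this codimension to exceed $\dim\mathcal M_{k+1,\ell}(X,L;\beta)$. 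Consequently $\dim\tilde{\mathcal B}<\dim\mathcal G$, and Sard's theorem applied to the projection $\pi_{\mathcal G}:\tilde{\mathcal B}\to\mathcal G$ yields that $\pi_{\mathcal G}(\tilde{\mathcal B})$ has measure zero in $\mathcal G$; hence arbitrarily small perturbations of $g_0$ lie in its complement. The main obstacle is precisely this codimension count -- verifying that enlarging $\mathcal G$ genuinely grows the slice codimension, so that it dominates $\dim\mathcal M_{k+1,\ell}(X,L;\beta)$. The remaining bookkeeping is straightforward: the conclusions of Lemma \ref{lem1122} (the transversality \ref{defn54564}, support away from nodes, ${\rm Aut}^+({\bf p}_i)$-invariance, Condition \ref{conds55}, and Condition \ref{cond22}) are all open conditions preserved under small perturbations, and ${\rm Aut}^+({\bf p}_i)$-equivariance is preserved by restricting to equivariant perturbation directions.
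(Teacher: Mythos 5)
Your overall strategy---putting the perturbed obstruction spaces in general position by a codimension count on Grassmannians---is the same in spirit as the paper's, but there is a genuine gap at the globalization step. You apply Sard's theorem to the projection of the bad locus $\tilde{\mathcal B}\subset\mathcal G\times\mathcal M_{k+1,\ell}(X,L;\beta)$ and you compare codimensions with $\dim\mathcal M_{k+1,\ell}(X,L;\beta)$. But $\mathcal M_{k+1,\ell}(X,L;\beta)$ is only a compact metrizable space (endowing it with a virtual manifold structure is the purpose of the whole paper); it has no dimension, $\tilde{\mathcal B}$ is not a manifold, and Sard's theorem does not apply to $\pi_{\mathcal G}:\tilde{\mathcal B}\to\mathcal G$. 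Moreover the continuity of ${\bf q}\mapsto E_{{\bf q},{\bf p}_i}({\bf q})$ that you invoke is far too weak: a merely continuous family over a compact set can have a bad locus whose projection to $\mathcal G$ is fat. The missing idea is the paper's device of replacing the moduli space by finitely many \emph{finite-dimensional thickened spaces}: at the $I$-th inductive step one forms $U({\bf p}^+_I)=\{{\bf x}\mid\overline{\partial}u_{\bf x}\in E_{{\bf p}_I}({\bf x})\}$, which by Proposition \ref{prop114} and Lemma \ref{lem91111} is a $C^n$ manifold $V({\bf p}^+_I)$ of dimension at most $D$, and over it the transported spaces $\tilde{\bf y}\mapsto(\Pi\circ\mathcal P_{\bf y})(E^1_{{\bf p}_i}(\cdot))$ give $C^1$ fiberwise-linear maps $F_i$ into a \emph{fixed} finite-dimensional target $E^+({\bf p}_I)$. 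Only then does the dimension count work (the images of the $C^1$ maps $\hat F_{\mathcal L}$ from manifolds of dimension $<d$ are nowhere dense, Lemma \ref{lem1118}), and the global statement is obtained by induction on $I=1,\dots,\mathscr P$, perturbing $E^0_{{\bf p}_I}({\bf p}_I)$ while the already perturbed $E^1_{{\bf p}_i}$, $i<I$, are held fixed. Some finite-dimensional ambient parametrization of this kind is indispensable; without it your slice-codimension estimate cannot be converted into smallness of the image of the bad set.

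Two further points. First, ``enlarging $N$ at will'' must be done once and for all inside a fixed finite-dimensional space chosen in advance (the paper's $E^+({\bf p}_I)\supset E^0_{{\bf p}_I}({\bf p}_I)$, with $d^+$ determined by $\mathscr P$, $g$, $\frak d$, $D$), since the count has to hold uniformly over the $D$-dimensional parameter manifold; the inequality $a+gD\sum d_i<d'$ in Lemma \ref{lemma1110} records exactly this. Second, ``restricting to equivariant perturbation directions'' is not automatically sufficient: inside the invariant Grassmannian the bad Schubert locus could a priori be everything unless each isotypic component of the ambient space has large multiplicity, which is why the paper proves the equivariant Lemma \ref{lemma1110} via Schur's lemma and chooses $E^+({\bf p}_I)$ with every irreducible multiplicity larger than $d^+$. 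Your opening observations (injective linear dependence on the seed via $\mathcal P^{-1}$, openness of the direct-sum condition, and stability of the conclusions of Lemma \ref{lem1122} under small perturbations) are correct and are also used in the paper, but they do not by themselves bridge the gap above.
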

Postponing the proof of Lemma \ref{lem1116} to the next subsection we continue the 
proof of Theorem \ref{them111}.
\par
Note we may choose $E^{\prime,0}_{{\bf p}_i}$, the perturbation of $E^{0}_{{\bf p}_i}$,
sufficiently close to $E^{0}_{{\bf p}_i}$,
so that for ${\bf q} \in K({\bf p}_i)$ the conclusion of Proposition \ref{prop114}
still holds after this perturbation.
\par
For each ${\bf x}$ sufficiently close to ${\bf q}$ we define
$
E_{\bf q}({\bf x}) = \bigoplus_{i \in I({\bf q})} E_{{\bf q},{\bf p}_i}({\bf x}).
$
(Since ${\bf x}$ is sufficiently close to ${\bf q}$ the right hand side 
is still a direct sum by Lemma \ref{lem1116}.)
\par
Now we prove that $\{E_{\bf q}({\bf x})\}$ satisfies 
Definition \ref{defn51} (1)-(5).
(1)(2)(5) are immediate consequences of the fact that 
$E_{{\bf q},{\bf p}_i}({\bf x})$ satisfies the same property.
(3) is a consequence of $I(\bf q) \ne \emptyset$ (which follows from (\ref{2199}))
and the fact that $E_{{\bf q},{\bf p}_i}({\bf x})$ satisfies (3).
\par
We check (4). Let ${\bf q}  \in \mathcal M_{k+1,\ell}(X,L;\beta)$.
Since $K({\bf p}_i)$ are all closed sets, there exists a neighborhood 
$U(\bf q)$ in $\mathcal M_{k+1,\ell}(X,L;\beta)$ such that  ${\bf q}' 
\in U(\bf q)$ implies $I({\bf q}') \subseteq I({\bf q})$.
Therefore
$
E_{{\bf q}'}({\bf x}) \subseteq E_{\bf q}({\bf x})
$
when both sides are defined. This implies (4).

\subsection{Proof of Lemma \ref{lem1116}}
\label{sub:transtechlem}

To complete the proof of Theorem \ref{them111} it remains to prove Lemma \ref{lem1116}.
The proof here is a copy of \cite[Section 27]{foootech}.
We begin with two simple lemmas.
(All the vector spaces in this subsection are complex vector spaces.)
\begin{shitu}\label{situ1117}
Let $V$ be a $D$ dimensional manifold, $V_i$ ($i=1,\dots,l$) open subsets of $V$ 
and $K_i \subset V_i$ compact subsets. 
$\pi_i : \mathcal E_i \to V_i$ is a $d_i$ dimensional vector bundle on $V_i$ 
and $E$ is a $d$ dimensional vector space.
Suppose 
$
F_i : \mathcal E_i \to E 
$
is a $C^1$ map which is linear on each fiber of $\mathcal E_i$.
Let
$Gr_a(E)$ be the Grassmannian manifold consisting of all $a$ dimensional subspaces 
of $E$.
\end{shitu}
\begin{lem}\label{lem1118}
In Situation \ref{situ1117} we assume
\begin{equation}
a+D\sum d_i < d.
\end{equation} 
Then the set of $E_0 \in Gr_a(E)$ satisfying the next condition is 
dense.
\begin{enumerate}
\item[(*)]
For any $v \in V$ we consider the sum of the linear subspaces 
$F_i(\pi_i^{-1}(v)) \subset E$ 
for $i$ with $v \in K_i$ and denote it by $F(v)$.
Then 
$$
F(v) \cap E_0 = \{0\}.
$$
\end{enumerate}
\end{lem}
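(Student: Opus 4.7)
The proof is a standard genericity/dimension-count argument in the Grassmannian $Gr_a(E)$, in the spirit of Sard's theorem via an incidence (Schubert) variety. The strategy is to exhibit the "bad set" $B \subset Gr_a(E)$, consisting of those $E_0$ violating (*), as a proper subvariety (or lower-dimensional semianalytic subset), which is then automatically of empty interior, so $B^c$ is open and dense.

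First I would bound the total size of the directions that $E_0$ must avoid. For each $v \in V$ the subspace $F(v) \subset E$ has dimension at most $\sum_i d_i$, since it is a sum of at most $l$ summands of dimensions $d_i$. Form the total space
\[
\mathcal{W} \;=\; \{(v,e) \in V \times E : v \in V,\ e \in F(v)\setminus\{0\}\}.
\]
Stratifying $V$ according to the (locally constant on an appropriate stratification) function $v \mapsto I(v) = \{i : v \in K_i\}$ (whose level sets are locally closed using compactness of the $K_i$ and openness of the $V_i$), each stratum contributes a piece of dimension at most $D + \sum_{i \in I} d_i \leq D + \sum_i d_i$, so $\mathcal{W}$ is a semianalytic set of dimension at most $D + \sum_i d_i$. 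Projectivizing in the second factor gives $\mathcal{P} \subset \mathbb{P}(E)$ with $\dim\mathcal{P} \leq D + \sum_i d_i - 1$.

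Next I would relate $B$ to $\mathcal{P}$ via an incidence variety. By construction,
\[
B \;=\; \bigl\{E_0 \in Gr_a(E) : \mathbb{P}(E_0) \cap \mathcal{P} \neq \emptyset\bigr\},
\]
so consider
\[
\mathcal{J} \;=\; \{(E_0,[e]) \in Gr_a(E) \times \mathcal{P} : [e] \in \mathbb{P}(E_0)\}
\]
and its two projections. The fiber of $\mathcal{J} \to \mathcal{P}$ over a point $[e]$ is the Schubert cycle of $a$-planes in $E$ through the given line, whose dimension is $(a-1)(d-a)$. Hence
\[
\dim \mathcal{J} \;\leq\; (a-1)(d-a) + D + \sum_i d_i - 1,
\]
and the image $B$ under the other projection to $Gr_a(E)$ satisfies the same dimensional bound. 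Comparing with $\dim Gr_a(E) = a(d-a)$ yields
\[
\operatorname{codim}_{Gr_a(E)} B \;\geq\; d - a - D - \sum_i d_i + 1,
\]
so $B$ is a proper subvariety provided $a + D + \sum_i d_i \leq d$. The hypothesis $a + D\sum_i d_i < d$ implies this (using $(D-1)(\sum_i d_i - 1) \geq 0$, hence $D + \sum_i d_i \leq D\sum_i d_i + 1$, in the nontrivial case $D, \sum_i d_i \geq 1$; the cases $D=0$ or $\sum d_i = 0$ are trivial). Consequently $B^c$ is open dense in $Gr_a(E)$.

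The only real point requiring care is making the dimension estimate for the possibly singular semianalytic set $\mathcal{W}$ rigorous; the stratification of $V$ by the finite index-set valued function $I(\cdot)$ suffices, combined with the elementary fact that the image of a $C^1$ map (of a manifold of dimension $\leq n$) has Lebesgue measure zero in a target of dimension $> n$. This is the only substantive step; everything else is linear algebra on Grassmannians.
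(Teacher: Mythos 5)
Your argument is correct in substance, but it takes a genuinely different route from the paper. The paper proves the lemma by induction on $a$: assuming $E'_0 \in Gr_{a-1}(E)$ satisfies (*), it shows that the set of lines $\C e$ for which $E'_0 + \C e$ still satisfies (*) is dense, because the bad vectors $e$ lie in the union, over subsets $\mathcal L \subset \{1,\dots,l\}$, of the images of the explicit $C^1$ maps $\hat F_{\mathcal L}(r,x,(w_i)) = r\bigl(x + \sum_{i \in \mathcal L} F_i(w_i)\bigr)$ defined on $\C \times E'_0 \times \bigoplus_{i\in\mathcal L}\mathcal E_i$ over $\bigcap_{i\in\mathcal L}V_i$, whose domains have dimension strictly smaller than $\dim E$. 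Everything there happens inside $E$ itself; no incidence geometry in the Grassmannian is needed, and one never has to handle the non-manifold families $v \mapsto F(v)$. You instead make a one-shot incidence-correspondence count in $Gr_a(E)\times \P(E)$, with Schubert fibers of dimension $(a-1)(d-a)$, and bound the bad set $B$ directly; both arguments ultimately rest on the same fact (the image of a $C^1$ map from a manifold of strictly smaller dimension is null, hence nowhere dense) and on essentially the same reduced inequality $a + D + \sum_i d_i \le d$, which you and the paper extract from the stated hypothesis by the same manipulation. What the induction buys the paper is the avoidance of exactly the technical point you flag at the end; what your version buys is a non-inductive codimension estimate for the bad set.

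Two points to repair if you write this up. First, drop the subvariety/semianalytic language: since the $F_i$ are only $C^1$, the sets $\mathcal W$, $\mathcal P$, $\mathcal J$ are not manifolds (and not semianalytic), and an image under projection does not by itself carry a dimension. The fix is to parametrize rather than take images: over each $V_{\mathcal L} = \bigcap_{i\in\mathcal L}V_i$ work on the total space of $\bigoplus_{i\in\mathcal L}\mathcal E_i$ (or its sphere bundle, to absorb the scaling redundancy) away from the zero locus of $(w_i)\mapsto \sum_i F_i(w_i)$, and pull back the universal incidence bundle $\{([e],E_0) : e\in E_0\}\subset \P(E)\times Gr_a(E)$ along the resulting $C^1$ map to $\P(E)$; this is a genuine $C^1$ manifold of the dimension you claim, $B$ is contained in the finite union of the images of its projections to $Gr_a(E)$, hence is null, and its complement is dense (openness of the complement is neither clear nor needed, since the lemma asserts only density). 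Second, your parenthetical claim that the case $D=0$ is trivial is not correct: with $V$ a point, $l=1$, $F_1$ injective of rank $d-1$ and $a=d-1$, the stated inequality $a + D\sum d_i < d$ holds while no $E_0$ satisfying (*) exists. The paper's own dimension count needs $a + D + \sum_i d_i \le d$ as well, so this is a defect of the lemma's stated hypothesis (implicitly one has $D\ge 1$ in the application) rather than of your argument in the relevant regime, but it should not be dismissed as trivial.
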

\begin{proof}
The proof is by induction on $a$. Suppose  $E'_{0} \in Gr_{a-1}(E)$ 
satisfies (*). It suffices to prove that the set of $e \in E \setminus \{0\}$
such that $\C e \cap E'_0 = \{0\}$ and $E'_0 + \C e$ satisfies (*) is dense.
\par
Let $\mathcal L \subset \{1,\dots,l\}$ and 
$U_{\mathcal L} = \bigcap_{i\in \mathcal L}U_i$. 
Let $\mathcal E_{\mathcal L}$ be
the total space of the  Whitney sum 
bundle $\bigoplus_{i \in \mathcal L} \mathcal E_i$
on $U_{\mathcal L}$.
We define
$
\hat F_{\mathcal L} : \C \times E'_0 \times \mathcal E_{\mathcal L} \to E
$
as follows.  Let $w_i \in \pi_i^{-1}(v) \subset \mathcal E_i$, then 
$$
\hat F_{\mathcal L}(r,x,(w_i)_{i\in \mathcal L}) 
= r(x + \sum_{i\in \mathcal L} F_i(w_i)).
$$
This map is $C^1$ and the dimension of the domain is 
strictly smaller than $d$. Therefore the image of $\hat F_{\mathcal L}$ is nowhere dense.
On the other hand if $e$ is not contained in the union 
of the images of $\hat F_{\mathcal L}$ for various $\mathcal L$, then 
$E'_0 + \C e$ satisfies (*).
In fact suppose 
$\sum F_i(w_i) + v + c e = 0$ with  $w_i \in \pi_i^{-1}(v)$, $v \in E$ and $c\in \C$.
If $c=0$ the induction hypothesis implies $\sum_i F_i(w_i) = 0$, $v=0$.
If $c\ne 0$, then $e = \hat F_{\mathcal L}(-1/c,v,(w_i))$, a contradiction.
\end{proof}
We use the equivariant version of Lemma \ref{lem1118}.
\begin{shitu}\label{situ1118}
Let $\Gamma$ be a finite group of order $g$ and $d' >0$.
In Situation \ref{situ1117} we assume in addition that $E$ 
is a $\Gamma$ vector space such that 
any irreducible representation $W_{\sigma}$
of $\Gamma$ has its multiplicity in $E$ larger than $d'$.
Let ${\rm Rep}(\Gamma)$ be the set of 
all irreducible representations of $\Gamma$ over $\C$.
\par
Let
$
Gr_{(a_{\sigma}:\sigma \in {\rm Rep}(\Gamma))}(E)$
be the set of all $\Gamma$ invariant linear subspaces $E_0$ of $E$
such that $E_0$  is isomorphic to 
$\bigoplus_{\sigma\in {\rm Rep}(\Gamma)} W_{\sigma}^{a_{\sigma}}$ as $\Gamma$ 
vector spaces.
Let $a = \sup\{a_{\sigma}\}$.
\end{shitu}
\begin{lem}\label{lemma1110}
Suppose we are in Situation \ref{situ1118}.
We assume 
\begin{equation}\label{1114ddd}
a+ gD\sum d_i < d'.
\end{equation}
Then the set of all $E_0 \in Gr_{(a_{\sigma}:\sigma \in {\rm Rep}(\Gamma))}(E)$
satisfying the Condition (*) in Lemma \ref{lem1118}
is dense in $Gr_{(a_{\sigma}:\sigma \in {\rm Rep}(\Gamma))}(E)$.
\end{lem}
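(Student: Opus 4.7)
The plan is to adapt the inductive proof of Lemma \ref{lem1118} to the equivariant setting, replacing the one-dimensional extension $E'_0 \mapsto E'_0 + \C e$ by a $\Gamma$-equivariant extension $E'_0 \mapsto E'_0 \oplus W$ with $W$ an irreducible $\Gamma$-subrepresentation of $E$ isomorphic to some $W_{\sigma_0}$. I would induct on $A := \sum_\sigma a_\sigma$; the base case $A = 0$ is trivial. At the inductive step, given a $\Gamma$-invariant $E'_0 \subset E$ of type $(a'_\sigma)$ satisfying Condition $(*)$ with $a'_{\sigma_0} < a_{\sigma_0}$, one seeks $W \cong W_{\sigma_0}$ with $W \cap E'_0 = 0$ such that $E'_0 \oplus W$ still satisfies $(*)$; by the inductive hypothesis $E'_0 \cap F(v) = 0$, this reduces to requiring $W \cap (E'_0 + F(v)) = 0$ for all $v \in V$.

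To parametrize candidate $W$'s, I would use the canonical isomorphism $E^{(\sigma_0)} \cong W_{\sigma_0} \otimes M_{\sigma_0}$ of the $\sigma_0$-isotypic component, where $M_{\sigma_0} := {\rm Hom}_\Gamma(W_{\sigma_0}, E)$ has dimension $m_{\sigma_0} > d'$. Every equivariant embedding $\phi : W_{\sigma_0} \hookrightarrow E$ is of the form $\phi(u) = u \otimes m$ for a unique $m \in M_{\sigma_0}$, so the candidate $W$'s, after accounting for the contribution of $E'_0 \cap E^{(\sigma_0)}$ and quotienting by scalars, are parametrized by a projective space of dimension at least $d' - a - 1$.

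The key irreducibility observation for bounding the bad locus: for any nonzero $y$ in an irreducible $W \cong W_{\sigma_0}$, the subspace $\C[\Gamma]\cdot y$ is a nonzero $\Gamma$-invariant subspace of $W$ and hence equals $W$. Consequently, any bad $W$ is recovered from any nonzero $y \in W \cap (E'_0 + F(v))$ via $W = \C[\Gamma] \cdot y$; moreover such $y$ must lie in $E^{(\sigma_0)}$, since $\C[\Gamma] y \cong W_{\sigma_0}$ forces $y$ to have components only in the $\sigma_0$-isotypic piece. As in the proof of Lemma \ref{lem1118}, ranging over subsets $\mathcal{L} \subseteq \{1, \ldots, l\}$ of indices $i$ with $v \in K_i$, the bad locus lies in the image of a map from an incidence variety of dimension at most $D + \dim E'_0 + \sum_i d_i$, with the Segre-variety structure of simple tensors $w \otimes m$ in $W_{\sigma_0} \otimes M_{\sigma_0}$ providing an additional dimension reduction of order $\dim W_{\sigma_0}$ in the fibers.

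The main obstacle will be the precise dimension arithmetic. Using $\dim E'_0 \le ag$ (which follows from $\sum_\sigma \dim W_\sigma \le g$, itself a consequence of $\sum_\sigma (\dim W_\sigma)^2 = g$), one obtains a bad-locus dimension bound roughly of the form $D + ag + \sum d_i$. One must verify that the hypothesis $a + g D \sum_i d_i < d'$ is precisely what makes this quantity strictly less than the parameter-space dimension $\ge d' - a - 1$, using the product structure $gD\sum d_i$ together with $D, \sum d_i \ge 1$ to absorb the linear contributions. Once this is established, the bad locus is a proper algebraic subvariety, hence nowhere dense, and a generic $\phi$ yields a $W$ that extends $E'_0$ as required, completing the induction.
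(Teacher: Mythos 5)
Your plan---adapting the induction of Lemma \ref{lem1118} to the equivariant setting by adding one irreducible $W\cong W_{\sigma_0}$ at a time and parametrizing candidates by $\mathbb{P}(M_{\sigma_0})$---is a genuinely different organization from the paper's proof. The paper does not induct on $\sum a_\sigma$ at all; instead it uses the product decomposition $Gr_{(a_\sigma)}(E)\cong\prod_\sigma Gr_{a_\sigma,m_\sigma}$ and Schur's lemma to reduce the statement, one irreducible $\sigma$ at a time, to a direct application of Lemma \ref{lem1118} inside $\C^{m_\sigma}$ with $\mathcal E_i$ replaced by $\mathcal E_i\otimes\C^{d_\sigma}$. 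The crucial step there is tensoring $F_i$ with $\C[\Gamma]$ to produce a $\Gamma$-\emph{equivariant} map $\mathcal E_i^+\to E$; only then do the isotypic pieces $F^+_\sigma(v)$ and $E_{0,\sigma}$ factor as $W_\sigma\otimes(\cdots)$, so that the condition $E_{0,\sigma}\cap F^+_\sigma(v)=0$ becomes a linear-algebra problem in $\C^{m_\sigma}$ with no trace of $\dim W_\sigma$.

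The gap in your argument is precisely the place where you invoke a ``Segre-variety dimension reduction of order $\dim W_{\sigma_0}$ in the fibers.'' The map from the Segre cone in $W_{\sigma_0}\otimes M_{\sigma_0}$ to $\mathbb{P}(M_{\sigma_0})$, $w\otimes m\mapsto[m]$, does have fibers of dimension $\dim W_{\sigma_0}$, but this only shrinks the image of a set $S$ of bad tensors by $\dim W_{\sigma_0}$ if $S$ is \emph{saturated} under that fibration (i.e.\ contains the whole $W_{\sigma_0}\otimes\C m$ whenever it contains one nonzero $w\otimes m$). The set $(E'_0+F(v))\cap E^{(\sigma_0)}$ is not $\Gamma$-invariant---$F(v)$ is not $\Gamma$-invariant, since $F_i$ is not assumed equivariant---so there is no reason for saturation, and the claimed reduction by $\dim W_{\sigma_0}$ is unjustified. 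Without it, the best a priori bound on the bad locus in $\mathbb{P}(M_{\sigma_0})$ is $1+a'_{\sigma_0}\dim W_{\sigma_0}+D+\sum d_i$, and for instance with $D=\sum d_i=1$, $a=2$ this already exceeds what the hypothesis $a+gD\sum d_i<d'$ can absorb. (Your cruder bound $D+ag+\sum d_i$ is worse still: only $E'_0\cap E^{(\sigma_0)}$, of dimension $a'_{\sigma_0}\dim W_{\sigma_0}$, enters, not all of $E'_0$.)

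You do observe that $W=\C[\Gamma]y$ whenever $0\neq y\in W$; pushed one step further this gives $W=\C[\Gamma]y\subseteq\C[\Gamma](E'_0+F(v))=E'_0+F^+(v)$ where $F^+(v)=\sum_{\gamma\in\Gamma}\gamma F(v)$. Since $E'_0+F^+(v)$ is $\Gamma$-invariant, its $\sigma_0$-isotypic piece is of tensor form $W_{\sigma_0}\otimes(\overline{E'_0}_{\sigma_0}+\overline{F^+}_{\sigma_0}(v))$, the Segre saturation \emph{does} hold, and the bad set of $[m]$'s lies in $\bigcup_v(\overline{E'_0}_{\sigma_0}+\overline{F^+}_{\sigma_0}(v))$, of dimension $\le a'_{\sigma_0}+D+d_{\sigma_0}\sum d_i$. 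This is exactly what the hypothesis $a_\sigma+Dd_\sigma\sum d_i<m_\sigma$ (a consequence of (\ref{1114ddd})) controls. So your approach can be repaired, but the repair amounts to the $\C[\Gamma]$-tensoring / Schur step of the paper's proof, which is the essential content you need to add.
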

Note we do not assume $\Gamma$ equivariance of 
$V$, $\mathcal E_i$ or $F_i$ in Lemma \ref{lemma1110}.
\begin{proof}
Let $m_{\sigma} \ge d'$ be the multiplicity of $W_{\sigma}$ in $E$.
Then there exists an obvious diffeomorphism
\begin{equation}\label{form1115555}
Gr_{(a_{\sigma}:\sigma \in {\rm Rep}(\Gamma))}(E) 
\cong 
\prod_{\sigma} Gr_{a_{\sigma},m_{\sigma}}.
\end{equation}
Here $Gr_{a_{\sigma},m_{\sigma}}$ is the Grassmanian manifold 
of all $a_{\sigma}$ dimensional subspace of $\C^{m_{\sigma}}$.
Let $\C[\Gamma]$ be the group ring of the finite group $\Gamma$.
We put:
\begin{equation}\label{form1116}
\mathcal E^+_i = \mathcal E_i \otimes_{\C} \C[\Gamma] 
\cong \bigoplus_{\sigma\in {\rm Rep}(\Gamma)}\mathcal E_i \otimes_{\C}W_{\sigma}^{d_{\sigma}}.
\end{equation}
Here $d_{\sigma}$ is the multiplicity of $W_{\sigma}$ in $\C[\Gamma]$.
Note $d_{\sigma} \le g$. (\ref{form1116}) is an isomorphism of 
$\Gamma$ equivariant vector bundles.
$F_i$ induces a $\Gamma$ equivariant map $\mathcal E^+_i \to E_0$.
It then induces  $\Gamma$ equivariant fiberwise linear 
maps $F_{i,\sigma} : \mathcal E_i \otimes_{\C}W_{\sigma}^{d_{\sigma}} \to W_{\sigma}^{m_{\sigma}}$
by decomposing into irreducible representations.
The map $F_{i,\sigma}$ can be identified with a fiberwise
$\C$-linear map $\overline F_{i,\sigma} : \mathcal E_i \otimes_{\C}\C^{d_{\sigma}} \to \C^{m_{\sigma}}$
by Schur's lemma.
(\ref{1114ddd}) implies 
$
a_{\sigma} + D d_{\sigma} \sum d_i < m_{\sigma}.
$
Therefore using (\ref{form1115555}) we apply Lemma \ref{lem1118}
for each $\sigma$ and prove Lemma \ref{lemma1110}.
\end{proof}
\begin{proof}[Proof of Lemma \ref{lem1116}]
We take
$\{{\bf p}_1,\dots,{\bf p}_{\mathscr P}\}$,
$\frak U({\bf p}_i)$, $K({\bf p}_i)$
as in Subsection \ref{sub:globalclnst}.
We also have taken $E^0_{{\bf p}_i}({\bf p}_i)$
so that the conclusion of Lemma \ref{lem1122} 
is satisfied.
Let $\frak d$ be the supremum of the 
dimension of $E^0_{{\bf p}_i}({\bf p}_i)$
and $g$ the supremum of the order of 
${\rm Aut}^+({\bf p}_i)$.
For each $i$ we take $E^+({\bf p}_i)$
in $C^{\infty}(\Sigma_{{\bf p}_i},u_{{\bf p}_i}^*TX \otimes \Lambda^{01})$
such that:
\begin{enumerate}
\item
Lemma \ref{lem1122} (1)(3) are satisfied.
\item
$E^0_{{\bf p}_i}({\bf p}_i) \subset E^+({\bf p}_i)$.
\item
For any irreducible representation $W_{\sigma}$
of ${\rm Aut}^+({\bf p}_i)$
the multiplicity of $W_{\sigma}$ in $E^+({\bf p}_i)$
is larger than $d^+$. Here $d^+$ is determined later.
\end{enumerate}
We will prove, 
by induction on $I = 1,\dots,\mathscr{P}$, that we can perturb 
$E^0_{{\bf p}_i}({\bf p}_i)$
in $E^+({\bf p}_i)$ in an arbitrary small amount 
to obtain $E^1_{{\bf p}_i}({\bf p}_i)$
so that statement ($I$) holds.
\begin{enumerate}
\item[($I$)]
For any 
$J\subseteq \{1,\dots,I\}$ and ${\bf q} \in \bigcap_{i\in J} K({\bf p}_i)$,
the sum
$\sum_{i\in J} E'_{{\bf q};{\bf p}_i}({\bf q})$ is a 
direct sum.
Here 
we define $E'_{{\bf q};{\bf p}_i}({\bf q})$ in the same way as 
$E_{{\bf q};{\bf p}_i}({\bf q})$, 
using $E^1_{{\bf p}_i}({\bf p}_i)$
instead of $E^0_{{\bf p}_i}({\bf p}_i)$.
\end{enumerate}

Suppose ($I-1$) holds.
We choose a weak stabilization data  $\vec{\frak w}_{{\bf p}_I}$
and a neighborhood $\mathscr N({\bf p}^+_I) 
\subset \mathcal X_{k+1,\ell+\ell'}(X,L;\beta)$ of ${\bf p}^+_I = {\bf p}_I \cup \vec{\frak w}_{{\bf p}_I}$.
We define obstruction bundle data $\{E_{{\bf p}_I}({\bf x})\}$ at ${\bf p}_I$ by using 
$E^0_{{\bf p}_I}({\bf p}_I)$. 
Proposition \ref{prop114}
and Lemma \ref{lem91111} then imply that 
$$
U({\bf p}^+_I) : = \{{\bf x} \in \mathscr N({\bf p}^+_I)
\mid \overline{\partial}u_{\bf x} \in E_{{\bf p}_I}({\bf x}) \}
$$
has a structure of a finite dimensional orbifold $V({\bf p}^+_I)/{\rm Aut}({\bf p}^+_I)$.\footnote{We use $E^0_{{\bf p}_I}({\bf x})$ here only 
to cut down $\mathscr N({\bf p}^+_I)$ to a finite dimensional orbifold.
So we do not need to use any particular relation of this space  $E^0_{{\bf p}_I}({\bf x})$
to the obstruction bundle data $E_{\bf p}(\bf x)$ we finally obtain.}
Let $D$ be the supremum of the dimension of  $V({\bf p}^+_I)$ for $I=1,\dots,\mathscr P$.
\par
We apply Lemma \ref{lemma1110} as follows.
Put $V = V({\bf p}^+_I)$, $E = E^+({\bf p}_I)$, $\Gamma = {\rm Aut}({\bf p}^+_I)$.
For $i \le I-1$, we define $K_i$ as the inverse image of  $K_{{\bf p}_i} \cap K_{{\bf p}_I}
\cap U({\bf p}^+_I)$ in $V({\bf p}^+_I)$ and 
take a sufficiently small neighborhood $V_i$ of $K_i$.
For $\tilde{\bf y} \in V_i$ we take
its image ${\bf y} \in U({\bf p}^+_I)$.
Put $\mathcal E_i(\tilde{\bf y}) = E^1_{{\bf p}_i}(
\frak{forget}_{\ell+\ell',\ell}({\bf y}))$ and define a vector bundle 
$
\mathcal E_i = \bigcup_{\tilde{\bf y} \in V_i} \mathcal E_i(\tilde{\bf y}) \times \{\tilde{\bf y}\}
$
on $V_i$.
We  consider an isomorphism
$$
\mathcal P_{{\bf y}} : 
L^2_m(\Sigma_{\bf y}(\vec \epsilon);u_{{\bf y}}^*TX\otimes \Lambda^{01})
\to 
L^2_m(\Sigma_{{\bf p}_I}(\vec \epsilon);u_{{\bf p}_I}^*TX\otimes \Lambda^{01})
$$
for $\tilde{\bf y} \in V_i$ with ${\bf y} = [\tilde{\bf y}]\in U({\bf p}^+_I)$ and compose $\mathcal P_{{\bf y}}$ 
with the orthogonal projection
$
\Pi : L^2_m(\Sigma_{{\bf p}_I}(\vec \epsilon);u_{{\bf p}_I}^*TX\otimes \Lambda^{01})
\to E^+({\bf p}_I) = E
$.
The restrictions to $\mathcal E_i(\tilde{\bf y})$ of the composition $
\Pi\circ\mathcal P_{{\bf y}}$
for various $\tilde{\bf y} \in V_i$
defines a $C^1$ map $F_i : \mathcal E_i \to E$.
\par
We may take $d^+$ depending only on $\mathscr P$, $g$, $\frak d$, $D$
so that (\ref{1114ddd}) is satisfied.
By Lemma \ref{lemma1110}, we obtain 
$(I)$ as an immediate consequence of $(I-1)$ and Lemma \ref{lem1118} $(*)$.
\par
The claim $(I)$, in the case $I = \mathscr P$, implies Lemma \ref{lem1116}.
\end{proof}

\section{From $C^n$ to $C^{\infty}$}
\label{sec:Cmugen}

In Sections \ref{sec;Kuracharsmooth} and \ref{sec:changesmoo}
we have constructed a Kuranishi structure of $C^n$ class for arbitrary but 
fixed $n$.
In this section we provide the way to construct one of $C^{\infty}$ class.
The argument of this section is a copy of \cite[Section 26]{foootech}.
We use Condition (3) in Definition \ref{defn8911}  in this section.

\subsection{$C^{\infty}$ structure of Kuranishi chart}
\label{sub:kurasmstr}

We study the map $({\rm Pr}^{\rm source},{\rm Pr}^{\rm map})$
and also $V_{\bf p} \subset U_{{\bf p}^+}$.
($U_{{\bf p}^+}$ is defined in Definition \ref{defn72}.
$V_{\bf p}$ is defined in Definition \ref{defn910}.)
In Section \ref{sec;Kuracharsmooth} we proved that 
the image of $U_{{\bf p}^+}$ by the map $({\rm Pr}^{\rm source},{\rm Pr}^{\rm map})$ 
is a $C^n$ submanifold if $m$ is sufficiently larger than $n$
and also the image of $V_{\bf p}$ is a $C^n$ submanifold.
We first remark the following:
\begin{lem}\label{lem121}
The images of $U_{{\bf p}^+}$ and  $V_{\bf p}$ 
by the map $({\rm Pr}^{\rm source},{\rm Pr}^{\rm map})$ are $C^{\infty}$ submanifolds 
at ${\bf p}^+$.
\end{lem}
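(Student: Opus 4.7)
The plan is to exploit the freedom to choose the Sobolev exponent $m$ arbitrarily large, together with Lemma \ref{lem108}, which asserts that the $C^n$ submanifold structure on the image of $U_{{\bf p}^+}$ (respectively $V_{\bf p}$) is independent of which $m$ is used as long as $m$ is sufficiently larger than $n$. The key technical input is the exponential decay estimate (\ref{form9224}) from \cite[Theorem 6.4]{foooexp}, which gives $C^{n'}$ bounds for every $n'\le n$ once $m$ is chosen large enough relative to $n'$. A second ingredient is elliptic regularity applied to (\ref{form9595}): since by Definition \ref{defn86} (1) elements of $E_{\bf p}({\bf p})$ are smooth and $u_{\bf p}$ itself is smooth, the image of $({\rm Pr}^{\rm source},{\rm Pr}^{\rm map})$ in fact lies in $\mathcal M^{{\rm d},\log}_{k+1,\ell+\ell'}\times \bigcap_m L^2_m(\ldots;X,L)$ near ${\bf p}^+$, so the issue is purely one of the regularity of the parametrization, not of the image being in a smaller Sobolev space.

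First I would apply Proposition \ref{prop94} for each $n$ separately with $m=m(n)$ sufficiently large, to obtain a $C^n$-submanifold structure on the image. Lemma \ref{lem108} then identifies these structures across different $m$'s, using the smooth inclusion $L^2_{m'}\hookrightarrow L^2_m$ for $m'\ge m$. Next I would use the gluing estimates (\ref{form9224}), rewritten in the logarithmic gluing coordinates $(s_j,\rho_i)$ of (\ref{form92}), to express the local parametrization of $U_{{\bf p}^+}$ at ${\bf p}^+$ as a $C^n$ perturbation of the pre-gluing map built from ${\rm Pr}^{\rm source}$ and ${\rm Pr}^{\rm map}$ on the stratum where all gluing parameters vanish; since the pre-gluing map is smooth in the $(s_j,\rho_i)$ coordinates by Lemma \ref{lem91}, and since the $C^n$ estimate is available for every $n$, the parametrization is $C^n$ at ${\bf p}^+$ for every $n$, hence $C^\infty$ at that point. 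Crucially Definition \ref{defn8911} (2) supplies smoothness of $E_{\bf p}(\cdot)$ with respect to arbitrary stabilization data, which is needed to apply the implicit function theorem at arbitrary order without a loss-of-derivative penalty.

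The statement for $V_{\bf p}$ follows from the one for $U_{{\bf p}^+}$: by Lemma \ref{lem91111}, $V_{\bf p}$ is cut out inside $U_{{\bf p}^+}$ by the transverse $C^n$ conditions (\ref{formfor911}), and at ${\bf p}^+$ each defining function ${\bf x}\mapsto u_{\bf x}({\frak z}_{{\bf x},\ell+i})={\rm Pr}^{\rm map}({\bf x})(\frak w_{{\bf p},i})$ is obtained by evaluating the smooth section ${\rm Pr}^{\rm map}$ at the \emph{fixed} source point $\frak w_{{\bf p},i}$, which is a smooth operation. Hence at ${\bf p}^+$ the defining equations for $V_{\bf p}$ are smooth and transverse, giving $V_{\bf p}$ a $C^\infty$ germ at ${\bf p}^+$.

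The main obstacle is conceptual rather than computational: having a $C^n$-submanifold structure for every $n$, in ambient spaces that vary with $n$, does not by itself produce a $C^\infty$-submanifold structure. The resolution is that Lemma \ref{lem108} furnishes a single underlying germ on which all the $C^n$ structures agree, and the exponential decay from (\ref{form9224}) shows that the local charts can be taken independently of $n$ up to reparametrizations that are $C^n$-close to the identity for every $n$; at the single point ${\bf p}^+$ the corresponding limit chart is smooth. I expect the care required in bookkeeping the dependence of $\vec\epsilon$ and $m$ on $n$ to be the most delicate part, since $\vec\epsilon$ shrinks as $n$ grows, and one genuinely obtains $C^\infty$ regularity only at the base point ${\bf p}^+$, not uniformly on a single fixed neighborhood.
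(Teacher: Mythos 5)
Your proposal takes essentially the same route as the paper: for each $n'$ choose $m'$ large enough to get a $C^{n'}$ image in the $L^2_{m'}$-ambient, then use the commutative diagram relating the $L^2_{m'}$- and $L^2_m$-spaces together with Lemma~\ref{lem108} to conclude that the image in the fixed $L^2_m$-ambient is $C^{n'}$ at ${\bf p}^+$ for every $n'$, hence $C^\infty$ there. Your observation that the neighborhood on which $C^{n'}$-regularity is obtained shrinks with $n'$, so that the conclusion holds only at the base point ${\bf p}^+$, matches the paper's explicit remark immediately after the proof.
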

\begin{proof}
It suffices to show that they are of  $C^{n'}$ class for any $n'$.
It is obvious from the construction of Subsection \ref{subsec:smoproof} that for any $n'$ we can find $m'$ such that 
for an open neighborhood $U'_{{\bf p}^+}$ of ${\bf p}^+$ in $U_{{\bf p}^+}$
and $\vec{\epsilon'}$ the next diagram commutes. 
\begin{equation}\label{diagram12-1}
\begin{CD}
U'_{{\bf p}^+}
@ >{({\rm Pr}^{\rm source},{\rm Pr}^{\rm map})}>>
\!\!\!\!\!\!\!\!\!\!\!\!\!\!
\!\!\!\!\!\!\!\!\!\!\!\!\!\!\!\!\!\!\!\!\!\!\!\!\!\!\!\!\!\!\!\!\!\!\!\!\!\!\!\!\!\!\!\!\!\!\!\! \mathcal M^{\rm d,log}_{k+1,\ell+\ell'}
\atop 
\times L^2_{m'}(\Sigma_{{\bf p} \cup \vec {\frak w}_{\bf p}}(\vec{\epsilon'}),
\partial \Sigma_{{\bf p} \cup \vec {\frak w}_{\bf p}}(\vec{\epsilon'});X,L)\\
@ VVV @VVV 
\\
U_{{\bf p}^+}
@ >{({\rm Pr}^{\rm source},{\rm Pr}^{\rm map})}>>
\!\!\!\!\!\!\!\!\!\!\!\!\!\!
\!\!\!\!\!\!\!\!\!\!\!\!\!\!\!\!\!\!\!\!\!\!\!\!\!\!\!\!\!\!\!\!\!\!\!\!\!\!\!\!\!\!\!\!\!\!\!\!
\mathcal M^{\rm d,log}_{k+1,\ell+\ell'}
\atop
\times L^2_m(\Sigma_{{\bf p} \cup \vec {\frak w}_{\bf p}}(\vec{\epsilon}),
\partial \Sigma_{{\bf p} \cup \vec {\frak w}_{\bf p}}(\vec{\epsilon});X,L)
\end{CD}
\end{equation}
where vertical arrows are inclusions.
Moreover the first horizontal arrow is of $C^{n'}$ class 
and the image of $V_{\bf p}$ by the first horizontal 
arrow is also of $C^{n'}$ class.
(See Lemma \ref{lem108}.)\footnote
{
Note the map ${\rm Pr}^{\rm map}$ depends only on the embedding 
$\Sigma_{{\bf p} \cup \vec {\frak w}_{\bf p}}(\vec{\epsilon'}) \to \Sigma_{\bf x}$.
(Here ${\bf x} \in U_{{\bf p}^+}$.) Therefore it depends only on the 
stabilization and trivialization data ${\frak W}_{\bf p}$ we use and is independent of 
the Sobolev exponent $m$ in $L^2_m$.
}
Therefore the image of the second horizontal arrow 
is of $C^{n'}$ class at the image of ${\bf p}^+$ and  that the image of $V_{\bf p}$ by the second horizontal 
arrow is also of $C^{n'}$ class  at the image of ${\bf p}^+$.
The lemma follows.
\end{proof}
Note the size of the neighborhood $U'_{{\bf p}^+}$ may become smaller 
and converge to $0$
as $m'$ goes to infinity.\footnote{This is because the size of the
domain of the convergence of the Newton's iteration scheme 
we explained in Subsection \ref{subsec:glue} may go to $0$ as $m$ goes to $\infty$.}
So the above proof of Lemma \ref{lem121} can be used to 
show the smoothness of $U_{{\bf p}^+}$ and  $V_{\bf p}$ only 
at ${\bf p}^+$.
We fix $m$,  $U_{\bf p}^+$ and $V_{\bf p}$ and will prove:
\begin{prop}\label{prop122}
The image of $V_{\bf p}$ 
by the map $({\rm Pr}^{\rm source},{\rm Pr}^{\rm map})$ is a submanifold of $C^{\infty}$ class.
\end{prop}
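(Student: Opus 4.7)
The plan is to show that at every point ${\bf r} \in V_{\bf p}$ and for every $n'$, the image $({\rm Pr}^{\rm source},{\rm Pr}^{\rm map})(V_{\bf p})$ is of $C^{n'}$ class at the image of ${\bf r}$, by running the argument of Lemma \ref{lem121} with ${\bf r}$ in place of ${\bf p}^+$. Such an ${\bf r}$ is represented by $((\Sigma_{\bf r},\vec z_{\bf r},\vec{\frak z}_{\bf r}),u_{\bf r})$ with $\overline\partial u_{\bf r} \in E_{\bf p}({\bf r})$; by elliptic regularity (bootstrapping for the alternating-method solution of Subsection \ref{subsec:glue}) $u_{\bf r}$ is smooth, though not pseudo holomorphic in general. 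This is precisely the situation envisaged in Remark \ref{rem83} and Definition \ref{defn8911} (3).

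First I would fix ${\bf r} \in V_{\bf p}$ and an arbitrary $n'$, and choose strong stabilization data $(\frak W_{\bf r},\vec{\mathcal N}_{\bf r})$ at ${\bf r}$ (cf.\ Remark \ref{rem83}). Then I would build the ${\bf r}$-centered analogues of $({\rm Pr}^{\rm source},{\rm Pr}^{\rm map})$ in Subsection \ref{subsec:glue}: they send a point ${\bf x}$ sufficiently close to ${\bf r}$ into the product $\mathcal M^{{\rm d},\log}_{k+1,\ell+\ell'} \times L^2_{m'}(\Sigma_{{\bf r} \cup \vec{\frak w}_{\bf r}}(\vec{\epsilon'}),\partial\Sigma_{{\bf r} \cup \vec{\frak w}_{\bf r}}(\vec{\epsilon'});X,L)$. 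Definition \ref{defn8911} (3) says exactly that the obstruction bundle data $E_{\bf p}(\cdot)$ depends on ${\bf x}$ in a $C^{n'}$ fashion in these ${\bf r}$-centered coordinates for any choice of $\frak W_{\bf r}$, once $m'$ is large enough compared with $n'$ and $\vec{\epsilon'}$ is small enough. Consequently the gluing analysis of Section \ref{sec;Kuracharsmooth} (Proposition \ref{prop94} and its proof) goes through with ${\bf r}$ in place of ${\bf p}$, and the image of $V_{\bf p}$ in the ${\bf r}$-centered chart is a $C^{n'}$ submanifold on a neighborhood of ${\bf r}$.

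Next I would compare the two charts. Lemma \ref{lem1244}, together with its extension to the non pseudo holomorphic base point ${\bf r}$ given in Remark \ref{lem1015}, produces, after shrinking $U'_{{\bf r}^+}$ and $\vec{\epsilon'}$, the ${\bf r}$-analogue of Diagram (\ref{diagram12-1}): the two embeddings $({\rm Pr}^{\rm source}_{\bf p},{\rm Pr}^{\rm map}_{\bf p})$ (at the fixed Sobolev exponent $m$) and $({\rm Pr}^{\rm source}_{\bf r},{\rm Pr}^{\rm map}_{\bf r})$ (at the higher exponent $m'$) are intertwined by $(\frak a,u') \mapsto (\frak a,u' \circ \phi)$ for a smooth embedding $\phi : \Sigma_{{\bf p} \cup \vec{\frak w}_{\bf p}}(\vec{\epsilon}) \to \Sigma_{{\bf r} \cup \vec{\frak w}_{\bf r}}(\vec{\epsilon'})$. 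By Lemma \ref{lem109} this intertwiner is a $C^{n'}$ map between the ambient Hilbert manifolds. Composing, the $C^{n'}$ submanifold structure in the ${\bf r}$-centered chart is pushed forward to a $C^{n'}$ submanifold structure in the ${\bf p}$-centered chart at the image of ${\bf r}$. Since $n'$ and ${\bf r}$ are arbitrary, the image is $C^{\infty}$.

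The main obstacle is keeping book on how the various constants degenerate: the neighborhood $U'_{{\bf r}^+}$ and the gluing radius $\vec{\epsilon'}$ both shrink as $n'$ (and hence $m'$) grows, as already noted after the proof of Lemma \ref{lem121}. This is harmless for the present proposition because $C^{\infty}$ at a point is a local and pointwise statement, so for each ${\bf r}$ we are only required to reach arbitrary finite $n'$ in some (possibly tiny, $n'$-dependent) neighborhood of ${\bf r}$. One still needs the analogue of Lemma \ref{lem108} to check that the $C^{n'}$ structures on the image obtained with different Sobolev exponents $m'$ agree wherever both are defined; this is immediate because elements of the image solve (\ref{form9595}) and are therefore automatically of $C^{\infty}$ class, so the identity on the image is the restriction of a continuous embedding of the ambient Hilbert spaces.
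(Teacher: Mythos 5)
Your proposal is correct and follows essentially the same route as the paper: apply Lemma \ref{lem1244} at the (smooth but not necessarily pseudo holomorphic) point ${\bf r}$ via Remark \ref{lem1015}, use Definition \ref{defn8911} (3) together with the $C^{n'}$ chart construction centered at ${\bf r}$, and transfer the $C^{n'}$ property back to the ${\bf p}$-centered presentation through the commutative diagram whose intertwiner is $(\frak a,u')\mapsto(\frak a,u'\circ\phi)$, concluding since $n'$ and ${\bf r}$ are arbitrary. Your added bookkeeping about the $n'$-dependent shrinking of neighborhoods and the consistency of structures for different Sobolev exponents matches the paper's remarks after Lemma \ref{lem121} and in Lemma \ref{lem108}.
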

\begin{cor}\label{cor123}
The Kuranishi chart $(U_{\bf p},\mathcal E_{\bf p},\frak s_{\bf p},\psi_{\bf p})$
we produced in Section \ref{sec;Kuracharsmooth} is of $C^{\infty}$ class.
\end{cor}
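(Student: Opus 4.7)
The plan is to take Proposition \ref{prop122} as the main input and propagate the $C^{\infty}$ conclusion through the standard construction steps of the Kuranishi chart. By Proposition \ref{prop122}, the image $({\rm Pr}^{\rm source},{\rm Pr}^{\rm map})(V_{\bf p})$ is a $C^{\infty}$ submanifold of $\mathcal M^{\rm d,log}_{k+1,\ell+\ell'}\times L^2_m(\Sigma_{{\bf p}^+}(\vec{\epsilon}),\partial\Sigma_{{\bf p}^+}(\vec{\epsilon});X,L)$. First I would observe that by the $C^n$ version of Proposition \ref{prop94} applied for arbitrary $n$ (valid by taking $m$ larger than any fixed $n$), the map $({\rm Pr}^{\rm source},{\rm Pr}^{\rm map})$ is a $C^n$ embedding for every $n$, so combined with Lemma \ref{lem121} and Proposition \ref{prop122} it induces a $C^{\infty}$ manifold structure on $V_{\bf p}$ itself.

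Next I would pass to the quotient. By Lemma \ref{lem9555} the action of ${\rm Aut}^+({\bf p})$ on $V_{\bf p}$ is $C^n$ for every $n$ and hence $C^{\infty}$, so taking the quotient by the finite group ${\rm Aut}({\bf p})$ gives $U_{\bf p}=V_{\bf p}/{\rm Aut}({\bf p})$ the structure of an effective $C^{\infty}$ orbifold (effectiveness being guaranteed by the second half of Condition \ref{conds55}). For the obstruction bundle $\mathcal E_{\bf p}$, I would argue that the subbundle $\tilde{\mathcal E}_{\bf p}\subset {\rm Pr}^{\rm map}{}^{*}(L^2_m(u^*TX\otimes\Lambda^{01}))$ whose fiber at ${\bf x}$ is $E_{\bf p}({\bf x})$ is $C^n$ for every $n$ by Definition \ref{defn8911} (3) (applied to general ${\bf r}\in\mathcal X_{k+1,\ell}(X,L,J;\beta)$ on $V_{\bf p}$), so it is a $C^{\infty}$ subbundle. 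Dividing by ${\rm Aut}({\bf p})$ produces $\mathcal E_{\bf p}$ as a $C^{\infty}$ orbi-bundle.

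Finally, smoothness of the Kuranishi map $\frak s_{\bf p}({\bf x})=\overline\partial u_{\bf x}$ follows because $\overline\partial u_{\bf x}$ is recovered from ${\rm Pr}^{\rm map}({\bf x})$ by a $C^{\infty}$ bundle operation on the trivialized neck region (using the ambient connection of Remark \ref{shitu84} and the trivialization data $\frak W_{\bf p}$), and the parametrization $\psi_{\bf p}$ and its compatibility are unaffected. I expect the main obstacle to be hidden inside Proposition \ref{prop122}: the argument of Lemma \ref{lem121} only yields $C^{n'}$ smoothness \emph{at the point} ${\bf p}^+$ because the open neighborhood $U'_{{\bf p}^+}$ of ${\bf p}^+$ may shrink to $\{{\bf p}^+\}$ as $m'\to\infty$ (the Newton iteration domain in Subsection \ref{subsec:glue} degenerates). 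Thus the real work is in Proposition \ref{prop122}: one must, for each point ${\bf q}\in V_{\bf p}$, re-center the construction at ${\bf q}$ using Remark \ref{lem1015} and Lemma \ref{lem1244} for non-pseudo-holomorphic ${\bf r}$, apply the $C^{n'}$ smoothness at the center together with Definition \ref{defn8911} (3) and the commutative diagrams (\ref{diagramCn}) and (\ref{diagram12-2}) to transfer $C^{n'}$ smoothness at ${\bf q}$ back to the fixed ambient space, and then let $n'\to\infty$. Once Proposition \ref{prop122} is established, the corollary itself is essentially bookkeeping.
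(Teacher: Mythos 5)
Your proof is correct and follows the paper's own route: the paper disposes of Corollary~\ref{cor123} with the single sentence ``It is easy to see that Proposition~\ref{prop122} implies Corollary~\ref{cor123},'' and your argument supplies exactly the expected bookkeeping (pulling back the $C^{\infty}$ structure on the image along $({\rm Pr}^{\rm source},{\rm Pr}^{\rm map})$ to $V_{\bf p}$ via Lemma~\ref{lem108}, passing to the quotient by ${\rm Aut}({\bf p})$ using Lemma~\ref{lem9555} and Condition~\ref{conds55}, upgrading the $C^n$ subbundle from the end of Subsection~\ref{subsec:Cnkurastru} to $C^{\infty}$ by the same re-centering argument, and noting smoothness of $\frak s_{\bf p}$). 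Your closing observation that the real content sits in Proposition~\ref{prop122} rather than in the corollary itself is accurate and matches the paper's organization.
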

It is easy to see that Proposition \ref{prop122} 
implies Corollary \ref{cor123}.
\begin{proof}[Proof of Proposition \ref{prop122}]
Let ${\bf r}^+ \in V_{\bf p} \subset U_{{\bf p}^+}$.
We put 
$
{\bf r}^+ = {\bf r} \cup \vec{\frak w}_{\bf r}
$
where
$
{\bf r} = ((\Sigma_{\bf r},\vec z_{\bf r},\vec{\frak z}_{\bf r}),u_{\bf r}).
$
We apply Lemma \ref{lem1244} with ${\bf q}$ replaced by ${\bf r}$
(See Remark \ref{lem1015}.) and obtain $\frak W_{\bf r} = (\vec {\frak w}_{\bf r},\{\varphi_{{\bf r},a,i}^{\rm s}\}, \{\varphi_{{\bf r},a,j}^{\rm d}\},
\{\psi_{{\bf r},a}\})$. (See Remark \ref{rem83}.)
\par
We write ${\rm Pr}^{\rm map}_{\bf p}$ etc. in place of 
${\rm Pr}^{\rm map}$ hereafter.
We recall that the map ${\rm Pr}^{\rm map}_{\bf p}$ 
(resp. ${\rm Pr}^{\rm map}_{\bf r}$) depends 
not only on the weak stabilization data $\vec{\frak w}_{\bf p}$
(resp. $\vec{\frak w}_{\bf r}$)
but also on the stabilization and trivialization data, 
$\frak W_{\bf p} = (\vec {\frak w}_{\bf p},\{\varphi_{{\bf p},a,i}^{\rm s}\}, \{\varphi_{{\bf p},a,j}^{\rm d}\},
\{\phi_{{\bf p},a}\})$
(resp.
$\frak W_{\bf r} = (\vec {\frak w}_{\bf r},\{\varphi_{{\bf r},a,i}^{\rm s}\}, \{\varphi_{{\bf r},a,j}^{\rm d}\},
\{\psi_{{\bf r},a}\})$).
\par
We consider Diagram (\ref{diagram12-2})
with ${\bf q}$ replaced by ${\bf r}$.
For each $n'$ we can choose $m'$ such that the image of
the first horizontal arrow
is of $C^{n'}$ class.
(Here we use Definition \ref{defn8911} (3) for ${\bf r}$ 
and apply the $C^{n'}$ version of Lemma \ref{lem73} (which is proved in Subsection 
\ref{subsec:Cnkurastru}) at ${\bf r}$.)
The right vertical arrow is smooth.
In fact this map is $f \mapsto f\circ \phi$ for an open smooth 
embedding $\phi$ mentioned in Lemma \ref{lem1244}.
\par
Therefore the commutativity of Diagram (\ref{diagram12-2}) implies that the image of 
$({\rm Pr}^{\rm source},{\rm Pr}^{\rm map}_{\bf p})$ in Diagram (\ref{diagram12-2})
is of $C^{n'}$ class at ${\bf r}^+$.
We can also prove that the image of $V_{\bf p}$ under the map $({\rm Pr}^{\rm source},{\rm Pr}^{\rm map}_{\bf p})$ 
is of $C^{n'}$ class at ${\bf r}^+$.
Since $n'$ and ${\bf r}^+$ are arbitrary, Proposition \ref{prop122} follows.
The proof of Proposition \ref{prop122} 
and Lemma \ref{lem73} (the $C^{\infty}$ version) are now complete.
\end{proof}

\subsection{Smoothness of coordinate change}
\label{sub:smcochange}

In this subsection we prove that the coordinate change 
we produced in Section \ref{sec:changesmoo} 
is of $C^{\infty}$ class with respect to the 
$C^{\infty}$ structure we defined in the last 
subsection.
\par
We first consider the case ${\bf q} = {\bf p}$.
We take two strong stabilization data
$(\frak W_{\bf p}^{j},\vec{\mathcal N}_{\bf p}^j)$ $(j=1,2)$
at ${\bf p}$. Moreover we take two obstruction bundle data 
$\{E_{\bf p}^j(\bf x)\}$ $(j=1,2)$ at ${\bf p}$.
We obtain two Kuranishi charts $(U^j_{\bf p},\mathcal E^j_{\bf p},\frak s^j_{\bf p},\psi^j_{\bf p})$
for $j=1,2$ using them.
We assume $E_{\bf p}^1({\bf x}) \subseteq E_{{\bf p}}^2({\bf x})$.
\par
Then by the construction of Subsection \ref{subsec:cchangeconst}
we obtain a coordinate change $(\varphi_{21},\hat{\varphi}_{21})$,
where $\varphi_{21} : U^{21}_{\bf p} \to U^{2}_{\bf p}$ 
is an embedding of orbifolds from open subset $U^{21}_{\bf p}$ 
of $U^{1}_{\bf p}$. The map $\hat{\varphi}_{21}$ is an 
embedding of orbibundles $\mathcal E^1_{\bf p}\vert_{U^{21}_{\bf p}} \to \mathcal E^2_{\bf p}$.

\begin{lem}\label{lem126}
$\varphi_{21}$, $\hat{\varphi}_{21}$ are of $C^{\infty}$ class at ${\bf p}$.
\end{lem}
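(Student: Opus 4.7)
The plan is to reduce smoothness of $(\varphi_{21},\widehat{\varphi}_{21})$ at ${\bf p}$ to $C^{n'}$-regularity at ${\bf p}$ for every $n'\in\N$, following the bookkeeping template already used in Subsection \ref{sub:kurasmstr} to promote the Kuranishi chart itself from $C^n$ to $C^\infty$. Smoothness at the single point ${\bf p}$ then follows since $n'$ is arbitrary.

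First I would reproduce the commutative diagram (\ref{diagramCn}) in the present setting, taking ${\bf q}={\bf p}$ but using the two distinct strong stabilization data on the two rows: the top row uses $(\frak W^1_{\bf p},\vec{\mathcal N}^1_{\bf p})$, a Sobolev exponent $m_2$, and a small radius $\vec{\epsilon'}$, while the bottom row uses $(\frak W^2_{\bf p},\vec{\mathcal N}^2_{\bf p})$, exponent $m_1$, and radius $\vec{\epsilon}$. The horizontal arrows are the embeddings $({\rm Pr}^{\rm source},{\rm Pr}^{\rm map}_j)$ of Proposition \ref{prop94}, the left vertical is $\varphi_{21}$, and the right vertical is built from the maps $\Xi_{{\bf p},{\bf p}}$ and $\Psi_{{\bf p},{\bf p}}$ of Definitions \ref{defn102} and \ref{defn105} applied to two different strong stabilization data at the \emph{same} underlying point ${\bf p}$. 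The commutativity is formal from the definitions, exactly as in Subsection \ref{subsec:smoproof}, and the pseudo-holomorphicity of ${\bf p}$ plays no role in this diagram chase.

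Next, given $n'$, I would choose $m_1$ much larger than $n'$ and $m_2$ much larger than $m_1$ and invoke Proposition \ref{prop103}, Lemma \ref{lem106}, together with Lemma \ref{lem109}, to conclude that the right vertical arrow is of $C^{n'}$ class. Combined with Proposition \ref{prop94} for the two horizontal arrows and the argument in Subsection \ref{subsec:smoproof}, this forces $\varphi_{21}$ to be $C^{n'}$ at ${\bf p}$ with respect to the $L^2_{m_j}$-based $C^{n'}$ structures on $U^j_{\bf p}$. The bundle map $\widehat{\varphi}_{21}$ is handled analogously: each $\mathcal E^j_{\bf p}$ is the pull-back of a tautological $L^2_{m_j}$-bundle by the $C^{n'}$ embedding $({\rm Pr}^{\rm source},{\rm Pr}^{\rm map}_j)$, and the inclusion $E^1_{\bf p}({\bf x})\subseteq E^2_{\bf p}({\bf x})$ combined with Lemma \ref{conds55lem} and Definition \ref{defn8911} (3) (applied to each $E^j_{\bf p}$) makes the bundle side fit into the same diagram.

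Finally I would appeal to Lemma \ref{lem121}, applied separately with the data $(\frak W^1_{\bf p},\vec{\mathcal N}^1_{\bf p},E^1_{\bf p})$ and $(\frak W^2_{\bf p},\vec{\mathcal N}^2_{\bf p},E^2_{\bf p})$, to conclude that the $C^\infty$ structures on $U^1_{\bf p}$ and $U^2_{\bf p}$ coming from Proposition \ref{prop122} agree at ${\bf p}$ with the $C^{n'}$ structures coming from the $L^2_{m_j}$-model for arbitrarily large $m_j$. Consequently $\varphi_{21}$ and $\widehat{\varphi}_{21}$ are $C^{n'}$ at ${\bf p}$ in the $C^\infty$ atlases, and letting $n'\to\infty$ yields smoothness at ${\bf p}$. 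The main obstacle is purely bookkeeping: the neighbourhood of ${\bf p}$ on which the $L^2_{m_j}$-description is valid shrinks as $m_j$ grows (Newton iteration has decreasing domain of convergence); however, since smoothness is asserted only at the single point ${\bf p}$, free shrinkage is allowed, and the transversality condition (\ref{form3.1111}) of Definition \ref{defKchart} (5), being an open condition already verified at ${\bf p}$, is preserved under this restriction.
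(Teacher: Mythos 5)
Your proposal is correct and follows essentially the same route as the paper: the paper likewise proves $C^{n'}$-regularity of $\varphi_{21}$, $\hat{\varphi}_{21}$ near ${\bf p}$ for every $n'$ by rerunning the argument of Subsection \ref{subsec:smoproof} with $L^2_{m'}$ for $m'$ large (as in Lemma \ref{lem121}), accepting that the neighborhood shrinks with $m'$, and then lets $n'\to\infty$ to get smoothness at the single point ${\bf p}$. Your extra bookkeeping (the explicit diagram with the two strong stabilization data, Proposition \ref{prop103}, Lemmas \ref{lem106}, \ref{lem109}, \ref{lem108}) is exactly what the paper's citation of Subsection \ref{subsec:smoproof} encapsulates.
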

\begin{proof}
The proof is similar to the proof of Lemma \ref{lem121}.
For any $n'$ we can use $L^2_{m'}$ space 
to show that $\varphi_{21}$, $\hat{\varphi}_{21}$ are of $C^{n'}$ class
on $U^{21}_{\bf p} \cap U'_{\bf p}$, where $U'_{\bf p}$ is a neighborhood 
of ${\bf p}$ in  $U^{1}_{\bf p}$ and depends on $m'$,$n'$.
This is a consequence of Subsection \ref{subsec:smoproof}.
Since $n'$ is arbitrary $\varphi_{21}$ is of $C^{\infty}$ class at ${\bf p}$.
The proof for $\hat{\varphi}_{21}$ is the same.
\end{proof}

Suppose we are given obstruction bundle data $\{E_{\bf p}({\bf x}) \mid {\bf p},
{\bf x}\}$ and are in Situation \ref{soti1011}.
We also take local transversals 
$\vec{\mathcal N_{\bf p}}$, $\vec{\mathcal N_{\bf q}}$
such that $(\frak W_{\bf p},\vec{\mathcal N}_{\bf p})$,
$(\frak W_{\bf q},\vec{\mathcal N}_{\bf q})$ are
strong obstruction bundle data.
\par
We then obtain Kuranishi charts 
$(U_{\bf p},\mathcal E_{\bf p},\frak s_{\bf p},\psi_{\bf p})$
(resp. $(U_{\bf q},\mathcal E_{\bf q},\frak s_{\bf q},\psi_{\bf q})$)
at ${\bf p}$ (resp. ${\bf q}$).
We assume ${\bf q}$ is close to ${\bf p}$.
Then by the construction of Subsection \ref{subsec:cchangeconst}
we obtain a coordinate change $(\varphi_{{\bf p}{\bf q}},\hat{\varphi}_{{\bf p}{\bf q}})$
of $C^n$ class.
\begin{prop}\label{prop12777}
$(\varphi_{{\bf p}{\bf q}},\hat{\varphi}_{{\bf p}{\bf q}})$ is 
of $C^{\infty}$ class.
\end{prop}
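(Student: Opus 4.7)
The plan is to reduce Proposition \ref{prop12777} to the point-by-point smoothness argument of Proposition \ref{prop122} combined with Lemma \ref{lem126}. Fix a point $\mathbf r^+ = \mathbf r\cup {}_{\mathbf q}\vec{\frak w}_{\mathbf r}\in V_{\mathbf q}$ lying in the domain of $\varphi_{\mathbf p\mathbf q}$, and fix $n'\in\N$. It suffices to prove that $\varphi_{\mathbf p\mathbf q}$ (and similarly $\widehat\varphi_{\mathbf p\mathbf q}$) is of class $C^{n'}$ on some neighborhood of $\mathbf r^+$; since $\mathbf r^+\in V_{\mathbf q}$ and $n'$ are arbitrary this gives $C^\infty$. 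Note that $u_{\mathbf r}$ is smooth but need not be pseudo-holomorphic, so by Remark \ref{lem1015} we may still select stabilization and trivialization data $\frak W_{\mathbf r}$ at $\mathbf r$.

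First I would apply Lemma \ref{lem1244} (as justified by Remark \ref{lem1015}) twice with $\mathbf q$ there replaced by $\mathbf r$: once paired with $\mathbf p$ and once paired with $\mathbf q$. For any prescribed $n'$ this yields an $m'$, a neighborhood $U'_{\mathbf p^+}\subset U_{\mathbf p^+}$ of $\mathbf r^+$ and a neighborhood $U'_{\mathbf q^+}\subset U_{\mathbf q^+}$ of $\mathbf r^+$, together with smooth open embeddings $\phi_{\mathbf p}:\Sigma_{\mathbf p\cup\vec{\frak w}_{\mathbf p}}(\vec\epsilon)\to\Sigma_{\mathbf r\cup\vec{\frak w}_{\mathbf r}}(\vec\epsilon')$ and $\phi_{\mathbf q}:\Sigma_{\mathbf q\cup\vec{\frak w}_{\mathbf q}}(\vec\epsilon)\to\Sigma_{\mathbf r\cup\vec{\frak w}_{\mathbf r}}(\vec\epsilon')$ making the corresponding copies of Diagram (\ref{diagram12-2}) commute. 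Concretely, on $U'_{\mathbf p^+}$ one has $\mathrm{Pr}^{\mathrm{map}}_{\mathbf p}(\mathbf x)=\mathrm{Pr}^{\mathrm{map}}_{\mathbf r}(\mathbf x)\circ\phi_{\mathbf p}$ with values in $L^2_{m'}$ on $\Sigma_{\mathbf r\cup\vec{\frak w}_{\mathbf r}}(\vec\epsilon')$, and similarly for $\mathbf q$; moreover the target horizontal maps for $\mathbf r$ give the $C^{n'}$ chart of Proposition \ref{prop122} in a neighborhood of $\mathbf r^+$.

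Next I would rewrite Diagram (\ref{diagramCn}) by intercalating this common intermediate space. The key point is that the left vertical arrow $\varphi_{\mathbf p\mathbf q}$ is characterized set-theoretically (Remark \ref{lem710}(1)) and therefore factors, on a neighborhood of $\mathbf r^+$, as the composition of: (i) the $C^{n'}$ chart for $U_{\mathbf q^+}$ at $\mathbf r^+$ landing in $\mathcal M^{\mathrm{d},\log}_{k+1,\ell+\ell''}\times L^2_{m'}(\Sigma_{\mathbf r^+}(\vec\epsilon');X,L)$, (ii) the right vertical map $(\Xi_{\mathbf p,\mathbf q},\psi)$ of Diagram (\ref{diagramCn}), which by Proposition \ref{prop103}, Lemma \ref{lem106} and their proof in Subsection \ref{subsec:techlemproof}—all of which only require $m'$ large compared to $n'$—is of class $C^{n'}$, and (iii) the inverse of the corresponding $C^{n'}$ chart at $\mathbf p^+$, composed with the genuinely smooth pullbacks $f\mapsto f\circ\phi_{\mathbf p}$ and $f\mapsto f\circ\phi_{\mathbf q}$ produced by Lemma \ref{lem1244}. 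Hence $\varphi_{\mathbf p\mathbf q}$ is $C^{n'}$ near $\mathbf r^+$. The smoothness of $\widehat\varphi_{\mathbf p\mathbf q}$ follows by applying the same argument to the associated map on the total spaces of the obstruction bundles, using Definition \ref{defn8911}(3) (which was invoked also in the proof of Proposition \ref{prop122}).

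The main obstacle, as in Proposition \ref{prop122}, is that the size of the neighborhood on which the $C^{n'}$ refinement is valid shrinks with $m'$, so we cannot obtain a single $C^\infty$ realization globally on $U_{\mathbf p\mathbf q}$. The circumvention is precisely the point-by-point localization at $\mathbf r^+$ using freshly chosen $\frak W_{\mathbf r}$ adapted to $\mathbf r$: on each such neighborhood all three factors above can be made of $C^{n'}$ class for any preassigned $n'$, and letting $n'\to\infty$ yields $C^\infty$ regularity at $\mathbf r^+$. Varying $\mathbf r^+\in V_{\mathbf q}$ completes the proof.
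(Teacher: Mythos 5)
Your strategy is the same as the paper's: localize at $\mathbf r$, invoke Lemma \ref{lem1244} (justified by Remark \ref{lem1015}) to obtain, for any $n'$, fresh $L^2_{m'}$--charts near $\mathbf r^+$, and conclude pointwise $C^\infty$. However, there is a genuine gap in your step (ii). Applying Lemma \ref{lem1244} once with the pair $(\mathbf p,\mathbf r)$ and once with $(\mathbf q,\mathbf r)$ produces \emph{two different} stabilization and trivialization data at $\mathbf r$ (the paper's ${}_{\mathbf p}\frak W_{\mathbf r}$ and ${}_{\mathbf q}\frak W_{\mathbf r}$), hence two different $m'$-charts centered at $\mathbf r^+$. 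Your ``common intermediate space'' is therefore not automatic, and the chart representation of $\varphi_{\mathbf p\mathbf q}$ near $\mathbf r^+$ in those fresh charts is the transition map \emph{between the ${}_{\mathbf p}$-adapted and ${}_{\mathbf q}$-adapted charts at $\mathbf r$}, not the map $(\Xi_{\mathbf p,\mathbf q},\psi)$ of Diagram (\ref{diagramCn}). The latter is tied to the fixed Sobolev indices $m_1,m_2$ and to data based at $\mathbf q^+$; its $C^{n'}$ property for $m'$ large is only directly available near $\mathbf q^+$, not near $\mathbf r^+$, which is exactly the shrinking-neighborhood obstruction you correctly identify but do not actually circumvent at that step.

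The paper closes this gap in two moves that your proposal omits. First, it verifies that the fresh data at $\mathbf r$ really give Kuranishi charts $({}_{\mathbf p}U_{\mathbf r},\dots)$ and $({}_{\mathbf q}U_{\mathbf r},\dots)$: this is not automatic, because the transversality step in Lemma \ref{lem91111} used $\overline\partial u_{\mathbf p}=0$, which fails for the non-pseudoholomorphic $u_{\mathbf r}$, and a separate argument (exploiting closeness of $\mathbf r^+$ to $\mathbf p^+$ and using $\mathcal N_{\mathbf r,i}=\mathcal N_{\mathbf p,i}$) is needed. Second, it assembles the commutative square (\ref{diagram12-4}), so that $\varphi_{\mathbf p\mathbf q}$ is recovered as (bottom horizontal) $\circ$ (left vertical) $\circ$ (top horizontal)$^{-1}$: the horizontal arrows are smooth open embeddings by Proposition \ref{prop122}, while the left vertical arrow — the transition between ${}_{\mathbf q}U'_{\mathbf r}$ and ${}_{\mathbf p}U_{\mathbf r}$ — is handled by the variant of Lemma \ref{lem126} in which the base map need not be pseudo holomorphic. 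This middle comparison between the two $\mathbf r$-centered charts is the missing ingredient in your factorization; once you supply it, you will essentially have reproduced the paper's proof.
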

\begin{proof}
Let ${\bf r} \in U_{{\bf p}{\bf q}}$.
We will prove $\varphi_{{\bf p}{\bf q}}$ is of $C^{\infty}$ class at ${\bf r}$.
\par
We define strong stabilization data $({}_{\bf p}\frak W_{\bf r},{}_{\bf p}\vec{\mathcal N}_{\bf r})$
at ${\bf r}$ as follows.
We first take $_{\bf p}\vec{\frak w}_{\bf r}$ so that Condition \ref{conds102}
(with ${\bf x}$ replaced by ${\bf r}$) is satisfied.
We then 
apply Sublemma \ref{sublem125} to obtain analytic families of coordinates 
and smooth local trivializations. We thus obtain 
${}_{\bf p}\frak W_{\bf r}$. We put ${}_{\bf p}{\mathcal N}_{{\bf r},i} = {\mathcal N}_{{\bf p},i}$.
We define $({}_{\bf q}\frak W_{\bf r},{}_{\bf q}\vec{\mathcal N}_{\bf r})$ in the same 
way replacing ${\bf p}$ by ${\bf q}$.
We also put ${}_{\bf p}E_{\bf r}({\bf x}) = E_{\bf p}({\bf x})$
and ${}_{\bf q}E_{\bf r}({\bf x}) = E_{\bf q}({\bf x})$.

\begin{lem}
Using $({}_{\bf p}\frak W_{\bf r},{}_{\bf p}\vec{\mathcal N}_{\bf r})$
and ${}_{\bf p}E_{\bf r}({\bf x}) = E_{\bf p}({\bf x})$
(resp. $({}_{\bf q}\frak W_{\bf r},{}_{\bf q}\vec{\mathcal N}_{\bf r})$
and ${}_{\bf q}E_{\bf r}({\bf x}) = E_{\bf q}({\bf x})$)
we can construct
$({}_{\bf p}U_{\bf r},{}_{\bf p}\mathcal E_{\bf r},{}_{\bf p}\frak s_{\bf r},{}_{\bf p}\psi_{\bf r})$
(resp. $({}_{\bf q}U_{\bf r},{}_{\bf q}\mathcal E_{\bf r},{}_{\bf q}\frak s_{\bf r},{}_{\bf q}\psi_{\bf r})$), 
which is a Kuranishi chart at ${\bf r}$.
\end{lem}
\begin{proof}
We have completed the construction of smooth Kuranishi charts in the last 
subsection. The only difference here is 
the fact that ${\bf r}$ may not be a point 
of the moduli space $\mathcal M_{k+1,\ell}(X,L;\beta)$.
In other words, $u_{\bf r}$ may not be pseudo holomorphic. On the other hand, it is smooth.
The construction of a Kuranishi chart goes through 
\footnote{We use Definition \ref{defn8911} (3) 
in place of Definition \ref{defn8911} (2).} in this case except the following point.
During the proof of Lemma \ref{lem91111} we use the
fact that $\overline\partial u_{\bf p} = 0$.
\par
However we can prove the conclusion of Lemma \ref{lem91111}
in our situation as follows.
\par
We used $\overline\partial u_{\bf p} = 0$ to prove 
that the map ${\bf x} \mapsto u_{\bf x}({\frak z}_{{\bf x},\ell+i})$
(See (\ref{formfor911})), which is a map from $U_{{\bf p}^+}$ to $X$, is 
transversal to $\mathcal N_{{\bf p},i}$ at ${\bf p}^+$.
We prove that the same map ${}_{\bf p}U_{{\bf r}^+} \to X$
is 
transversal to ${}_{\bf p}\mathcal N_{{\bf r},i}$ at ${\bf r}^+$ as 
follows.
\par
By our choice of ${}_{\bf p}\frak W_{\bf r}$
and ${}_{\bf p}E_{\bf r}(\bf x)$, the set 
${}_{\bf p}U_{{\bf r}^+}$ is a neighborhood of ${\bf r}^+ = {\bf r}\cup {}_{\bf p}\vec{\frak w}_{\bf r}$ 
in $U_{{\bf p}^+}$.
Moreover ${}_{\bf p}\mathcal N_{{\bf r},i} = \mathcal N_{{\bf p},i}$.
Therefore we use the fact that 
${\bf x} \mapsto u_{\bf x}({\frak z}_{{\bf x},\ell+i})$ is 
transversal to $\mathcal N_{{\bf p},i}$ at ${\bf p}^+$
and ${\bf r}^+$ can be chosen to be close to ${\bf p}^+$
to show the required transversality at ${\bf r}^+$.
The case when we replace ${\bf p}$ by ${\bf q}$ is the same.
\end{proof}
We now consider the following commutative 
diagram.
\begin{equation}\label{diagram12-4}
\begin{CD}
{}_{\bf q}U'_{\bf r}
@ >>>
U_{{\bf p}{\bf q}} \\
@ VVV 
@ VVV 
\\
{}_{\bf p}U_{\bf r}
@ >>>
U_{{\bf p}}
\end{CD}
\end{equation}
Here ${}_{\bf q}U'_{\bf r}$ is a small neighborhood 
of ${\bf r}$ in ${}_{\bf q}U_{\bf r}$.
All the arrows are coordinate changes.
The commutativity of the diagram follows from Lemma \ref{lem79}.
\par
We first observe that two horizontal arrows are 
smooth at ${\bf r}$. This is the consequence of our 
choice of $({}_{\bf p}\frak W_{\bf r},{}_{\bf p}\frak W_{\bf r})$ and 
$({}_{\bf q}\frak W_{\bf r},{}_{\bf q}\frak W_{\bf r})$
(and of ${}_{\bf p}E_{\bf r}$ and ${}_{\bf q}E_{\bf r}$).
In other words it is nothing but Proposition \ref{prop122} and its 
proof.
Moreover they are open embeddings.
\par
We next observe that the left vertical arrow is smooth at ${\bf r}^+$.
This is a variant of Lemma \ref{lem126}
where $u_{\bf r}$ (which corresponds to $u_{\bf p}$) may not 
be pseudo holomorphic.
The proof of this variant is the same as the proof of  Lemma \ref{lem126}.
\par
Therefore the right vertical arrow, which is 
nothing but the map $\varphi_{\bf pq}$, is smooth 
at ${\bf r}$. The proof of smoothness of 
$\widehat{\varphi}_{\bf pq}$ is the same.
The proof of Proposition \ref{prop12777} is complete.
\end{proof}

\section{Proof of Lemma \ref{lem413}}
\label{proofoflem413}

\begin{proof}
The proof is divided into 4 steps.
In the first two steps we 
consider the case ${\bf p} = {\bf q}$.
We write $\frak W_{\bf p}^o = (\vec{\frak w}^o,
\{\varphi_{a,i}^{o,\rm s}\}, \{\varphi_{a,j}^{o,\rm d}\},
\{\phi^o_a\})$ $(o=1,2)$.
We will prove 
\begin{equation}\label{form131}
B_{\delta}({\mathcal X}_{k+1,\ell}(X,L,J;\beta);{\bf p},\frak W^1)
\subset 
B_{\epsilon}({\mathcal X}_{k+1,\ell}(X,L,J;\beta);{\bf p},\frak W^2),
\end{equation}
for sufficiently small $\delta$.
\par\smallskip
\noindent(Step 1)
We assume
$\vec{\frak w}^1 \subseteq \vec{\frak w}^2$
or $\vec{\frak w}^2 \subseteq \vec{\frak w}^1$:
Suppose $\vec{\frak w}^1 \subseteq \vec{\frak w}^2$
and $\# \vec{\frak w}^1 = \ell'$,  $\# \vec{\frak w}^2 = \ell'+\ell''$.
We consider the next diagram:
$$
\begin{CD}
\mathcal V_{{\bf p}\cup \vec{\frak w}^2} \times \Sigma_{{\bf p}\cup \vec{\frak w}^2}(\vec \epsilon)
@ >{\widehat{\Phi}^2_{\vec{\epsilon}}}>>
\mathcal C^{\rm d}_{k+1,\ell+\ell'+\ell''}
@ >{\pi}>>
\mathcal M^{\rm d}_{k+1,\ell+\ell'+\ell''} \\
@ VVV 
@ VVV 
@ VV{\frak{forget}_{\ell+\ell'+\ell'',\ell+\ell'}}V
\\
\mathcal V_{{\bf p}\cup \vec{\frak w}^1} \times \Sigma_{{\bf p}\cup \vec{\frak w}^1}(\vec{\epsilon'})
@ >{\widehat{\Phi}^1_{\vec{\epsilon'}}}>>
\mathcal C^{\rm d}_{k+1,\ell+\ell'}
@ >{\pi}>>
\mathcal M^{\rm d}_{k+1,\ell+\ell'}
\end{CD}
$$
Here the right half of the diagram is one induced by the 
forgetful map in the obvious way.
We put
\begin{equation}\label{for132}
\mathcal V_{{\bf p}\cup \vec{\frak w}^o}
=
\prod_{a\in \mathcal A_{\bf p}^{\rm s} \cup \mathcal A_{\bf p}^{\rm d}} \mathcal V^o_a 
\times [0,c)^{m_{\rm d}} \times (D^2_{\circ}(c))^{m_{\rm s}}.
\end{equation}
See (\ref{form33}). The second and the third factors of the right hand 
side of (\ref{for132}) are the gluing parameters of the nodes.
$\mathcal V^o_a$ is the deformation parameter of the 
$a$-th irreducible component of the source curve of 
${\bf p}\cup \vec{\frak w}^o$.
\par
The map $\widehat{\Phi}^1_{\vec{\epsilon}}$ 
(resp. $\widehat{\Phi}^2_{\vec{\epsilon'}}$)  is the map  
(\ref{hatPhi39}) defined  using $\frak W^1$ (resp. $\frak W^2$).
\par
The left vertical arrow is defined as follows.
$\Sigma_{{\bf p}\cup \vec{\frak w}^2}(\vec \epsilon)
\to \Sigma_{{\bf p}\cup \vec{\frak w}^1}(\vec{\epsilon'})$
is the inclusion. (The inclusion exists if $\vec{\epsilon'}$ is sufficiently 
small compared to $\vec{\epsilon}$.)
The forgetful map of the marked points induces a map
$\mathcal V_{a}^2 \to \mathcal V_{a}^1$.
This map together with the identity map of the second and the third factors 
of (\ref{for132}) defines the map in the left vertical arrow.
\par
The maps appearing in the diagram are all smooth.
The right half of the diagram commutes.
Note the left half of the diagram does {\it not} commute since 
we use different stabilization and trivialization data to define 
$\widehat{\Phi}^1_{\vec{\epsilon}}$ and $\widehat{\Phi}^2_{\vec{\epsilon}}$.
\par
Note however that the left half of the diagram 
commutes if we restrict it to the fiber of 
${\bf p}\cup \vec{\frak w}^1$.
Therefore choosing 
$\mathcal V_{{\bf p}\cup \vec{\frak w}^2}$ small the left half 
of the diagram commutes modulo a term whose $C^2$ norm is 
sufficiently smaller than $\epsilon$.
(\ref{form131}) is an easy consequence of this fact.
The proof of the case $\vec{\frak w}^2 \subseteq \vec{\frak w}^1$ is the same.
\par\smallskip
\noindent(Step 2)
We consider the case
${\bf p} = {\bf q}$ in general, where $\vec{\frak w}^1$, $\vec{\frak w}^2$ have no inclusion relations.
Let  $\frak W_{\bf p}^o$ $(o=1,2)$ be as above.
We take a weak stabilization data $\vec{\frak w}^3$ such that 
$\vec{\frak w}^1 \cap \vec{\frak w}^3 = \vec{\frak w}^2 \cap \vec{\frak w}^3 = \emptyset$.
We take stabilization and trivialization data $\frak W_{\bf p}^3$, $\frak W_{\bf p}^{13}$, $\frak W_{\bf p}^{23}$,
so that their weak stabilization data are given by   
$\vec{\frak w}_{\bf p}^3$, $\vec{\frak w}_{\bf p}^1 \cup \vec{\frak w}_{\bf p}^3$, 
$\vec{\frak w}_{\bf p}^2 \cup \vec{\frak w}_{\bf p}^3$, respectively.
Now we can apply Step 1 to the following 4 situations.
$(\frak W_{\bf p}^1,\frak W_{\bf p}^{13})$,
$(\frak W_{\bf p}^{13},\frak W_{\bf p}^{3})$, 
$(\frak W_{\bf p}^3,\frak W_{\bf p}^{23})$,
$(\frak W_{\bf p}^{23},\frak W_{\bf p}^{2})$.
Combining them we obtain the conclusion (\ref{form131}) of our case 
$(\frak W_{\bf p}^1,\frak W_{\bf p}^{2})$.
\par\smallskip
\noindent(Step 3)
The case ${\bf p} \ne {\bf q}$.
In this step we prove that given $\frak W_{\bf p}$
we can find $\frak W_{\bf q}$
such that (\ref{lem413occc}) holds: 
We first use the fact that ${\bf q}$ is sufficiently close to ${\bf p}$
to find $\vec{\frak w}_{\bf q}$ such that 
 ${\bf q} \cup \vec{\frak w}_{\bf q}$ is sufficiently close to ${\bf p}\cup \vec{\frak w}_{\bf p}$.
We then apply Sublemma \ref{sublem125} to obtain 
$\{\varphi_{{\bf q},a,i}^{\rm s}\}, \{\varphi_{{\bf q},a,j}^{\rm d}\},
\{\phi_{{\bf q},a}\}$.
We put
$\frak W_{\bf q} = (\vec {\frak w}_{\bf q},\{\varphi_{{\bf q},a,i}^{\rm s}\}, \{\varphi_{{\bf q},a,j}^{\rm d}\},
\{\phi_{{\bf q},a}\})$.
Diagram (\ref{diagram12-3}) 
commutes. (\ref{lem413occc}) is its immediate consequence.
\par\smallskip
\noindent(Step 4) Now the general case follows by combining Step 2 and Step 3.
\end{proof}

\include{index}
\printindex

\bibliographystyle{amsalpha}

\begin{thebibliography}{FOOOXX}

\bibitem[ALR]{ofdruan}
A. Adem, J. Leida and Y. Ruan,
{\em Orbifolds and Stringy Topology},
Cambridge Tracts in Math. 171, Cambridge University Press, Cambridge, 2007.

\bibitem[ACG]{alcorGri}
E. Arbarello,
M. Cornalba,
and P. Griffiths,
{\em Geometry of Algebraic curves II},
Grundlehren der Math. Wissenschaften 268,
(2010)
Springer, Berling.


\bibitem[DM]{delignemumford}
P. Deligne and D. Mumford,
{\em The irreducibility of the space of curves of given genus},
Publ. IHES 36 (1969) 75--110.


\bibitem[Fu1]{FuFu5}
K. {Fukaya}, 
\emph{Lie groupoid, deformation of unstable curve, and
  construction of equivariant Kuranishi charts}, ArXiv 1701.02840.

\bibitem[Fu2]{FuFu6}
K. {Fukaya}, 
\emph{Unobstructed immersed Lagrangian correspondences 
and filtered $A_{\infty}$ functors}, ArXiv 1706.02131v2.



\bibitem[FOOO1]{fooobook} K. Fukaya, Y.-G. Oh, H. Ohta and K. Ono,
{\em Lagrangian intersection Floer theory-anomaly and
obstruction, Part I,} AMS/IP Studies in Advanced Math. vol. 46.1, International Press/
Amer. Math. Soc. (2009). MR2553465.


\bibitem[FOOO2]{fooobook2} K. Fukaya, Y.-G. Oh, H. Ohta and K. Ono,
{\em Lagrangian intersection Floer theory-anomaly and
obstruction, Part II,} AMS/IP Studies in Advanced Math. vol. 46.2, International Press/
Amer. Math. Soc. (2009). MR2548482.


\bibitem[FOOO3]{foootoric32}
K. Fukaya, Y.-G. Oh, H. Ohta and K. Ono,
{\em Floer theory and mirror symmetry on compact toric manifolds}, arXiv:1009.1648v2. Ast\'erisque, No. 376, (2016).


\bibitem[FOOO4]{foootech}
K. Fukaya, Y.-G. Oh, H. Ohta and K. Ono,
{\em Technical details on Kuranishi structure and virtual fundamental chain},
arXiv:1209.4410.

\bibitem[FOOO5]{foootech2}
K. Fukaya, Y.-G. Oh, H. Ohta and K. Ono,
{\em Kuranishi structure,
Pseudo-holomorphic curve,
and Virtual fundamental chain; Part 1},
submitted, 
arXiv:1503.07631.

\bibitem[FOOO6]{foootech21}
K. Fukaya, Y.-G. Oh, H. Ohta and K. Ono,
{\em Kuranishi structure,
Pseudo-holomorphic curve,
and Virtual fundamental chain; Part 2}, submitted,
 arXiv:1704.01848.

\bibitem[FOOO7]{foooexp}
K. Fukaya, Y.-G. Oh, H. Ohta and K. Ono,
{\em Exponential decay estimates and smoothness of the moduli space of pseudoholomorphic curves}, 
submitted, 
arXiv:1603.07026. 


\bibitem[FOn]{FO}
K. Fukaya and K. Ono,
{\em  Arnold conjecture and Gromov-Witten invariant},
Topology 38 (1999), no. 5, 933--1048.

\bibitem[HWZ]{hwze}
H. Hofer, K. Wysocki and E. Zehnder,
{\em Application of Polyfold Theory I:
The Polyfold of Gromov-Witten Theory},
Memoire Amer. Math. Soc. 248 (2017).


\bibitem[Ke]{keel}
S. Keel,
{\em Intersection theory of moduli space of
stable $n$-pointed curves of genus zero},
Trans. Amer. Math. Soc. 330 (1992) 545--574.



\bibitem[LiTi]{LiTi98}
 J. Li and G. Tian,
{\em Virtual moduli cycles and Gromov-Witten invariants of
general symplectic manifolds},
Topics in
Symplectic $4$-Manifolds (Irvine, CA, 1996),
First Int. Press Lect. Ser. 1
Internat. Press
Cambridge, MA, USA
(1998) 47--83

\bibitem[LiuTi]{LiuTi98}
G. Liu and G. Tian,
{\em Floer homology and Arnold conjecture},
 J. Differential Geom. 49
(1998) 1--74.

\bibitem[Ru]{Rua99}
Y. Ruan,
{\em Virtual neighborhood and pseudoholomorphic curve},
Turkish J. Math.
(1999) 161--231.

\bibitem[Sie]{Siebert}
B. Siebert,
{\em Gromov-Witten invariants of general symplectic manifolds},
arXiv:dg-ga/9608005.



\end{thebibliography}

\end{document}